\documentclass[11pt]{amsart}
\usepackage[dvips]{color}
\usepackage{amsmath}
\usepackage{amsxtra}
\usepackage{amscd}
\usepackage{amsthm}
\usepackage{amsfonts}
\usepackage{amssymb}
\usepackage{eucal}		
\usepackage{epsfig}
\usepackage{graphics}
\usepackage[matrix,arrow]{xy}
\usepackage{enumitem}

\setlength{\hoffset}{-1in}
\setlength{\voffset}{-1in}
\setlength{\oddsidemargin}{1in}
\setlength{\evensidemargin}{1in}
\setlength{\textwidth}{6.in}
\setlength{\textheight}{8in}
\setlength{\topmargin}{1in}
\setlength{\baselineskip}{14pt}
\theoremstyle{plain}
\newtheorem{theorem}{Theorem}[section]

\newtheorem{thm}[theorem]{Theorem}
\newtheorem{cor}[theorem]{Corollary}
\newtheorem{prop}[theorem]{Proposition}
\newtheorem{lem}[theorem]{Lemma}
\newtheorem{conj}[theorem]{Conjecture}
\newtheorem{defi}[theorem]{Definition}
\newtheorem{example}[theorem]{Example}
\newtheorem{rem}[theorem]{Remark}

\newcommand{\Glie}{\mathfrak{g}}

\newcommand{\U}{\mathcal{U}}

\newcommand{\ZZ}{\mathbb{Z}}
\newcommand{\CC}{\mathbb{C}}


\newcommand{\C}{\mathbb{C}}

\newcommand{\g}{\mathfrak{g}}
\newcommand{\bo}{\mathfrak{b}}
\newcommand{\tb}{\mathbf{\mathfrak{t}}}
\newcommand{\ga}{\overline{\alpha}}

\newcommand{\Psib}{\mbox{\boldmath$\Psi$}}
\newcommand{\Psibs}{\scalebox{.7}{\boldmath$\Psi$}}

\newcommand{\nc}{\newcommand}
\nc{\on}{\operatorname}
\nc{\la}{\lambda}
\nc{\wh}{\widehat}
\nc{\wt}{\widetilde}
\nc{\sw}{{\mathfrak s}{\mathfrak l}}
\nc{\ghat}{\wh{\g}}
\nc{\hhat}{\wh{\h}}
\nc{\mc}{\mathcal}
\nc{\bi}{\bibitem}
\nc{\pa}{\partial}
\nc{\ppart}{(\!(t)\!)}
\nc{\pparl}{(\!(\la)\!)}
\nc{\zpart}{(\!(z^{-1})\!)}
\nc{\n}{{\mathfrak n}}
\nc{\ol}{\overline}
\nc{\mb}{\mathbf}
\nc{\bb}{{\mathfrak b}}
\nc{\su}{\wh\sw_2}
\nc{\h}{{\mathfrak h}}
\nc{\can}{\on{can}}
\nc{\ntil}{\wt{\n}}
\nc{\pone}{{\mathbb P}^1}
\nc{\bs}{\backslash}
\nc{\al}{\alpha}
\nc{\gt}{{\mathfrak g}'}
\nc{\ds}{\displaystyle}

\theoremstyle{definition}

\setcounter{tocdepth}{1}
\begin{document}

\author{David Hernandez}

\subjclass[2020]{Primary 17B37 (16T99 17B10 82B23 13F60)}

\keywords{Shifted quantum affine algebras and truncations, quantum affine Borel algebras, category $\mathcal{O}$, fusion product, categorification of cluster algebras, Baxter polynomials, Langlands interpolation}

\address{Universit\'e de Paris and Sorbonne Universit\'e, CNRS, IMJ-PRG, IUF, F-75006 Paris, France.}

\email{david.hernandez@u-paris.fr}

\begin{abstract} 
We develop the representation theory of shifted quantum affine algebras $\mathcal{U}_q^\mu(\hat{\Glie})$ and of their truncations
which appeared in the study of quantized K-theoretic Coulomb branches of 3d $N = 4$ SUSY quiver gauge theories.
Our approach is based on novel techniques, which are new in the cases of shifted Yangians or 
ordinary quantum affine algebras as well : realization in terms of asymptotical subalgebras of the quantum affine algebra $\mathcal{U}_q(\hat{\Glie})$, induction and restriction
functors to the category $\mathcal{O}$ of representations of the Borel subalgebra $\mathcal{U}_q(\hat{\bo})$ of $\mathcal{U}_q(\hat{\Glie})$, relations between truncations and Baxter polynomiality in quantum integrable models, 
parametrization of simple modules via Langlands dual interpolation.
We first introduce the category $\mathcal{O}_\mu$ of representations of
$\mathcal{U}_q^\mu(\hat{\Glie})$ and we classify its simple objects. 
Then we establish the existence of fusion products and we get a ring structure on the sum of the 
Grothendieck groups $K_0(\mathcal{O}_\mu)$. We classify simple finite-dimensional representations of $\mathcal{U}_q^\mu(\hat{\Glie})$ 
and we obtain a cluster algebra structure on the Grothendieck ring of finite-dimensional representations.
We prove a truncation has only a finite number of 
simple representations and we introduce a related partial ordering on simple modules. Eventually, we state a conjecture on the parametrization of simple modules of a non simply-laced truncation 
in terms of the Langlands dual Lie algebra. We have several evidences, including a general result 
for simple finite-dimensional representations.
\end{abstract}

\begin{title}
{Representations of shifted quantum affine algebras}
\end{title}

\maketitle

\tableofcontents

\section{Introduction}

Shifted quantum affine algebras and their truncations arose \cite{FT} in the study of quantized $K$-theoretic Coulomb branches of 
3d $N = 4$ SUSY quiver gauge theories in the sense of Braverman-Finkelberg-Nakajima \cite{bfn} which are at the center of 
current important developments (see for instance \cite{Nreview, Fi} and references therein). A presentation of (truncated) 
shifted quantum affine algebras by generators and relations was given by Finkelberg-Tsymbaliuk \cite{FT}. 
Their rational analogs, 
the shifted Yangians, and their truncations, appeared in type $A$ in the context of the representation theory of finite $W$-algebras \cite{brkl}, 
then in the study of quantized affine Grassmannian slices \cite{kwwy1} for general types and in the study of quantized Coulomb branches of 3d $N = 4$ SUSY quiver gauge theories \cite{bfn} for simply-laced types and \cite{nw} for non simply-laced types.

Shifted quantum affine algebras $\mathcal{U}_q^{\mu_+,\mu_-}(\hat{\Glie})$ can be seen as variations of Drinfeld-Jimbo quantum affine algebras $\mathcal{U}_q(\hat{\Glie})$ 
in their Drinfeld presentation, but depending on two coweights $\mu_+$, $\mu_-$ 
of the underlying simple Lie algebra $\mathfrak{g}$. 
In the particular case $\mu_+ = \mu_- = 0$, the algebra $\mathcal{U}_q^{0,0}(\hat{\Glie})$ is a central extension of the ordinary quantum affine algebra $\mathcal{U}_q(\hat{\Glie})$. 
The algebra $\mathcal{U}_q(\hat{\Glie})$ and its representations have been under intense study in recent years, the reader may refer to the recent ICM talks \cite{K3, o} 
for recent important developments.

The truncations depend on additional parameters, including a dominant coweight $\lambda$. In this context, these parameters $\lambda$ and 
$\mu = \mu_+ + \mu_-$ can be interpreted as parameters for generalized slices of the affine Grassmannian $\overline{\mathcal{W}}_\mu^\lambda$ (usual slices when $\mu$
is dominant) or its symplectic dual in the sense of \cite{blpw}, a Nakajima quiver variety $\mathcal{M}_{\lambda,\mu}$. 
Up to isomorphism, $\mathcal{U}_q^{\mu_+,\mu_-}(\hat{\Glie})$ only depends on $\mu$ and will be denoted 
simply by $\mathcal{U}_q^\mu(\hat{\Glie})$.

For simply-laced types, representations of shifted Yangians and related Coulomb branches have been 
intensively studied, see \cite{brkl, ktwwy, ktwwy2} and references therein. 
For non simply-laced types, representations of quantizations of Coulomb branches have been
studied by Nakajima and Weekes \cite{nw} by using the method originally developed in \cite{N4} for simply-laced types.

In the present paper, we develop the representation theory of shifted quantum affine algebras with an approach based on several novel techniques : 

\smallskip

\begin{enumerate}[label=(\arabic*)]

\item  for $\mu$ anti-dominant, realization in terms of the asymptotic algebra introduced in \cite{HJ}, which is 
a subalgebra of the ordinary quantum affine algebra $\mathcal{U}_q(\hat{\Glie})$,

\item  induction and restriction
functors to the category $\mathcal{O}$ of representations of the Borel subalgebra $\mathcal{U}_q(\hat{\bo})$ of $\mathcal{U}_q(\hat{\Glie})$, 

\smallskip

\item relations of truncations with Baxter polynomiality in quantum integrable models, 

\smallskip

\item parametrization of simple modules via Langlands dual interpolating $(q,t)$-characters.

\end{enumerate}

\smallskip

We underline that these techniques, and a large part of our results, are also new for ordinary quantum affine algebras
or shifted Yangians. Hence our study goes beyond a trigonometric version of known results for Yangians.

Let us explain our results. We first relate these representations to $q$-oscillator algebras and to the quantum affine Borel algebra $\mathcal{U}_q(\hat{\bo})$. It is known since \cite{blz} that certain representations of the $q$-oscillator algebra 
give rise to representations of the quantum affine Borel algebra $\mathcal{U}_q(\hat{\bo})$ of the quantum affine algebra $\mathcal{U}_q(\hat{sl}_2)$.
 For general untwisted types, the category $\mathcal{O}$ of representations of the quantum affine Borel algebra $\mathcal{U}_q(\hat{\bo})$ was 
introduced and studied in \cite{HJ}. Some representations in this category extend to a representation of the 
whole quantum affine algebra $\mathcal{U}_q(\hat{\Glie})$, but many do not, including the prefundamental representations constructed in \cite{HJ} and whose transfer-matrices
have remarkable properties for the corresponding quantum integrable systems \cite{FH}.
However, it was first observed in \cite{HJ} that for some of the simple representations in $\mathcal{O}$, the structure of representation of $\mathcal{U}_q(\hat{\bo})$ can 
be extended to a larger subalgebra of $\mathcal{U}_q(\hat{\Glie})$, the asymptotic algebra $\tilde{\mathcal{U}}_q(\hat{\Glie})$. 
It was observed in \cite{Z} that, in the Yangian case, certain examples of simple representations in an analog of the category $\mathcal{O}$ can be extended to a shifted Yangian. We will prove that all antidominant shifted quantum affine algebras can be realized in terms of $\tilde{\mathcal{U}}_q(\hat{\Glie})$.

This picture motivated us to introduce a category $\mathcal{O}_\mu$ of representations of $\mathcal{U}_q^\mu(\hat{\Glie})$ which is an analog of the ordinary category $\mathcal{O}$. In particular, the category $\mathcal{O}_0$ contains the category of finite-dimensional representations
of the quantum affine algebra $\mathcal{U}_q(\hat{\Glie})$, but for some other $\mu$, there are much more finite-dimensional representations in the 
categories $\mathcal{O}_\mu$ which seem to be a natural extension of the ordinary representation theory of quantum affine algebras.
These categories $\mathcal{O}_\mu$ are the main categories studied in the present paper. 

\smallskip

Our main results are the following  : 

\smallskip

\begin{enumerate}[label=(\arabic*)]

\item The classification of simple representations in the category $\mathcal{O}_\mu$.

\smallskip

\item The classification of simple finite-dimensional representations of $\mathcal{U}_q^\mu(\hat{\Glie})$.

\smallskip

\item A ring structure on the sum of Grothendieck groups $K_0(\mathcal{O}_\mu)$ from fusion products.

\smallskip

\item A cluster algebra structure on the Grothendieck ring of finite-dimensional 
representations of shifted quantum affine algebras.

\smallskip 

\item A $q$-characters formula for simple finite-dimensional 
representations of $\mathcal{U}_q^\mu(\hat{\Glie})$ in terms of the $q$-character 
of simple representations of $\mathcal{U}_q(\hat{\bo})$.

\smallskip

\item The rationality and polynomiality of remarkable Cartan-Drinfeld series on 
simple representations in $\mathcal{O}_\mu$, 
using Baxter polynomiality of quantum integrable models.

\smallskip

\item The proof of the finiteness of the number of simple isomorphism classes for truncations for 
general types and their complete classification for $\Glie = sl_2$.

\smallskip

\item A new partial ordering on simple representations of $\mathcal{U}_q^\mu(\hat{\Glie})$.

\smallskip

\item The statement of a conjecture to explicitly parametrize simple representations of non simply-laced truncated shifted quantum affine algebras involving the Langlands dual $\mathcal{U}_q({}^L\hat{\Glie})$.

\smallskip

\item The proof that simple finite-dimensional representations descend to a truncation as in the conjecture.

\end{enumerate}

\smallskip

Let us comment on related structures and on previous results.

For simply-laced types, simple representations of truncated shifted Yangians have been parametrized in terms of Nakajima monomial crystals \cite{ktwwy2}. Combining with \cite{N4}, this implies an analog statement for simply-laced shifted quantum affine algebras. 
This is a fundamental motivation for our conjecture (9) in non simply-laced types. 
We have several strong evidences, including a complete result in type $B_2$, and a general result for finite-dimensional representations as mentioned above.

Based on \cite{N4}, Nakajima-Weekes \cite{nw} gave a bijection between more general simple representations of a non simply-laced quantized Coulomb branch and
those for simply-laced types (they are parametrized by the same canonical base). 
Nakajima explained to the author this bijection preserves finiteness of dimension and category $\mathcal{O}$. Thus, 
combining with \cite{ktwwy2}, this gives an explicit parametrization of simple representations in category $\mathcal{O}$ of truncated non 
simply-laced shifted Yangians and quantum affine algebras. 
After using the comparison between simply-laced and twisted $q$-characters \cite{H3}, one can 
consider a possible relation between the two parametrizations. 
In small examples discussed in a correspondence between Nakajima and the author, this different method seems to give the same parametrization as our result.
Note also that results (7) above can be obtained by this method from simply-laced types and an equivalence of representations of truncated shifted Yangians/quantum affine algebras. For this last point, in the formulation of \cite{N4, nw}, once the spectral parameters are specialized, the algebras relevant to homological and $K$-theoretic Coulomb branches 
become isomorphic by Riemann-Roch theorem (there should be also other approaches to this last problem in some cases, in the spirit of \cite{GTL}).

One of the aims of the last part of the paper is to understand truncations and their representations, 
uniformly, from the direct methods developed in the first parts. 
We show that it is also relevant to use the theory of quantum integrable models we have previously studied.
We derive an explicit parametrization using the direct algebraic and transfer-matrices 
approaches. 

Let us recall that to each finite-dimensional representation $V$ of $\mathcal{U}_q(\hat{\Glie})$, and 
more generally to each representation of $\mathcal{U}_q(\hat{\mathfrak{b}})$ in the category $\mathcal{O}$, 
is attached a transfer-matrix $t_V(z)$ which is a formal power series in a formal parameter $z$ with coefficients 
in $\mathcal{U}_q(\hat{\Glie})$ (the transfer-matrix is defined via the $R$-matrix construction).
Given another simple finite-dimensional representation $W$ of $\mathcal{U}_q(\hat{\Glie})$, we get a family
of commuting operators on $W[[z]]$. This is a quantum integrable model generalizing the $XXZ$-model. 
As conjectured in \cite{Fre} and established in \cite{FH}, the spectrum of this system, that is the eigenvalues
of the transfer-matrices, can be described in terms of certain polynomials, generalizing Baxter's polynomials 
associated to the $XXZ$-model. These 
Baxter's polynomials are obtained from 
the eigenvalues of transfer-matrices associated to prefundamental representations of $\mathcal{U}_q(\hat{\mathfrak{b}})$. 
Moreover, this Baxter polynomiality implies the polynomiality of certain series of Cartan-Drinfeld elements acting on 
finite-dimensional representations \cite{FH}. We relate this result to the structures of representations 
of truncated shifted quantum affine algebras (results (6), (8), (10) above).

In non-simply-laced types, the parametrization (9) does not involve directly the 
monomial crystal or the $q$-character of a standard module, but a "mixture" between the 
$q$-characters of Langlands dual standard modules obtained from interpolating $(q,t)$-characters. 
The latter were defined in \cite{FH0} as an 
incarnation of Frenkel-Reshetikhin deformed $\mathcal{W}$-algebras interpolating between 
$q$-characters of a non simply-laced quantum affine algebra and its Langlands dual. 
They lead \cite{FH0, FHR} to the definition of an interpolation between the Grothendieck ring 
$\text{Rep}(\U_q(\hat{\Glie}))$ of finite-dimensional representations of
 $\U_q(\hat{\Glie})$ (at $t = 1$) and the Grothendieck ring $\text{Rep}(\U_t({}^L\hat{\Glie}))$ 
of finite-dimensional representations of the Langlands dual
 algebra quantum affine algebra $\U_t({}^L\hat{\Glie})$ (at $q = \epsilon$ a certain root of $1$) : 
$$\xymatrix{ &\mathfrak{K}_{q,t}\ar@{-->}[dr]^{q = \epsilon}\ar@{-->}[dl]^{t = 1}&   
\\ \text{Rep}(\U_q(\hat{\Glie})) & &  \text{Rep}(\U_t({}^L\hat{\Glie}))
}$$
Here $\mathfrak{K}_{q,t}$ is the ring of interpolating $(q,t)$-characters. To describe our parametrization (9), we 
found it is relevant to use a different specialization of interpolating $(q,t)$-characters that we call Langlands dual $q$-characters 
(with $t = 1$ for variables but $q = \epsilon$ for coefficients).

The interpolating $(q,t)$-characters are closely related to the deformed $\mathcal{W}$-algebras which appeared in \cite{Wd} in the context of the geometric Langlands program. 
Note also that the parametrization in \cite{ktwwy2} for simply-laced types can be understood in the context of symplectic duality (more precisely from the equivariant version of the Hikita conjecture \cite{hik} for the symplectic duality formed by an affine Grassmannian slice 
and a quiver variety). Hence the statement of our conjecture has also as main motivations the symplectic duality and the Langlands duality. 

Let us discuss another application of our approach in the context of cluster algebra theory (result (4) above). 
The cluster
algebra $\mathcal{A}(Q)$ attached to a quiver $Q$ is a commutative ring 
with a distinguished set of generators called cluster variables and obtained
inductively via the Fomin-Zelevinsky procedure of mutation \cite{FZ1}. 
A monoidal category $\mathcal{C}$ is said to be a monoidal categorification of $\mathcal{A}(Q)$ if
there exists a ring isomorphism 
$\mathcal{A}(Q)\stackrel{\sim}{\rightarrow} K_0(\mathcal{C})$, with some additional properties, 
in particular with cluster variables corresponding to classes of certain (prime) simple modules (see \cite{HL0}). 
Various examples of monoidal categorifications have been established. It was proved in \cite{HL} that 
the Grothendieck rings of certain categories of representations of $\mathcal{U}_q(\hat{\mathfrak{b}})$ have a cluster 
algebra structure. Using (3) and induction/restriction functors, we obtain a cluster algebra structure on the Grothendieck ring of finite-dimensional 
representations of shifted quantum affine algebras (result (4)).

\smallskip

We expect our results and conjectures will lead to further developments in the representation theory of shifted quantum affine algebras. We also expect our results to extend to twisted shifted quantum affine algebras. 

\medskip

The paper is organized as follows.

\smallskip

In Section \ref{finitetype}, we consider finite-type analogs of shifted quantum affine algebras and we underline the relation with the $q$-oscillator algebras. 

In Section \ref{sqaa}, we recall the definition of shifted quantum affine algebras $\mathcal{U}_q^\mu(\hat{\Glie})$. In the $sl_2$-case, 
we consider evaluation morphisms to the $q$-oscillator algebra (Proposition \ref{evalm}) which give examples of evaluation representations. 
For general types, we prove for $\mu$ anti-dominant that $\mathcal{U}_q^\mu (\hat{\Glie})$ can be realized from the asymptotic algebra and that it contains a subalgebra isomorphic to the quantum affine Borel algebra $\mathcal{U}_q(\hat{\bo})$ (Proposition \ref{suba}).

In Section \ref{catosh}, we introduce the category $\mathcal{O}_\mu$ of representations of the shifted quantum affine algebra 
$\mathcal{U}_q^\mu(\hat{\Glie})$ and we classify its simple objects (Theorem \ref{param}). The category $\mathcal{O}^{sh} = \bigoplus_{\mu\in\Lambda}\mathcal{O}_\mu$ is the sum 
of the abelian categories $\mathcal{O}_\mu$.
We study shift functors induced by shift homomorphisms. 

In Section \ref{fusionr}, we construct the fusion product of representations of shifted quantum affine algebras by using the deformed Drinfeld coproduct and 
the renomalization procedure introduced in \cite{H2} (Theorem \ref{fp}). This leads to the definition of a ring structure on the Grothendieck groups $K_0(\mathcal{O}^{sh})$. We establish a simple module in $\mathcal{O}^{sh}$ is a quotient of a fusion product of various prefundamental ones (Corollary \ref{role}).
Along the way we consider analogs of Frenkel-Reshetikhin $q$-characters of representations of shifted quantum affine algebras. We establish $q$-characters of simple representations
satisfy a triangularity property with respect to Nakajima partial ordering (Theorem \ref{partialo}).

In Section \ref{fdr}, we classify the simple finite-dimensional representations of shifted quantum affine algebras (Theorem \ref{fdclas}). 

In Section \ref{irf}, we define and study induction and restriction functors relating the category $\mathcal{O}$ of representations
of the quantum affine Borel algebra $\mathcal{U}_q(\hat{\bo})$ and the categories $\mathcal{O}_\mu$ of representations of shifted quantum affine algebras.

In Section \ref{charclus}, we establish a $q$-characters formula for simple finite-dimensional 
representations of shifted quantum affine algebras in terms of the $q$-characters 
of certain simple representations of $\mathcal{U}_q(\hat{\bo})$ 
in the category $\mathcal{O}$ (Theorem \ref{charqf}). Then, we prove 
the results in \cite{HL} imply a description 
of simple finite-dimensional representations of $\mathcal{U}_q^\mu(\hat{sl}_2)$ (Theorem \ref{fact}), 
isomorphisms of Grothendieck rings between categories of representations
 of $\mathcal{U}_q^\mu(\hat{\Glie})$ associated to dominant and anti-dominant $\mu$ 
(Theorem \ref{exdu}), 
and a cluster algebra structure on the Grothendieck ring of finite-dimensional 
representations of shifted quantum affine algebras (Theorem \ref{clsh}).

In Section \ref{cds}, we recall Cartan-Drinfeld series $Y_i^\pm(z)$, $T_i^\pm(z)$ introduced respectively in \cite{Fre} and in \cite{HJ} 
in the study of transfer-matrices of representations of 
quantum affine algebras. We establish (Theorem \ref{newpol}) the rationality of $Y_i^\pm(z)$ (resp. the polynomiality 
of $(T_i^\pm(z))^{\mp 1}$) on a simple representation of a shifted quantum affine algebra in the category $\mathcal{O}_\mu$, 
using Baxter polynomiality of quantum integrable models \cite{FH}.

In Section \ref{tsqaa}, we recall the definition of truncated shifted quantum affine algebras and we explain how 
it appears naturally  in terms of the Cartan-Drinfeld series $Y_i^\pm(z)$, $T_i^\pm(z)$. We establish (Proposition \ref{rata}) a necessary and sufficient condition 
for the defining series to have a rational action on a simple representation.

In Section \ref{dttt}, we start investigating which simple representations descend to truncated shifted quantum affine algebras.
We establish a necessary condition  (Proposition \ref{maint}). It implies that a 
truncated shifted quantum affine algebra has only a finite number of isomorphism classes of simple representations (Theorem \ref{finsimp}).
Then we introduce a partial ordering $\preceq_{\mathcal{Z}}$ on simple modules, related to the Langlands dual Lie algebra ${}^L\Glie$. 
We prove a related triangularity property 
for simple representations of truncated shifted quantum affine algebra (Theorem \ref{thpartial}).
In the $sl_2$-case we characterize simple representations of a truncated  shifted quantum affine algebra (Theorem \ref{carsl22}).

In Section \ref{maincsec}, 
we state a conjecture (Conjecture \ref{mainc}) on the parametrization of simple modules of non simply-laced truncated shifted quantum affine algebras. 
It is given in terms Langlands dual $q$-characters that we introduce.
 We establish in type $B_2$ that our parametrization indeed gives representations of the 
truncated shifted quantum affine algebra (Proposition \ref{tdeux}). In general, we establish that a simple finite-dimensional
representation of a shifted quantum affine algebra descends to a truncation as in Conjecture \ref{mainc} (Theorem \ref{truncfd}). 
The proof of this last result is based on Baxter polynomiality.

\medskip

{\bf Acknowledgments : } The author is grateful to Sasha Tsymbaliuk for a very careful reading of a first version of this 
paper and for numerous precious comments.
The author is grateful to Hiraku Nakajima for useful discussions and correspondences, in particular for a 
question on prefundamental representations from which this project started and for explanations of the consequences
of results in \cite{N4}. He is also grateful to Vyjayanthi Chari, 
Pavel Etingof, Christof Geiss, Bernard Leclerc, Alex Weekes and Huafeng Zhang for useful discussions.
The author was supported by the European Research Council under the European Union's Framework Programme H2020 with ERC Grant Agreement number 647353 Qaffine.

\section{$q$-oscillator algebras}\label{finitetype}

We first consider finite-type analogs of shifted quantum affine algebras and we underline the relation with the $q$-oscillator algebras. 

\subsection{Definition} Let $\Glie$ be a simple finite-dimensional of rank $n$ and $I = \{1,\cdots, n\}$. We denote by 
$(\omega_i)_{i\in I}$, $\{\alpha_i\}_{i\in I}$, $\{\alpha_i^\vee\}_{i\in I}$, $\{\omega_i^\vee\}_{i\in I}$ the fundamental weights, the simple roots, simple
coroots and fundamental coweights respectively. $P$ is integral weight lattice and $P_\mathbb{Q} = P\otimes \mathbb{Q}$. 
The set of dominant weights is denoted by $P^+$. The Cartan matrix is $C = (\alpha_j(\alpha_i^\vee))_{i,j\in I}$ and $r_1,\cdots, r_n > 0$ integers are 
minimal so that we have a symmetric matrix
$$B = \text{diag}(r_1,\cdots, r_n)C.$$ 
Let $q\in\mathbb{C}^*$ be not a root of unity. For $i\in I$, we set $q_i = q^{r_i}$. The quantum Cartan matrix $C(q)$ is 
defined for $i\neq j\in I$ by 
$$C_{i,i}(q) = [2]_{q_i}\text{ and } C_{i,j}(q) = [C_{i,j}]_q,$$
with the standard $q$-number notation $[m]_u = \frac{u^m - u^m}{u - u^{-1}}$ for $m\in\mathbb{Z}$, $u\in\mathbb{C}^*\setminus\{-1,1\}$.

Set $\tb^*=\bigl(\C^\times\bigr)^I$, and endow it with a group structure by pointwise multiplication. 
We define a group morphism $\overline{\phantom{u}}:P_\mathbb{Q} \longrightarrow \tb^*$ by setting  
$\overline{\omega}(i)=q_i^{\omega(\alpha_i^\vee)}$. We shall use the standard partial ordering  on $\tb^*$:
$$\omega\preceq \omega' \quad \text{if $\omega' \omega^{-1}$ is a product of $\{\ga_i\}_{i\in I}$}.$$

We consider the following generalization of $q$-oscillator algebras. For $J,K\subset I$, 
the algebra $\mathcal{U}_q^{J , K}(\Glie)$ is defined by the same generators $k_i^{\pm 1}$, $e_i$, $f_i$ as for the quantum group $\mathcal{U}_q(\Glie)$ but with the modified relation
$$[e_i , f_i] = \frac{ \delta_{i\notin K} k_i - \delta_{i\notin J} k_i^{-1} }{q_i - q_i^{-1}}\text{ for $i\in I$.}$$ 
The other relations are the same, that is for $i,j\in I$  
$$k_i k_j = k_j k_i\text{ , }k_i e_j = q_i^{C_{i,j}} e_j k_i\text{ , }k_i f_f = q_i^{-C_{i,j}}f_j k_i,$$
and if $i\neq j$, $[e_i,f_j] = 0$ and  
$$\sum_{0\leq r\leq 1 - C_{i,j}}(-1)^r \begin{bmatrix}1 - C_{i,j}\\ r \end{bmatrix}_{q_i}  e_i^{1 - C_{i,j} - r}e_je_i^r = 
0 = \sum_{0\leq r\leq 1 - C_{i,j}}(-1)^r \begin{bmatrix}1 - C_{i,j}\\ r \end{bmatrix}_{q_i}  f_i^{1 - C_{i,j} - r}f_jf_i^r.$$
We used here the standard $q$-binomials from the standard $q$-factorial notations.

\begin{rem} For $gl_{n+1}$, certain contracted algebras are introduced in \cite{t1, t2} and are very close the 
algebras $\mathcal{U}_q^{J, K}(sl_{n+1})$ above. This was pointed out to the author by Huafeng Zhang.
\end{rem}

\subsection{$q$-oscillator algebras in the $sl_2$-case}
We recover the usual $q$-oscillator algebras 
$$\mathcal{U}_q^+(sl_2) = \mathcal{U}_q^{\emptyset,\{1\}}(sl_2)\text{ and }\mathcal{U}_q^-(sl_2) = \mathcal{U}_q^{\{1\},\emptyset}(sl_2).$$ 
$\mathcal{U}_q^\pm(sl_2)$ is generated by $e$, $f$, $k$, $k^{-1}$ with relations
$$ke = q^2 ek\text{ , }kf = q^{-2}fk\text{ , }kk^{-1} = k^{-1}k = 1\text{ , }[e,f] = \frac{\pm k^{\pm 1}}{q - q^{-1}}.$$
Note that exchanging $e$ and $f$, $k$ and $k^{-1}$, we have an isomorphism 
\begin{equation}\label{isompm}\mathcal{U}_q^+(sl_2)\simeq \mathcal{U}_q^-(sl_2).\end{equation}

\begin{rem}\label{basic} (i) The quantum Boson algebra $\mathcal{B}_q(sl_2)$ of Kashiwara \cite{K} is isomorphic to the subalgebra of $\mathcal{U}_q^+(sl_2)$ 
generated by $f$ and 
$$e' = (q - q^{-1})k^{-1}e\text{ as }e'f-q^2 fe' = 1.$$ 

(ii) We denote $\mathcal{U}_{q,loc}^\pm(sl_2)$ the algebra $\mathcal{U}_q^\pm(sl_2)$ localized at the Casimir central element
$$C_\pm = ef + \frac{q^{\mp 1}k^{\pm 1}}{(q - q^{-1})^2} = fe + \frac{q^{\pm 1} k^{\pm 1}}{(q - q^{-1})^2}.$$

(iii) The algebra $\mathcal{U}_q^\pm (sl_2)$ has a natural triangular decomposition. In particular the Borel subalgebra $\mathcal{U}_q(\bo)\subset \mathcal{U}_q(sl_2)$ generated by $e$, $k$, $k^{-1}$ is a subalgebra of $\mathcal{U}_q^\pm(sl_2)$.\end{rem}

\begin{prop} There is an anti-isomorphism $S : \mathcal{U}_q^+(sl_2)\rightarrow \mathcal{U}_q^-(sl_2)$ defined by 
$$S(e) = - e k\text{ , }S(f) = - k^{-1}f\text{ , }S(k) =  k^{-1}\text{ , }S(k^{-1}) = k.$$
\end{prop}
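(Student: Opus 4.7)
The plan is to verify, first, that the prescribed values of $S$ on generators are compatible with an anti-homomorphism and, second, to establish bijectivity by writing down an explicit inverse. For the anti-homomorphism property, I need to show that each defining relation of $\mathcal{U}_q^+(sl_2)$, read with reversed multiplication order under $S$, becomes a valid identity in $\mathcal{U}_q^-(sl_2)$. The Hopf-type relations $kk^{-1} = k^{-1}k = 1$ and the commutation relations $ke = q^2 ek$, $kf = q^{-2}fk$ reduce to short computations using $S(k) = k^{-1}$ together with the identical commutation relations inside $\mathcal{U}_q^-(sl_2)$; for instance, $S(e)S(k) = -e$ while $q^2 S(k)S(e) = -q^2 k^{-1}ek = -e$, since $k^{-1}ek = q^{-2}e$ in $\mathcal{U}_q^-(sl_2)$. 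Since $I = \{1\}$ there are no Serre relations to check.

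The central computation concerns the commutator relation. In $\mathcal{U}_q^+(sl_2)$ one has $[e,f] = k/(q-q^{-1})$, so the required identity in $\mathcal{U}_q^-(sl_2)$ is
$$S(f)S(e) - S(e)S(f) = \frac{k^{-1}}{q-q^{-1}}.$$
Expanding via $S(e) = -ek$ and $S(f) = -k^{-1}f$ gives $k^{-1}fek - ef$. Substituting the relation $fe = ef + k^{-1}/(q-q^{-1})$ from $\mathcal{U}_q^-(sl_2)$ and using that the weight-zero element $ef$ commutes with $k$, the expression reduces to $k^{-1}/(q-q^{-1})$, as desired.

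For bijectivity I would construct an inverse anti-homomorphism $T : \mathcal{U}_q^-(sl_2) \to \mathcal{U}_q^+(sl_2)$ by setting $T(e) = -q^2 ek$, $T(f) = -q^{-2}k^{-1}f$, $T(k^{\pm 1}) = k^{\mp 1}$, and run the analogous verifications that $T$ respects the relations of $\mathcal{U}_q^-(sl_2)$. The only differences from the checks for $S$ are the precise powers of $q$ needed to absorb the mismatch between $k$ and $k^{-1}$ on the two sides, which are forced by the commutator relation. A direct evaluation on generators then yields $T \circ S = \mathrm{id}_{\mathcal{U}_q^+(sl_2)}$ and $S \circ T = \mathrm{id}_{\mathcal{U}_q^-(sl_2)}$.

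The main technical point is the $[e,f]$ verification, which is where the differing signs and exponents between the two $q$-oscillator algebras interact in a nontrivial way; once this is checked and the correct $q$-normalizations for $T$ are determined, all remaining verifications are mechanical.
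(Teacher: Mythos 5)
Your proof is correct and takes essentially the same route as the paper: you check that $S$ respects the defining relations, the only nontrivial point being the commutator relation, which reduces (exactly as in the paper) to $k^{-1}fek - ef = fe - ef = \frac{k^{-1}}{q-q^{-1}}$ in $\mathcal{U}_q^-(sl_2)$. Your additional construction of the explicit inverse $T$ (with the correct $q$-powers) just makes the bijectivity explicit, a point the paper leaves implicit after invoking the usual antipode for the first three relations.
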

Composing with the isomorphism (\ref{isompm}), it gives also anti-automorphisms
$$S^\pm : \mathcal{U}_q^\pm(sl_2)\rightarrow \mathcal{U}_q^\pm(sl_2).$$

\begin{proof} It suffices to check the relations are preserved. For the first three relations, it follows from
the fact that the usual antipode is well-defined. For the last one, one has :
$$[S(f),S(e)] = [f,e] = \frac{k^{-1}}{q - q^{-1}} = S([e,f]).$$
\end{proof}

\begin{prop} There are algebra morphisms
$$\Delta_\pm : \mathcal{U}_q(sl_2)\rightarrow \mathcal{U}_q^\pm(sl_2)\otimes \mathcal{U}_q^\mp(sl_2),$$
$$\Delta_\pm (e) = e\otimes 1 + k^{\mp 1}\otimes e\text{ , }\Delta_\pm (f) = f\otimes k^{\pm 1} + 1 \otimes f\text{ , }\Delta_\pm (k) = k\otimes k\text{ , }\Delta_\pm (k^{-1}) = k^{-1}\otimes k^{-1}.$$
The same formulas define left-comodule and right-comodule structures :
$$\mathcal{U}_q^\pm (sl_2) \rightarrow \mathcal{U}_q(sl_2)\otimes \mathcal{U}_q^\pm (sl_2)\text{ , }
\mathcal{U}_q^\mp (sl_2) \rightarrow \mathcal{U}_q^\mp(sl_2)\otimes \mathcal{U}_q (sl_2).$$
\end{prop}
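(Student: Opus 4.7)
The plan is to verify directly that each of the maps respects the defining relations of its source, since the formulas are asymmetric variants of the standard Drinfeld-Jimbo coproduct. For $\Delta_\pm$, the Cartan-type relations are automatic: $\Delta_\pm(k) = k\otimes k$ is group-like, and the $q$-commutation $\Delta_\pm(k)\Delta_\pm(e) = q^2 \Delta_\pm(e)\Delta_\pm(k)$ holds summand by summand from the identities $ke = q^2 ek$ available both in $\mathcal{U}_q^\pm(sl_2)$ and in $\mathcal{U}_q^\mp(sl_2)$, and similarly for $f$. There are no Serre relations to check since the source $\mathcal{U}_q(sl_2)$ has only the two generators $e, f$. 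All the content is therefore concentrated in the commutator $[e,f] = (k - k^{-1})/(q - q^{-1})$.

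The main computation is to expand
\[
[\Delta_\pm(e),\, \Delta_\pm(f)] \;=\; [e,f]\otimes k^{\pm 1} \;+\; k^{\mp 1}\otimes [e,f] \;+\; (\text{mixed terms}),
\]
where the mixed terms are $(k^{\mp 1}\otimes e)(f\otimes k^{\pm 1}) - (f\otimes k^{\pm 1})(k^{\mp 1}\otimes e)$; they cancel after applying the identities $k^{\mp 1}f = q^{\pm 2}fk^{\mp 1}$ and $ek^{\pm 1} = q^{\mp 2}k^{\pm 1}e$ in the respective tensor factors, since each factor of $q^{\pm 2}$ is matched by a factor of $q^{\mp 2}$ in the partner slot. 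Substituting now the modified relations $[e,f] = \pm k^{\pm 1}/(q - q^{-1})$ in the $\mathcal{U}_q^\pm$-factor and $[e,f] = \mp k^{\mp 1}/(q - q^{-1})$ in the $\mathcal{U}_q^\mp$-factor, the diagonal contribution collapses to $(k\otimes k - k^{-1}\otimes k^{-1})/(q - q^{-1}) = \Delta_\pm\bigl((k - k^{-1})/(q - q^{-1})\bigr) = \Delta_\pm([e,f])$. This is the one place where the asymmetric shifts of $\mathcal{U}_q^\pm$ and $\mathcal{U}_q^\mp$ conspire to reproduce the standard relation of $\mathcal{U}_q(sl_2)$, and it is the main obstacle in the proof.

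For the comodule structures, the same formulas are reinterpreted as maps from $\mathcal{U}_q^\pm(sl_2)$ or $\mathcal{U}_q^\mp(sl_2)$ into a tensor product that carries exactly one factor of $\mathcal{U}_q(sl_2)$. That each such map is an algebra morphism follows from formally the same computation, with the roles of the shifted and unshifted algebras permuted among the tensor factors: the mixed terms cancel by the same $q$-commutations, while the diagonal part now combines the standard relation $[e,f] = (k - k^{-1})/(q - q^{-1})$ in the $\mathcal{U}_q(sl_2)$-factor with a shifted relation in the other factor to reproduce the image of the source relation $[e,f] = \pm k^{\pm 1}/(q - q^{-1})$. The coassociativity-type identities for left and right comodules, namely $(\Delta \otimes \mathrm{id})\rho = (\mathrm{id} \otimes \rho)\rho$ and its right-handed analogue, then need only be verified on the generators $e, f, k^{\pm 1}$, where they reduce to the coassociativity of the standard coproduct of $\mathcal{U}_q(sl_2)$, and they extend to the full algebra by the algebra-morphism property already established.
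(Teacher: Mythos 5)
Your proof is correct and follows essentially the same route as the paper: the Cartan relations are inherited from the standard coproduct, and the only real check is the commutator relation, where the mixed term cancels by the $q$-commutations and the shifted relations in the two tensor factors recombine into $(k\otimes k - k^{-1}\otimes k^{-1})/(q-q^{-1})$. The only difference is that you also spell out the comodule verification (algebra-morphism property and coassociativity on generators), which the paper leaves implicit.
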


\begin{proof}  It suffices to check the compatibility with the defining relations of $\mathcal{U}_q(sl_2)$. For the first three standard relations which are
the same as for the quantum groups, it follows from the fact that the usual coproduct is well-defined. For the last one, one has :
$$[\Delta_\pm(e),\Delta_\pm(f)] = [e\otimes 1 + k^{\mp 1}\otimes e , f\otimes k^{\pm 1} + 1 \otimes f ]
= [e\otimes 1, f\otimes k^{\pm 1}] + [k^{\mp 1}\otimes e , 1\otimes f] $$
$$= \frac{\pm k^{\pm 1}}{q  -q^{-1}}\otimes k^{\pm 1} 
\mp k^{\mp 1}\otimes \frac{k^{\mp 1}}{q - q^{-1}} =  \frac{k\otimes k - k^{-1}\otimes k^{-1}}{q - q^{-1}} 
= \Delta_\pm([e,f]).$$
\end{proof}

Note that composing with the isomorphim (\ref{isompm}), we also get algebra morphisms
$$\mathcal{U}_q(sl_2)\rightarrow \mathcal{U}_q^+(sl_2)\otimes \mathcal{U}_q^+(sl_2)$$
which can be considered as analogs of a coproduct for $\mathcal{U}_q^+(sl_2)$. The author believes these maps are known but could
not find them in the literature.

\subsection{Representations of $\mathcal{U}_q^\pm (sl_2)$}

Let $\mathcal{C}_0$ be the category of $\mathcal{U}_q(sl_2)$-modules and $\mathcal{C}_1$ be the category of $\mathcal{U}_q^+(sl_2)$-modules.
It is well known that $\mathcal{C}_0$ has a monoidal structure
$$\otimes : \mathcal{C}_0\times \mathcal{C}_0\rightarrow \mathcal{C}_0.$$
From the algebra morphisms above we get bi-functors
$$\mathcal{C}_0\times \mathcal{C}_1\rightarrow \mathcal{C}_1\text{ , }\mathcal{C}_1\times \mathcal{C}_0\rightarrow \mathcal{C}_1\text{ , }\mathcal{C}_1\times \mathcal{C}_1\rightarrow \mathcal{C}_0,$$
which make the category $\mathcal{C}_0\oplus \mathcal{C}_1$ into a $\mathbb{Z}/2\mathbb{Z}$-graded monoidal category. In particular we have a ring structure on
$$K_0(\mathcal{C}_0) \oplus K_0(\mathcal{C}_1).$$

\begin{rem} (i) $\mathcal{C}_0$ admits a left and a right action by the category $\mathcal{C}_1$.

(ii) For $V^\pm$ a representation of $\mathcal{U}_q^\pm(sl_2)$, $V^+\otimes V^-$ and $V^-\otimes V^+$ are $\mathcal{U}_q(sl_2)$-modules.

(iii) We expect to get a $(\mathbb{Z}/2\mathbb{Z})^n$-graded monoidal category for a general $\Glie$, although we will not use it in this paper. We plan to come back to it in a forthcoming paper.
 \end{rem}

From the triangular decomposition (see (iii) in Remark \ref{basic}), $\mathcal{U}_q^+ (sl_2)$ (resp. $\mathcal{U}_q^-(sl_2)$) 
has a Verma module $V(\gamma)$ (resp. $W(\gamma)$) associated to each $\gamma\in\CC^*$ eigenvalue of $k$. These representations $V(\gamma) = \text{Vect}(v_r)_{r\geq 0}$, $W(\gamma) = \text{Vect}(w_r)_{r\geq 0}$ can be explicitly described : 
$$e.v_r = \delta_{r > 0}v_{r - 1}\text{ , }f.v_r = \gamma q^{-r} \frac{[r+1]_q}{q - q^{-1}}v_{r + 1}\text{ , }k.v_r = \gamma q^{-2r}v_r,$$
$$e.w_r = \delta_{r > 0}w_{r - 1}\text{ , }f.w_r = - \gamma^{-1} q^{r} \frac{[r+1]_q}{q - q^{-1}}w_{r + 1}\text{ , }k.w_r = \gamma q^{-2r}w_r.$$

\begin{rem} $V(\gamma)$ is also a representation of $\mathcal{U}_{q,loc}^+(sl_2)$ as the action of $C_+$ is $\frac{q\gamma\text{Id}}{(q - q^{-1})^2}$.
\end{rem}

For $L$ a representation of $\mathcal{U}_q^\pm(sl_2)$, we can define its weight space $L_{\gamma }= \{v\in V|k.v = \gamma v\}$ for $\gamma\in\mathbb{C}^*$. 
We have a corresponding category $\mathcal{O}_\pm$ of $\mathcal{U}_q^\pm(sl_2)$-modules 
(defined as for the ordinary category $\mathcal{O}$, see Section \ref{remcato}) and the corresponding character morphisms : 
$$\chi : K_0(\mathcal{O}_\pm) \rightarrow \mathcal{E}\text{ , }\chi(L) = \sum_{\gamma}\text{dim}(L_\gamma)[\gamma],$$
where $\mathcal{E} \subset \ZZ^{\CC^*}$ is a ring of formal power series as for the category $\mathcal{O}$ of $\mathcal{U}_q(sl_2)$ 
(see \cite[Section 3.4]{HJ} for instance). 

\begin{prop}\label{nonze} The Verma modules of $\mathcal{U}_q^\pm(sl_2)$ are irreducible and exhaust all simple modules of the category $\mathcal{O}_\pm$.
The non-zero representations of $\mathcal{U}_q^\pm(sl_2)$ are infinite-dimensional.
\end{prop}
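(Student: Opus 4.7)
The plan is to handle the three claims in sequence, reducing throughout to the $\mathcal{U}_q^+(sl_2)$ case via the isomorphism (\ref{isompm}). First, I would establish that $V(\gamma)$ is simple by direct inspection of the action formulas. The weight spaces $V(\gamma)_{\gamma q^{-2r}} = \CC v_r$ are one-dimensional with pairwise distinct $k$-eigenvalues (since $q$ is not a root of unity), so any non-zero submodule contains some $v_r$; iterated applications of $e$ produce all $v_s$ for $s < r$, and the coefficients $\gamma q^{-s}[s+1]_q/(q-q^{-1}) \neq 0$ appearing in $f\cdot v_s$ produce all remaining $v_s$.

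Second, I would observe that $V(\gamma)$ itself lies in $\mathcal{O}_+$ (one-dimensional weight spaces, weights $\{\gamma q^{-2r}\}_{r \geq 0}$ bounded above by $\gamma$ in the partial ordering $\preceq$) and then show that every simple $L \in \mathcal{O}_+$ is of this form. The category $\mathcal{O}$ axioms force the non-empty set of weights of $L$ to admit a maximal element $\gamma$, and any non-zero $v \in L_\gamma$ is annihilated by $e$ because $ev$ would lie in $L_{q^2 \gamma} = 0$. Using the triangular decomposition of Remark \ref{basic}(iii), the assignment $v_0 \mapsto v$ extends uniquely to a non-zero homomorphism $V(\gamma) \to L$, which is then an isomorphism by simplicity of both source and target.

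For the infinite-dimensionality assertion, I would argue by contradiction. Suppose $M$ is a non-zero finite-dimensional $\mathcal{U}_q^+(sl_2)$-module. Since $k$ is invertible, it has an eigenvector $v \in M$ with eigenvalue $\gamma \in \CC^*$; the non-zero vectors among $v, ev, e^2 v, \ldots$ carry pairwise distinct $k$-eigenvalues $q^{2n}\gamma$ and are therefore linearly independent, so there is a maximal $n_0$ with $w := e^{n_0} v \neq 0$, satisfying $ew = 0$ and $kw = q^{2n_0}\gamma \cdot w$. The triangular decomposition again yields a non-zero homomorphism $V(q^{2n_0}\gamma) \to M$, which is injective by the simplicity of the Verma module just established, contradicting $\dim M < \infty$. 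No step presents a real obstacle; the only point needing mild care is the extraction of a maximal weight in Step 2 from the category $\mathcal{O}_\pm$ axioms.
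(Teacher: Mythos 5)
Your three steps are correct and, in substance, they are exactly the paper's (much terser) argument: the paper proves simplicity by noting that $\mathbb{C}v_0$ (resp. $\mathbb{C}w_0$) are the only primitive vectors, and disposes of finite-dimensional representations by observing that a simple finite-dimensional module would be a quotient of a Verma module; your explicit weight-space analysis, the extraction of a maximal weight from the category $\mathcal{O}_\pm$ axioms, and the universal-property argument from the triangular decomposition of Remark \ref{basic}(iii) are just the details the paper leaves as "standard arguments".

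The one point to repair is the announced strategy of "reducing throughout to the $\mathcal{U}_q^+(sl_2)$ case via the isomorphism (\ref{isompm})". That isomorphism exchanges $k$ and $k^{-1}$, so pulling back a $\mathcal{U}_q^-(sl_2)$-module along it inverts all $k$-eigenvalues: the Verma module $W(\gamma)$ becomes a \emph{lowest}-weight module on which $e$ acts injectively (so it is not any $V(\gamma')$), and a module satisfying the $\mathcal{O}_-$ boundedness condition is carried to one whose weights are bounded \emph{below}, hence not an object of $\mathcal{O}_+$. Consequently neither the simplicity of $W(\gamma)$ nor the exhaustion of simples in $\mathcal{O}_-$ follows from the $+$ case by this transport (only the infinite-dimensionality claim does, since finite dimension is preserved by any pullback). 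The fix costs nothing: the relations $ke=q^2ek$, $kf=q^{-2}fk$ and the containment of the Borel $\langle e,k^{\pm1}\rangle$ are identical in $\mathcal{U}_q^-(sl_2)$, and the coefficients $-\gamma^{-1}q^{r}[r+1]_q/(q-q^{-1})$ in $f.w_r$ are still non-zero, so all three of your steps run verbatim for $\mathcal{U}_q^-(sl_2)$ and $W(\gamma)$; just say that rather than invoking (\ref{isompm}).
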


\begin{proof} In $V(\gamma)$ (resp. $W(\gamma)$), the highest weight vectors $\mathbb{C}. v_0$ (resp. $\mathbb{C}.w_0$) are the only primitive vectors. 
This implies $V(\gamma)$ and $W(\gamma)$ are simple. 

By standard arguments, a simple finite-dimensional representation of  $\mathcal{U}_q^\pm(sl_2)$ is a quotient of a Verma module, the second point follows.
\end{proof}

\begin{example} (i)
For $\gamma,\beta\in\mathbb{C}^*$, the $\mathcal{U}_q(sl_2)$-modules $V(\gamma)\otimes W(\beta)$ and $W(\beta)\otimes V(\gamma)$ have the same character
 $$\sum_{r,r'\geq 0}[\gamma\beta q^{-2(r+r')}] = \sum_{r\geq 0}\chi(M(\gamma\beta q^{-2r}))$$
where $M(\lambda)$ is the Verma module of $\mathcal{U}_q(sl_2)$ of highest weight $\lambda$. 
For $\gamma,\beta$ so that $\gamma\beta\notin q^{\mathbb{Z}_{\geq 0}}$, these representations are semi-simple and
$$V(\gamma)\otimes W(\beta)\simeq W(\beta) \otimes V(\gamma)  \simeq \bigoplus_{r\geq 0}M(\gamma\beta q^{-2r}).$$
Indeed, the weight space associated to $\gamma\beta q^{-2r}$ has dimension $r + 1$. Hence $e$ is not injective 
on this weight space which contains a primitive vector generating $M(\gamma\beta q^{-2r})$.

(ii) For $\gamma\in \mathbb{C}^*$ and $V$ a fundamental $\mathcal{U}_q(sl_2)$-module of highest weight $[q]$, we have
$$V\otimes V(\gamma)\simeq V(\gamma q) \oplus V(\gamma q^{-1}).$$
\end{example}

\section{Shifted quantum affine algebras}\label{sqaa}

We recall, for $\mu$ in the coweight lattice, the definition of the shifted quantum affine algebra $\mathcal{U}_q^\mu(\hat{\Glie})$ in the sense of Finkelberg-Tsymbaliuk \cite{FT} and its first properties. In the $sl_2$-case, 
we consider evaluation morphisms to the $q$-oscillator algebras of the previous section which
give examples of evaluation representations (Proposition \ref{evalm}). For general types, we prove that for $\mu$ anti-dominant, the 
shifted quantum affine algebra $\mathcal{U}_q^\mu (\hat{\Glie})$ can be realized from a subalgebra of the ordinary quantum affine algebra, the asymptotic algebra introduced in \cite{HJ}. We also prove that for $\mu$ anti-dominant, it
contains a subalgebra isomorphic to the quantum affine Borel subalgebra $\mathcal{U}_q(\hat{\bo})$ of the quantum affine algebra (Proposition \ref{suba}).

\subsection{Definition and first properties}
For $\hat{\Glie}$ an untwisted affine algebra, recall the Drinfeld presentation of the quantum affine algebra
$\mathcal{U}_q(\hat{\Glie})$ (established in \cite{dr, bec, da2}, see for instance \cite{CP}). 

Let $\Lambda = \bigoplus_{i\in I}\mathbb{Z}\omega_i^\vee$ be the coweight lattice (that is the weight lattice of the Langlands dual Lie algebra ${}^L\Glie$).
It contains the set $\Lambda^+$ of dominant coweights $\omega^\vee\in\Lambda$ so that $\alpha_i(\omega^\vee)\geq 0$ for $i\in I$.

Let $\mu_+, \mu_- \in \Lambda$. The shifted quantum affine algebra $\mathcal{U}_q^{\mu_+, \mu_-}(\hat{\Glie})$ is defined in \cite[Section 5]{FT} 
by Drinfeld generators $x_{i,m}^\pm$, $\phi_{i,m}^\pm$, $h_{i,r}$ with $i\in I, m\in\mathbb{Z}, r\in\mathbb{Z}\setminus\{0\}$ and the same relations as for the ordinary quantum affine algebra except that
$$\sum_{r\in\ZZ} \phi_{i,\pm r}^\pm z^{\pm r} =  \phi_i^\pm (z)  =  z^{\mp \alpha_i(\mu_{\pm})} \phi_{i,\mp \alpha_i(\mu_{\pm})}^\pm \text{exp}(\pm (q_i - q_i^{-1}) \sum_{r > 0} h_{i,\pm r} z^{\pm r}),$$
with $\phi_{i,\mp \alpha_i(\mu_{\pm})}^\pm$ which are invertible and satisfy the same relations as the ordinary $k_i^{\pm 1}$. 

Explicitly, for $i,j\in I$, $r,r'\in\mathbb{Z}$, $m\in\mathbb{Z}\setminus\{0\}$, we have
\begin{equation}\label{un}[\phi_{i,r}^\pm,\phi_{j,r'}^\pm] = [\phi_{i,r}^\pm,\phi_{j,r'}^\mp] = 0,\end{equation}
\begin{equation}\label{deux}\phi_{i,-\alpha_i(\mu_+)}^+ x_{j,r}^{\pm} = q_i^{\pm C_{i,j}}x_{j,r}^{\pm} \phi_{i,- \alpha_i(\mu_+)}^+\text{ and }\phi_{i,\alpha_i(\mu_-)}^- x_{j,r}^{\pm} = q_i^{\mp C_{i,j}}x_{j,r}^{\pm} \phi_{i,\alpha_i(\mu_-)}^-,\end{equation}
\begin{equation}\label{hd}[h_{i,m},x_{j,r}^{\pm}] = \pm \frac{1}{m}[m C_{i,j}]_{q_i}  x_{j,m+r}^{\pm},\end{equation}
\begin{equation}\label{trois}[x_{i,r}^+,x_{j,r'}^-] = \delta_{i,j}\frac{\phi^+_{i,r+r'}- \phi^-_{i,r+r'}}{q_i-q_i^{-1}},\end{equation}
\begin{equation}\label{hdd}x_{i,r+1}^{\pm}x_{j,r'}^{\pm} - q^{\pm B_{i,j}}x_{j,r'}^{\pm}x_{i,r+1}^{\pm}
=q^{\pm B_{i,j}}x_{i,r}^{\pm}x_{j,r'+1}^{\pm}-x_{j,r'+1}^{\pm}x_{i,r}^{\pm},\end{equation}
and for $i\neq j$, $r', r_1,\cdots, r_s\in\mathbb{Z}$ where $s = 1 - C_{i,j}$, 
\begin{equation}\label{seq}\underset{\pi\in \Sigma_s}{\sum}\underset{0\leq r \leq s}{\sum}(-1)^r\begin{bmatrix}s\\r\end{bmatrix}_{q_i}x_{i,r_{\pi(1)}}^{\pm}\cdots x_{i,r_{\pi(r)}}^{\pm}x_{j,r'}^{\pm}x_{i,r_{\pi(r+1)}}^{\pm}\cdots x_{i,r_{\pi(s)}}^{\pm}=0.\end{equation}
The relations may be written in terms of currents $x_i^\pm(z) = \sum_{r\in\mathbb{Z}} x_{i,r}^\pm z^r$, $\delta(z) = \sum_{r\in\mathbb{Z}}z^r$ :
$$[\phi_i^\pm(z),\phi_j^\pm(w)] = [\phi_i^\pm(z),\phi_j^\mp(w)] = 0,$$
\begin{equation}\label{phix}\phi_i^{\epsilon}(z) x_j^\pm(w) =   \frac{q^{\pm B_{i,j}}w - z}{w - q^{\pm B_{i,j}}z}  x_j^\pm(w) \phi_i^\epsilon(z) \text{ for $\epsilon = +$ or $-$},\end{equation}
\begin{equation}\label{pmz} [x_i^+(z), x_j^-(w)] = \frac{\delta_{i,j}}{q_i - q_i^{-1}}\left[ \delta \left( \frac{w}{z} \right) \phi_i^+(z)    -  \delta\left( \frac{z}{w} \right)  \phi_i^-(z) \right],\end{equation}
$$ (w - q^{\pm B_{i,j}} z)   x_i^\pm(z) x_j^\pm(w)      =       (q^{\pm B_{i,j}} w - z)            x_j^\pm(w) x_i^\pm(z),$$
$$\underset{\pi\in \Sigma_s}{\sum}\underset{0\leq r \leq s}{\sum}(-1)^r\begin{bmatrix}s\\r\end{bmatrix}_{q_i}x_i^{\pm}(w_{\pi(1)})\cdots x_i^\pm(w_{\pi(r)})x_j^{\pm}(z)x^\pm_i(z_{\pi(r+1)})\cdots x_i^\pm(z_{\pi(s)})=0\text{ for $i\neq j$.}$$

\begin{rem}\label{com} (i) Up to isomorphism, $\mathcal{U}_q^{\mu_+, \mu_-}(\hat{\Glie})$ depends only on $\mu = \mu_+ + \mu_-$, see \cite[Section 5.(i)]{FT}. 
We will simply denote $\mathcal{U}_q^\mu(\hat{\Glie}) = \mathcal{U}_q^{0,\mu}(\hat{\Glie})$.

(ii) For $i\in I$, the product 
$$\phi_{i,-\alpha_i(\mu_+)}^+\phi_{i,\alpha_i(\mu_-)}^-$$ 
is central. The quantum loop algebra $\mathcal{U}_q(\hat{\Glie})$ is the quotient of $\mathcal{U}_q^{0, 0}(\hat{\Glie})$ by identifying 
$\phi_{i,0}^+\phi_{i,0}^-$ with $1$ for $i\in I$, see \cite{FT}.

(iii) The algebra $\mathcal{U}_q^{\mu_+ , \mu_-}(\hat{\Glie})$ has a triangular decomposition analogous to the Drinfeld triangular 
decomposition of $\mathcal{U}_q(\hat{\Glie})$ (see \cite[Proposition 2]{FT} and \cite[Theorem 2]{H}). Each triangular factor can 
be presented by their Drinfeld generators and the relations of $\mathcal{U}_q^{\mu_+ , \mu_-}(\hat{\Glie})$ involving these
generators (there are no hidden relations). The subalgebra generated by the $\phi_{i,m}^\pm$, $(\phi_{i,\alpha_i(\mu_{\pm})}^\pm)^{\mp 1}$, with $i\in I$, $m\in\mathbb{Z}$,  is commutative and called 
the Cartan-Drinfeld subalgebra.

(iv) Consequently, for $\mu_+,\mu_-\in - \Lambda^+$ and $J_\pm = \{i\in I|\alpha_i(\mu_\mp)\neq 0\}$, the $q$-oscillator 
algebra $\mathcal{U}_q^{J_+,J_-}(\Glie)$ of the previous section is a subalgebra of 
$$\mathcal{U}_q^{\mu_+,\mu_-}(\hat{\Glie})/<\phi_{i,0}^+\phi_{i,0}^- = 1, i\notin J_+\cup J_->.$$

(v) The algebra $\mathcal{U}_q^{\mu_+, \mu_-}(\hat{\Glie})$ has a natural $\mathbb{Z}$-grading defined so that 
$$\text{deg}(x_{i,m}^\pm) = \text{deg}(\phi_{i,m}^{\pm}) = m\text{ and }\text{deg}(h_{i,r}) = r
\text{ for  $i\in I$, $m\in\mathbb{Z}$, $r\in\mathbb{Z}\setminus\{0\}$.}$$ 
In particular, for $a\in\CC^*$, there is an algebra automorphism $\tau_a$ of $\mathcal{U}_q^{\mu_+ , \mu_-}(\hat{\Glie})$ such that for $i\in I$, $m\in\mathbb{Z}$ and $r\in\mathbb{Z}\setminus\{0\}$ :
$$\tau_a(x_{i,m}^\pm) = a^m x_{i,m}^\pm\text{ , }\tau_a(\phi_{i,m}^\pm) = a^m \phi_{i,m}^\pm\text{ , }\tau_a(h_{i,r}) = a^r h_{i,r}.$$

(vi) $z^{-1}$ in the notations of \cite{FT} is $z$ here. 

(vii) In type $A$, $RTT$ realizations have been established in \cite{FT, FPT} when $\mu\in - \Lambda^+$.
\end{rem}

\begin{example} Let $i\in I$. For $\mathcal{U}_q^{-\omega_i^\vee,0}(\hat{\Glie})$ (resp. $\mathcal{U}_q^{0 , -\omega_i^\vee}(\hat{\Glie})$),
the modified relations are : 
$$[x_{i,r}^+ , x_{i,- r}^-] = \frac{- \phi_{i,0}^-}{q_i - q_i^{-1}}\text{ (resp. }[x_{i,r}^+ , x_{i,- r}^-] = \frac{\phi_{i,0}^+}{q_i - q_i^{-1}})\text{ for $r \in\mathbb{Z}$,}$$ 
with the definition of the $\phi_{i,  r}^+$ (resp. $\phi_{i, - r}^-$) modified to : 
$$\phi_i^+ (z)  = z \phi_{i,1}^+ \text{exp}(  (q_i - q_i^{-1}) \sum_{r > 0} h_{i, r} z^{ r})$$
$$(\text{resp. }\phi_i^- (z)  = z^{- 1} \phi_{i,- 1}^- \text{exp}( - (q_i - q_i^{-1}) \sum_{r > 0} h_{i,- r} z^{- r})).$$
 \end{example}

\subsection{Antidominant case : relation to asymptotic and Borel algebras}\label{asymprel}

The asymptotic algebra $\tilde{\mathcal{U}}_q(\hat{\Glie})$ is defined in \cite{HJ} as the subalgebra of ordinary quantum affine algebra $\mathcal{U}_q(\hat{\Glie})$ generated by the 
$$x_{i,m}^+\text{ , }k_i^{-1}x_{i,m}^-\text{ , }k_i^{-1}\phi_{i, m}^\pm\text{ , }k_i^{-1}\text{ for $m\in\mathbb{Z}$, $i\in I$.}$$ 
These elements are denoted respectively by $\tilde{x}_{i,m}^+$, $\tilde{x}_{i,m}^-$, $\tilde{\phi}_{i,m}^\pm$, $\kappa_i$.
Note that $\tilde{\phi}_{i,0}^+ = 1$ and $\tilde{\phi}_{i,0}^- = \kappa_i^2$. We will denote 
$$\tilde{\phi}_i^\pm(z) = \sum_{m\geq 0}\tilde{\phi}_{i,\pm m}^\pm z^{\pm m}.$$

Consider an antidominant coweight $\mu\in - \Lambda^+$. The defining relations of the shifted quantum affine algebra 
and the presentation of the asymptotic algebra by generators and relations in \cite[Section 2.2]{HJ} imply the following.

\begin{prop}
The shifted quantum affine $\mathcal{U}_{q}^{\mu}(\hat{\Glie})$ is obtained from the quotient
$$\tilde{\mathcal{U}}_q(\hat{\Glie})/(\kappa_i = \phi_{i,-1}^- = \cdots = \phi_{i,\alpha_i(\mu) + 1}^- = 0,i\in I)$$
by localizing at the $\phi_{i,\alpha_i(\mu)}$ for all $i\in I$, and by adding, for $i\in I$ so that $\alpha_i(\mu) < 0$, the generators $k_i^{\pm 1}$ with the quasi-commutation relations (\ref{un}), (\ref{deux}).
\end{prop}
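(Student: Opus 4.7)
The plan is to directly compare the Drinfeld-style presentations of both algebras. The asymptotic algebra $\tilde{\mathcal{U}}_q(\hat{\Glie})$ is presented in \cite[Section 2.2]{HJ} by generators $\tilde{x}_{i,m}^\pm, \tilde{\phi}_{i,m}^\pm, \kappa_i$ with relations obtained from the standard Drinfeld presentation of $\mathcal{U}_q(\hat{\Glie})$ after rewriting each occurrence of $k_i$ via $\kappa_i = k_i^{-1}$, so that the invertibility of $\kappa_i$ is no longer imposed. The tautological identifications $\tilde{x}_{i,m}^+ = x_{i,m}^+$, $\tilde{x}_{i,m}^- = k_i^{-1}x_{i,m}^-$, $\tilde{\phi}_{i,m}^\pm = k_i^{-1}\phi_{i,m}^\pm$ inside $\mathcal{U}_q(\hat{\Glie})$ provide the natural bridge to the Drinfeld generators of $\mathcal{U}_q^{0,\mu}(\hat{\Glie})$.

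First, I would verify that all the relations imposed in the statement are consistent with $\mathcal{U}_q^{0,\mu}(\hat{\Glie})$. The shifted Cartan-current formula $\phi_i^-(z) = z^{\alpha_i(\mu)}\phi_{i,\alpha_i(\mu)}^-\exp(-(q_i-q_i^{-1})\sum_{r>0}h_{i,-r}z^{-r})$ immediately yields $\phi_{i,m}^- = 0$ for $\alpha_i(\mu) < m \leq -1$, exhibits $\phi_{i,\alpha_i(\mu)}^-$ as the invertible leading term, and (under the identification $\tilde{\phi}_{i,0}^- = \kappa_i^2$) forces $\kappa_i = 0$ whenever $\alpha_i(\mu) < 0$. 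For $i$ with $\alpha_i(\mu) = 0$ the chain of $\phi^-$-vanishings is empty and the leading term $\phi_{i,0}^-$ is directly invertible in the localization.

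Second, I would construct a surjective homomorphism $\Phi$ from the algebra built in the statement---the quotient of $\tilde{\mathcal{U}}_q(\hat{\Glie})$ by the specified relations, localized at the $\tilde{\phi}_{i,\alpha_i(\mu)}^-$, and extended by the adjoined $k_i^{\pm 1}$ for $\alpha_i(\mu) < 0$ satisfying (\ref{un}) and (\ref{deux})---to $\mathcal{U}_q^{0,\mu}(\hat{\Glie})$. On generators $\Phi$ sends $\tilde{x}_{i,m}^\pm$ and $\tilde{\phi}_{i,m}^\pm$ to $k_i^{-1}x_{i,m}^\pm$ and $k_i^{-1}\phi_{i,m}^\pm$ respectively, where $k_i^{-1}$ is interpreted as the inverse of the adjoined $k_i$ when $\alpha_i(\mu) < 0$, and otherwise as a suitable multiple of $\phi_{i,0}^-$ via the centrality of $\phi_{i,0}^+\phi_{i,0}^-$ from Remark \ref{com}(ii). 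Well-definedness reduces to checking that the asymptotic Drinfeld relations specialize to (\ref{un})--(\ref{seq}) under $\Phi$: the $q$-Serre relations (\ref{seq}), the $\phi$-$x$ relations (\ref{phix}) and the $x$-$x$ relations (\ref{hdd}) are preserved verbatim, while (\ref{trois}) is matched after tracking the $k_i^{-1}$ prefactor carried by $\tilde{x}_{i,m}^-$.

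Injectivity follows from the Drinfeld triangular decomposition recalled in Remark \ref{com}(iii), which holds for both the asymptotic and shifted quantum affine algebras and is respected by $\Phi$. The positive and negative halves, generated by $x_{i,m}^\pm$ alone subject only to the $q$-Serre relations, match generator-for-generator. The comparison of Cartan-Drinfeld factors is the main obstacle: one must verify that the triple operation (quotient, localization, extension) applied to the asymptotic Cartan-Drinfeld subalgebra produces exactly the polynomial-Laurent ring in the surviving $\phi_{i,m}^\pm$ together with their invertible leading elements, with no missing or spurious relations. Concretely, one must check that the adjoined $k_i^{\pm 1}$ interact with the image generators exactly as $\phi_{i,0}^+$ does in $\mathcal{U}_q^{0,\mu}(\hat{\Glie})$, and that the identification of $\tilde{\phi}_{i,\alpha_i(\mu)}^-$ with $k_i^{-1}\phi_{i,\alpha_i(\mu)}^-$ is compatible with the rescaling implicit in the asymptotic presentation. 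This reduces to a PBW-basis comparison made tractable by the triangular decompositions on both sides.
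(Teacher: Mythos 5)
Your overall strategy --- proving the statement by directly matching the presentation of $\tilde{\mathcal{U}}_q(\hat{\Glie})$ from \cite[Section 2.2]{HJ} against the defining relations of $\mathcal{U}_q^{\mu}(\hat{\Glie})$ --- is exactly the route the paper takes (the paper offers no further detail, asserting the Proposition follows from the two presentations). But your execution has a concrete defect: after correctly recalling the identifications $\tilde{x}_{i,m}^{+}=x_{i,m}^{+}$, $\tilde{x}_{i,m}^{-}=k_i^{-1}x_{i,m}^{-}$, $\tilde{\phi}_{i,m}^{\pm}=k_i^{-1}\phi_{i,m}^{\pm}$, you define $\Phi$ by sending \emph{both} $\tilde{x}_{i,m}^{+}$ and $\tilde{x}_{i,m}^{-}$ to $k_i^{-1}x_{i,m}^{\pm}$ (and then later track the prefactor only on $\tilde{x}_{i,m}^{-}$, so the write-up is internally inconsistent). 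With the symmetric assignment the analogue of (\ref{trois}) cannot be matched: the commutator side carries $k_i^{-2}$ while the Cartan side carries a single $k_i^{-1}$. Moreover the claim that (\ref{phix}), (\ref{hdd}) and (\ref{seq}) are ``preserved verbatim'' is not accurate even with the correct asymmetric assignment: commuting the prefactors $k_i^{-1}$ across the currents produces constant powers of $q_i$, so in the asymptotic presentation the counterpart of (\ref{trois}) is a $q$-commutator and the counterpart of (\ref{phix}) has modified constants. These checks are routine, but they are precisely the content of the comparison and must be carried out with the correct assignment.

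The more serious gap is your treatment of the nodes with $\alpha_i(\mu)=0$, where no $k_i^{\pm 1}$ is adjoined and $\kappa_i$ is not killed but becomes invertible after localizing at $\tilde{\phi}_{i,0}^{-}=\kappa_i^{2}$. You propose to read $k_i^{-1}$ there as ``a suitable multiple of $\phi_{i,0}^{-}$''. This cannot be arranged: since $\tilde{\phi}_{i,0}^{+}=1$ in $\tilde{\mathcal{U}}_q(\hat{\Glie})$, well-definedness of $\Phi$ forces the element playing the role of $k_i^{-1}$ to be $(\phi_{i,0}^{+})^{-1}$, which differs from any scalar multiple of $\phi_{i,0}^{-}$ by the central but non-scalar element $\phi_{i,0}^{+}\phi_{i,0}^{-}$ of Remark \ref{com}(ii); and the relation $\tilde{\phi}_{i,0}^{-}=\kappa_i^{2}$ then requires $\Phi(\kappa_i)$ to be a square root of $(\phi_{i,0}^{+})^{-1}\phi_{i,0}^{-}$, which $\mathcal{U}_q^{\mu}(\hat{\Glie})$ does not provide, while the image of $\Phi$ no longer contains $\phi_{i,0}^{+}$ and $\phi_{i,0}^{-}$ separately. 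So the surjection you describe does not exist in the stated form at such nodes; this is the one place where the comparison of Cartan--Drinfeld factors is not mechanical, and your sketch needs an explicit argument there (the bookkeeping closes without difficulty when $\alpha_i(\mu)<0$ for all $i$, since then every node receives an adjoined $k_i^{\pm1}$ and $\kappa_i=0$).
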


Let $\mathcal{U}_q(\hat{\bo})$ be the quantum affine Borel subalgebra of $\mathcal{U}_q(\hat{\Glie})$, in the sense of Drinfeld-Jimbo. It is generated by the subset $e_i$, $k_i^{\pm 1}$, $i\in I \sqcup\{0\}$, of the Chevalley generators of $\mathcal{U}_q(\hat{\Glie})$.

From the last Proposition, we have a natural algebra morphism 
$$\mathcal{U}_q(\hat{\bo})\cap \tilde{\mathcal{U}}_q(\hat{\Glie})\rightarrow \mathcal{U}_{q}^{\mu}(\hat{\Glie}).$$
But $\mathcal{U}_q(\hat{\bo})$ is obtained from $\mathcal{U}_q(\hat{\bo})\cap \tilde{\mathcal{U}}_q(\hat{\Glie})$ 
by localization at the $\kappa_i$, $i\in I$, and so the morphism extends to an algebra morphism
$$I_\mu : \mathcal{U}_q(\hat{\bo})\rightarrow \mathcal{U}_{q}^{\mu}(\hat{\Glie}).$$

\begin{prop}\label{suba} If $\mu\in - \Lambda^+$, then $\mathcal{U}_q^\mu (\hat{\Glie})$ contains a subalgebra
isomorphic to $\mathcal{U}_q(\hat{\bo})$.
\end{prop}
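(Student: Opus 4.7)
The algebra morphism $I_\mu:\mathcal{U}_q(\hat{\bo})\to\mathcal{U}_q^\mu(\hat{\Glie})$ has already been constructed, so the only task is to prove its injectivity under the antidominance hypothesis $\mu\in -\Lambda^+$. My plan is to exploit the compatibility of $I_\mu$ with the triangular decompositions on both sides and thereby reduce the claim to three separate injectivity statements, one for each Drinfeld triangular factor.

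Using the Drinfeld-type realization of the Borel (Beck/Damiani \cite{bec, da2}), one has a triangular decomposition
\[\mathcal{U}_q(\hat{\bo})\;\simeq\;\mathcal{U}_q^+(\hat{\bo})\otimes \mathcal{U}_q^0(\hat{\bo})\otimes \mathcal{U}_q^-(\hat{\bo}),\]
where $\mathcal{U}_q^+(\hat{\bo})$ is generated by the $x_{i,m}^+$ with $m\geq 0$, $\mathcal{U}_q^-(\hat{\bo})$ by the $x_{i,m}^-$ with $m>0$, and the commutative Cartan-Drinfeld factor $\mathcal{U}_q^0(\hat{\bo})$ by $k_i^{\pm 1}$ and the $h_{i,r}$ for $r>0$. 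A corresponding triangular decomposition of $\mathcal{U}_q^\mu(\hat{\Glie})$ is furnished by Remark \ref{com}(iii). Tracking the construction of $I_\mu$ through the quotient/localization realization of the previous proposition, one sees that $I_\mu$ is graded for the $\ZZ$-grading of Remark \ref{com}(v) and preserves the triangular structure, mapping each factor of $\mathcal{U}_q(\hat{\bo})$ into the corresponding factor of $\mathcal{U}_q^\mu(\hat{\Glie})$. Injectivity of $I_\mu$ thus reduces to checking injectivity on each of the three triangular factors.

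For the positive (resp.\ negative) Drinfeld factor, $I_\mu$ is the identity on the generators $x_{i,m}^+$ with $m\geq 0$ (resp.\ $x_{i,m}^-$ with $m>0$); the defining relations among these (relations (\ref{hdd}) and (\ref{seq})) involve no Cartan-Drinfeld elements and are identical in both algebras, so the subalgebras generated are canonically isomorphic and injectivity is immediate. The main technical point, and the step I expect to be the principal obstacle, is injectivity on the Cartan-Drinfeld factor, which is precisely where the antidominance of $\mu$ enters in an essential way: $\mu\in -\Lambda^+$ guarantees that the leading coefficient $\phi_{i,\alpha_i(\mu)}^-$ of $\phi_i^-(z)$ is invertible for every $i\in I$, and this in turn ensures that the Cartan-Drinfeld part of $\mathcal{U}_q^\mu(\hat{\Glie})$ is a polynomial-type commutative algebra in which the images of the $h_{i,r}$ ($r>0$) and of the $k_i^{\pm 1}$ (either as adjoined generators when $\alpha_i(\mu)<0$, or as $(\phi_{i,0}^-)^{\mp 1}$ when $\alpha_i(\mu)=0$) remain algebraically independent. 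Combining this with the injectivities on the positive and negative factors completes the proof.
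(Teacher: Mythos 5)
Your overall strategy --- reduce injectivity of $I_\mu$ to the three Drinfeld triangular factors, using the triangular decompositions of $\mathcal{U}_q(\hat{\bo})$ and of $\mathcal{U}_q^\mu(\hat{\Glie})$ and the fact that $I_\mu$ respects them --- is exactly the paper's. The gap is in your treatment of the negative factor. You describe $\mathcal{U}_q^-(\hat{\bo})$ as the subalgebra generated by the $x_{i,m}^-$ with $m>0$; this is true only for $\Glie=sl_2$. In rank at least $2$ the negative triangular factor of the Borel contains root vectors attached to the affine roots $-\alpha+k\delta$ with $\alpha$ a non-simple positive root, and already for $k=1$ these cannot lie in the subalgebra generated by the $x_{i,m}^-$, $m\geq 1$: combining the $\ZZ$-grading of Remark \ref{com}(v) with the weight grading, any element of that subalgebra of finite weight $-\alpha$ has degree at least $\mathrm{ht}(\alpha)\geq 2$, whereas a root vector for $-\alpha+\delta$ has degree $1$ (the image of $e_0$ is, up to a Cartan factor, of this type). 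This is precisely the content of the remark following Proposition \ref{suba} (``$\mathcal{U}_q(\hat{\bo})$ \ldots is not generated by these elements (except in the $sl_2$-case)''). So your ``immediate'' injectivity statement only covers a proper subalgebra of $\mathcal{U}_q^-(\hat{\bo})$ and omits exactly the elements for which the proposition is nontrivial. The paper's argument avoids this: by Remark \ref{com}(iii) each triangular factor of $\mathcal{U}_q^{\mu}(\hat{\Glie})$ is presented by its Drinfeld generators and the relations involving only those generators (no hidden relations), and since the relations among the $x_{i,m}^-$ do not involve the shift, the whole factor $\mathcal{U}_q^{\mu,-}(\hat{\Glie})$ is isomorphic to $\mathcal{U}_q^{0,-}(\hat{\Glie})$; as $\mathcal{U}_q^-(\hat{\bo})$ is a subalgebra of the latter, it embeds into the former, and this embedding is the restriction of $I_\mu$. (Note also that your ``same generators, same relations, hence canonically isomorphic'' step needs such a no-hidden-relations statement to be an argument at all; Remark \ref{com}(iii) supplies it for the full triangular factors, not for arbitrary subsets of generators.)

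A secondary point: you locate the essential use of antidominance in the Cartan-Drinfeld factor, but the algebraic independence you invoke there holds for every shift $\mu$ --- the Cartan-Drinfeld factor of $\mathcal{U}_q^\mu(\hat{\Glie})$ is a commutative algebra of the same polynomial/Laurent type for all $\mu$, and the injectivity argument on the factors is uniform in $\mu$. Antidominance is what makes $I_\mu$ exist at all: it enters through the mixed relations (for $\alpha_i(\mu)>0$ the nonzero modes $\phi_{i,s}^-$ with $0<s\leq\alpha_i(\mu)$ obstruct a morphism fixing the low positive modes of $x_i^\pm$, which is why Proposition \ref{mubo} requires $s>\mathrm{Max}(0,\alpha_i(\mu))$), and through the requirement that the full negative Borel factor, not just its Drinfeld-mode part, land in the shifted algebra. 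Since you take the construction of $I_\mu$ as given, this misattribution is not by itself fatal, but the missing argument for the negative factor is.
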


\begin{proof} Let us consider the triangular decomposition of $\mathcal{U}_q(\hat{\Glie})$ 
as in (iii) of Remark \ref{com}. It induces a triangular decomposition of the quantum affine Borel algebra
$\mathcal{U}_q(\hat{\bo})$ (this follows from \cite{bec, da}, see \cite[Section 2.3]{HJ} for instance) :
$$\mathcal{U}_q(\hat{\bo})\simeq \mathcal{U}_q^-(\hat{\bo})\otimes\mathcal{U}_q^0(\hat{\bo})\otimes\mathcal{U}_q^+(\hat{\bo}).$$
Now let us consider the analogous triangular decomposition of $\mathcal{U}_q^\mu (\hat{\Glie})$ :
$$\mathcal{U}_q^\mu (\hat{\Glie}) \simeq \mathcal{U}_q^{\mu,-} (\hat{\Glie})
\otimes \mathcal{U}_q^{\mu,0} (\hat{\Glie})\otimes \mathcal{U}_q^{\mu,+} (\hat{\Glie})$$
as in (iii) of Remark \ref{com}. The triangular decomposition statement 
is not only a linear isomorphism with the tensor product of the three algebras. In addition, it states that each of the three algebras 
can be presented by Drinfeld generators and the relations involving these generators only 
(no additional hidden relations are necessary).

Hence each triangular factor, $\mathcal{U}_q^{\mu,-} (\hat{\Glie})$, $\mathcal{U}_q^{\mu,0} (\hat{\Glie})$, $\mathcal{U}_q^{\mu,+} (\hat{\Glie})$, is isomorphic, as an algebra, 
to the corresponding triangular factor in $\mathcal{U}_q^0(\hat{\Glie})$. Hence $\mathcal{U}_q^{\mu,-} (\hat{\Glie})$
(resp. $\mathcal{U}_q^{\mu,0} (\hat{\Glie})$, $\mathcal{U}_q^{\mu,+} (\hat{\Glie})$) contains a
subalgebra isomorphic to $\mathcal{U}_q^-(\hat{\bo})$ (resp. $\mathcal{U}_q^0(\hat{\bo})$,
$\mathcal{U}_q^+(\hat{\bo})$). By construction, these three isomorphism are given by the restrictions of $I_\mu$ 
to the corresponding triangular factors. This implies that 
$I_\mu$ is injective as it is injective on each triangular factor. Consequently these three subalgebras generate a subalgebra isomorphic to $\mathcal{U}_q(\hat{\bo})$.


\end{proof}

The images $I_\mu(e_i)$ of the Chevalley generators $e_i$, $i\in I \sqcup \{0\}$, of $\mathcal{U}_q(\hat{\bo}))$, now seen in $\mathcal{U}_q^{\mu}(\hat{\Glie})$, will be denoted by the same symbols $e_i$.

\begin{example} For $\mu\in - \Lambda^+$, the subalgebra of $\mathcal{U}_q^\mu (\hat{sl_2})$ generated by $e_1 = x_{1,0}^+$, 
$e_0 = (\phi_{1,0}^+)^{-1}x_{1,1}^-$, $(\phi_{1,0}^+)^{\pm 1}$ is isomorphic to $\mathcal{U}_q(\hat{\bo})\subset \mathcal{U}_q(\hat{sl}_2)$.
\end{example}

\begin{rem} The algebra $\mathcal{U}_q(\hat{\bo})$ contains the $x_{i,m}^+$, $x_{i,r}^-$, 
$\phi_{i,m}^+$, $(\phi_{i,0}^+)^{-1}$ for $i\in I$, $m\geq 0$, $r > 0$, but is not generated by these elements (except in the $sl_2$-case).
\end{rem}

The same argument as for the proof of Proposition \ref{suba} implies the following.

\begin{prop}\label{mubo}
Let $\mu\in \Lambda$. The subalgebras of $\mathcal{U}_q(\hat{\bo})$ and of $\mathcal{U}_q^{\mu}(\hat{\Glie})$ generated by : 
$$x_{i,r}^+\text{ , } x_{i,s}^-\text{ , }\phi_{i, r}^+\text{ , }(\phi_{i,0}^+)^{- 1}$$
for $i\in I$, $r\geq 0$, $s > \text{Max}(0,\alpha_i(\mu))$, are isomorphic. 
\end{prop}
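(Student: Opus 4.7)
The plan is to adapt the triangular-decomposition argument used in Proposition \ref{suba}. Denote the proposed subalgebra of $\mathcal{U}_q^\mu(\hat{\Glie})$ by $A_\mu$ and the one of $\mathcal{U}_q(\hat{\bo})$ by $B_\mu$. First I would verify that the generating family is closed, modulo the Drinfeld relations, inside each ambient algebra. The only relation that could a priori produce elements outside the family is the commutator (\ref{trois}): given $r \geq 0$ and $s > \text{Max}(0, \alpha_i(\mu))$, one has $r + s > \text{Max}(0, \alpha_i(\mu))$, and in $\mathcal{U}_q^\mu(\hat{\Glie})$ (with the convention $\mu_+ = 0$, $\mu_- = \mu$) the series $\phi_i^-(z)$ is supported in degrees $\leq \alpha_i(\mu)$, so $\phi_{i, r+s}^- = 0$; in $\mathcal{U}_q(\hat{\bo})$ no $\phi^-$ appears at all. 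Hence in both cases $[x_{i,r}^+, x_{j,s}^-]$ lies in the span of the $\phi_{i,r+s}^+$, which are already in the generating family.

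Next I would invoke the Drinfeld-style triangular decomposition of the ambient algebras ((iii) of Remark \ref{com} for the shifted algebra and its Borel counterpart from \cite{bec, da}) and transfer it to the subalgebras. Each subalgebra inherits a triangular decomposition
$$A_\mu \simeq A_\mu^- \otimes A_\mu^0 \otimes A_\mu^+, \quad B_\mu \simeq B_\mu^- \otimes B_\mu^0 \otimes B_\mu^+,$$
whose factors are generated by the corresponding pieces of the generating family. The linear isomorphism with the tensor product follows from the PBW-type statement of the ambient decomposition together with the observation from the first step that the cross-brackets between positive and negative generators stay inside the Cartan-Drinfeld factor.

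The final step is to match the factors on the two sides. The content of the triangular decomposition theorem is precisely that each factor is presented by its Drinfeld generators with the relations among them and no hidden additional relations. Restricting to the subset of generators with the indices prescribed by the proposition, the Serre-type relations (\ref{seq}), the quasi-commutation (\ref{hdd}), and the Cartan-Drinfeld relations (\ref{un}), (\ref{deux}), (\ref{hd}) only involve these indices and make no reference to the shift datum $\mu$. Hence the obvious generator-to-generator correspondence extends to an algebra isomorphism on each triangular factor, and tensoring produces $A_\mu \simeq B_\mu$.

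The hard part, just as in Proposition \ref{suba}, is to rule out unexpected relations forced by the ambient structure; the $A_\mu^\pm$ factors themselves are handled exactly as there, and the new ingredient is the careful index bookkeeping justifying the vanishing of $\phi_{i,r+s}^-$ in (\ref{trois}). The threshold $s > \text{Max}(0, \alpha_i(\mu))$ is sharp for this: weakening it to $s \geq \alpha_i(\mu)$ when $\alpha_i(\mu) > 0$ would allow a $\phi_{i, \alpha_i(\mu)}^-$ term to appear in the commutator, which is not in the subalgebra.
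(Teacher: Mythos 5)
Your argument is correct and is essentially the paper's own: the paper proves this proposition by invoking ``the same argument as for Proposition \ref{suba}'', i.e.\ the Drinfeld triangular decomposition together with the no-hidden-relations property of each factor, which is exactly the route you take. Your extra bookkeeping — that $s>\text{Max}(0,\alpha_i(\mu))$ forces $\phi^-_{i,r+s}=0$ in relation (\ref{trois}), so the cross-brackets land in the Cartan–Drinfeld part identically on both sides — is precisely the point the paper leaves implicit, and your sharpness remark about the threshold is accurate.
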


We will denote this algebra by $\mathcal{U}_q^\mu(\hat{\bo})$.

\subsection{Example - the algebras $\mathcal{U}_q^\pm(\hat{sl}_2)$}\label{evalex}

In the $sl_2$-case, we will simply denote 
$$\mathcal{U}_q^+(\hat{sl}_2) = \mathcal{U}_q^{0,-\omega_1^\vee}(\hat{sl}_2)\text{ and }\mathcal{U}_q^-(\hat{sl}_2) = \mathcal{U}_q^{-\omega_1^\vee,0}(\hat{sl}_2).$$ 
Note that we have $e_1 = x_{1,0}^+$ and $e_0 = k_1^{-1}x_{1,0}^-$ in $\mathcal{U}_q(\hat{\bo})$.

It is well known there exist evaluation morphisms $\mathcal{U}_q(\hat{\bo})\rightarrow \mathcal{U}_{q}^\pm(sl_2)$ 
(see \cite{blz}) but they can not be extended to $\mathcal{U}_q(\hat{sl}_2)$. We prove it can be extended to a shifted quantum affine algebra.

\begin{prop}\label{evalm} For $a\in\CC^*$, we have evaluation morphisms 
$$ev_a^\pm : \mathcal{U}_q^\pm(\hat{sl}_2)\rightarrow \mathcal{U}_{q,loc}^\pm(sl_2)$$ 
defined for $m\in \ZZ$, $r > 0$ by :
$$ev_a(x_m^+) = a^m e k^m\text{ , }ev_a(x_m^-) = a^m k^m f,$$
$$ev_a(\phi_r^+) = a^r[ek^r,f](q - q^{-1})\text{ , }ev_a(\phi_{-r}^-) = a^{-r} [ ek^{-r}, f] (q^{-1} - q),$$
$$ev_a(\phi_0^+) = \delta_{\pm, +}k\text{ , }ev_a(\phi_0^-) = \delta_{\pm,-}k^{-1}.$$
\end{prop}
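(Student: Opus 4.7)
The plan is a direct verification that the proposed assignments respect every Drinfeld relation of $\mathcal{U}_q^\pm(\hat{sl}_2)$. By the isomorphism $\mathcal{U}_q^+(sl_2)\simeq \mathcal{U}_q^-(sl_2)$ of (\ref{isompm}), it suffices to treat the $+$ case, that is the algebra $\mathcal{U}_q^{0,-\omega_1^\vee}(\hat{sl}_2)$ in which $\phi_{1,0}^+$ is invertible, $\phi_{1,0}^-=0$ and $\phi_{1,-1}^-$ is invertible; the Serre relations (\ref{seq}) are vacuous in the $sl_2$-case. The workhorse identity throughout is $[ek^s,f]=k^{s+1}/(q-q^{-1})+(q^{-2s}-1)efk^s$ for $s\in\mathbb{Z}$, obtained from the Leibniz rule together with $[e,f]=k/(q-q^{-1})$ and $k^sf=q^{-2s}fk^s$; it shows every image of a $\phi^\pm_r$ lies in the subalgebra $\mathbb{C}[k^{\pm 1},ef]$ of $\mathcal{U}_{q,loc}^+(sl_2)$.

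First I would dispatch the easy relations. Since $k$ commutes with $ef$ (one line: $k(ef)=q^2ekf=q^2eq^{-2}fk=efk$), the subalgebra $\mathbb{C}[k^{\pm 1},ef]$ is commutative, yielding (\ref{un}). Relation (\ref{deux}) for the leading term $\phi_0^+\mapsto k$ follows from $ke=q^2ek$ and $kf=q^{-2}fk$. Relation (\ref{hdd}) is a straightforward monomial calculation using $ek^a\cdot ek^b=q^{2a}e^2k^{a+b}$ and its analogue $k^af\cdot k^bf=q^{2b}k^{a+b}f^2$, both sides collapsing to the same monomial in $e^2k^\bullet$ (resp. $k^\bullet f^2$).

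The crux is relation (\ref{trois}). Using $k^{r'}f=q^{-2r'}fk^{r'}$ and $k^{r'}e=q^{2r'}ek^{r'}$, the commutator collapses to $[ev_a(x_r^+),ev_a(x_{r'}^-)]=a^{r+r'}[ek^{r+r'},f]$. Splitting into three cases and comparing with $(\phi^+_{r+r'}-\phi^-_{r+r'})/(q-q^{-1})$: for $r+r'>0$ one has $\phi^-_{r+r'}=0$ and $ev_a(\phi^+_{r+r'})/(q-q^{-1})=a^{r+r'}[ek^{r+r'},f]$ matches directly; for $r+r'=0$ the right-hand side equals $k/(q-q^{-1})=[e,f]$ as needed; for $r+r'<0$ one has $\phi^+_{r+r'}=0$ and the sign $(q^{-1}-q)$ in the formula for $ev_a(\phi^-_{r+r'})$ produces exactly $-a^{r+r'}(q-q^{-1})[ek^{r+r'},f]$, so again both sides agree.

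Finally I would confirm that the images of $\phi^\pm(z)$ have the prescribed exponential structure with the shifts $\mp\alpha_1(\mu_\pm)$. The expansion $ev_a(\phi^+(z))=k+\sum_{r>0}a^r(q-q^{-1})[ek^r,f]z^r$ clearly starts at $z^0$ with invertible leading term $k$; substituting $ef=C_+-q^{-1}k/(q-q^{-1})^2$ into the workhorse identity and simplifying yields $ev_a(\phi^-_{-1})=-a^{-1}q(q-q^{-1})^2C_+k^{-1}$, invertible in $\mathcal{U}_{q,loc}^+(sl_2)$ precisely because $C_+$ has been localized, so $ev_a(\phi^-(z))$ indeed begins at $z^{-1}$ as required. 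The $h_r$-images are then defined implicitly by inverting the exponential, and the $h$-$x$ relation (\ref{hd}) follows from the full current-form (\ref{phix}), which one checks by summing $ev_a(\phi^+(z))$ as a rational function in $kz$ and observing that conjugation by it rescales $ev_a(x^\pm(w))$ by $(q^{\pm 2}w-z)/(w-q^{\pm 2}z)$. I expect the main obstacle to be this last step: confirming the current-form relation on the two-sided formal series $\sum_m(akz)^m$, and in particular that no spurious $z^0$-term appears in $ev_a(\phi^-(z))$ (which would contradict $\phi_0^-=0$ in the shifted algebra).
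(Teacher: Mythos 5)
Your mode-by-mode checks are correct as far as they go: the workhorse identity $[ek^s,f]=k^{s+1}/(q-q^{-1})+(q^{-2s}-1)efk^s$, the collapse $[ek^r,k^{r'}f]=[ek^{r+r'},f]$ which settles relation (\ref{trois}) in all three cases (including $r+r'=0$, the one commutator computation the paper writes out), the verifications of (\ref{un}), (\ref{deux}), (\ref{hdd}), and the identification $ev_a(\phi_{-1}^-)=-a^{-1}q(q-q^{-1})^2C_+k^{-1}$, which is exactly where the localization at the Casimir enters — all of this agrees with the paper. Your route is, however, genuinely different: the paper notes that every formula except those for $\phi_0^{\pm}$ coincides with the Jimbo evaluation morphism, so relations (\ref{un}), (\ref{deux}), (\ref{hd}), (\ref{hdd}) and (\ref{trois}) for $r+r'\neq 0$ are imported from the known unshifted case, leaving only the $r+r'=0$ commutator and the invertibility of $ev_a^{\pm}(\phi_{\mp1}^{\mp})$ to check; you instead re-verify everything from scratch.

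The genuine gap is the last step, which you yourself flag: the relations involving the $h_{\pm r}$ (relation (\ref{hd}), equivalently the current relation (\ref{phix}) once the $h$-images are defined by inverting the exponential). Your proposed argument — sum $ev_a(\phi^+(z))$ into a rational expression in $kz$ and ``observe'' that conjugation by it rescales $ev_a(x^{\pm}(w))$ by $(q^{\pm2}w-z)/(w-q^{\pm2}z)$ — is not an observation: the coefficients of $ev_a(\phi^{\pm}(z))$ lie in $\mathbb{C}[k^{\pm1},ef]$ (localized), and $ef$ does not commute past $e$ or $f$ (one must use $fe=ef-k/(q-q^{-1})$ at each step), so the claimed rescaling is precisely the identity to be proved, and you do not prove it. Moreover, for the negative modes it is not simply inherited from the unshifted case: in the shifted algebra the exponential for $\phi^-(z)$ is normalized by $\phi_{-1}^-$ rather than $\phi_0^-$, so the implicit $h_{-r}$-images differ from Jimbo's. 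A clean repair is to check the denominator-free form $(w-q^{\pm2}z)\,\phi^-(z)x^{\pm}(w)=(q^{\pm2}w-z)\,x^{\pm}(w)\phi^-(z)$: its coefficients for $r\geq1$ are the Jimbo relations, and the only new boundary case (coming from $\phi_0^-=0$) reduces to the quasi-commutation of $ev_a(\phi_{-1}^-)$ with the $x$-images, which is immediate since that element is a central factor times $k^{-1}$. A second, smaller gap: you reduce the $-$ case to the $+$ case by citing the finite-type isomorphism (\ref{isompm}) alone; to transport the verification you would also need a compatible isomorphism of the affine shifted algebras $\mathcal{U}_q^+(\hat{sl}_2)\simeq \mathcal{U}_q^-(\hat{sl}_2)$ intertwining $ev_a^+$ and $ev_a^-$, which you do not exhibit — alternatively, simply run the symmetric computation in the $-$ case.
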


\begin{rem} (i) Only the last two formulas differ from the Jimbo evaluation morphism \cite{J}
$$\mathcal{U}_q(\hat{sl}_2)\rightarrow \mathcal{U}_q(sl_2).$$

(ii) Closely related morphisms are introduced in \cite[Proposition 3.85 and Remark 3.89]{FPT} 
for  $\Glie = gl_{n+1}$ using an $RTT$-presentation.
\end{rem}

\begin{proof} First we have 
$$ev_a^\pm (\phi_{\mp 1}^\mp) = - q^{\pm 1}a^{\mp 1} ( q - q^{-1})^2 C_\pm k^{\mp 1}$$
which is invertible in $\mathcal{U}_{q,loc}^\pm(sl_2)$. 
We have to check the other defining relations of $\mathcal{U}_q^\pm(\hat{sl}_2)$ are compatible with the
defining formulas of $ev_a$. 
From the result for the standard evaluation morphism, this is clear for the formulas (\ref{un}), (\ref{deux}), (\ref{hd}), (\ref{trois}), (\ref{hdd})
except of $r + r' = 0$ (note that there are no Drinfeld-Serre relations (\ref{seq}) in the $sl_2$-case).
In the last case, we have for $r\in\ZZ$
$$[x_r^+ , x_{-r}^-] = \frac{\phi_0^+ - \phi_0^-}{q - q^{-1}}\mapsto \frac{\delta_{\pm, +}k - \delta_{\pm,-}k^{-1}}{q - q^{-1}} = [e,f] = [a^ e k^r,a^{-r}k^{-r}f]
=[ev_a(x_r^+), ev_a(x_{-r}^-)].$$
\end{proof}

\begin{example}\label{exval} Recall the Verma module $V(\gamma)$ of $\mathcal{U}_q^+(sl_2)$. Its evaluation at $q^2\gamma^{-1}$ satisfies
$$e_0.v_r = q^{-r + 2}\frac{[r+1]_q}{q - q^{-1}}v_{r+1}\text{ , }f_0.v_r = \gamma q^{-2}v_{r - 1}.$$
By twisting by the automorphisms $\tau_a$, we get a continuous family of representations that we denote by $L_{\gamma,a}^-$.
The action of $\phi_\pm(z)\in\mathcal{U}_q^+(\hat{sl_2})[[z^{\pm 1}]]$ is given by
$$\phi^\pm(z).v_j = \gamma q^{-2j} \frac{1 - q^2 z a}{(1 - q^{ 2 - 2j} az)(1 - q^{-2j} a z)} .v_j\in L_{\gamma,a}^-[[z^{\pm 1}]].$$
This matches formulas in \cite[Section 4.1]{HJ} : the restriction to $\mathcal{U}_q(\hat{\bo})$ is $[\gamma]\otimes L^{\bo, -}_a$
where $L^{\bo,-}_a$ is a prefundamental representation and $[\gamma]$ is $1$-dimensional (see Section \ref{remcato}). Note that 
for $m\geq 0$, $r > 0$, we recover the action on $L_{\gamma,a}^-$ from the action on $L_{1,a}^-$ by the twist 
$$x_r^- \mapsto \gamma x_r^-\text{ , }x_m^+\mapsto x_m^+\text{ , }\phi_m^\pm\mapsto \gamma \phi_m^\pm.$$
\end{example}

\begin{rem}\label{finc} The action of $\mathcal{U}_q(\hat{\bo})$ on $L^{\bo, -}_{1,a}$ can not be extended to $\mathcal{U}_q(\hat{sl}_2)$ (see \cite{HJ}). 
This implies there is no embedding $\mathcal{U}_q(\hat{sl}_2) \subset \mathcal{U}_q^+(\hat{sl}_2)$ 
which induces the embedding $\mathcal{U}_q(\hat{\bo})\subset\mathcal{U}_q^+(\hat{sl}_2)$ above. 
\end{rem}

\section{Category $\mathcal{O}^{sh}$}\label{catosh}



In this section we introduce categories $\mathcal{O}_\mu$ of representations of shifted quantum affine algebras and classify their simple objects (Theorem \ref{param}). The category $\mathcal{O}^{sh}$ is the sum of these abelian categories.
We also study shift functors induced by shift homomorphisms.

\subsection{Reminder - the category $\mathcal{O}$ for the quantum affine Borel algebra}\label{remcato} 
The category $\mathcal{O}$ of representations of $\mathcal{U}_q(\hat{\bo})$ is defined in \cite{HJ} as 
an analog of the ordinary category $\mathcal{O}$ (see \cite{kac}). It is the category of $\mathcal{U}_q(\hat{\bo})$-modules $V$ which are the sum of their weight spaces
$$V_\omega = \{v\in V| k_i. v = \omega(i) v,\forall i\in I\}\text{ where }\omega\in \tb^*,$$
such that the $V_\omega$ are finite-dimensional and there are a finite number of $\omega_1,\cdots, \omega_s\in \tb^*$ so that 
the weights of $V$, that is the $\omega$ so that $V_\omega\neq 0$, belong to $D(\omega_1)\cup\cdots \cup D(\omega_s)$ where
$$D(\omega_i) = \{\omega\in \tb^*|\omega\preceq \omega_i\}.$$

A series $\Psib=(\Psi_{i, m})_{i\in I, m\geq 0}$ of complex numbers such that 
$\Psi_{i,0}\neq 0$ for all $i\in I$ is called an $\ell$-weight. We also denote 
$$\Psib = (\Psi_i(z))_{i\in I}\text{ where }\Psi_i(z) = \underset{m\geq 0}{\sum} \Psi_{i,m} z^m.$$
A $\mathcal{U}_q(\hat{\mathfrak{b}})$-module $V$ is of highest $\ell$-weight $\Psib$ if there is $v\in V$ with $V =\mathcal{U}_q(\hat{\mathfrak{b}}).v$ and 
\begin{align*}
e_i\, v=0\quad (i\in I)\,,
\qquad 
\phi_{i,m}^+v=\Psi_{i, m}v\quad (i\in I,\ m\ge 0)\,.
\end{align*}
We recall here that the $\phi_{i,m}^+$, with $i\in I$, $m\geq 0$, are indeed in $\mathcal{U}_q(\hat{\bo})$.

For any $\ell$-weight $\Psib$, there exists a unique simple 
highest $\ell$-weight $\mathcal{U}_q(\hat{\bo})$-module $L^{\bo}(\Psib)$ of highest $\ell$-weight
$\Psib$. For example, for $i\in I$ and $a\in\CC^\times$, we have the prefundamental representations
\begin{align}
L_{i,a}^{\bo,\pm }= L^\bo(\Psib_{i,a}^{\pm 1})
\quad \text{where}\quad 
(\Psib_{i,a})_j(z) = \begin{cases}
(1 - za) & (j=i)\,,\\
1 & (j\neq i)\,.\\
\end{cases} 
\label{fund-rep}
\end{align}
The superscript $\bo$ is to distinguish with representations of shifted quantum affine algebras we will study. 
For $\omega\in \tb^*$, we have the $1$-dimensional representation, called constant representation
$$[\omega] = L^\bo(\Psib_\omega)
\quad \text{where}\quad 
(\Psib_\omega)_i(z) = \omega(i) \quad (i\in I).$$
\newcommand{\mfr}{\mathfrak{r}}
Let $\mfr$ be the group of rational $\ell$-weights $\Psib$, so that the $\Psi_i(z)$ are rational for $i\in I$ 
(the group structure is given by the ordinary multiplication of rational fractions).
\begin{thm}\label{class}\cite{HJ} The simple modules in the category 
$\mathcal{O}$ are the $L^\bo(\Psib)$ for $\Psib\in \mfr$.
\end{thm}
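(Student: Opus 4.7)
The plan is to split the statement into two complementary halves: first, that for every rational $\ell$-weight $\Psib \in \mfr$ the simple highest $\ell$-weight module $L^\bo(\Psib)$ lies in the category $\mathcal{O}$; second, that every simple object of $\mathcal{O}$ is isomorphic to $L^\bo(\Psib)$ for some $\Psib \in \mfr$.

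For the existence half, I would start from the triangular decomposition of $\mathcal{U}_q(\hat{\bo})$ analogous to the one recalled in Remark \ref{com}(iii). For any $\ell$-weight $\Psib$ it produces a Verma-type module $M(\Psib)$ generated by a highest $\ell$-weight vector; taking its unique simple quotient defines $L^\bo(\Psib)$ and also gives its uniqueness up to isomorphism. The real content is that $L^\bo(\Psib)$ belongs to $\mathcal{O}$ when $\Psib$ is rational. Here I would rely on two building blocks already known to be in $\mathcal{O}$: the one-dimensional constant modules $[\omega]$, and the prefundamental representations $L_{i,a}^{\bo,\pm}$, constructed as asymptotic limits of Kirillov--Reshetikhin modules. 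A rational $\Psib$ factors as $\Psib_\omega \prod_k \Psib_{i_k,a_k}^{\epsilon_k}$ with $\epsilon_k \in \{\pm 1\}$, and $L^\bo(\Psib)$ is then realized as the simple subquotient generated by the tensor product of highest $\ell$-weight vectors inside an appropriate ordered tensor product of these building blocks; closure of $\mathcal{O}$ under such tensor products reduces to tracking the finite set of maximal weights.

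For the exhaustion half, let $V$ be a simple object of $\mathcal{O}$. Since the weights of $V$ are contained in a finite union of cones $D(\omega_s)$, there is a maximal weight $\omega$. The weight space $V_\omega$ is finite-dimensional and stable under the commuting family $\{\phi_{i,m}^+ : i \in I,\ m \geq 0\} \subset \mathcal{U}_q(\hat{\bo})$, so it contains a common eigenvector $v$. Maximality of $\omega$ in the partial order forces $e_i.v \in V_{\omega \overline{\alpha}_i} = 0$ for $i \in I$, so $v$ is a highest $\ell$-weight vector for some $\Psib$, and simplicity gives $V = \mathcal{U}_q(\hat{\bo}).v \cong L^\bo(\Psib)$.

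The main obstacle is showing that the resulting $\Psib$ is rational. The approach would be to extract a finite-order linear recurrence on the sequence $(\Psi_{i,m})_{m\ge 0}$ from finiteness of a single lower weight space. Fix $i \in I$. The vectors $x_{i,s}^-.v$ for $s \geq 1$ all lie in $V_{\omega \overline{\alpha}_i^{-1}}$, which is finite-dimensional, so there exists $N$ and scalars $c_s$, not all zero, with $\sum_{s=1}^{N} c_s\, x_{i,s}^-.v = 0$. Applying $x_{i,r}^+$ for $r \geq 0$, using that $x_{i,r}^+.v = 0$ by weight considerations, and using the commutation formula (\ref{pmz}) in the form $[x_{i,r}^+, x_{i,s}^-].v = (q_i - q_i^{-1})^{-1}\Psi_{i,r+s} v$ (the $\phi_{i,r+s}^-$ term vanishes for $r+s > 0$), one obtains $\sum_{s=1}^{N} c_s \Psi_{i,r+s} = 0$ for all $r \geq 0$. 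This is precisely the condition that $\Psi_i(z)$ be a rational function of $z$, finishing the proof. This recurrence extraction, which crucially uses the Drinfeld presentation of the Borel algebra, is the delicate heart of the argument.
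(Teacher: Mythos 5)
This theorem is not proved in the paper you were given: it is recalled from \cite{HJ}, so the only meaningful comparison is with the original argument there, and your outline is essentially a faithful reconstruction of it. Your exhaustion half is correct and is the standard mechanism: a maximal weight exists because the weights lie in finitely many cones $D(\omega_j)$; a common eigenvector $v$ of the commuting $\phi_{i,m}^+$ exists in the finite-dimensional top weight space; $e_i.v=0$ for weight reasons; and rationality of the highest $\ell$-weight follows from the recurrence $\sum_{s=1}^N c_s\Psi_{i,r+s}=0$ obtained by hitting a linear dependence among the vectors $x_{i,s}^-.v$ in the finite-dimensional space $V_{\omega\overline{\alpha_i}^{-1}}$ with $x_{i,r}^+$, using $[x_{i,r}^+,x_{i,s}^-].v=(q_i-q_i^{-1})^{-1}\phi_{i,r+s}^+.v$ for $r+s>0$; an eventually recurrent coefficient sequence is indeed equivalent to rationality of $\Psi_i(z)$. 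For the existence half, be aware that you are assuming precisely the two heaviest inputs of \cite{HJ}: (a) that the prefundamental representations $L_{i,a}^{\bo,\pm}$ lie in $\mathcal{O}$, whose asymptotic construction from Kirillov--Reshetikhin modules is the technical core of that paper rather than a routine fact, and (b) that the submodule of a tensor product generated by the tensor product of highest $\ell$-weight vectors is again of highest $\ell$-weight with $\ell$-weight the product, which for $\mathcal{U}_q(\hat{\bo})$ rests on nontrivial control of the Drinfeld--Jimbo coproduct in Drinfeld generators. Granting those, factoring a rational $\Psib$ as $\Psib(0)\prod_k\Psib_{i_k,a_k}^{\pm 1}$ and passing to the simple quotient of that cyclic submodule does place $L^{\bo}(\Psib)$ in $\mathcal{O}$, since $\mathcal{O}$ is stable under tensor products, submodules and quotients. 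So there is no genuine gap in the logic, but your existence half is a reduction of the hard direction to the construction of the prefundamental representations, not an independent proof of it; stated with that caveat, the proposal matches the route of \cite{HJ}.
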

For $V$ in the category $\mathcal{O}$ and $\Psib$ an $\ell$-weight, we have the $\ell$-weight space 
\begin{align}
V_{\Psibs} =
\{v\in V\mid
\exists p\geq 0, \forall i\in I, 
\forall m\geq 0,  
(\phi_{i,m}^+ - \Psi_{i,m})^pv = 0\}.
\label{l-wtsp} 
\end{align}
A representation in the category $\mathcal{O}$ is the direct sum of its $\ell$-weight spaces. Moreover : 
\begin{thm}\cite{HJ} For $V$ in category $\mathcal{O}$, $V_{\Psib}\neq 0$ implies $\Psib\in\mfr$.\end{thm}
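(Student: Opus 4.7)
The plan is to reduce rationality of an arbitrary $\ell$-weight $\Psib$ appearing in $V$ to the rationality of highest $\ell$-weights, which is already provided by Theorem \ref{class}.

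Given $v \in V_{\Psib}$ non-zero, I would first pass to the submodule $W = \mathcal{U}_q(\hat{\bo}).v$, which is again in category $\mathcal{O}$. Since the weights of $W$ lie in $D(\omega_1)\cup\cdots\cup D(\omega_s)$ and all weight spaces are finite-dimensional, $W$ admits at least one maximal weight $\omega_0$. Any vector in $W_{\omega_0}$ is annihilated by all $x_{i,r}^+$, $i\in I$, $r\geq 0$, because these raise the weight by $\overline{\alpha}_i$ and $\omega_0$ is maximal. Since the operators $\phi_{i,m}^+$ commute among themselves by (\ref{un}) and preserve the finite-dimensional space $W_{\omega_0}$, I can choose a joint generalized eigenvector $v_0 \in W_{\omega_0}$; it is then a highest $\ell$-weight vector with some $\ell$-weight $\Psib^{(0)}$. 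Its simple quotient $L^\bo(\Psib^{(0)})$ is a subquotient of $V$, hence still in $\mathcal{O}$, so Theorem \ref{class} gives $\Psib^{(0)}\in\mfr$.

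Next, to propagate rationality from $\Psib^{(0)}$ down to $\Psib$, I would exploit the commutation relation (\ref{phix}), which in the case $\epsilon=+$, $x_j^-$ reads
$$\phi_i^+(z)\, x_j^-(w) = \frac{q^{-B_{i,j}}w - z}{w - q^{-B_{i,j}}z}\, x_j^-(w)\, \phi_i^+(z).$$
Each generator $x_{j,r}^-$ (with $r>0$, so that it lies in $\mathcal{U}_q(\hat{\bo})$) acts on the submodule generated by $v_0$, and expanding the identity above in $z$ and $w$ shows that the formal-series eigenvalue $\Psi_i^{(0)}(z)$ gets multiplied, after Jordan-decomposing each $x_{j,r}^-$-iterated vector into generalized $\ell$-weight components, by a product of rational factors of the form $(q^{-B_{i,j}}a-z)/(1-q^{-B_{i,j}}az)$ for specific complex numbers $a$. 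These $a$'s are exactly the roots of the characteristic polynomials of the finite-dimensional operators governing the action on each weight space. Iterating, every generalized $\ell$-weight appearing in $\mathcal{U}_q(\hat{\bo}).v_0$ is a rational modification of $\Psib^{(0)}$, hence lies in $\mfr$. If $v \notin \mathcal{U}_q(\hat{\bo}).v_0$, I pass to $W/\mathcal{U}_q(\hat{\bo}).v_0$ (still in $\mathcal{O}$), repeat the construction of a new highest $\ell$-weight vector there, and continue; this exhausts $W$ via a filtration whose successive quotients are highest $\ell$-weight modules with highest $\ell$-weights in $\mfr$.

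The main obstacle will be controlling the Jordan decomposition of vectors $x_{j,r}^-u$ into generalized $\ell$-weight components: one must argue that only finitely many distinct rational shift factors appear at each weight, so that the resulting $\ell$-weights remain genuinely rational rather than merely formal power series obtained from an infinite product. This finiteness is ultimately dictated by the finite-dimensionality of each weight space, but making it compatible with the generalized (rather than strict) eigenvector setup — and checking that the iteration on $W$ through successive quotients never introduces non-rational $\ell$-weights at any stage — is the delicate technical step.
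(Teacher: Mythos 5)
The paper offers no proof of this statement---it is quoted from \cite{HJ}---so the comparison can only be with what a complete argument would require; judged on that basis, your outline has a reasonable skeleton (extract highest $\ell$-weight subquotients, get rationality at the top from Theorem \ref{class}, then propagate downwards using (\ref{phix}) and finite-dimensionality of weight spaces), but the step that carries all the mathematical content is missing. Your claim that applying $x_{j,r}^-$ to an $\ell$-weight vector modifies the eigenvalue of $\phi_i^+(z)$ by factors of the specific form $(q^{-B_{i,j}}a-z)/(1-q^{-B_{i,j}}az)$ is asserted rather than proved, and in this form it is not correct for Borel modules: for the positive prefundamental representations $L_{i,a}^{\bo,+}$ all $\ell$-weights differ from the highest one only by constants of $\tb^*$ (see formula (\ref{cform})), and the paper itself notes (remark following Theorem \ref{partialo}) that triangularity of $\ell$-weights with respect to root-type factors fails in category $\mathcal{O}$. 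The obstruction is exactly what makes the Borel case delicate: only the half-current $\sum_{r>0}x_{j,r}^-w^r$ acts, so (\ref{phix}) cannot be ``evaluated at $w=a$'', the mode relations extracted from it are unavailable at the boundary (they would involve $x_{j,0}^-\notin\mathcal{U}_q(\hat{\bo})$), and $\phi_i^-(z)$ is absent, so the rationality mechanisms used elsewhere in the paper (Remark \ref{remind}, Proposition \ref{zero}) do not apply. Proving that on the finite-dimensional span of $\{x_{j,r}^-u\}_{r\geq 1}$ the generalized eigenvalues of $\phi_i^+(z)$ are rational whenever the (generalized) eigenvalue on $u$ is---allowing degenerate/constant shifts and non-semisimple action---is essentially the theorem itself, and this is precisely what you defer as ``the delicate technical step''.

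There are also gaps in the reduction. A joint \emph{generalized} eigenvector is not a highest $\ell$-weight vector; you need an honest common eigenvector of the commuting operators $\phi_{i,m}^+$ on the finite-dimensional top weight space (which exists, but say so). More seriously, the iteration ``quotient by $\mathcal{U}_q(\hat{\bo}).v_0$ and repeat'' need not terminate or ever capture the given vector: $W$ can have infinite length and the chain of submodules you construct need not exhaust it, so you must argue separately that the fixed $\ell$-weight $\Psib$ is detected in one of the highest $\ell$-weight subquotients after finitely many steps (for instance by working inside the finite-dimensional sum of the weight spaces of $W$ with weight $\succeq \Psib(0)$, or by quotienting by a submodule maximal among those with zero $\Psib$-component). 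These points are fixable by standard arguments, but combined with the unproved propagation step the proposal remains an outline rather than a proof.
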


\subsection{Reminder - the category $\hat{\mathcal{O}}$ for the quantum affine algebra}
The category $\hat{\mathcal{O}}$ of $\mathcal{U}_q(\hat{\Glie})$-modules which are in the category $\mathcal{O}$ as $\mathcal{U}_q(\hat{\bo})$-modules 
was introduced in \cite{H}. The categories $\hat{\mathcal{O}}$ and $\mathcal{O}$ are monoidal (with respect to the ordinary Drinfeld-Jimbo coproduct) and there is a forgetful functor
$$\hat{f} : \hat{\mathcal{O}} \rightarrow \mathcal{O}.$$
The following was established by Bowman \cite{bo} and Chari-Greenstein \cite{CG} for finite-dimensional representations. 
The proof in \cite[Proposition 3.5]{HJ} can be adapted to the category $\hat{\mathcal{O}}$.

\begin{prop} For $V$ a simple representation in $\hat{\mathcal{O}}$, $\hat{f}(V)$ is simple.\end{prop}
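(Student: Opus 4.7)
The plan is to adapt the argument of \cite[Proposition 3.5]{HJ} to $\hat{\mathcal{O}}$. Let $V$ be simple in $\hat{\mathcal{O}}$. The strategy consists of extracting a Drinfeld highest $\ell$-weight vector $v\in V$, observing that it is simultaneously a Borel highest $\ell$-weight vector, establishing that $v$ generates $\hat{f}(V)$ under $\mathcal{U}_q(\hat{\bo})$, and identifying $\hat{f}(V)$ with the simple Borel module $L^{\bo}(\Psib_+)$.

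First, since $V\in\hat{\mathcal{O}}$ has weights bounded above in $\preceq$ and finite-dimensional weight spaces, a maximal weight $\omega$ is attained. On $V_\omega$ the raising operators $x_{i,m}^+$ all vanish by maximality of $\omega$, while the commuting Cartan-Drinfeld operators $\phi_{i,r}^\pm$ preserve $V_\omega$ and admit a common eigenvector $v$ with some Drinfeld $\ell$-weight $\Psib$. Simplicity of $V$ then forces $V=\mathcal{U}_q(\hat{\Glie}).v$ and $\dim V_{\Psib}=1$. The vector $v$ is automatically a Borel highest $\ell$-weight vector in the sense of Section \ref{remcato}, with Borel $\ell$-weight $\Psib_+=(\Psi_i(z))_{i\in I}$: indeed $e_i v=x_{i,0}^+ v=0$ for $i\in I$, and $v$ is a $\phi_{i,m}^+$-eigenvector for $m\geq 0$.

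The main obstacle is establishing the cyclicity $\mathcal{U}_q(\hat{\bo}).v = V$. By Drinfeld triangular decomposition $V=\mathcal{U}_q^-(\hat{\Glie}).v$, but $\mathcal{U}_q(\hat{\bo})$ contains only the negative Drinfeld generators $x_{i,m}^-$ with $m>0$; the missing elements $x_{i,m}^-.v$ with $m\leq 0$ must be recovered inside $\mathcal{U}_q(\hat{\bo}).v$ using the Chevalley generator $e_0\in\mathcal{U}_q(\hat{\bo})$, which via Beck-Damiani corresponds (up to an invertible Cartan factor) to a negative Drinfeld root vector attached to $-\theta$ in loop degree one. Iterating with the $x_{i,r}^-$ for $r>0$ and invoking the commutation and Serre relations (\ref{hdd})--(\ref{seq}) then produces all the $x_{i,m}^-.v$ inside $\mathcal{U}_q(\hat{\bo}).v$.

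Having established that $\hat{f}(V)=\mathcal{U}_q(\hat{\bo}).v$ is a Borel highest $\ell$-weight module with one-dimensional top $\mathbb{C} v$ and top weight $\Psib_+$, it remains to identify $\hat{f}(V)$ with $L^{\bo}(\Psib_+)$. For this I would argue that any nonzero Borel submodule $W$ must contain $v$: by simplicity of $V$ as a $\mathcal{U}_q(\hat{\Glie})$-module, $\mathcal{U}_q(\hat{\Glie}).W=V$, and one can mimic the cyclicity argument of the previous step applied now to a nonzero $w\in W$ in place of $v$, producing a nonzero multiple of $v$ inside $\mathcal{U}_q(\hat{\bo}).w\subseteq W$. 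Thus $W\supseteq\mathcal{U}_q(\hat{\bo}).v=V$, and $\hat{f}(V)$ is simple. The cyclicity step is the genuinely new content of the adaptation beyond the finite-dimensional case of \cite{bo, CG}, as it uses crucially the interplay between the Drinfeld and Chevalley Borel subalgebras of $\mathcal{U}_q(\hat{\Glie})$.
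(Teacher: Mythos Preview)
Your overall plan is sound, but step 3 (and hence step 4) contains a genuine gap. You claim that the missing operators $x_{i,m}^-$ with $m\leq 0$ can be recovered inside $\mathcal{U}_q(\hat{\bo}).v$ by invoking $e_0$ together with the relations (\ref{hdd})--(\ref{seq}). This does not work: via Beck--Damiani, $e_0$ is (up to Cartan) a Drinfeld root vector of loop degree $+1$, so it already lies in the subalgebra generated by the $x_{i,r}^-$ with $r>0$ and Cartan elements. The relations (\ref{hdd})--(\ref{seq}) preserve the total loop degree and never produce a monomial of nonpositive degree from monomials of strictly positive degree. In other words, purely algebraically $\mathcal{U}_q(\hat{\bo})$ is a proper subalgebra not containing any $x_{i,0}^-$, so no manipulation of relations alone can yield $x_{i,0}^-.v\in\mathcal{U}_q(\hat{\bo}).v$; you must use something specific to the module $V$.

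The missing ingredient, and the one actually used in the adaptation of \cite[Proposition 3.5]{HJ} (compare the proof of Corollary \ref{still} later in the paper, which is the exact analogue for $\mathcal{O}_\mu$), is the rationality of the Drinfeld currents on weight spaces: on each finite-dimensional weight space of $V\in\hat{\mathcal{O}}$ there exists a nonzero polynomial $P(z)$ with $P(z)x_i^\pm(z)=0$ (see Remark \ref{remind} and Proposition \ref{zero}). This linear recurrence forces every $x_{i,m}^\pm$ acting on that weight space to be a linear combination of finitely many $x_{i,m'}^\pm$ with $m'\geq 1$, all of which lie in $\mathcal{U}_q(\hat{\bo})$. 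Cyclicity over $\mathcal{U}_q(\hat{\bo})$ and the absence of new primitive vectors then follow immediately, giving simplicity of $\hat f(V)$. Your step 4 should likewise be replaced by this argument: a Borel-primitive vector $w$ (i.e.\ with $x_{i,m}^+ w=0$ for $m\geq 0$) is automatically Drinfeld-primitive by the same recurrence, hence proportional to $v$.
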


\begin{rem}\label{remind}
(i) It is proved in \cite[Lemma 14]{H} and \cite[Lemma 3.9]{HJ} that for $i\in I$, the action of $\phi_i^+(z)$ and $\phi_i^-(z)$ coincide on a representation $V$ in $\hat{\mathcal{O}}$, 
seen as rational operators on each weight space (it follows directly from the existence of a polynomial $P(z)$ so that $P(z)(\phi_i^+(z) - \phi_i^-(z)) = 0$; 
this is also proved in \cite[Section 3.6]{GTL}). 

(ii) As $\phi_i^+(z)$ (resp. $\phi_i^-(z)$) is regular at $0$ (resp. $\infty$), 
this also implies that $\phi_i^+(z)$ has degree $0$ and that $\phi_i^+(0)\phi_i^+(\infty) = \text{Id}$. 
In particular, the simple representations $V$ in $\hat{\mathcal{O}}$ are parametrized by the highest $\ell$-weight $\Psib$ of $\hat{f}(V)$ : 
it is rational of degree $0$ with $\Psi_i(0)\Psi_i(\infty) = 1$ for $i\in I$. The converse statement is true by \cite{my} : these rational $\ell$-weights 
parametrize the simple representations in $\hat{\mathcal{O}}$.
\end{rem}

\subsection{The category $\tilde{\mathcal{O}}$ for the asymptotic algebra}
Recall the asymptotic algebra $\tilde{\mathcal{U}}_q(\hat{\Glie})$ which is a subalgebra of
$\mathcal{U}_q(\hat{\Glie})$ (see Section \ref{asymprel}). For our purposes, we will consider representations of $\tilde{\mathcal{U}}_q(\hat{\Glie})$ with $0$ as a possible eigenvalue of $\kappa_i$.
Hence we have to modify the axioms to introduce the proper notion of category $\mathcal{O}$ for this algebra.

A representation $V$ of $\tilde{\mathcal{U}}_q(\hat{\Glie})$ is said to be 
$\tb^*$-graded if there is a decomposition into a direct sum of finite-dimensional subspaces (the weight spaces)
$V = \bigoplus_{\omega\in \tb^*} V^{(\omega)}$ such that 
$$\tilde{x}_{i,r}^\pm V^{(\omega)}\subset V^{(\omega \overline{\alpha_i}^{\pm 1})}\text{ , }
\tilde{\phi}_{i,\pm m}^\pm V^{(\omega)}\subset V^{(\omega)}\text{ , }
\kappa_i V^{(\omega)}\subset V^{(\omega)}
\text{ for any $\omega\in \tb^*$, $i\in I$, $r\in\ZZ, m\geq 0$.}
$$
The weights of $V$ are the $\omega$ so that $V^{(\omega)}\neq 0$. 

The same argument as in Remark \ref{remind} (i) shows that for each $i\in I$, the action of $\tilde{\phi}_i^\pm(z)$ are rational on weight spaces and coincide as rational operators. The representation $V$ is the direct sum of its $\ell$-weight spaces 
corresponding to pseudo-eigenvalues $\Psi_i(z)$ of the $\tilde{\phi}_i^+(z)$. 
Besides, for $i\in I$, we have $\text{deg}(\Psi_i)\leq 0$ from the development of the rational function at $\infty$ and 
$\Psi_i(0) = 1$ from the development at $0$. Hence we have
$$V = \bigoplus_{\mu\in - \Lambda^+} V_{(\mu)}$$
where for $\mu\in - \Lambda^+$, $V_{(\mu)}$ is the sum of the $\ell$-weight spaces $V_{\Psib}$ with $\text{deg}(\Psi_i) = \alpha_i(\mu)$. 

For $\omega\in\tb^*$ and $\mu\in - \Lambda^+$, we set
$$V_{(\mu,\omega)} = \{v\in V_{(\mu)}|\tilde{\phi}_{i,\alpha_i(\mu)}^-.v = \omega(i) v, \forall i\in I\}.$$

\begin{defi}\label{o} A $\tb^*$-graded representation $V$ of $\tilde{\mathcal{U}}_q(\hat{\Glie})$ is said to be in the category $\tilde{\mathcal{O}}$ if : 



(i) We have $V = \bigoplus_{\mu\in - \Lambda^+, \omega\in\tb^*} V_{(\mu,\omega)}$.

(ii) The spaces $V_{(\omega)} = \bigoplus_{\mu\in - \Lambda^+} V_{(\mu,\omega)}$ are finite-dimensional.

(iii) There are a finite number of elements $\omega_1,\cdots, \omega_s\in \tb^*$, so that $V^{(\omega)}\neq 0$ or $V_{(\omega)}\neq 0$ implies $\omega\in\bigcup_{1\leq j\leq s} D(\omega_j)$.
\end{defi}

As for the category $\mathcal{O}$, a simple representation in $\tilde{\mathcal{O}}$ is determined up to isomorphism 
by its highest $\ell$-weight $\Psib$. It is rational, with $\text{deg}(\Psi_i)\geq 0$ and $\Psi_i(0) = 1$ for any $i\in I$.
We will prove that such $\Psib$ parametrize the simple representations in $\tilde{\mathcal{O}}$ (see Theorem \ref{simpasym}).

It is proved in \cite[Section 2.4]{HJ} that a $\tb^*$-graded\footnote{It is proved in \cite{HJ} for $Q$-graded representations, but the proof also works for $\tb^*$-graded representations.} representation $V$ gives a representation of $\mathcal{U}_q(\hat{\bo})$ such that 
\begin{align}
e_i\, v = \tilde{x}_{i,0}^+v,\quad e_0\, v = y\, v,
\quad
k_iv = \omega(i) v\,
\quad (i\in I,\ v\in V^{(\omega)})\,,
\label{vsigma1}
\end{align}
where $y\in \tilde{\mathcal{U}}_q(\hat{\Glie})$ is a certain distinguished element defined via iterated quantum brackets.
This defines a functor
$$\tilde{f} : \tilde{\mathcal{O}}\rightarrow \mathcal{O}.$$
This is how the prefundamental representations $L_{i,a}^{\mathfrak{b},-}$ are constructed in \cite{HJ}.

\begin{example} For $i\in I$, $a\in\mathbb{C}^*$, the prefundamental representation $L_{i,a}^{\bo,-}$ of $\mathcal{U}_q(\hat{\bo})$ is 
in the image of a module in $\tilde{\mathcal{O}}$ by the functor $\tilde{f}$ (see \cite{HJ}). 
\end{example}

\begin{example}\label{exom} Monoidal subcategories $\mathcal{O}^\pm$ of $\mathcal{O}$ were introduced in \cite{HL} in the context of monoidal categorification of cluster algebras. $\mathcal{O}^\pm$ is the subcategory of representations in $\mathcal{O}$ whose simple constituents have highest $\ell$-weight which is a product of various $\Psib_{i,a}\Psib_{i,aq_i^2}^{- 1}$, $\Psib_{i,a}^{\pm 1}$, $[\omega]$ for various $i\in I$, $a\in\CC^*$, $\omega\in \tb^*$. By \cite[Section 7.2]{HL}, the simple representations in $\mathcal{O}^-$ are in the image of the functor $\tilde{f}$.
\end{example}

\subsection{The category $\mathcal{O}_\mu$ for the shifted quantum affine algebra}\label{debo}
For $\mu_+, \mu_-\in \Lambda$, we introduce the following category.

\begin{defi} The category $\mathcal{O}_{\mu_+,\mu_-}$ 
is the category of $\mathcal{U}_q^{\mu_+,\mu_-}(\hat{\Glie})$-modules 
$$V = \bigoplus_{\omega\in \tb^*} V_{\omega}^+ = \bigoplus_{\omega\in \tb^*} V_{\omega}^-$$
where for $\omega\in \tb^*$, the weight spaces
$$V_\omega^+ = \{v\in V | \phi_{i,- \alpha_i(\mu_+)}^+.v = \omega(i) v, \forall i\in I\}\text{ , }V_\omega^- = \{v\in V | \phi_{i, \alpha_i(\mu_-)}^-.v = (\omega(i))^{- 1} v, \forall i\in I\}$$
are finite-dimensional and there are a finite number of $\omega_1,\cdots, \omega_s\in \tb^*$ so that 
$V_\omega^+ \neq \{0\}$ or $V_\omega^- \neq \{0\}$ implies 
$$\omega \in D(\omega_1)\cup\cdots \cup D(\omega_s).$$ 
\end{defi}

\begin{rem} (i) In general $V_\omega^+$ and $V_{\omega}^-$ do not coincide as $\phi_{i,- \alpha_i(\mu_+)}^+$ and 
$\phi_{i, \alpha_i(\mu_-)}^-$ are not inverse one to each other.

(ii) By (i) in Remark \ref{com}, this category depends only on $\mu = \mu_+ + \mu_-$. We will be simply denote $\mathcal{O}_\mu = \mathcal{O}_{0,\mu}$.

(iii) By (ii) in Remark \ref{com}, the category $\hat{\mathcal{O}}$ is a full subcategory of $\mathcal{O}_0$.
\end{rem}

\begin{prop}\label{zero} Let $V$ be a representation in $\mathcal{O}_\mu$ (or in $\tilde{\mathcal{O}}$). 
For each weight space of $V$, there is a non-zero polynomial $P(z)$ so that for 
any $i\in I$, $P(z)(\phi_i^+(z) - \phi_i^-(z))$ and $P(z)x_i^\pm(z)$ are zero 
on this weight space.  The action of $\phi_i^+(z)$ and $\phi_i^-(z)$ are rational of degree $\alpha_i(\mu)$
on this weight space and coincide as rational operators.
\end{prop}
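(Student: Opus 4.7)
Fix a weight space $V_\omega$ of $V$, finite-dimensional by assumption. The strategy is to show that $\phi_i^\pm(z)|_{V_\omega}$ and $x_i^\pm(z)|_{V_\omega}$ are spectrally finite operator-valued series, i.e.\ supported on finitely many points as formal distributions, by combining the finite-dimensionality of weight spaces with the intertwining relation (\ref{phix}).

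The operators $\phi_{i,m}^\pm$ for $i \in I$ and $m \in \mathbb{Z}$ pairwise commute and preserve $V_\omega$, so they generate a finite-dimensional commutative subalgebra of $\text{End}(V_\omega)$; decompose $V_\omega$ into the corresponding generalized common eigenspaces. On such an eigenspace $W$ the semisimple parts define formal series $\Psi_i^\pm(z) = \sum_m \Psi_{i,m}^\pm z^m$ with the same supports as $\phi_i^\pm(z)$. Pick a common eigenvector $v \in W$ and apply (\ref{phix}):
$$\phi_i^+(z)\,x_j^\pm(w) v = \frac{q^{\pm B_{i,j}}w - z}{w - q^{\pm B_{i,j}}z}\,\Psi_i^+(z)\,x_j^\pm(w) v.$$
This identifies the $\ell$-eigenvalue of each component of $x_j^\pm(w) v$ with $\Psi_i^+(z)$ multiplied by the explicit rational factor, specialised at a pole value of $w$ matching a target $\ell$-weight. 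Since the target weight space $V_{\omega \overline{\alpha_j}^{\pm 1}}$ is finite-dimensional and supports only finitely many $\ell$-weights, only a finite list of $w$-values can contribute. This forces $x_j^\pm(w) v$ to be a finite combination of formal delta series $\delta(w/a)$ with $a$ in a finite subset of $\mathbb{C}^*$, yielding a polynomial $Q_j(w) = \prod_a(1 - w/a)$ with $Q_j(w)\, x_j^\pm(w)|_{V_\omega} = 0$.

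Iterating the propagation of the intertwiner through successive weight spaces forces each $\Psi_i^\pm(z)$ to be the Laurent expansion (near $0$, resp.\ $\infty$) of a common rational function: the shifted eigenvalues generated by repeated applications of (\ref{phix}) must be realised among the finite list of $\ell$-weights appearing in $V$, which imposes rationality on $\Psi_i^\pm(z)$. A common denominator then gives a polynomial $P_\phi(z)$ with $P_\phi(z)(\phi_i^+(z) - \phi_i^-(z))|_{V_\omega} = 0$. Combining $P_\phi(z)$ with the polynomials $Q_j$ for all $j \in I$ yields a single non-zero $P(z)$ annihilating all required operators. The degree claim follows from the definitions: $\phi_i^+(z)$ has lowest power $z^{-\alpha_i(\mu_+)}$ with invertible leading coefficient and $\phi_i^-(z)$ has highest power $z^{\alpha_i(\mu_-)}$ with invertible leading coefficient, so the common rational function has degree $\alpha_i(\mu_-) + \alpha_i(\mu_+) = \alpha_i(\mu)$. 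The case $V \in \tilde{\mathcal{O}}$ is handled identically using the defining relations of the asymptotic algebra.

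The main obstacle is to rigorously propagate spectral finiteness through (\ref{phix}): one must handle the formal Laurent expansion of the rational factor with consistent conventions, and control the nilpotent parts in the generalized eigenspaces of the Cartan-Drinfeld family. I would first carry out the analysis assuming the commutative family acts semisimply, then extend by a filtration argument compatible with the nilpotent parts, which commute among themselves and with the semisimple parts.
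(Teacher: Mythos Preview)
Your strategy for the $x_j^\pm(w)$ part is essentially the argument the paper invokes (from \cite{H2} and \cite{BK}): the component form (\ref{hd}) gives, for an $h_{i,1}$-eigenvector $v$ with eigenvalue $\eta$ and $i$ chosen with $C_{i,j}\neq 0$, the recurrence $x_{j,r+1}^\pm v = \pm [C_{i,j}]_{q_i}^{-1}(h_{i,1}-\eta)\,x_{j,r}^\pm v$, so the characteristic polynomial of $(h_{i,1}-\eta)$ on the finite-dimensional target furnishes $Q_j(w)$; nilpotent parts are then routine. The obstacle you flag at the end is real but not the serious one.

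The genuine gap is your claim that ``iterating (\ref{phix})'' forces rationality of $\Psi_i^\pm(z)$ and the coincidence $\phi_i^+=\phi_i^-$. Relation (\ref{phix}) holds separately for $\epsilon=+$ and $\epsilon=-$ and never mixes them; it only says neighbouring $\ell$-weights differ by a rational multiplier. A finite set of formal series closed under multiplication by fixed rational functions need not consist of rational functions, and nothing you invoke links the $z$-expansion $\Psi_i^+$ to the $z^{-1}$-expansion $\Psi_i^-$. The only defining relation that does is (\ref{trois})/(\ref{pmz}), which you never use. Once you have $Q_i(w)\,x_i^-(w)=0$ on both $V_\omega$ and $V_{\omega\overline{\alpha_i}}$, the identity $(q_i-q_i^{-1})[x_{i,0}^+, x_{i,m}^-]=\phi_{i,m}^+-\phi_{i,m}^-$ gives $Q_i(z)(\phi_i^+(z)-\phi_i^-(z))=0$ on $V_\omega$ immediately; then $Q_i(z)\phi_i^+(z)\in\text{End}(V_\omega)[[z]]$ equals $Q_i(z)\phi_i^-(z)\in z^{\alpha_i(\mu)+\deg Q_i}\,\text{End}(V_\omega)[[z^{-1}]]$, so both are polynomial, delivering rationality, coincidence, and the degree claim at once. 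This is the route of \cite{H}, \cite{HJ}, \cite{GTL} cited in the paper.
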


\begin{proof} The same argument as in Remark \ref{remind} (i) shows that for each $i\in I$, the action of $\phi_i^+(z)$ and 
$\phi_i^-(z)$ are rational on weight spaces of a representation in $\mathcal{O}_\mu$ and coincide as rational operators. In particular on each
weight space $\phi_i^+(z)$ is equivalent to $\phi_{i,\alpha_i(\mu)}^- z^{\alpha_i(\mu)}$ when $z\rightarrow \infty$, which implies
the degree is $\alpha_i(\mu)$ as $\phi_{i,\alpha_i(\mu)}^-$ is invertible. The statement for 
the $x_i^\pm(z)$ is proved as in \cite[Proposition 3.8]{H2} (it is proved there under the assumption the representation
is integrable, but the fact that weight spaces are finite-dimensional is only used there; an analogous result was also obtained
in \cite{BK}).
\end{proof}

As above, the representations in $\mathcal{O}_\mu$ are the direct sum of their $\ell$-weight spaces 
corresponding to pseudo-eigenvalues of the $\phi_i^+(z)$.
The simple representations are determined up to isomorphism by their highest 
$\ell$-weight $\Psib$ which is rational with $\text{deg} (\Psi_i) = \alpha_i(\mu)$.
We will denote by $\mathfrak{r}_\mu$ the set of such $\ell$-weights :
$$\mathfrak{r}_\mu = \{\Psib = (\Psi_i(z))_{i\in I} \in \mathfrak{r}|  \text{deg}(\Psi_i(z)) = \alpha_i(\mu) \}.$$

A representation in $\mathcal{O}_\mu$ is said to be of highest $\ell$-weight $\Psib\in\mathfrak{r}_\mu$ if it is generated 
by a vector $v$ such that 
$$x_{i,m}^+.v = 0\text{ and }\phi_{i,m}^\pm.v = \Psi_{i,m}^\pm v\text{ for $i\in I$, $m\in \mathbb{Z}$,}$$ 
where
$$\Psi_i(z) = \sum_{m\geq 0} \Psi_{i,m}^+ z^m = \sum_{m\geq - \alpha_i(\mu)} \Psi_{i,-m}^- z^{-m} \in\mathbb{C}(z).$$
Here, the second and third expressions are just expansions of the same rational function in $z$ and $z^{-1}$ respectively.

\begin{cor}\label{still} A simple representation in the category $\mathcal{O}_\mu$, $\mu \in - \Lambda^+$ (resp. in the category $\tilde{\mathcal{O}}$) is simple as a representation of $\mathcal{U}_q(\hat{\bo})$.\end{cor}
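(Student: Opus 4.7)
The plan is to adapt the argument of \cite[Proposition 3.5]{HJ} (which treats finite-dimensional simple $\mathcal{U}_q(\hat{\Glie})$-modules, and whose extension to $\hat{\mathcal{O}}$ was invoked just above) to the two cases at hand. I would describe the case of $\mathcal{O}_\mu$ with $\mu \in -\Lambda^+$; the case of $\tilde{\mathcal{O}}$ is parallel, replacing the embedding $I_\mu : \mathcal{U}_q(\hat{\bo}) \hookrightarrow \mathcal{U}_q^\mu(\hat{\Glie})$ of Proposition \ref{suba} by the functor $\tilde{f} : \tilde{\mathcal{O}} \rightarrow \mathcal{O}$ of \cite[Section 2.4]{HJ}.

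Fix a simple $V \in \mathcal{O}_\mu$ with highest $\ell$-weight $\Psib \in \mathfrak{r}_\mu$ and highest weight vector $v_\Psib$. First I would verify that the restriction of $V$ along $I_\mu$ lies in the category $\mathcal{O}$ of $\mathcal{U}_q(\hat{\bo})$-modules: the $k_i$ of $\mathcal{U}_q(\hat{\bo})$ act through the $\phi_{i,0}^+$, so the weight gradings on $V$ as a shifted module and as a $\mathcal{U}_q(\hat{\bo})$-module coincide, and both the finite-dimensionality of weight spaces and the upper-boundedness of the weight support carry over from $\mathcal{O}_\mu$ to $\mathcal{O}$.

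Next, let $W \subseteq V$ be a non-zero $\mathcal{U}_q(\hat{\bo})$-submodule. Because $W$ lies in $\mathcal{O}$, it contains a non-zero vector $w$ annihilated by every Chevalley generator $e_i$, $i \in I \sqcup \{0\}$. Since $V$ is $\mathcal{U}_q^\mu(\hat{\Glie})$-simple we have $\mathcal{U}_q^\mu(\hat{\Glie}) \cdot w = V$, and the task is to upgrade this identity to $\mathcal{U}_q(\hat{\bo}) \cdot w = V$, which will force $W = V$. Using the Drinfeld triangular decomposition of $\mathcal{U}_q^\mu(\hat{\Glie})$ (Remark \ref{com}(iii)), the problem reduces to rewriting the action on $w$ of those Drinfeld generators that lie outside $\mathcal{U}_q(\hat{\bo})$---namely the $x_{i,m}^\pm$ of negative Drinfeld index, together with the Cartan-Drinfeld series $\phi_{i,m}^-$---in terms of elements of $\mathcal{U}_q(\hat{\bo})$. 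The two main inputs are (i) the rationality and matching of $\phi_i^+(z)$ and $\phi_i^-(z)$ as operators on each weight space of $V$ (Proposition \ref{zero}), which realizes the negative-index Cartan-Drinfeld data as rational functions of the positive-index data already available in $\mathcal{U}_q(\hat{\bo})$, and (ii) the commutation relation (\ref{pmz}), which together with (i) permits the reduction to $\mathcal{U}_q(\hat{\bo})$-data of the negative-index $x^\pm$-generators applied to $w$.

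The main obstacle is precisely this last reduction---the technical core of \cite[Proposition 3.5]{HJ} reinterpreted in the shifted (resp. asymptotic) setting. In particular, the embedding $I_\mu$ (resp. the functor $\tilde{f}$) realizes the affine Chevalley generator $e_0$ through negative-degree Drinfeld generators (cf.\ the example following Proposition \ref{suba}), and it is precisely the interplay between the two generating series $\phi_i^+(z)$ and $\phi_i^-(z)$ provided by Proposition \ref{zero} that makes the passage to $\mathcal{U}_q(\hat{\bo})$ work. Once this reduction is carried out, the closing of the argument follows the model of \cite[Proposition 3.5]{HJ} verbatim.
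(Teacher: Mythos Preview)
Your approach is correct but more elaborate than the paper's. You propose to rerun the primitive-vector argument of \cite[Proposition 3.5]{HJ}: locate a $\mathcal{U}_q(\hat{\bo})$-primitive $w$ in a nonzero Borel submodule, then show $\mathcal{U}_q(\hat{\bo}) \cdot w = V$ by expressing the action of the missing Drinfeld generators through $\mathcal{U}_q(\hat{\bo})$-data, invoking the $\phi^\pm$-coincidence of Proposition \ref{zero} together with the commutation relation (\ref{pmz}). The paper bypasses this by using the \emph{other} clause of Proposition \ref{zero}, the one you do not invoke: on each weight space one has $P(z)\,x_i^\pm(z) = 0$ for some nonzero polynomial $P$. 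Writing $P(z) = \sum_{k} p_k z^k$, this is a linear recurrence $\sum_k p_k\, x_{i,m-k}^\pm = 0$ on that weight space, so the action of every $x_{i,m}^\pm$ with $m \in \mathbb{Z}$ is a linear combination of the actions with $m \geq 1$, all of which lie in $\mathcal{U}_q(\hat{\bo})$. Hence any $\mathcal{U}_q(\hat{\bo})$-stable subspace is automatically $\mathcal{U}_q^\mu(\hat{\Glie})$-stable, and simplicity transfers immediately. What your route buys is a self-contained argument in the style of \cite{HJ} that does not rely on the $P(z)x_i^\pm(z)=0$ statement (whose proof imports \cite[Proposition 3.8]{H2}); what the paper's route buys is a two-line proof with no primitive vectors, no relation (\ref{pmz}), and no iteration over weight strata.
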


\begin{proof} Consider a simple representation $V$ in the category $\mathcal{O}_\mu$. From Proposition \ref{zero}, 
we have established that the action of the operators $x_{i,m}^+$ (resp. $x_{j,m}^-$) for $m\in\mathbb{Z}$ 
are determined by the action of these operators for $m\geq 1$, which are in $\mathcal{U}_q(\hat{\bo})$. Hence, $V$ is generated by its highest
weight vector and has no other primitive vector as a representation of $\mathcal{U}_q(\hat{\bo})$. So it is simple. \end{proof}

\begin{thm}\label{param}
For $\mu\in\Lambda$, the simple modules in the category $\mathcal{O}_{\mu}$ are parametrized by $\mathfrak{r}_\mu$.
\end{thm}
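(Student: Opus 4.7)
The plan is to follow the classical highest-weight recipe in three steps: (i) assign to each simple object of $\mathcal{O}_\mu$ a well-defined highest $\ell$-weight in $\mathfrak{r}_\mu$; (ii) show that two simples sharing the same highest $\ell$-weight are isomorphic by producing a common Verma cover; (iii) for every $\Psib \in \mathfrak{r}_\mu$, construct a simple module in $\mathcal{O}_\mu$ whose highest $\ell$-weight is $\Psib$.

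For (i), let $V$ be simple in $\mathcal{O}_\mu$. The cone condition in the definition of $\mathcal{O}_\mu$ together with the fact that the submonoid of $\tb^*$ generated by the $\overline{\alpha_i}$ is free makes the partial order $\preceq$ Artinian on the set of weights of $V$, so a maximal weight exists. Any nonzero vector of maximal weight is annihilated by every $x_{i,m}^+$. Using the commutativity of the Cartan--Drinfeld subalgebra (Remark \ref{com}(iii)) and the rationality of its action on each weight space (Proposition \ref{zero}), this vector can be refined to a common eigenvector $v$ of the operators $\phi_{i,m}^\pm$. By simplicity $V=\mathcal{U}_q^\mu(\hat{\Glie}).v$, and the corresponding highest $\ell$-weight $\Psib$ lies in $\mathfrak{r}_\mu$ by Proposition \ref{zero}.

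For (ii), using the triangular decomposition of Remark \ref{com}(iii), I form for each $\Psib\in\mathfrak{r}_\mu$ the Verma module $M(\Psib)$ by inducing to $\mathcal{U}_q^\mu(\hat{\Glie})$ from $\mathcal{U}_q^{\mu,0}(\hat{\Glie})\otimes\mathcal{U}_q^{\mu,+}(\hat{\Glie})$ the one-dimensional representation on which the $x_{i,m}^+$ act by $0$ and the $\phi_{i,m}^\pm$ by $\Psi_{i,m}^\pm$. The weight decomposition of $M(\Psib)$ has a one-dimensional top and all other weights strictly below, so the sum of all submodules missing the top line is a unique maximal proper submodule, with simple quotient $L(\Psib)$. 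Any simple module of highest $\ell$-weight $\Psib$ is a quotient of $M(\Psib)$, hence isomorphic to $L(\Psib)$.

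The main step is (iii). For $\mu\in-\Lambda^+$, I start from the simple $\mathcal{U}_q(\hat{\bo})$-module $L^\bo(\Psib)$ of Theorem \ref{class} and lift it to a $\mathcal{U}_q^\mu(\hat{\Glie})$-module using the realization of Section \ref{asymprel}: the condition $\text{deg}\,\Psi_i=\alpha_i(\mu)$ is precisely what makes $\phi_{i,\alpha_i(\mu)}^-$ act invertibly on $L^\bo(\Psib)$ and what makes the quotient relations $\kappa_i=\phi_{i,-1}^-=\cdots=\phi_{i,\alpha_i(\mu)+1}^-=0$ compatible with the extension of the action through $\tilde{\mathcal{U}}_q(\hat{\Glie})$ followed by localization. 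The weight-space finiteness and cone conditions defining $\mathcal{O}_\mu$ are inherited from those of the Borel category $\mathcal{O}$, and $L(\Psib)$ arises as the unique simple subquotient of highest $\ell$-weight $\Psib$. For general $\mu$, I reduce to the anti-dominant case via the shift homomorphisms studied later in this section, which induce functors transporting simples between the categories $\mathcal{O}_{\mu'}$ for various $\mu'$ while controlling the highest $\ell$-weight. The main obstacle will be precisely this step (iii): verifying that the lift of $L^\bo(\Psib)$ through the asymptotic algebra yields a module with the expected highest $\ell$-weight in $\mathfrak{r}_\mu$, and then controlling the non-anti-dominant reduction so that the shift functors neither destroy simplicity nor move the $\ell$-weight outside of $\mathfrak{r}_\mu$.
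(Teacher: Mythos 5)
Your steps (i)--(ii) are fine and agree with what the paper does (the necessary condition on the highest $\ell$-weight is exactly the content of Section \ref{debo} together with Proposition \ref{zero}, and the simple quotient of a Verma module built from the triangular decomposition of Remark \ref{com}(iii) gives uniqueness). Your reduction of general $\mu$ to the anti-dominant case via the shift functors is also legitimate: by Proposition \ref{repsp}, $\mathcal{R}_{\mu,\mu',a}$ carries a simple of $\mathcal{O}_{\mu+\mu'}$ to a highest $\ell$-weight object of $\mathcal{O}_\mu$ whose highest $\ell$-weight differs by the polynomial factors $\Psib_{i,a}^{-\mu'(\alpha_i^\vee)}$, and $\mathcal{O}_\mu$ is closed under subquotients, so existence propagates from anti-dominant $\mu+\mu'$ to arbitrary $\mu$.

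The genuine gap is the anti-dominant existence step itself. You assert that for $\mu\in-\Lambda^+$ the simple Borel module $L^{\bo}(\Psib)$ of Theorem \ref{class} ``lifts'' through $\tilde{\mathcal{U}}_q(\hat{\Glie})$ to a $\mathcal{U}_q^\mu(\hat{\Glie})$-module because the degree condition makes the truncation relations and the localization compatible. But the degree condition only prescribes what $\phi_i^-(z)$ and the negative-mode generators \emph{would} have to do; it does not produce an action of the generators $x_{i,m}^{\pm}$ with $m\leq 0$ (which are not in $\mathcal{U}_q(\hat{\bo})$), nor verify the relations they must satisfy. This extension problem is exactly the hard point of the asymptotic construction of \cite{HJ}, where it is solved only for special modules (negative prefundamentals, via limits of Kirillov--Reshetikhin modules), and nothing available before Theorem \ref{param} gives it for an arbitrary $\Psib\in\mathfrak{r}_\mu$ (e.g.\ $\Psi_i$ with generic zeros and poles is not covered by Example \ref{exom}); note that Corollary \ref{still} goes in the opposite direction (restriction of a shifted simple is Borel-simple), so it cannot be invoked here, and in fact the existence of the lift is a \emph{consequence} of the theorem rather than an input. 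The paper deliberately avoids this route: since all Drinfeld generators are present in $\mathcal{U}_q^\mu(\hat{\Glie})$, it takes the highest $\ell$-weight module $L$ (simple quotient of the Verma module) and proves directly that it lies in $\mathcal{O}_\mu$, by a Chari--Pressley-style induction on the height of $\omega'\omega^{-1}$ showing the weight spaces are finite-dimensional; the key step is that on the top weight space there is a polynomial $P(z)$ with $P(z)(\phi_i^+(z)-\phi_i^-(z))=0$, whence, using relation (\ref{pmz}) and simplicity, $P(z)x_j^-(z)=0$ on $L_\omega$, forcing $L_{\omega\overline{\alpha_j}^{-1}}$ to be finite-dimensional. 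To repair your proposal you would either have to carry out that finite-dimensionality argument (at which point the Borel/asymptotic lift is unnecessary), or supply a genuine construction of the extended action for every $\Psib\in\mathfrak{r}_\mu$ with $\mu$ anti-dominant, which is substantially harder than the sentence in your step (iii) suggests.
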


\begin{proof} We have seen in Section \ref{debo} that for $i\in I$, a simple representation in the category $\mathcal{O}_\mu$ has a highest $\ell$-weight $\Psib$ satisfying $\text{deg}(\Psi_i) = \alpha_i(\mu)$ and that $\phi_i^+(z)$, $\phi_i^-(z)$ coincide as rational operators on this representation. 
So we have to prove that there exists a simple representation in the category $\mathcal{O}_\mu$ for each such $\ell$-weight $\Psib$.
In contrast to the case of the category $\mathcal{O}$, we have all Drinfeld generators in $\mathcal{U}_q^{\mu}(\hat{\Glie})$. 
So we do not have to use the strategy in \cite{HJ} (asymptotic representation theory), but 
we can use arguments as for finite-dimensional representations in \cite{CP} 
(see also \cite[Theorem 3.6]{my}). We consider a representation $L$ of highest $\ell$-weight $\Psib$ 
(such a representation can be constructed from a Verma module of highest $\ell$-weight as in \cite{HJ} for instance). 
It suffices to prove its weight spaces $L_{\omega'}$, $\omega'\in\tb^*$, are finite-dimensional. 
Let $\omega = \Psib(0)\in\tb^*$ be the highest weight of $L$. 
As in \cite[Section 5, PROOF of (b)]{CP}, this is proved by induction on the height of 
$\omega' \omega^{-1}$ factorized as a product of simple roots. 
The first step in the proof is to establish that for any $j\in I$, $L_{\omega\overline{\alpha_j}^{-1}}$ is finite-dimensional 
(the induction starts on weights $\omega'$ so that $\omega' \omega^{-1}$ has height $2$).
By the properties of $\Psib$, there is a non-zero polynomial $P(z)$ such that for any $i\in I$, the operator $P(z) (\phi_i^+(z) - \phi_i^-(z))$ is $0$ on $L_\omega$.
For $i,j\in I$ and $s\in\mathbb{Z}$, by the relation (\ref{pmz}), 
$$x_{i,s}^+ (P(z)x_j^-(z)) = 0$$
on $L_\omega$. As $L$ is simple, we get $P(z)x_j^-(z) = 0$ on $L_\omega$. 
This implies that $L_{\omega\overline{\alpha_j}^{-1}}$ is finite-dimensional.
We finish the proof word by word as in \cite[Section 5, PROOF of (b)]{CP}.
\end{proof}

\begin{example}\label{pospre} For $i\in I, a\in\mathbb{C}^*$, we have the positive and negative prefundamental representations $L_{i,a}^\pm = L(\Psib_{i,a}^{\pm 1})$ 
in the category $\mathcal{O}_{\pm \omega_i^\vee}$. The representation $L(\Psib_{i,a})$ is one-dimensional, with the action of the $x_{j,m}^\pm$ equal to $0$ for $j\in I$, $m\in\mathbb{Z}$, and 
$$\phi_j^+(z) = 1 - a z\delta_{i,j}\text{ , }\phi_i^-(z) = z ( z^{-1} - a \delta_{i,j}).$$ 
The structure of $\mathcal{U}_q^{- \omega_i^\vee}(\hat{\Glie})$-module of $L_{i,a}^-$ extends the structure of $\mathcal{U}_q(\hat{\bo})$-module.
This generalizes the result in the $sl_2$-case obtained in terms of evaluation morphisms in Example \ref{exval} ($L_{\gamma,a}^-$ there is $L(\gamma (1- za)^{-1})$). 
In the case of shifted Yangians, these examples were first discussed in \cite{Z}.
It was first noted in \cite{HJ} that the action of $\mathcal{U}_q(\hat{\bo})$ can be extended to the asymptotic algebra, but not to the whole quantum affine algebra : we understand now that the correct framework
is given by shifted quantum affine algebras.
\end{example}

We now define the direct sum of the abelian categories
$$\mathcal{O}^{sh} = \bigoplus_{\mu \in \Lambda} \mathcal{O}_\mu.$$
By Theorem \ref{param}, the simple objects in $\mathcal{O}^{sh}$ are parametrized by $\mathfrak{r}$.

\subsection{Shift functors}\label{reps}

 A shift homomorphism is introduced in \cite[Section 10.(vii)]{FT} as an analog of the morphism defined 
for shifted Yangians in \cite{fkprw}. For $\mu\in \Lambda$, $\mu'\in -\Lambda^+$ and $a\in\mathbb{C}^*$ there is an algebra morphism
$$\iota_{\mu, \mu',a} : \mathcal{U}_q^\mu(\hat{\Glie}) \rightarrow \mathcal{U}_q^{\mu + \mu'}(\hat{\Glie})$$
defined by the following (for $i\in I$) :
\begin{equation}\label{iot}x_i^+(z)\mapsto x_i^+(z)\text{ , }x_i^-(z)\mapsto (1 - az)^{-\mu'(\alpha_i^\vee)} x_i^-(z)\text{ , }\phi_i^\pm(z)\mapsto (1 - az)^{-\mu'(\alpha_i^\vee)} \phi_i^\pm(z).\end{equation}
This is obtained from the shift homomorphism in \cite{FT} after conjugating by the change of variables $z\mapsto az$.

\begin{prop}\cite{FT}\label{iinj} The shift homomorphism $\iota_{\mu, \mu',a}$ is injective.
\end{prop}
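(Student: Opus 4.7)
The plan is to exploit the triangular decomposition of shifted quantum affine algebras (Remark \ref{com}(iii)): $\mathcal{U}_q^\mu(\hat{\Glie}) \simeq \mathcal{U}_q^{\mu,-}(\hat{\Glie}) \otimes \mathcal{U}_q^{\mu,0}(\hat{\Glie}) \otimes \mathcal{U}_q^{\mu,+}(\hat{\Glie})$ as $\mathbb{C}$-vector spaces via multiplication, and similarly for the target $\mathcal{U}_q^{\mu+\mu'}(\hat{\Glie})$. Inspection of the defining formulas \eqref{iot} shows that $\iota := \iota_{\mu,\mu',a}$ restricts to the identity on the positive factor and sends the Cartan-Drinfeld (respectively negative) factor of the source into the corresponding factor of the target. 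Hence $\iota$ decomposes, as a linear map, as the tensor product $\iota^{-} \otimes \iota^{0} \otimes \iota^{+}$ of its restrictions, and by flatness over $\mathbb{C}$ it suffices to prove that each of $\iota^{+}$, $\iota^{0}$, $\iota^{-}$ is injective.

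The positive restriction $\iota^{+}$ is the identity, hence trivially injective. For $\iota^{0}$, I would use that $\mathcal{U}_q^{\mu,0}$ is freely generated as a commutative algebra by the invertible leading elements $(\phi_{i,0}^{+})^{\pm 1}$, $(\phi_{i,\alpha_i(\mu)}^{-})^{\pm 1}$, together with the $h_{i,r}$ for $r \neq 0$. Expanding $(1-az)^{-\mu'(\alpha_i^\vee)}\phi_i^{\pm}(z)$ in exponential form and comparing with the target's exponential form, one reads off that $\iota^{0}$ sends $\phi_{i,0}^{+}$ to itself, sends $\phi_{i,\alpha_i(\mu)}^{-}$ to a non-zero scalar multiple of the target's analogous leading element, and sends each $h_{i,r}$ to itself plus an explicit scalar coming from the logarithmic expansion of $(1-az)^{-\mu'(\alpha_i^\vee)}$. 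This is manifestly an automorphism of the Cartan factor, hence injective.

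The main case is $\iota^{-}$. My key observation is that the defining relations of $\mathcal{U}_q^{\mu,-}$, namely the quadratic relation \eqref{hdd} and the Serre relations \eqref{seq} for the $-$ sign, do not depend on $\mu$. This yields a canonical algebra identification $\mathcal{U}_q^{\mu,-} \simeq \mathcal{U}_q^{\mu+\mu',-}$ under which $\iota^{-}$ becomes an endomorphism acting by $x_i^{-}(z) \mapsto (1-az)^{-\mu'(\alpha_i^\vee)} x_i^{-}(z)$. I would then use the Drinfeld $\mathbb{Z}$-grading (Remark \ref{com}(v)), with respect to which the negative factor is $\mathbb{Z}$-graded since all its defining relations are homogeneous. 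Since $-\mu'(\alpha_i^\vee) \geq 0$ and $(1-az)^{-\mu'(\alpha_i^\vee)}$ has constant term $1$, the image $\iota^{-}(x_{i,r}^{-})$ equals $x_{i,r}^{-}$ plus a sum of terms of strictly smaller Drinfeld degree; this extends multiplicatively, so $\iota^{-}$ preserves the exhaustive increasing filtration by Drinfeld degree $\leq d$, and its associated graded is the identity endomorphism, which is injective. The standard top-degree argument (any $0 \neq v$ has a well-defined top degree $d_0$, and the image of $v$ in $F^{\leq d_0}/F^{\leq d_0 -1}$ under the associated graded map is nonzero, so $\iota^{-}(v) \neq 0$) then gives injectivity of $\iota^{-}$. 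The main delicate point, in my view, will be justifying rigorously the canonical algebra identification $\mathcal{U}_q^{\mu,-} \simeq \mathcal{U}_q^{\mu+\mu',-}$ from the presentations: this requires the precise content of Remark \ref{com}(iii) that each triangular factor is presented only by the Drinfeld generators it contains and the relations involving these generators, with no additional hidden relations depending on $\mu$.
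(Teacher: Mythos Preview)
Your argument is correct and complete. The triangular decomposition does reduce the problem to the three factors; the Cartan piece is handled by the explicit shift-by-scalar on the $h_{i,r}$ together with the leading element being sent to a nonzero multiple of the target's leading element; and for the negative factor your filtration-by-Drinfeld-degree argument goes through because, as you note, the presentation of $\mathcal{U}_q^{\mu,-}$ by the relations \eqref{hdd} and \eqref{seq} is literally $\mu$-independent (this is precisely the content of Remark~\ref{com}(iii)), so $\iota^{-}$ is an endomorphism whose associated graded is the identity.

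The paper does \emph{not} prove this statement itself: it records that the result is established in type $A$ in \cite[Proposition~I.4]{FT}, and that Tsymbaliuk communicated that the same argument, combined with the embedding of shifted quantum affine algebras into shuffle algebras, yields the result in general type. Your route is therefore genuinely different and more self-contained: it avoids shuffle algebras entirely and works uniformly for all types, using only the triangular decomposition already invoked elsewhere in the paper (e.g.\ in the proof of Proposition~\ref{suba}). The shuffle-algebra approach, on the other hand, gives an injective realization of the whole algebra at once, which can be useful for other purposes (PBW-type bases, explicit computations), but for the bare injectivity of $\iota_{\mu,\mu',a}$ your elementary argument is cleaner.
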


\begin{rem} (i) This is proved explicitly in type $A$ in \cite[Proposition I.4]{FT}, but Tsymbaliuk explained to the author
the same argument, combined with embedding into shuffle algebras, gives the result for general types.

(ii) Consequently, for $\mu\in -\Lambda^+$, 
$\mathcal{U}_q^\mu (\hat{\Glie})$ contains a subalgebra isomorphic to $\mathcal{U}_q^0(\hat{\Glie})$ and so a subalgebra
isomorphic to $\mathcal{U}_q(\hat{\bo})$. It is not equal to the subalgebra constructed in the proof of Proposition \ref{suba} if $\mu\neq 0$ (this follows from Remark \ref{finc}).
\end{rem}

Let $\mu\in\Lambda$ and $\mu'\in -\Lambda^+$. Then the shift homomorphism $\iota_{\mu,\mu',a}$ defines a functor
$$\mathcal{R}_{\mu,\mu',a} : \mathcal{O}_{\mu+\mu'}\rightarrow \mathcal{O}_\mu.$$
Conversely, for a representation $V$ of $\mathcal{U}_q(\hat{\Glie})$, let us consider the $ \mathcal{U}_q^{\mu + \mu'}(\hat{\Glie})$-module : 
$$\mathcal{I}_{\mu,\mu',a}(V) = \mathcal{U}_q^{\mu + \mu'}(\hat{\Glie})\otimes_{\mathcal{U}_q^\mu(\hat{\Glie})} V.$$
This gives a functor 
$$\mathcal{I}_{\mu,\mu',a} : \text{Mod}_\mu \rightarrow \text{Mod}_{\mu + \mu'},$$
from the category $\text{Mod}_\mu$ of representations of $\mathcal{U}_q^\mu(\hat{\Glie})$ to the category $\text{Mod}_{\mu + \mu'}$ of representations of $\mathcal{U}_q^{\mu + \mu'}(\hat{\Glie})$.

From the defining formulas of $\iota_{\mu,\mu',a}$, we get the following.

\begin{prop}\label{repsp} Let $L(\Psib)$ be a simple module in $\mathcal{O}_{\mu + \mu'}$. Then $\mathcal{R}_{\mu,\mu',a}(L(\Psib))$ is a highest $\ell$-weight module of highest $\ell$-weight 
$$\Psib' = \Psib\prod_{i\in I}\Psib_{i,a}^{-\mu'(\alpha_i^\vee)}$$ 
and so admits $L(\Psib')$ as a subquotient. Conversely, $\mathcal{I}_{\mu,\mu',a}(L(\Psib'))$ is a representation of $\mathcal{U}_q^{\mu + \mu'}(\hat{\Glie})$
of highest $\ell$-weight $\Psib$ which admits $L(\Psib)$ as a simple quotient.
\end{prop}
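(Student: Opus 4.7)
The plan is to verify both halves of the statement by direct computation from the defining formulas~(\ref{iot}) of $\iota=\iota_{\mu,\mu',a}$, together with the rationality of the Drinfeld currents on weight spaces (Proposition~\ref{zero}). Throughout, write $n_i = -\mu'(\alpha_i^\vee) \geq 0$, so that $(1-az)^{n_i}$ is a polynomial and $\Psib'_i(z) = (1-az)^{n_i}\Psib_i(z)$.

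For the first part, let $v \in L(\Psib)$ be the highest $\ell$-weight vector. Since $\iota(x_i^+(z)) = x_i^+(z)$ it is automatic that $\iota(x_i^+(z)).v = 0$; the formula for $\iota(\phi_i^\pm(z))$ immediately gives $\iota(\phi_i^\pm(z)).v = (1-az)^{n_i} \Psib_i(z)\, v = \Psib'_i(z)\, v$, so $v$ is a highest $\ell$-weight vector of weight $\Psib'$ in the pullback. It remains to show $\iota(\mathcal{U}_q^\mu(\hat{\Glie})).v = L(\Psib)$; the inclusion $\subseteq$ is trivial, and for $\supseteq$ one uses simplicity of $L(\Psib)$, reducing to the assertion that $\iota(\mathcal{U}_q^\mu(\hat{\Glie}))$ and $\mathcal{U}_q^{\mu+\mu'}(\hat{\Glie})$ act on each weight space with the same span of operators. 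By Proposition~\ref{zero}, the actions of $x_i^-(z)$ and $\phi_i^\pm(z)$ on any fixed weight space are rational in $z$; multiplying such a rational function by the polynomial $(1-az)^{n_i}$ and recovering it by formal division (at $z=0$ or $z=\infty$ as appropriate) shows that the coefficients of $(1-az)^{n_i} x_i^-(z)$ span the same space of operators as the coefficients of $x_i^-(z)$, and similarly for $\phi_i^\pm(z)$. The two orbits of $v$ therefore coincide, so the pullback is highest $\ell$-weight of weight $\Psib'$ and admits $L(\Psib')$ as its unique simple quotient.

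For the converse, I would consider $w = 1 \otimes v' \in \mathcal{I}_{\mu,\mu',a}(L(\Psib'))$. Using $\iota(a)(1 \otimes v') = 1 \otimes a.v'$ one computes $(1-az)^{n_i} \phi_i^\pm(z).w = \iota(\phi_i^\pm(z)).w = 1 \otimes \Psib'_i(z) v' = (1-az)^{n_i} \Psib_i(z)\, w$; dividing by $(1-az)^{n_i}$ (invertible in $\CC[[z]]$ for $\phi_i^+(z)$ and in $\CC[[z^{-1}]]$ for $\phi_i^-(z)$) yields $\phi_i^\pm(z).w = \Psib_i(z)\, w$, while $x_i^+(z).w = \iota(x_i^+(z)).w = 0$. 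That $w$ generates the whole induced module follows from $1 \otimes L(\Psib') = 1 \otimes \iota(\mathcal{U}_q^\mu(\hat{\Glie})).v' = \iota(\mathcal{U}_q^\mu(\hat{\Glie})).w \subseteq \mathcal{U}_q^{\mu+\mu'}(\hat{\Glie}).w$ together with the fact that $1 \otimes L(\Psib')$ spans the induction. Hence $\mathcal{I}_{\mu,\mu',a}(L(\Psib'))$ is a highest $\ell$-weight module of weight $\Psib$ and admits $L(\Psib)$ as its unique simple quotient. The main subtlety — and the place where an error would most easily creep in — is the careful handling of the two different formal expansions of $(1-az)^{n_i}$ needed to isolate $\phi_i^+(z)$ from $\phi_i^-(z)$; once this is set up correctly, everything reduces to a direct calculation from~(\ref{iot}) and Proposition~\ref{zero}.
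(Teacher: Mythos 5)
Your treatment of the induction half is fine: cancelling $(1-az)^{n_i}$ there is legitimate because $\phi_i^+(z).w$ and $\phi_i^-(z).w$ are one-sided series, on which multiplication by the polynomial $(1-az)^{n_i}$ is injective, and generation follows from $1\otimes L(\Psib') = \iota_{\mu,\mu',a}(\mathcal{U}_q^\mu(\hat\Glie)).(1\otimes v')$. Likewise, your computation showing that the highest weight vector $v$ of $L(\Psib)$ becomes a highest $\ell$-weight vector of $\ell$-weight $\Psib'$ in the pullback is correct, and this is all the paper really extracts from the first half ("follows from the defining formulas").

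The gap is in your claim that, on each weight space, the coefficients of $(1-az)^{n_i}x_i^-(z)$ span the same space of operators as the coefficients of $x_i^-(z)$, recovered "by formal division". Division by $(1-az)^{n_i}$ is only injective on one-sided series; $x_i^-(z)$ is a two-sided current, and Proposition \ref{zero} says precisely that on a weight space it is a formal distribution annihilated by a polynomial $P(z)$, hence a sum of delta-type terms supported at the inverse roots of $P$. Multiplication by $(1-az)^{n_i}$ kills the component supported at $z=a^{-1}$ (recall $(1-az)\delta(az)=0$), and that component cannot be recovered by any division. This is not a repairable technicality: in the paper's own Example just after the Proposition, take $\Glie=sl_2$, $\mu=\omega_1^\vee$, $\mu'=-2\omega_1^\vee$ and $L(\Psib)=L_{1,a}^-$ with highest weight vector $v_0$; by Example \ref{exqchar} one has $x_{1,r}^-.v_0=\frac{a^r}{q-q^{-1}}v_1\neq 0$ for all $r$, while $\iota(x_1^-(z)).v_0=(1-az)^2\,\delta(az)\,\frac{v_1}{q-q^{-1}}=0$, so on the top weight space the two spans are $\{0\}$ and a line, and $\iota(\mathcal{U}_q^{\omega_1^\vee}(\hat{sl}_2)).v_0=\CC v_0$ is a proper submodule. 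Thus the restricted module is genuinely not generated by its highest $\ell$-weight vector in this case (which is exactly why that restriction is "not simple" and $L_{i,a}^+$ occurs only as a subquotient). The robust conclusion — and the one used later, e.g. in Proposition \ref{fsense} — is the one your first, correct computation already gives: $v$ is a highest $\ell$-weight vector of $\ell$-weight $\Psib'$, so the submodule $\iota(\mathcal{U}_q^\mu(\hat\Glie)).v$ surjects onto $L(\Psib')$ and $L(\Psib')$ is a subquotient of $\mathcal{R}_{\mu,\mu',a}(L(\Psib))$; the stronger generation statement should be read in this weakened sense, and in any case your span-equality argument for it fails whenever $a^{-1}$ is a pole of the rational action of $x_i^-(z)$ on some weight space.
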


\begin{example}
For $i\in I$ and $a\in\mathbb{C}^*$ we have the functor
$$\mathcal{R}_{\omega_i^\vee, -2\omega_i^\vee,a}:\mathcal{O}_{-\omega_i^\vee}\rightarrow \mathcal{O}_{\omega_i^\vee}.$$
Then, $L_{i,a}^+$ is a $1$-dimensional subquotient of $\mathcal{R}_{\omega_i^\vee,-2\omega_i^\vee,a}(L_{i,a}^-)$  which is not simple. 
Note also that the functors factors through $\mathcal{O}_0$ where we see already a $1$-dimensional subquotient.
\end{example}

\section{Fusion product and Grothendieck ring}\label{fusionr}

We construct the fusion product of representations of shifted quantum affine algebras in the category $\mathcal{O}^{sh}$ by using the deformed Drinfeld coproduct and 
the renomalization procedure in \cite{H2} (Theorem \ref{fp}). This leads to the definition of a ring structure on the Grothendieck group $K_0(\mathcal{O}^{sh})$. We establish a simple module in $\mathcal{O}^{sh}$ is a quotient of a fusion product of various prefundamental and constant representations (Corollary \ref{role}). Note that the Drinfeld coproduct is not an analog of 
the shifted Yangian coproduct of \cite{fkprw}  (see Remark \ref{opro}). Hence the methods and results in this section are not 
trigonometric versions of known results for Yangians.

Along the way we consider analogs of Frenkel-Reshetikhin $q$-characters of representations of shifted quantum affine algebras. We establish $q$-characters of simple representations
satisfy a triangularity property with respect to Nakajima partial ordering (Theorem \ref{partialo}).

\subsection{Characters}

Following \cite{Fre, HJ}, there is a linear $q$-character morphism
$$\chi_q : K_0(\mathcal{O}_\mu) \rightarrow \mathcal{E}_{\ell,\mu}$$
where $K_0(\mathcal{O}_\mu)$ is the Grothendieck group of the abelian category $\mathcal{O}_\mu$ and 
$\mathcal{E}_{\ell,\mu}\subset \mathbb{Z}^{\mathfrak{r}_\mu}$ is a group of formal series with coefficients in $\mathfrak{r}_\mu$ as in \cite{HJ}. 
It is defined by 
$$\chi_q(V) = \sum_{\Psib\in \mathfrak{r}_\mu}\text{dim}(V_{\Psib})[\Psib]$$
where $V_{\Psib}$ is the $\ell$-weight space of $\ell$-weight $\Psib$ as above and $[\Psib]$ is the map $\delta_{\Psib,.}$ 
(which assigns $1$ to $\Psib$ and $0$ to all other $\Psib'$).
We recover the standard character
$$\chi(V) = \varpi(\chi_q(V)) = \sum_{\omega\in \tb^*} \text{dim}(V_\omega) [\omega]$$
where we have set $\varpi([\Psib]) = \Psib(0)\in \tb^*$.

Due to Theorem \ref{param}, the $q$-characters morphism separates isomorphism classes of simple modules and 
the $q$-characters of simple modules are linearly independent for weight reasons. Hence, by standard arguments one
obtains the following.

\begin{cor}\label{injq} The $q$-character morphism $\chi_q$ is injective.\end{cor}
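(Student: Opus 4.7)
The plan is to exploit the triangularity of $\chi_q(L(\Psib))$ with respect to the partial ordering $\preceq$ on $\tb^*$, together with the classification in Theorem \ref{param}, following the standard pattern used for the category $\mathcal{O}$ of $\mathcal{U}_q(\hat{\bo})$ in \cite{HJ}.

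First I would check that $[V]$ admits a well-defined Jordan--Hölder expansion in terms of simple classes. The finiteness axioms of $\mathcal{O}_\mu$ (finite-dimensional weight spaces and finitely many maximal weights $\omega_1,\dots,\omega_s$) guarantee that, on each weight space, only finitely many simple subquotients $L(\Psib)$ can contribute. So there are well-defined multiplicities $m_\Psib(V)\in\mathbb{Z}_{\geq 0}$ such that, in the appropriately completed $K_0(\mathcal{O}_\mu)$, one has $[V] = \sum_{\Psib\in\mathfrak{r}_\mu} m_\Psib(V)[L(\Psib)]$ with the sum locally finite, and hence $\chi_q(V) = \sum_\Psib m_\Psib(V)\,\chi_q(L(\Psib))$ inside $\mathcal{E}_{\ell,\mu}$.

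Next, for each $\Psib\in\mathfrak{r}_\mu$, I would verify the triangular form
$$\chi_q(L(\Psib)) = [\Psib] + \sum_{\Psib'\neq\Psib,\; \varpi(\Psib')\prec \varpi(\Psib)} d_{\Psib,\Psib'}\,[\Psib'].$$
The coefficient of $[\Psib]$ is $1$ because the highest $\ell$-weight space of $L(\Psib)$ is one-dimensional (it is cyclic and generated by the highest-weight vector). Any other $\ell$-weight space is reached from the highest-weight vector by applying a non-trivial product of $x_{i,m}^-$'s, and by Proposition \ref{zero} each such operator shifts the $\tb^*$-weight by some $\overline{\alpha_i}^{-1}$; so any $\Psib'$ appearing satisfies $\varpi(\Psib')\prec\varpi(\Psib)$ strictly.

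Finally I would conclude by an inversion argument: if $\chi_q(V)=0$ but some $m_\Psib(V)\neq 0$, pick any $\Psib$ with $\varpi(\Psib)$ maximal among weights carrying a non-zero multiplicity; then the coefficient of $[\Psib]$ in $\chi_q(V)$ equals $m_\Psib(V)$, a contradiction. Hence all $m_\Psib(V)$ vanish and $[V]=0$ in $K_0(\mathcal{O}_\mu)$, which gives injectivity on each summand and therefore on $K_0(\mathcal{O}^{sh})$. The only mild obstacle here is purely bookkeeping, namely matching up the completion used for $K_0(\mathcal{O}_\mu)$ with the space of formal series $\mathcal{E}_{\ell,\mu}$ so that the locally finite expansion above is well-posed; once the finiteness conditions of $\mathcal{O}_\mu$ are unraveled this is routine, which is why the author signals the result follows "by standard arguments."
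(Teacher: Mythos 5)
Your proof is correct and is exactly the "standard argument" the paper invokes: Theorem \ref{param} plus the weight-triangularity of $\chi_q(L(\Psib))$ (leading coefficient $1$ at $[\Psib]$, all other terms of strictly lower weight), combined with a locally finite expansion of classes into simple classes and a maximal-weight inversion. No essential difference from the paper's intended proof.
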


\begin{example}\label{exqchar} (i) For $i\in I$ and $a\in\mathbb{C}^*$, the prefundamental representation $L_{i,a}^+$ in $\mathcal{O}_{\omega_i^\vee}$ satisfies
$$\chi_q(L_{i,a}^+) = [\Psib_{i,a}].$$
It is different from the $q$-character of the corresponding simple $\mathcal{U}_q(\hat{\bo})$-module in $\mathcal{O}$ which is infinite-dimensional.

(ii) When restricted to the category $\hat{\mathcal{O}}\subset \mathcal{O}_0$, we recover the $q$-character of $\mathcal{U}_q(\hat{\Glie})$-modules.

(iii) For $\Glie = sl_2$, we have
$$\chi_q(L_{1,a}^-) =  \sum_{m\geq 0} [q^{-2m}\Psib_{1,aq^{-2m}}^{-1}\Psib_{1,aq^2}\Psib_{1,aq^{2(1 - m)}}^{-1}].$$

(iv) More generally, for $i\in I$, $a\in\mathbb{C}^*$, we get an analog of the $q$-character formula established in \cite{FH2} :
$$\tilde{\Psib}_{i,a} = \Psib_{i,a}^{-1}\left(\prod_{j,C_{i,j} = -1}\Psib_{j,aq_i} \right)\left(\prod_{j,C_{i,j} = -2}\Psib_{j,a}\Psib_{j,aq^2} \right)\left(\prod_{j,C_{i,j} = -3}\Psib_{j,aq^{-1}}\Psib_{j,aq}\Psib_{j,aq^3} \right),$$
$$\chi_q(L(\tilde{\Psib}_{i,a})) = \sum_{m\geq 0} [\overline{-m\alpha_i}][\tilde{\Psib}_{i,aq_i^{-2m}}\Psib_{i,aq_i^2}\Psib_{i,aq_i^{2(1-m)}}^{-1}].$$ 
Indeed, this representation can be realized with a basis $(v_m)_{m\geq 0}$ of $\ell$-weight vectors corresponding to the terms in this sum. For $r\in\mathbb{Z}$, the $x_{j,r}^\pm$ have a $0$ action if $j\neq i$,
$$x_{i,r}^+.v_m =   a^r q^{2r (1 - m)}\delta_{m>0}v_{m- 1}\text{ , }x_{i,r}^-.v_m = a^r q^{- (2r +1)m}\frac{[m+1]_q}{q - q^{-1}} v_{m+1}.$$
\end{example}

Let $\mathcal{E}_\ell = \bigoplus_{\mu \in \Lambda}\mathcal{E}_{\ell,\mu}$. We get an injective linear morphism.
$$\chi_q : K_0(\mathcal{O}^{sh}) = \bigoplus_{\mu\in \Lambda}K_0(\mathcal{O}_\mu) \rightarrow \mathcal{E}_\ell.$$
We have a natural bilinear product 
$$\mathcal{E}_{\ell,\mu_1}\otimes \mathcal{E}_{\ell,\mu_2}\rightarrow \mathcal{E}_{\ell,\mu_1 + \mu_2},$$ 
which induces a ring structure on $\mathcal{E}_\ell$. Hence, we can multiply $q$-characters. Let us explain the categorical meaning of this product.

\subsection{Deformed Drinfeld coproduct}

The Drinfeld coproduct, and its deformed version \cite[Section 3.1]{H2}, can be defined for 
shifted quantum affine algebras by using the same formula as for quantum affine algebras (see \cite[Section 10.1]{FT}). 

For $u$ a formal parameter and $\mu_1,\mu_2\in\Lambda$, 
$$(\mathcal{U}_q^{\mu_1}(\hat{\Glie})
\otimes\mathcal{U}_q^{\mu_2}(\hat{\Glie}))((u))$$ 
is the algebra of formal Laurent series
with coefficients in $\mathcal{U}_q^{\mu_1}(\hat{\Glie})
\otimes\mathcal{U}_q^{\mu_2}(\hat{\Glie})$. The deformed Drinfeld coproduct is the algebra morphism : 
$$\Delta_u : \mathcal{U}_q^{\mu_1 + \mu_2}(\hat{\Glie})\rightarrow (\mathcal{U}_q^{\mu_1}(\hat{\Glie})
\otimes \mathcal{U}_q^{\mu_2}(\hat{\Glie}))((u))$$
define by the formulas
$$\Delta_u (x_i^+(z)) = x_i^+(z)\otimes 1 + \phi_i^-(z)\otimes x_i^+(zu)\text{ , }\Delta_u(x_i^-(z)) = 1\otimes x_i^-(zu) + x_i^-(z)\otimes \phi_i^+(zu),$$
$$\Delta_u(\phi_i^\pm(z)) = \phi_i^\pm(z)\otimes \phi_i^\pm(zu).$$

\begin{rem}\label{opro} (i) The specialization at $u = 1$ of $\Delta_u$ is well-defined only in a completion of the tensor product $\mathcal{U}_q^{\mu_1}(\hat{\Glie})
\otimes \mathcal{U}_q^{\mu_2}(\hat{\Glie})$.

(ii) Another coproduct, analog to the Drinfeld-Jimbo coproduct of quantum affine algebras and to the coproduct for 
shifted Yangians in \cite{fkprw}, is defined in \cite{FT} for type $A$ shifted quantum affine algebras (it is conjectured
to exist for any types). 
\end{rem}

\subsection{Fusion product}

Consider $V_1$, $V_2$ respectively in $\mathcal{O}_{\mu_1}$, $\mathcal{O}_{\mu_2}$. We get a structure of
$\mathcal{U}_q^{\mu_1 + \mu_2}(\hat{\Glie})$-module on the space of Laurent formal power series with coefficients in $V_1\otimes V_2$ : 
$$(V_1\otimes V_2)((u)).$$
This representation is the sum of its weight spaces which are infinite-dimensional. But let us study how to get a representation in 
the category $\mathcal{O}_{\mu_1 + \mu_2}$ from this representation.

We use the fusion procedure introduced in \cite{H, H2} for quantum affine algebras (and quantum affinizations). 
In general, the formal parameter $u$ can not be specialized directly to a specific complex number. 
However, one can prove as in \cite[Lemma 3.10]{H} that for $V = V_1\otimes V_2$, the subspace of rational 
Laurent formal power series
$$V(u)\subset V((u))$$ 
is a stable submodule. 

Let $\mathcal{A}\subset \CC(u)$ be the subring of rational fractions regular at $1$. An $\mathcal{A}$-form 
$\tilde{V}\subset V(u)$ is a $\mathcal{A}\otimes \mathcal{U}_q^{\mu_1 + \mu_2}(\hat{\Glie})$-submodule generating $V(u)$ as a $\CC(u)$-vector space 
and so that its intersection with any weight space of $V(u)$ is a finitely generated $\mathcal{A}$-module.

Suppose that $V_1$, $V_2$ are of highest $\ell$-weights. Then it is proved as in \cite[Theorem 6.2]{H2} that $V(u)$ 
is cyclic for the action of $\mathcal{U}_q^{\mu_1 + \mu_2}(\hat{\Glie})\otimes \mathbb{C}(u)$ generated 
by a tensor product $v_1\otimes v_2$ of highest weight vectors $v_1$, $v_2$. Then we obtain as in \cite[Lemma 4.8]{H2} 
that the $\mathcal{A}\otimes \mathcal{U}_q^{\mu_1 + \mu_2}(\hat{\Glie})$-submodule generated by $v_1\otimes v_2$ is an 
$\mathcal{A}$-form that we denote $\tilde{V}$. Then 
$$V_1 * V_2 = \tilde{V}/(u - 1) \tilde{V}$$ 
is a $\mathcal{U}_q^{\mu_1 + \mu_2}(\hat{\Glie})$-module in the category $\mathcal{O}_{\mu_1 + \mu_2}$ called the
fusion product of $V_1$ and $V_2$.

\begin{thm}\label{fp} The fusion product $V_1 * V_2$ is a well-defined highest $\ell$-weight module in $\mathcal{O}_{\mu_1 + \mu_2}$ satisfying 
$$\chi_q(V_1 * V_2) = \chi_q(V_1)\chi_q(V_2).$$
\end{thm}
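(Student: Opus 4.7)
The plan is to transport the fusion procedure developed in \cite{H, H2} for ordinary quantum affine algebras to the shifted setting. By the discussion preceding the theorem, $V(u)$ is a stable submodule of $(V_1\otimes V_2)((u))$, cyclic over $\mathbb{C}(u)\otimes\mathcal{U}_q^{\mu_1+\mu_2}(\hat{\Glie})$ with generator $v_1\otimes v_2$, and $\tilde{V}$ denotes the $\mathcal{A}\otimes\mathcal{U}_q^{\mu_1+\mu_2}(\hat{\Glie})$-submodule generated by this vector. The three assertions remaining to establish are: (a) $\tilde{V}$ is indeed an $\mathcal{A}$-form, i.e.\ finitely generated on each weight space; (b) the quotient $V_1 * V_2 = \tilde{V}/(u-1)\tilde{V}$ is a highest $\ell$-weight module in $\mathcal{O}_{\mu_1+\mu_2}$; (c) the multiplicativity $\chi_q(V_1*V_2) = \chi_q(V_1)\chi_q(V_2)$ holds.

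For (a), following the template of \cite[Lemma 4.8]{H2}, I would invoke Proposition \ref{zero}: on any weight subspace $W$ of $V_1\otimes V_2$, the Fourier coefficients of $\phi_i^\pm(z)$ and $x_i^\pm(z)$ satisfy linear recurrences with constant polynomial coefficients, so the images of $\Delta_u(\phi_i^\pm(z))$ and $\Delta_u(x_i^\pm(z))$ on $W$ are spanned over $\mathbb{C}(u)$ by finitely many vectors with matrix entries rational in $u$. Consequently $\tilde{V}$ intersects each weight space $V(u)_\omega$ in an $\mathcal{A}$-module of rank at most $\dim (V_1\otimes V_2)_\omega$, which is finite by the category-$\mathcal{O}$ condition on $V_1$ and $V_2$.

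For (b), with (a) in hand, specialization at $u=1$ is well defined on $\tilde{V}$, and the image of $v_1\otimes v_2$ in the quotient is annihilated by every $x_{i,m}^+$ and is an eigenvector for the Cartan-Drinfeld series with eigenvalue
$$\Delta_u(\phi_i^\pm(z))(v_1\otimes v_2)\big|_{u=1} = (\Psib_1)_i(z)(\Psib_2)_i(z),$$
a rational function of degree $\alpha_i(\mu_1)+\alpha_i(\mu_2)=\alpha_i(\mu_1+\mu_2)$, so $\Psib_1\Psib_2\in\mathfrak{r}_{\mu_1+\mu_2}$. Finite-dimensionality of weight spaces and the bounding-weights condition for $V_1*V_2$ both descend from $V_1\otimes V_2$, placing the quotient in $\mathcal{O}_{\mu_1+\mu_2}$. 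For (c), multiplicativity of $\chi_q$ follows from the observation that $\Delta_u(\phi_i^\pm(z)) = \phi_i^\pm(z)\otimes\phi_i^\pm(zu)$ is diagonal with respect to the $\ell$-weight decomposition $V_1\otimes V_2 = \bigoplus_{\Psibs_1,\Psibs_2}(V_1)_{\Psibs_1}\otimes(V_2)_{\Psibs_2}$; each $\ell$-weight space of $V_1*V_2$ of $\ell$-weight $\Psib$ therefore has dimension $\sum_{\Psibs_1\Psibs_2=\Psibs}\dim(V_1)_{\Psibs_1}\dim(V_2)_{\Psibs_2}$.

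The main obstacle will be (a): one must verify that no unwanted poles at $u=1$ appear in the action of the full Drinfeld algebra, not only the Cartan-Drinfeld part. Whereas the arguments of \cite[Lemma 3.10]{H} and \cite[Theorem 6.2]{H2} for quantum affine algebras rely on degree bounds arising from integrability, here the rational-operator property of Proposition \ref{zero} must be applied uniformly to all Drinfeld currents appearing in the recursive generation of $\tilde{V}$ from $v_1\otimes v_2$, with careful bookkeeping to verify that every matrix entry encountered in the process is regular at $u=1$.
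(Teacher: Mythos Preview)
Your approach is essentially the one the paper indicates: the paper does not give an explicit proof of this theorem, instead treating it as a direct consequence of the construction described in the paragraphs immediately preceding the statement, which already invoke \cite[Lemma 3.10]{H}, \cite[Theorem 6.2]{H2}, and \cite[Lemma 4.8]{H2}. Your steps (a), (b), (c) are a correct elaboration of what those references establish, transported to the shifted setting via Proposition \ref{zero}.

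Two small remarks. First, your final paragraph slightly misidentifies the obstacle in (a): by definition $\tilde{V}$ is the $\mathcal{A}\otimes\mathcal{U}_q^{\mu_1+\mu_2}(\hat{\Glie})$-submodule generated by $v_1\otimes v_2$, so the real content is not stability but rather that $\tilde{V}\cap V(u)_\omega$ is finitely generated over $\mathcal{A}$, which amounts to showing the action of the Drinfeld generators on $V(u)$ sends $v_1\otimes v_2$ into $(V_1\otimes V_2)\otimes\mathcal{A}$ (i.e.\ no poles at $u=1$); this is what the rationality from Proposition \ref{zero} provides, so your instinct is right even if the framing is slightly off. Second, your argument for (c) is incomplete as stated: the diagonality of $\Delta_u(\phi_i^\pm(z))$ on $\ell$-weight spaces is only with respect to generalized eigenspaces, and you still need the flatness argument (each $\tilde{V}_\omega$ is free over the PID $\mathcal{A}$ of rank $\dim_{\mathbb{C}}(V_1\otimes V_2)_\omega$) to conclude that specialization at $u=1$ preserves the dimension of each generalized eigenspace. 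This is standard and implicit in \cite{H2}, but deserves a sentence.
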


For $V_1,\cdots, V_r$ a family of highest $\ell$-weight representations $V_i$ in $\mathcal{O}_{\mu_i}$, the same procedure gives a fusion module
$$V_1 * V_2 * \cdots  * V_r$$
in $\mathcal{O}_{\mu_1 + \cdots + \mu_r}$ with
$$\chi_q(V_1 * V_2 * \cdots * V_r) = \chi_q(V_1)\cdots \chi_q(V_r).$$
The first example is the following fusion of positive (resp. negative) prefundamental representations.

\begin{thm}\label{fpm} A fusion product of positive (resp. negative) prefundamental representations is simple :
$$L_{i_1,a_1}^{\pm}*L_{i_2,a_2}^{\pm} *\cdots * L_{i_N,a_N}^{\pm }\simeq L((\Psib_{i_1,a_1}\Psib_{i_2,a_2} \Psib_{i_N,a_N})^{\pm 1})$$
for any $i_1,\cdots, i_N\in I$, $a_1,\cdots, a_N\in\mathbb{C}^*$.
\end{thm}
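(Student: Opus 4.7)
The plan is to leverage the $q$-character formula from Theorem \ref{fp} together with the injectivity of $\chi_q$ (Corollary \ref{injq}) and reduce simplicity to a statement about tensor products of prefundamental representations of the quantum affine Borel algebra. I first dispatch the positive case: each $L_{i_k,a_k}^+$ is one-dimensional with $\chi_q(L_{i_k,a_k}^+) = [\Psib_{i_k,a_k}]$ by Example \ref{pospre}, so Theorem \ref{fp} yields
$$\chi_q(L_{i_1,a_1}^+ * \cdots * L_{i_N,a_N}^+) = [\Psib_{i_1,a_1}\cdots \Psib_{i_N,a_N}],$$
which matches the $q$-character of the one-dimensional simple module $L(\Psib_{i_1,a_1}\cdots \Psib_{i_N,a_N})$; injectivity of $\chi_q$ then identifies the two.

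For the negative case, set $V = L_{i_1,a_1}^- * \cdots * L_{i_N,a_N}^-$ and $\Psib^- = \prod_k \Psib_{i_k,a_k}^{-1}$. By construction $V$ is a highest $\ell$-weight module in $\mathcal{O}_\mu$ with $\mu = -\sum_k \omega_{i_k}^\vee \in -\Lambda^+$, so it admits $L(\Psib^-)$ as a simple quotient and $\chi_q(L(\Psib^-))$ appears termwise inside $\chi_q(V) = \prod_k \chi_q(L_{i_k,a_k}^-)$. I would then restrict to $\mathcal{U}_q(\hat{\bo})$ via the embedding of Proposition \ref{suba}: by Corollary \ref{still}, $L(\Psib^-)$ and each $L_{i_k,a_k}^-$ remain simple after restriction, realizing respectively $L^{\bo}(\Psib^-)$ and the negative prefundamental $L_{i_k,a_k}^{\bo,-}$. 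Since the Cartan-Drinfeld generators $\phi_{i,m}^+$ with $m \geq 0$ lie in this Borel subalgebra, the $\ell$-weight decomposition, and hence the $q$-character, of any module in $\mathcal{O}_\mu$ coincides with that of its Borel restriction. The problem thus reduces to proving
$$\chi_q(L^{\bo}(\Psib^-)) = \prod_{k=1}^N \chi_q(L_{i_k,a_k}^{\bo,-})$$
in $K_0(\mathcal{O})$.

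The hard part will be this last identity, equivalent to the simplicity of the tensor product $L_{i_1,a_1}^{\bo,-} \otimes \cdots \otimes L_{i_N,a_N}^{\bo,-}$ of negative prefundamental $\mathcal{U}_q(\hat{\bo})$-modules. This is available from the monoidal categorification results for the category $\mathcal{O}^-$ in \cite{HL} recalled in Example \ref{exom}, and it can also be extracted from \cite{HJ} by realizing negative prefundamentals as asymptotic limits of Kirillov-Reshetikhin modules and passing the known product formula for $q$-characters of tensor products of Kirillov-Reshetikhin modules to the limit. With this input in hand, the two inequalities combine to give $V \simeq L(\Psib^-)$, completing the proof.
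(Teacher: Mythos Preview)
Your proposal is correct and follows essentially the same route as the paper: one-dimensionality handles the positive case, and for the negative case you compare $q$-characters via Theorem \ref{fp} and Corollary \ref{still}, reducing to the simplicity of the tensor product of negative prefundamentals over $\mathcal{U}_q(\hat{\bo})$. The only discrepancy is the external input you invoke for that last step: the paper cites \cite{FH} (where this simplicity is proved directly), rather than \cite{HL} or \cite{HJ}.
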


\begin{proof} For positive prefundamental representations, it is clear as these representations are one-dimensional. 
It is proved in \cite{FH} that a tensor product of negative prefundamental representations of $\mathcal{U}_q(\hat{\bo})$ 
is simple as a $\mathcal{U}_q(\hat{\bo})$-module. Hence, by Corollary \ref{still}, this tensor product has the same $q$-character 
as a simple module of the corresponding shifted quantum affine algebra. But, due to Theorem \ref{fp}, 
this is also the $q$-character of the fusion product of 
these negative prefundamental representations, hence this fusion product is simple.\end{proof}

The following confirms prefundamental representations play the role of fundamental representations in the category $\mathcal{O}^{sh}$.

\begin{cor}\label{role} A simple module in $\mathcal{O}^{sh}$ is a quotient of a fusion product of various prefundamental representations and a simple constant representation.
\end{cor}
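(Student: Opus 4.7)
The plan is to factor the highest $\ell$-weight and then invoke the fusion product construction already developed.

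By Theorem \ref{param}, any simple module in $\mathcal{O}^{sh}$ is of the form $L(\Psib)$ with $\Psib = (\Psi_i(z))_{i\in I} \in \mathfrak{r}$, so each $\Psi_i(z) \in \mathbb{C}(z)$ is a rational function with $\Psi_i(0) \in \mathbb{C}^*$. Factoring numerator and denominator, write
\[
\Psi_i(z) = \gamma_i \prod_{k} (1 - a_{i,k}^+ z) \prod_{\ell} (1 - a_{i,\ell}^- z)^{-1}
\]
for suitable $\gamma_i \in \mathbb{C}^*$ and $a_{i,k}^\pm \in \mathbb{C}^*$. Setting $\omega(i) = \gamma_i$, this gives a factorization at the level of $\ell$-weights
\[
\Psib = \Psib_\omega \cdot \prod_{i,k} \Psib_{i, a_{i,k}^+} \cdot \prod_{i,\ell} \Psib_{i, a_{i,\ell}^-}^{-1},
\]
where $\Psib_\omega$ corresponds to the one-dimensional constant representation $[\omega]$ in $\mathcal{O}_0$.

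Next, by Theorem \ref{fp} (and its extension to several factors), the iterated fusion product
\[
F = [\omega] \mathbin{*} \Bigl(\mathop{\scalebox{1.2}{$*$}}_{i,k} L_{i, a_{i,k}^+}^+\Bigr) \mathbin{*} \Bigl(\mathop{\scalebox{1.2}{$*$}}_{i,\ell} L_{i, a_{i,\ell}^-}^-\Bigr)
\]
is a well-defined highest $\ell$-weight representation in $\mathcal{O}_\mu$ (with $\mu = \sum_{i,k} \omega_i^\vee - \sum_{i,\ell} \omega_i^\vee$), and its $q$-character equals the product of the $q$-characters of the factors. Using Example \ref{exqchar}(i) for the positive prefundamentals and the analogous formula for $[\omega]$, one reads off that the highest $\ell$-weight of $F$ is precisely $\Psib$.

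Since $F$ is a highest $\ell$-weight module of highest $\ell$-weight $\Psib$, it has a unique simple quotient, which must be $L(\Psib)$ by Theorem \ref{param}. This is exactly the claim. There is no real obstacle here: the main content has already been packaged into Theorem \ref{fp}, and the only thing to verify is that every element of $\mathfrak{r}$ admits such a prefundamental-plus-constant factorization, which is immediate from the rationality of the $\Psi_i(z)$. The role of the constant representation $[\omega]$ is precisely to absorb the leading coefficients $\gamma_i$ left over after pulling out the monic linear factors producing the prefundamentals.
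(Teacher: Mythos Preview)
Your proof is correct and follows essentially the same approach as the paper: factor $\Psib$ into a constant part and products of $\Psib_{i,a}^{\pm 1}$, then apply Theorem \ref{fp} to conclude that the corresponding fusion product is a highest $\ell$-weight module with highest $\ell$-weight $\Psib$, hence has $L(\Psib)$ as its unique simple quotient. The paper's version groups the positive and negative factors into $L(\Psib^+)$ and $L(\Psib^-)$ via Theorem \ref{fpm}, but this is cosmetic; your direct use of the individual prefundamentals is equally valid.
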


\begin{proof} For $L(\Psib)$ such a simple representation, it suffices to write $\Psib = \Psib(0) \Psib^+\Psib^-$ where 
$\Psib^\pm$ is a product of various $\Psib_{i,a}^{\pm 1}$. Then $L(\Psib)$ is a subquotient of $L(\Psib(0))*L(\Psib^+)*L(\Psib^-)$.
\end{proof}

\begin{cor}\label{preff} A simple module in $\mathcal{O}^{sh}$ is a subquotient of a fusion product of $1$-dimensional module by a simple module of $\mathcal{U}_q(\hat{\bo})$.
\end{cor}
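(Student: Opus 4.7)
The plan is to refine Corollary~\ref{role} by consolidating factors of the same sign. Let $L(\Psib)$ be simple in $\mathcal{O}^{sh}$ and decompose $\Psib = \Psib(0)\Psib^+\Psib^-$, where $\Psib^{\pm}$ is a product of various $\Psib_{i,a}^{\pm 1}$. By Corollary~\ref{role}, $L(\Psib)$ is a quotient of a fusion product
\[
L(\Psib(0)) * L_{i_1,a_1}^{+} * \cdots * L_{i_p,a_p}^{+} * L_{j_1,b_1}^{-} * \cdots * L_{j_r,b_r}^{-}.
\]
Using Theorem~\ref{fpm} twice, I would identify the fusion of the positive prefundamental factors with the simple module $L(\Psib^+)$ and the fusion of the negative ones with $L(\Psib^-)$, so that $L(\Psib)$ is a quotient of $L(\Psib(0)) * L(\Psib^+) * L(\Psib^-)$.

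Next I would merge the constant and positive parts. Since each $L_{i,a}^+$ is one-dimensional (Example~\ref{pospre}), so is $L(\Psib^+)$; consequently the fusion $D := L(\Psib(0)) * L(\Psib^+)$ is again one-dimensional. Associativity of the fusion operation, implicit in the $r$-fold construction following Theorem~\ref{fp}, then yields that $L(\Psib)$ is a subquotient of $D * L(\Psib^-)$.

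It remains to recognize $L(\Psib^-)$ as a simple $\mathcal{U}_q(\hat{\bo})$-module. The module $L(\Psib^-)$ lies in $\mathcal{O}_{\mu^-}$ with antidominant coweight $\mu^- = -\omega_{j_1}^\vee - \cdots - \omega_{j_r}^\vee \in -\Lambda^+$, so by Corollary~\ref{still}, applied through the Borel embedding of Proposition~\ref{suba}, $L(\Psib^-)$ remains simple upon restriction to $\mathcal{U}_q(\hat{\bo})$. This exhibits the desired factorization. There is no serious obstacle here; the argument is essentially a bookkeeping refinement of Corollary~\ref{role}. The only point requiring care is the interpretation of the fusion product, which is taken in the shifted quantum affine framework, with the second factor viewed as a $\mathcal{U}_q(\hat{\bo})$-module via its canonical extension to $\mathcal{U}_q^{\mu^-}(\hat{\Glie})$ guaranteed by Example~\ref{pospre} and Theorem~\ref{fpm}.
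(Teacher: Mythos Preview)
Your argument is correct and follows the same decomposition $\Psib = \Psib(0)\Psib^+\Psib^-$ as the paper, using one-dimensionality of $L(\Psib(0)\Psib^+)$ and Corollary~\ref{still} for $L(\Psib^-)$. The paper is slightly more direct: rather than passing through Corollary~\ref{role} and then reassembling via Theorem~\ref{fpm} and an associativity argument, it simply notes that $L(\Psib(0)\Psib^+) * L(\Psib^-)$ is highest $\ell$-weight of highest $\ell$-weight $\Psib$ (Theorem~\ref{fp}), so $L(\Psib)$ is a quotient of it; your appeal to associativity is justified at the level of composition factors by the $q$-character identity, which is all that ``subquotient'' requires.
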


\begin{proof} Let $L(\Psib)$ be a simple representation in $\mathcal{O}_\mu$. Then we can factorize $\Psib = \lambda \Psib^+ \Psib^-$ where $\lambda$ is constant, $\Psib^\pm$ is a product of various $\Psib_{i,a}^{\pm 1}$. 
Then $L(\lambda \Psib^+)$ is $1$-dimensional and by Corollary \ref{still}, $L(\Psib^-)$ is simple when restricted to $\mathcal{U}_q(\hat{\bo})$. Then $L(\Psib)$ is a quotient of 
$$L(\lambda \Psib^+) * L(\Psib^-).$$
\end{proof}

\begin{rem}\label{caspe} (i) For $V_2$ a fusion product of positive prefundamental representations  and $V_1$ a representation in $\mathcal{O}_{\mu_1}$, 
$(V_1\otimes V_2)\otimes \mathcal{A}$ is an $\mathcal{A}$-lattice. Indeed $V_2$ is $1$-dimensional and 
$x_{i,r}^\pm$ act by $0$ and $\phi_{i,r}^+$ act by $0$ for $r$ large enough. Then the image of $\mathcal{U}_q^{\mu_1 + \mu_2}(\hat{\Glie})$ by $\Delta_u$, after composing by 
the representation morphisms, gives a Laurent polynomial in $\text{End}(V_1\otimes V_2)[u^{\pm 1}]$.

(ii) For $V_2 = L_{i,a}^+$, one gets a functor
$$*_{i,a} : \mathcal{O}_{\mu}\rightarrow \mathcal{O}_{\mu + \omega_i^\vee}$$
which preserves the dimension and the character so that $\chi_q(*_{i,a}(V)) = [\Psi_{i,a}]\chi_q(V)$. It coincides with 
the functor $\mathcal{R}_{\mu + \omega_i^\vee,-\omega_i^\vee,a}$ from Section \ref{reps}.
\end{rem}

\subsection{The Grothendieck ring $K_0(\mathcal{O}^{sh})$}

As the $q$-character morphism is injective by Corollary \ref{injq}, it follows from Theorem \ref{fp} that the 
image 
$$\chi_q(K_0(\mathcal{O}^{sh}))\subset \mathcal{E}_\ell $$ 
is a subring of $\mathcal{E}_\ell$. This induces a ring structure on $K_0(\mathcal{O}^{sh})$ 
with positive constant structures on the basis of simple modules. By construction
$$\chi_q : K_0(\mathcal{O}^{sh}) \rightarrow \mathcal{E}_{\ell}$$
is an injective ring morphism. Clearly, $K_0(\mathcal{O}_0)$ is a subring of $K_0(\mathcal{O}^{sh})$.

\begin{example} (i) For $\Glie = sl_2$, we have
$$[L_{1,a}^{-}][L_{1,a}^+] = 1 +  [\overline{-\alpha_1}][L_{1,aq^{-2}}^{-}][L_{1,aq^2}^+]).$$

(ii) More generally, recall the representations $L(\tilde{\Psib}_{i,a})$ from Example \ref{exqchar}. We have an analog of the $Q\tilde{Q}$-system 
established in \cite{FH2} in $K_0(\mathcal{O})$ :
$$[L(\tilde{\Psib}_{i,a})][L_{i,a}^+] = [\overline{-\alpha_i}][L(\tilde{\Psib}_{i,aq_i^{-2}})][L_{i,aq_i^2}^+]$$
$$+\left(\prod_{j,C_{i,j} = -1}[L_{j,aq_i}^+] \right)\left(\prod_{j,C_{i,j} = -2}[L_{j,a}^+][L_{j,aq^2}^+] \right)\left(\prod_{j,C_{i,j} = -3}[L_{j,aq^{-1}}^+][L_{j,aq}^+][L_{j,aq^3}^+] \right).$$
\end{example}

\subsection{Root monomials and Nakajima partial ordering}\label{npo}
Following \cite{Fre}, we introduce for $i\in I$, $a\in\mathbb{C}^*$ the following $\ell$-weight which is a monomial analog of a simple root : 
\begin{equation}\label{ayf}A_{i,a} = Y_{i,aq_i^{-1}}Y_{i,aq_i} \left(  \prod_{j\in I, C_{j,i} = -1}Y_{j,a}^{-1}\prod_{j\in I, C_{j,i} = -2}Y_{j,aq^{-1}}^{-1}Y_{j,aq}^{-1}\prod_{j\in I, C_{j,i} = -3}Y_{j,aq^{-2}}^{-1}Y_{j,a}^{-1}Y_{j,aq^2}^{-1}  \right),
\end{equation}
where 
$$Y_{i,a} =   \overline{\omega_i} \Psib_{i,aq_i}^{-1}\Psib_{i,aq_i^{-1}}.$$
Note this $\ell$-weight can also be written simply as
$$A_{i,a} = \overline{\alpha_i}  \prod_{j\in I}\Psib_{j,aq^{B_{i,j}}}^{-1}\Psib_{j,aq^{-B_{i,j}}}.$$

\begin{rem} Note that the Langlands dual Cartan matrix $(C_{j,i})_{i,j}$ occurs in the definition of $A_{i,a}$ in contrast to the definition of the 
$\ell$-weights $\tilde{\Psib}_{i,a}$ in Example \ref{exqchar}. However, we can rewrite the formula therein 
$$\chi_q(L(\tilde{\Psib}_{i,a})) = [\tilde{\Psib}_{i,a}]\sum_{k\geq 0} A_{i,a}^{-1}A_{i,aq_i^{-2}}^{-1} \cdots A_{i,aq_i^{-2(k - 1)}}^{-1}.$$
\end{rem}

We extend Nakajima partial ordering \cite{N1} to $\ell$-weights : we set $\Psib \preceq \Psib '$ if and only if $\Psib' \Psib^{-1}$ is a monomial in the $A_{i,a}$.

\begin{thm}\label{partialo} For $\Psib$ an $\ell$-weight, we have
$$\chi_q(L(\Psib))\in [\Psib] + \sum_{\Psib' \prec \Psib} \mathbb{N} [\Psib'].$$
\end{thm}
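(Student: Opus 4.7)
The plan is to reduce the triangularity to the case of prefundamental and constant modules, via Corollary \ref{role} and the multiplicativity $\chi_q(V_1 * V_2) = \chi_q(V_1)\chi_q(V_2)$ from Theorem \ref{fp}. I first note that the triangularity is trivial when the module has a single-monomial $q$-character: this covers the constant modules $[\omega]$ and the positive prefundamentals $L_{i,a}^+$. The essential base case is that of the negative prefundamental $L_{i,a}^-$. By Corollary \ref{still}, this simple module remains simple when restricted to $\mathcal{U}_q(\hat{\bo})$, so it has the same $q$-character as the $\mathcal{U}_q(\hat{\bo})$-prefundamental $L_{i,a}^{\bo,-}$, and I would deduce the triangularity of the latter from its construction in \cite{HJ} as a normalized limit of Kirillov--Reshetikhin modules, combined with the classical Frenkel--Mukhin triangularity available for these finite-dimensional modules of $\mathcal{U}_q(\hat{\Glie})$.

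Given the base cases, Corollary \ref{role} produces, for any simple module $L(\Psib)$ in $\mathcal{O}^{sh}$, a fusion product
$$F = L(\Psib(0)) * L_{i_1,a_1}^{\epsilon_1} * \cdots * L_{i_N,a_N}^{\epsilon_N}$$
with $\epsilon_r \in \{+,-\}$, of highest $\ell$-weight $\Psib$, equipped with a surjection $F \twoheadrightarrow L(\Psib)$. Applying Theorem \ref{fp} together with the triangularity of each factor, the product of $q$-characters lies in $[\Psib] + \sum_{\Psib'\prec\Psib}\mathbb{N}[\Psib']$, since the set of products of $A_{j,b}^{-1}$ is closed under multiplication and the top coefficient equals $1$. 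Because $\chi_q$ is additive on short exact sequences, the surjection $F \twoheadrightarrow L(\Psib)$ forces $\chi_q(L(\Psib)) \leq \chi_q(F)$ componentwise, while the one-dimensionality of $L(\Psib)_{\Psibs}$ ensures the coefficient of $[\Psib]$ remains $1$, which yields the claimed inclusion.

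The main obstacle will be the triangularity of $L_{i,a}^-$ in arbitrary type: direct calculation is not available since $\chi_q(L_{i,a}^-)$ admits no simple closed form for general $\Glie$. The cleanest route is via the asymptotic-algebra realization of Section \ref{asymprel} --- since $L_{i,a}^-$ arises as a limit of appropriately normalized Kirillov--Reshetikhin modules, one transfers triangularity from the finite-dimensional setting, where it is standard. As an alternative I could argue directly from the commutation relation
$$\phi_i^\pm(z)\, x_j^-(w) = \frac{q^{-B_{i,j}}w - z}{w - q^{-B_{i,j}}z}\, x_j^-(w)\, \phi_i^\pm(z),$$
and show that the action of suitable linear combinations of the $x_{j,m}^-$ on an $\ell$-weight vector of $\ell$-weight $\Psib'$ produces $\ell$-weight vectors with $\ell$-weights of the form $\Psib' \cdot A_{j,a}^{-1}$ for various $a \in \mathbb{C}^*$, which again gives the cone inclusion inductively on depth.
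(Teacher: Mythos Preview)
Your proposal is correct and follows essentially the same route as the paper: reduce via the fusion product (the paper invokes Corollary~\ref{preff} rather than Corollary~\ref{role}, but the reductions are equivalent) to the one-dimensional factors and to negative prefundamentals, then use Corollary~\ref{still} to identify $\chi_q(L_{i,a}^-)$ with $\chi_q(L_{i,a}^{\bo,-})$ and quote the triangularity established in \cite{HJ}. Your discussion of the limit-of-KR mechanism and the alternative commutation-relation argument is extra detail beyond what the paper records, but the core strategy is the same.
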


\begin{proof} By Corollary \ref{preff}, it suffices to prove the statement for prefundamental representations.
This is clear for positive prefundamental representations as they are $1$-dimensional. For negative
prefundamental representations $L_{i,a}^-$, it follows from Corollary \ref{still} that the $q$-character coincides with the $q$-character of the
negative prefundamental representation $L_{i,a}^{\bo,-}$ of $\mathcal{U}_q(\hat{\bo})$. In this 
case the result is proved in \cite{HJ}.
\end{proof}

\begin{rem} (i) The statement was proved for finite-dimensional representations of quantum affine algebras in \cite{Fre2}.

(ii) The analogous statement is not satisfied in general for the representations of the quantum affine Borel algebra (for example for
positive prefundamental representations $L_{i,a}^{\bo,+}$).
\end{rem}

\section{Finite-dimensional representations}\label{fdr}

In this section we classify the simple finite-dimensional representations of shifted quantum affine algebras (Theorem \ref{fdclas}). 

For $\Glie$ simply-laced, a classification of simple finite-dimensional representations of simply-laced shifted Yangians 
is given  in \cite[Theorem 1.4]{ktwwy} (see (iii) in Remark \ref{refy}) by a different method.

Our results and methods are uniform for any type of $\Glie$, simply-laced or not simply-laced.

\medskip

The standard Theorem of Chari-Pressley \cite{CP} classifying finite-dimensional representations of quantum
affine algebras in terms of Drinfeld polynomials can be formulated in the following form (see also \cite[Examples in Section 3.2]{HJ}).

\begin{thm}\label{case0} The simple finite dimensional representations of $\mathcal{U}^{\mu = 0}_q(\hat{\Glie})$ are the 
$L(\Psib)$ where $\Psib(z)(\Psib(0))^{-1}$ is a monomial in the 
$$\tilde{Y}_{i,a} = \overline{\omega_i}^{-1} Y_{i,a} =  \Psib_{i,aq_i^{-1}}(\Psib_{i,aq_i})^{-1} \text{ for $i\in I$, $a\in\mathbb{C}^*$.}$$
\end{thm}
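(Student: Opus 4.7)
The plan is to reduce to the classical Chari--Pressley theorem via the central extension $\mathcal{U}_q^0(\hat{\Glie})\twoheadrightarrow\mathcal{U}_q(\hat{\Glie})$ of Remark \ref{com}(ii), whose kernel is generated by $c_i - 1$ for $i\in I$, where $c_i := \phi_{i,0}^+\phi_{i,0}^-$ is central in $\mathcal{U}_q^0(\hat{\Glie})$. On any simple finite-dimensional representation $V = L(\Psib)$, Schur's lemma forces each $c_i$ to act as a scalar $\lambda_i\in\mathbb{C}^*$; evaluating on the highest $\ell$-weight vector yields $\lambda_i = \Psi_i(0)\Psi_i(\infty)$, which is well-defined since $\Psi_i$ is rational of degree $\alpha_i(\mu) = 0$ by Proposition \ref{zero}.

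I would next untwist the central character by rescaling the Drinfeld generators. Choose $\nu_i\in\mathbb{C}^*$ with $\nu_i^2 = \lambda_i^{-1}$ and scalars $\beta_i^+,\beta_i^-$ satisfying $\beta_i^+\beta_i^- = \nu_i$, and set $\tilde{x}_{i,r}^+ := \beta_i^+ x_{i,r}^+$, $\tilde{x}_{i,r}^- := \beta_i^- x_{i,r}^-$, $\tilde{\phi}_{i,m}^\pm := \nu_i\phi_{i,m}^\pm$, leaving the $h_{i,r}$ unchanged (this is consistent since $h_{i,r}$ enters $\phi_i^\pm(z)$ through the exponential, unaffected by an overall scaling of $\phi_i^\pm$). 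Inspecting the relations (\ref{un})--(\ref{seq}) one sees that the only delicate compatibility is (\ref{trois}) at $i=j$, $r+r'=0$, which produces precisely the constraint $\beta_i^+\beta_i^- = \nu_i$; all other relations hold automatically since the scaling factors either cancel or appear symmetrically. Hence the rescaled action defines a $\mathcal{U}_q^0(\hat{\Glie})$-module structure on $V$ with $\tilde{c}_i = \nu_i^2\lambda_i = 1$, which descends to a simple finite-dimensional $\mathcal{U}_q(\hat{\Glie})$-module $\tilde{V}$ of highest $\ell$-weight $\tilde{\Psib}$ with $\tilde{\Psi}_i(z) = \nu_i\Psi_i(z)$.

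The classical Chari--Pressley theorem applied to $\tilde{V}$ then yields Drinfeld polynomials $(P_i)_{i\in I}$ with $P_i(0)=1$ and $\tilde{\Psi}_i(z) = q_i^{\deg P_i}P_i(zq_i^{-1})/P_i(zq_i)$. Writing $P_i(u) = \prod_s(1 - a_{i,s}u)$, a direct expansion gives $\tilde{\Psi}_i(z)/\tilde{\Psi}_i(0) = \prod_s\tilde{Y}_{i,a_{i,s}}(z)$, and since the rescaling factor $\nu_i$ cancels in the ratio, $\Psi_i(z)/\Psi_i(0) = \tilde{\Psi}_i(z)/\tilde{\Psi}_i(0)$ is the desired monomial in the $\tilde{Y}_{i,a}$. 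For the converse, given $\Psib$ with $\Psib(z)/\Psib(0)$ a monomial in the $\tilde{Y}_{i,a}$, I would read off the polynomials $(P_i)$ from the exponents, construct the corresponding simple $\mathcal{U}_q(\hat{\Glie})$-module via Chari--Pressley, inflate along the projection, and apply the inverse rescaling to match the prescribed $\Psib(0)$; the square roots $\nu_i$ always exist in $\mathbb{C}$ and the isomorphism class of the resulting module is independent of the choice. The main technical obstacle is really the bookkeeping of the rescaling, which becomes routine once the constraint $\beta_i^+\beta_i^- = \nu_i$ from (\ref{trois}) is identified.
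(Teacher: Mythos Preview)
Your proof is correct and follows essentially the same route as the paper: both reduce to Chari--Pressley via the central extension of Remark~\ref{com}(ii), use Schur's lemma to identify the central character as $(\Psi_i(0)\Psi_i(\infty))_{i\in I}$, and kill it by a constant twist (a square root of its inverse) to land in $\mathcal{U}_q(\hat{\Glie})$-modules. The paper is terser---it writes the twist as replacing $\Psib$ by $\lambda\Psib$ for $\lambda$ a square root of $(\Psib(0)\Psib(\infty))^{-1}$, and for the converse it uses the fusion product $L(\lambda^{-1}) * L(\Psib')$ with a one-dimensional representation rather than your explicit generator rescaling---but your rescaling $\tilde{\phi}_{i,m}^\pm = \nu_i\phi_{i,m}^\pm$, $\tilde{x}_{i,r}^\pm = \beta_i^\pm x_{i,r}^\pm$ with $\beta_i^+\beta_i^- = \nu_i$ is exactly the automorphism implementing that twist, so the two arguments are the same in content. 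One minor imprecision: the constraint $\beta_i^+\beta_i^- = \nu_i$ is needed in relation~(\ref{trois}) for all $r+r'$, not only $r+r'=0$; the verification is identical in every case, so this does not affect your argument.
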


\begin{proof} By \cite{CP}, the simple finite dimensional representations of the quantum affine algebra $\mathcal{U}_q(\hat{\Glie})$ are 
parametrized by the $\Psib$ which are monomial in the $Y_{i,a}$.

Recall that by (ii) in Remark \ref{com}, $\mathcal{U}_q^0(\hat{\Glie})$ is a central extension of $\mathcal{U}_q(\hat{\Glie})$.
Let $L(\Psib)$ be a simple finite-dimensional representation of $\mathcal{U}_q^0(\hat{\Glie})$. Then for $\lambda \in (\mathbb{C}^*)^n$ 
a square root of $(\Psib(0)\Psib(\infty))^{-1}$, $L(\lambda\Psib)$ is a simple finite-dimensional representation of $\mathcal{U}_q(\hat{\Glie})$
and so $\Psib$ has the correct form. Conversely, if $\Psib = \lambda^{-1}\Psib'$ where $L(\Psib')$ is a simple finite-dimensional representation of $\mathcal{U}_q(\hat{\Glie})$ and $L(\lambda^{-1})$ is a one-dimensional representation of $\mathcal{U}_q^0(\hat{\Glie})$, then 
$$L(\Psib) \simeq L(\lambda^{-1}) * L(\Psib')$$ 
is a finite-dimensional of $\mathcal{U}_q^0(\hat{\Glie})$ with the same dimension as $L(\Psib')$.
\end{proof}

To generalize this for all shifted quantum affine algebras, first the following follows easily from the previous results.

\begin{prop}\label{fsense} For $\mu\in\Lambda$, the algebra $\mathcal{U}_\mu(\hat{\Glie})$ admits 
non-zero finite-dimensional representations if and only if $\mu$ is dominant.

Let $\mu\in \Lambda^+$ be dominant and $\Psib\in \mathfrak{r}_\mu$ be such that $\Psib (\Psib(0))^{-1}$ is a product of various 
$$\tilde{Y}_{i,a}\text{ and }\Psib_{i,a}\text{ for $i\in I$, $a\in\mathbb{C}^*$}.$$
Then $L(\Psib)$ is a simple finite-dimensional representation of $\mathcal{U}_q^\mu(\hat{\Glie})$.
\end{prop}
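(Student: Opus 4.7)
The plan is to prove the existence (second) assertion first, since it immediately yields the ``if'' direction of the first assertion; the ``only if'' direction will then follow from an elementary infinite-chain computation inside any hypothetical non-zero finite-dimensional module.

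For the existence claim, I would factor $\Psib = \omega \cdot \Psib_{\text{fin}} \cdot \Psib_{\text{pref}}$, where $\omega = \Psib(0) \in \tb^*$ is the constant part, $\Psib_{\text{fin}} = \prod_{i,a} \tilde Y_{i,a}^{n_{i,a}}$ collects the $\tilde Y$-factors (total degree zero in each component), and $\Psib_{\text{pref}} = \prod_{i,b} \Psib_{i,b}^{m_{i,b}}$ collects the positive prefundamental factors. The condition $\Psib \in \mathfrak{r}_\mu$ then forces $\sum_b m_{i,b} = \deg \Psi_i = \alpha_i(\mu)$, so in particular $\mu = \sum_{i,b} m_{i,b} \omega_i^\vee \in \Lambda^+$. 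By Theorem \ref{case0} the module $L(\omega \Psib_{\text{fin}})$ is simple and finite-dimensional in $\mathcal{O}_0$; by Example \ref{pospre} each $L_{i,b}^+$ is one-dimensional, and Theorem \ref{fpm} identifies their fusion with $L(\Psib_{\text{pref}})$, which is therefore also one-dimensional and lies in $\mathcal{O}_\mu$. Forming
$$V := L(\omega \Psib_{\text{fin}}) * L(\Psib_{\text{pref}})$$
and applying Theorem \ref{fp}, one obtains a highest $\ell$-weight module in $\mathcal{O}_{0+\mu} = \mathcal{O}_\mu$ of highest $\ell$-weight $\Psib$ whose $q$-character is $\chi_q(L(\omega \Psib_{\text{fin}})) \cdot \chi_q(L(\Psib_{\text{pref}}))$, so $\dim V = \dim L(\omega \Psib_{\text{fin}}) < \infty$. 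The simple quotient $L(\Psib)$ is then finite-dimensional.

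For the converse, assume $V \neq 0$ is finite-dimensional in $\mathcal{O}_\mu$ and pick a simple subquotient $L(\Psib)$ with highest weight vector $v_0$. If $\mu$ is not dominant, choose $i \in I$ with $\alpha_i(\mu) < 0$; in the convention $\mu_+ = 0,\ \mu_- = \mu$, the element $\phi_{i,0}^-$ vanishes in $\mathcal{U}_q^\mu(\hat{\Glie})$ while $\phi_{i,0}^+$ is invertible with eigenvalue $\Psi_i(0) \neq 0$ on $v_0$, so the defining commutator reduces to $[x_{i,0}^+, x_{i,0}^-] = \phi_{i,0}^+/(q_i - q_i^{-1})$. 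Combined with $\phi_{i,0}^+ x_{i,0}^- = q_i^{-2} x_{i,0}^- \phi_{i,0}^+$, an immediate induction yields $x_{i,0}^+ (x_{i,0}^-)^{r+1} v_0 \in \C^\times (x_{i,0}^-)^r v_0$, so every $v_r := (x_{i,0}^-)^r v_0$ is non-zero. These vectors lie in the pairwise distinct weight spaces $V_{\omega_0 \overline{\alpha_i}^{-r}}$ (distinct because $\overline{\alpha_i}(i) = q_i^2 \neq 1$), contradicting $\dim V < \infty$.

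The most delicate point is the bookkeeping of shifts in the fusion-product construction: one must verify that $L(\Psib_{\text{pref}})$ really is one-dimensional and sits in $\mathcal{O}_\mu$, so that fusing with the $\mathcal{O}_0$-module $L(\omega \Psib_{\text{fin}})$ lands in $\mathcal{O}_\mu$ with the correct highest $\ell$-weight. The converse direction essentially internalizes the $\mathcal{U}_{q_i}^-(sl_2)$-Verma computation of Proposition \ref{nonze}: once $\alpha_i(\mu) < 0$, the disappearance of $\phi_{i,0}^-$ and the invertibility of $\phi_{i,0}^+$ jointly reproduce the $q$-oscillator relation that already forbids finite-dimensional modules in finite type.
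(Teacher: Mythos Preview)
Your proof is correct and essentially matches the paper's approach. For the existence direction, the paper phrases the construction via the shift functors $\mathcal{R}_{\mu,\mu',a}$ of Proposition~\ref{repsp} rather than via fusion with $L(\Psib_{\mathrm{pref}})$, but Remark~\ref{caspe}~(ii) identifies fusion with $L_{i,a}^+$ with the corresponding shift functor, so the two constructions coincide. For the converse, the paper simply observes that when $\alpha_i(\mu)<0$ the elements $x_{i,0}^\pm$, $(\phi_{i,0}^+)^{\pm 1}$ generate a copy of $\mathcal{U}_{q_i}^+(sl_2)$ and invokes Proposition~\ref{nonze}; your explicit chain computation is precisely the content of that proposition spelled out inside the module.

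One small remark: you state the converse for $V$ ``finite-dimensional in $\mathcal{O}_\mu$'', whereas the proposition concerns arbitrary finite-dimensional $\mathcal{U}_q^\mu(\hat{\Glie})$-modules. This is harmless: on any nonzero finite-dimensional module the invertible operator $\phi_{i,0}^+$ has a nonzero eigenvector, and since $x_{i,0}^+$ shifts $\phi_{i,0}^+$-eigenvalues by $q_i^2$ one may choose such an eigenvector with $x_{i,0}^+ v_0 = 0$; your chain argument then runs verbatim without any diagonalizability hypothesis.
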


\begin{proof} If there is $i\in I$ so that $\alpha_i(\mu) < 0$, then $\mathcal{U}_q^\mu(\hat{\Glie})$ contains a subalgebra
isomorphic to $\mathcal{U}_{q_i}^+(sl_2)$. So it follows from Proposition \ref{nonze} that zero is the only
finite-dimensional representation of  $\mathcal{U}_q^\mu(\hat{\Glie})$. This implies the "only if" part of the first point. 
For the "if" part, it suffices to establish the second point. So consider $\Psib(z)$ as in the statement. We can write $\Psib(z) = \Psib(0) M_1 M_2$
where $M_1$ is a product of various $\tilde{Y}_{i,a}$ and 
$$M_2 = \Psib_{i_1,a_1}\Psib_{i_2,a_2}\cdots \Psib_{i_N,a_N}$$
for various $i_1,\cdots, i_N\in I$, $a_1,\cdots, a_N\in\mathbb{C}^*$. Then $L(\Psib(0)M_1)$ is a simple finite-dimensional 
representation of $\mathcal{U}_q^0(\hat{\Glie})$ by Proposition \ref{case0}. Using inductively the functors 
$$\mathcal{R}_{\omega_{i_1}^\vee + \cdots + \omega_{i_{j-1}}^\vee,-\omega_{i_j}^\vee,a_j} : \mathcal{O}_{\omega_{i_1}^\vee + \cdots + \omega_{i_{j-1}}^\vee} \rightarrow \mathcal{O}_{\omega_{i_1}^\vee + \cdots + \omega_{i_{j}}^\vee} $$
from Section \ref{reps}, we get from Proposition \ref{repsp} that $L(\Psib)$ is finite-dimensional (with dimension no larger than the dimension of $L(M_1)$).
\end{proof}

\begin{rem}\label{refy} (i) The condition in Proposition \ref{fsense} appeared in \cite{HL} and in \cite{FJMM} for the $\ell$-weights of the simple modules of a category $\mathcal{O}^+$ of representations of the quantum affine Borel algebra (see Remark \ref{exom}). 

(ii) If $\Psib$ satisfies the condition of the statement and in addition is a Laurent monomial in the $\tilde{Y}_{i,a}$, then the powers of the $\tilde{Y}_{i,a}$ are 
positive. So, following \cite{Fre, FJMM}, let us call a general $\ell$-weight satisfying this 
condition a dominant $\ell$-weight.

(iii) Let $\Psib(z)$ be an $\ell$-weight whose poles and zeros are in $q^{\mathbb{Z}}$. It is a Laurent monomial in the $\Psib_{i,q^r}^{\pm 1}$.
There is a structure of crystal on the set of such Laurent monomials \cite{nacr, K2} (the variables $Y_{i,q^r}$ in \cite{nacr, K2} are the $\Psi_{i,q^r}$ here; 
this should not be confused with the $Y_{i,q^r}$ above). Such an $\ell$-weight 
$\Psib(z)$ is dominant if and only if it is highest weight for this crystal structure. For $\Glie$ simply-laced, it is the condition found in 
\cite[Theorem 1.4]{ktwwy} where a classification of simple finite-dimensional representations of simply-laced shifted Yangians was given 
(the proof therein is based on type $A$ results in \cite[Section 7.2]{brkl}).
\end{rem}

We will prove the converse statement which gives a complete classification of finite-dimensional representations of shifted quantum affine algebras.

\begin{thm}\label{fdclas} The simple finite-dimensional representations of shifted quantum affine algebras are the $L(\Psib)$ 
where $\Psib$ is dominant.
\end{thm}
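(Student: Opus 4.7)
The ``if'' direction is Proposition~\ref{fsense}, so the plan is to prove the converse: any finite-dimensional simple $\mathcal{U}_q^\mu(\hat{\Glie})$-module $L(\Psib)$ has dominant highest $\ell$-weight $\Psib$. Proposition~\ref{fsense} already forces $\mu\in\Lambda^+$, so what remains is to establish that, for each $i\in I$,
\begin{equation*}
\Psi_i(z)/\Psi_i(0)=P_i(zq_i^{-1})/P_i(zq_i)\cdot Q_i(z)
\end{equation*}
for polynomials $P_i,Q_i$ with $P_i(0)=Q_i(0)=1$ and $\deg Q_i=\alpha_i(\mu)$.

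First, I would reduce to the shifted quantum affine $\hat{sl}_2$ case. Fix $i\in I$. Inspection of the Drinfeld relations~\eqref{un}--\eqref{seq} shows that those involving only the $i$-th generators $x_{i,r}^\pm$, $\phi_{i,r}^\pm$ (together with the invertibility of $\phi_{i,0}^+$ and $\phi_{i,\alpha_i(\mu)}^-$) are exactly the defining relations of the shifted quantum affine $sl_2$-algebra $\mathcal{U}_{q_i}^{\alpha_i(\mu)\omega_1^\vee}(\hat{sl}_2)$; the Drinfeld--Serre relations~\eqref{seq} do not appear as they require $j\neq i$. This yields an algebra homomorphism $\pi_i:\mathcal{U}_{q_i}^{\alpha_i(\mu)\omega_1^\vee}(\hat{sl}_2)\to\mathcal{U}_q^\mu(\hat{\Glie})$. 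Pulling back $L(\Psib)$ along $\pi_i$, the submodule generated by the highest $\ell$-weight vector $v$ is a finite-dimensional highest $\ell$-weight $\mathcal{U}_{q_i}^{\alpha_i(\mu)\omega_1^\vee}(\hat{sl}_2)$-module with highest $\ell$-weight $\Psi_i(z)$, and thus admits a finite-dimensional simple quotient. Granted the $\hat{sl}_2$ case, each $\Psi_i(z)$ is then of the required dominant form, and ranging over $i\in I$ yields the dominance of $\Psib$.

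Second, I would prove the $\hat{sl}_2$ case with shift $m=\alpha_1(\mu)\geq 0$ by adapting the classical Chari--Pressley argument~\cite{CP}. For $L(\Psi)$ finite-dimensional simple with highest $\ell$-weight vector $v$, the weight space $W=L(\Psi)_{\omega\overline{\alpha_1}^{-1}}$ is finite-dimensional and contains $\{x_{1,s}^-v:s\in\mathbb{Z}\}$. The relation~\eqref{pmz} together with $x_{1,r}^+v=0$ gives
\begin{equation*}
(q-q^{-1})\,x_{1,r}^+\cdot x_{1,s}^-v=(\Psi_{1,r+s}^+-\Psi_{1,r+s}^-)\,v,
\end{equation*}
where $\Psi_{1,n}^\pm$ are the coefficients of the expansions of $\Psi(z)$ at $0$ and at $\infty$. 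Finite-dimensionality of $W$ forces a linear recursion among the $x_{1,s}^-v$, which via the above identity translates into a polynomial constraint on $\Psi(z)$ and, combined with the degree condition $\deg\Psi=m$, yields the factorization $\Psi(z)=\Psi(0)\cdot P(zq^{-1})/P(zq)\cdot Q(z)$ with $\deg Q=m$.

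The main obstacle is this last step, namely extracting the full dominant factorization in the shifted $\hat{sl}_2$ setting. In the unshifted case $m=0$, this is the classical Chari--Pressley theorem giving the Drinfeld polynomial ratio $P(zq^{-1})/P(zq)$. The novelty here is the polynomial factor $Q(z)$ of degree $m$, which reflects the nontrivial leading behavior of $\phi^-(z)$ at infinity (now of degree $m$ rather than $0$) and requires a careful separation of the ``ratio part'' from the ``polynomial part'' of $\Psi(z)$. An alternative route would be to exploit the shift embedding $\iota_{m\omega_1^\vee,-m\omega_1^\vee,a}:\mathcal{U}_q^{m\omega_1^\vee}(\hat{sl}_2)\hookrightarrow\mathcal{U}_q^0(\hat{sl}_2)$ of Proposition~\ref{iinj} to reduce to the unshifted Chari--Pressley result (Proposition~\ref{case0}), but this would require controlling whether the induction functor preserves finite-dimensionality of simple quotients, which is nontrivial.
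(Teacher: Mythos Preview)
Your reduction to the $sl_2$ case via the subalgebra $\pi_i$ is correct and is exactly what the paper does (tersely, by invoking Proposition~\ref{fsense}). The genuine gap is precisely where you locate it: the $sl_2$ step. You sketch a Chari--Pressley-style recursion argument but do not carry it through, and it is not clear that it can be carried through. The recursion among the $x_{1,s}^-v$ only tells you that $\Psi(z)$ is rational of the correct degree, which you already know from Proposition~\ref{zero}; extracting the specific factorization $P(zq^{-1})/P(zq)\cdot Q(z)$ requires more, and in the unshifted case this ``more'' comes from the interplay between highest and lowest weight vectors and the constraint $\Psi(0)\Psi(\infty)=1$, neither of which is available here in the same form.

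The paper takes a completely different route for the $sl_2$ step, avoiding any direct adaptation of Chari--Pressley. It uses the fusion product (Theorem~\ref{fp}) to \emph{separate} the polynomial part from the degree-zero part at the level of modules rather than $\ell$-weights. Write $\Psib=\Psib^+\Psib^0$ with $\Psib^+=\prod_j\Psib_{c_j}$ polynomial of degree $m$ and $\Psib^0=\prod_j\Psib_{a_j}\Psib_{b_j}^{-1}$ of degree zero; choose the $c_j$ generically relative to the $a_j,b_j$ (condition~\eqref{condcab}). Then one shows $L(\Psib)\simeq L(\Psib^0)*L(\Psib^+)$ by a two-sided $q$-character comparison: $L(\Psib)$ is a quotient of $L(\Psib^0)*L(\Psib^+)$, and conversely $L(\Psib^0)$ is a quotient of $L(\Psib)*L((\Psib^+)^{-1})$, but the explicit $q$-character of $L((\Psib^+)^{-1})$ (Example~\ref{exqchar}) together with the genericity condition forces only the highest $\ell$-weight of $L((\Psib^+)^{-1})$ to contribute. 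Since $L(\Psib^+)$ is one-dimensional, $L(\Psib^0)$ is finite-dimensional, lives in $\mathcal{O}_0$, and Theorem~\ref{case0} (the already-established $\mu=0$ case) finishes the job. Your alternative via the shift embedding $\iota$ faces the obstacle you name; the fusion-product route bypasses it entirely by working with a functor in the opposite direction that does preserve dimension.
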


\begin{proof}
From Proposition \ref{fsense}, it suffices to prove for $\Glie = sl_2$ that $L(\Psib)$ finite-dimensional implies $\Psib$ dominant.

Let $\mu \in \Lambda^+$ and suppose that $L(\Psib)$ is a simple finite-dimensional representation of 
$\mathcal{U}_q^\mu(\hat{sl}_2)$. As discussed above, $\Psib(z)$ is a rational fraction of non-negative degree. 
Without loss of generality, we may assume $\Psib(0) = 1$. There is a (non-unique) factorization $\Psib = \Psib^+ \Psib^0 $ where 
$$\Psib^0 = (\Psib_{a_1}\Psib_{b_1}^{-1})\cdots (\Psib_{a_N}\Psib_{b_N}^{-1})\text{ for certain $N\geq 0$, $a_1,\cdots a_N, b_1,\cdots, b_N\in\mathbb{C}^*$,}$$
$$\Psib^+ = \Psib_{c_1}\cdots \Psib_{c_M}\text{ where $M = \text{deg}(\Psib) \geq 0$ and $c_1,\cdots, c_M\in\mathbb{C}^*$.}$$ 
We will denote
$$\mathcal{F} = \{1\leq j\leq N|a_{j'}\in  b_{j'} q^{-2\mathbb{Z}}\}\text{ and }\mathcal{I} = \{1\leq j\leq N|a_{j'}\notin  b_{j'} q^{-2\mathbb{Z}}\}$$
so that for $j\in \mathcal{F}$ (resp. $j\in \mathcal{I}$) $L(\Psi_{a_j}\Psib_{b_j}^{-1})$ is finite-dimensional (resp. infinite-dimensional).
Moreover, we may assume that for any $1\leq j\leq M$, $1\leq j'\leq N$ : 
\begin{equation}\label{condcab}(c_j\notin b_{j'}q^{-2\mathbb{Z}}\text{ if }j'\in\mathcal{I})\text{ and }(c_j\notin \{b_{j'},b_{j'}q^{-2},\cdots, a_{j'}q^2\}\text{ if }j'\in\mathcal{F}).\end{equation}
Then we prove that
$$L(\Psib)\simeq L(\Psib^0) * L(\Psib^+).$$
As $\chi_q([L(\Psib^+)] = [\Psib^+]$, we want to prove that the multiplicities of $\ell$-weights are the same in 
$\chi_q(L(\Psib))$ and in $[\Psib^+]\chi_q(L(\Psib^0))$. First $L(\Psib)$ is a quotient of $L(\Psib^0) * L(\Psib^+)$, 
so the multiplicities in $\chi_q(L(\Psib))$ are lower or equal to the multiplicities in $[\Psib^+]\chi_q(L(\Psib^0))$. 
But $L(\Psib^0)$ is
a quotient of $L(\Psib) * L((\Psib^+)^{-1})$. We have precise informations on the $q$-character of $L(\Psib^0)$ and $L((\Psib^+)^{-1})$ :
$$\chi_q(L((\Psib^+)^{-1})\in [(\Psib^+)^{-1}] (1 + \sum_j A_{c_j}^{-1}\ZZ[A_{a}^{-1}]_{a\in\mathbb{C}^*}),$$
$$\chi_q(L(\Psib^0))\in [\Psib^0] \ZZ[A_{b_jq^{-2m}}^{-1}]_{j\in\mathcal{F},m\geq 0}\ZZ[A_{b_j}^{-1},A_{b_jq^{-2}}^{-1},\cdots, A_{a_jq^2}^{-1}]_{j\in\mathcal{I}}.$$
(see (iii) in Example \ref{exqchar} for the first one; as $L(\Psib_a\Psib_b^{-1})$ is a quotient of the fusion $L(\Psib_a)*L(\Psib_b^{-1})$, 
the second also follows from this Example). Now, from (\ref{condcab}), only the $\ell$-weight $[(\Psib^+)^{-1}]$ of $L((\Psib^+)^{-1})$ can contribute to 
an $\ell$-weight of $L(\Psib_0)$. So we get that the multiplicities of $\ell$-weights in $\chi_q(L(\Psib^0))$ are lower 
or equal than in $[(\Psib^+)^{-1}]\chi_q(L(\Psib))$. We have proved the isomorphism $L(\Psib)\simeq L(\Psib^0) * L(\Psib^+)$.
This implies that $L(\Psib^0)$ is finite-dimensional and we obtain the desired condition on $\Psib_0$ from Theorem \ref{case0}. Hence the result.
\end{proof}

\begin{rem} 

(i) The result implies that $L(\Psib)$ is finite-dimensional if and only if the simple module $L^\bo(\Psib)$ of $\mathcal{U}_q(\hat{\bo})$ 
is in the category $\mathcal{O}^+$ (see Remarks \ref{exom}, \ref{refy}).

(ii) The factorization of $\Psib = \Psib^+ \Psib^0$ appeared in \cite{FJMM} in the classification of "finite-type" simple representations of $\mathcal{U}_q(\hat{\bo})$. 
The proof that $L^\bo(\Psib)\simeq L^\bo(\Psib^+)\otimes L^\bo(\Psib^0)$ in this
context is given in \cite[Lemma 5.9]{FJMM} and is more complicated.

(iii) In type $A$, a classification of simple finite-dimensional representations of shifted quantum current algebras 
is obtained in \cite{kw} in terms of Drinfeld polynomials. These are subalgebras of a shifted quantum affine algebra of type $A$ generated by positive mode Drinfeld generators. Their motivation comes from representations of cyclotomic $q$-Schur algebras.
\end{rem}

\begin{example}\label{exhl1} In addition to finite-dimensional representations of quantum affine algebras, there are many new examples
of finite-dimensional representations. For example the positive prefundamental representations $L(\Psib_{i,a})$ have dimension $1$ and
for 
$$\Psib_{i,a}^* = \Psib_{i,a}^{-1}\prod_{j,C_{i,j} \neq 0}\Psib_{j,aq_j^{-C_{j,i}}}$$ 
the $\ell$-weight in \cite[Section 6.1.3]{HL}, $L(\Psib_{i,a}^*)$ has dimension $2$ with 
$$\chi_q(L(\Psib_{i,a}^*)) = [\Psib_{i,a}^*] + [\overline{-\alpha_i}] [\Psib_{i,a}^{-1}\prod_{j,C_{i,j} \neq 0}\Psib_{j,aq_j^{C_{j,i}}}] = [\Psib_{i,a}^*](1 + A_{i,a}^{-1}).$$
This representation is in $\mathcal{O}_\mu$ with $\mu = \sum_{j,C_{j,i} < 0}\omega_j^\vee$ and can be realized with a basis $v_0$, $v_1$  of $\ell$-weight vectors corresponding to the terms in this sum. 
For $r\in\mathbb{Z}$, the $x_{j,r}^\pm$ have a zero action if $j\neq i$ and
$$x_{i,r}^+.v_0 = x_{i,r}^-.v_1 = 0\text{ , }x_{i,r}^-.v_0 =   a^r v_{1}\text{ , }x_{i,r}^+.v_1 = a^r q^{-1} v_0.$$
We get an analog in $K_0(\mathcal{O}^{sh})$ of the $QQ^*$-system established in \cite{HL} in $K_0(\mathcal{O})$ :
$$[L(\Psib_{i,a}^*)][L_{i,a}^+] = [\overline{-\alpha_i}] \prod_{j,C_{i,j} \neq 0}[L_{j,aq_j^{C_{j,i}}}^+] 
+ \prod_{j,C_{i,j} \neq 0}[L_{j,aq_j^{-C_{j,i}}}^+].$$
\end{example}

\section{Induction and restriction functors}\label{irf}

We define and study induction and restriction functors relating the category $\mathcal{O}$ of representations
of the quantum affine Borel algebra $\mathcal{U}_q(\hat{\bo})$ and the categories $\mathcal{O}_\mu$ of representations of shifted quantum affine algebras $\mathcal{U}_q^\mu(\hat{\Glie})$. These functors will be useful tools for our study in the following. 
No analogs of these functors are known for shifted Yangians.

\subsection{Functors for antidominant weights}

\begin{prop} We have an equivalence of categories
$$\tilde{\mathcal{O}} \xrightarrow{\sim} \bigoplus_{\mu\in - \Lambda^+} \mathcal{O}_{\mu}.$$
\end{prop}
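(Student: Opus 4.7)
The plan is to construct mutually quasi-inverse functors $F\colon\tilde{\mathcal{O}}\to\bigoplus_{\mu\in -\Lambda^+}\mathcal{O}_\mu$ and $G$ in the opposite direction, relying crucially on the preceding proposition, which presents $\mathcal{U}_q^\mu(\hat{\Glie})$ as $\tilde{\mathcal{U}}_q(\hat{\Glie})$ quotiented by the relations $\kappa_i=\tilde{\phi}_{i,-1}^-=\cdots=\tilde{\phi}_{i,\alpha_i(\mu)+1}^-=0$, localized at the leading coefficients $\tilde{\phi}_{i,\alpha_i(\mu)}^-$, and enlarged by invertible Cartan generators $k_i^{\pm 1}$ whenever $\alpha_i(\mu)<0$.

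For $F$, the defining decomposition $V=\bigoplus_{\mu\in -\Lambda^+}V_{(\mu)}$ of any $V\in\tilde{\mathcal{O}}$ is stable under $\tilde{\mathcal{U}}_q(\hat{\Glie})$ because the Drinfeld commutation relation (\ref{phix}), together with Proposition~\ref{zero}, shows that each $\tilde{x}_i^\pm(z)$ preserves the rational degree of every $\tilde{\phi}_j^\pm(z)$ on weight spaces. On $V_{(\mu)}$, the quotient relations are automatic (the expansion of $\tilde{\phi}_i^-(z)$ at infinity starts at $z^{\alpha_i(\mu)}$, so lower coefficients vanish and $\kappa_i^2=\tilde{\phi}_{i,0}^-=0$ when $\alpha_i(\mu)<0$), while $\tilde{\phi}_{i,\alpha_i(\mu)}^-$ is invertible by the very definition of $V_{(\mu,\omega)}$; the extra Cartan generators $k_i^{\pm 1}$ required for $\alpha_i(\mu)<0$ are read off from the eigenspace data $\omega$. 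This produces a $\mathcal{U}_q^\mu(\hat{\Glie})$-module whose category $\mathcal{O}_\mu$ axioms (weight space decomposition, finite-dimensionality, cone condition) follow directly from those of $\tilde{\mathcal{O}}$.

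For $G$, each $V_\mu\in\mathcal{O}_\mu$ is given a $\tilde{\mathcal{U}}_q(\hat{\Glie})$-action by pullback along the canonical map implicit in the proposition: $\tilde{x}_{i,m}^+$, $\tilde{x}_{i,m}^-$, $\tilde{\phi}_{i,m}^\pm$ and $\kappa_i$ act through their images in $\mathcal{U}_q^\mu(\hat{\Glie})$, after the quotient-localization-extension procedure. The Drinfeld-type relations of $\tilde{\mathcal{U}}_q(\hat{\Glie})$ recalled from \cite[Section 2.2]{HJ} then follow automatically from those of the shifted algebra, and the weight plus $\ell$-weight decompositions of $V_\mu$ provide exactly the required $V_{(\mu,\omega)}$ grading placing $G(V_\mu)$ in $\tilde{\mathcal{O}}$. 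Both compositions $F\circ G$ and $G\circ F$ reduce tautologically to the identity, since they preserve both the underlying vector space and the action of each Drinfeld generator, while acting by the identity on morphisms.

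The main obstacle I foresee is the careful bookkeeping required to reconcile the asymptotic generators (renormalized by $k_i^{-1}$, with $\kappa_i$ forced to vanish on $V_{(\mu)}$ whenever $\alpha_i(\mu)<0$) with the shifted-algebra generators $k_i^{\pm 1}$ (which remain invertible and independent of $\kappa_i$ in $\mathcal{U}_q^\mu(\hat{\Glie})$). Concretely, one must verify that the presentation of the asymptotic algebra by Drinfeld-style generators and relations has no hidden relations that get lost under the quotient, so that the pullback morphism in the definition of $G$ is actually a well-defined algebra homomorphism; this is essentially the content of the preceding proposition itself together with the absence-of-hidden-relations part of the triangular decomposition noted in (iii) of Remark~\ref{com}.
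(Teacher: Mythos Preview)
Your proposal is correct and follows essentially the same route as the paper. Both arguments rest on two points: (1) each $V_{(\mu)}$ is a $\tilde{\mathcal{U}}_q(\hat{\Glie})$-submodule, which you deduce from degree preservation under relation (\ref{phix}) and Proposition~\ref{zero}, while the paper deduces it from (\ref{phix}) together with the invertibility and diagonalizability of $\tilde{\phi}_{i,\alpha_i(\mu)}^-$ guaranteed by axiom~(i) of Definition~\ref{o}; and (2) the resulting pieces are identified with $\mathcal{O}_\mu$ via the preceding proposition. The paper then phrases the conclusion as a direct-sum decomposition of abelian categories, noting explicitly that there are no nontrivial extensions between different summands; you instead package the same content as a pair of mutually inverse functors. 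These are equivalent formulations, and your functorial language makes the bijection on morphisms slightly more explicit. The $\kappa_i$ bookkeeping you flag as the main obstacle is handled in the paper at exactly the same level of detail as in your proposal, namely by appeal to the presentation of $\mathcal{U}_q^\mu(\hat{\Glie})$ in the preceding proposition.
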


\begin{proof} For $\mu\in - \Lambda^+$, let $\tilde{\mathcal{O}}_{\mu}$ be the subcategory of representations in 
$\tilde{\mathcal{O}}$ on which for $i\in I$
$$\kappa_i = \phi_{i,-1}^- = \cdots = \phi_{i, \alpha_i(\mu) + 1}^- = 0$$
and $\phi_{i,\alpha_i(\mu)}$ is invertible. It is the subcategory of the representations $V$ in $\tilde{\mathcal{O}}$ so that
$V = V_{(\mu)}$. Such representations $V$ are representations of $\mathcal{U}_q^{\mu}(\hat{\Glie})$. 
Indeed, recall from Section \ref{asymprel} that $\mathcal{U}_{q}^{\mu}(\hat{\Glie})$ is obtained from a quotient of 
the asymptotic algebra $\tilde{\mathcal{U}}_q(\hat{\Glie})$. Then for a representation $V$ as above, we set $k_i.v = \omega(i) v$ for $v\in V^{(\omega)}$.
We obtain an equivalence of categories 
$$\tilde{\mathcal{O}}_{\mu} \xrightarrow{\sim} \mathcal{O}_{\mu}.$$
Now let $V$ be a representation in $\tilde{\mathcal{O}}$ and $\mu\in -\Lambda^+$. As the $\phi_{i,\alpha_i(\mu)}^-$ are invertible and 
diagonalizable on $V_{(\mu)}$ (from the definition of the category $\tilde{\mathcal{O}}$), it follows from relations (\ref{phix}) that $V_{(\mu)}$ 
is a submodule of $V$ which is in $\tilde{\mathcal{O}}_\mu$. Moreover, we obtain that there are no non-trivial extension between the 
modules $V_{(\mu)}$ and $V_{(\mu')}$ for $\mu\neq \mu'$.

We obtain that   $\tilde{\mathcal{O}} = \bigoplus_{\mu\in - \Lambda^+} \tilde{\mathcal{O}}_{\mu}$.
\end{proof}

For $\mu\in -\Lambda^+$, composing the equivalence $\mathcal{O}_\mu \xrightarrow{\sim} \tilde{\mathcal{O}}_{\mu}$ with $\tilde{f}$, we get a functor
$$f_\mu : \mathcal{O}_{\mu}\rightarrow \mathcal{O}.$$
Any simple module in $\tilde{\mathcal{O}}$ is in one of the categories $\tilde{\mathcal{O}}_\mu$.

As a consequence and from the results of the previous Sections, one gets the following.

\begin{thm}\label{simpasym}
The simple modules in the category $\tilde{\mathcal{O}}$ are parametrized by rational $\ell$-weights of non-positive degree and constant term $1$.
\end{thm}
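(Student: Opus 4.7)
The plan is to combine the decomposition of $\tilde{\mathcal{O}}$ established in the preceding proposition with the classification of simple modules in $\mathcal{O}_\mu$ given by Theorem \ref{param}, and to track the additional constraint $\Psi_i(0)=1$ that is forced by the defining relation $\tilde{\phi}_{i,0}^+=1$ of the asymptotic algebra.

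First I would observe that, by the preceding proposition, every simple module in $\tilde{\mathcal{O}}$ lies in some summand $\tilde{\mathcal{O}}_\mu$ with $\mu\in-\Lambda^+$, and the equivalence $\tilde{\mathcal{O}}_\mu\xrightarrow{\sim}\mathcal{O}_\mu$ identifies it with a simple $\mathcal{U}_q^\mu(\hat{\Glie})$-module. By Theorem \ref{param} the latter is $L(\Psib)$ for a unique $\Psib\in\mathfrak{r}_\mu$, that is, a rational $\ell$-weight with $\deg(\Psi_i)=\alpha_i(\mu)\le 0$. Because the equivalence is set up so that the action of $\tilde{\phi}_i^+(z)$ is carried to that of $\phi_i^+(z)$, and because $\tilde{\phi}_{i,0}^+=1$ in $\tilde{\mathcal{U}}_q(\hat{\Glie})$, the highest $\ell$-weight must satisfy $\Psi_i(0)=1$ for every $i\in I$. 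This gives the necessity of both the non-positive degree and the normalization of the constant term.

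For the converse, given any rational $\ell$-weight $\Psib=(\Psi_i(z))_{i\in I}$ with $\deg(\Psi_i)\le 0$ and $\Psi_i(0)=1$, I would set $\mu\in-\Lambda^+$ to be the antidominant coweight determined by $\alpha_i(\mu)=\deg(\Psi_i)$. Then $\Psib\in\mathfrak{r}_\mu$, so Theorem \ref{param} produces a simple module $L(\Psib)$ in $\mathcal{O}_\mu$; applying the inverse of the equivalence $\tilde{\mathcal{O}}_\mu\xrightarrow{\sim}\mathcal{O}_\mu$ yields a simple module in $\tilde{\mathcal{O}}_\mu\subset\tilde{\mathcal{O}}$ with the prescribed highest $\ell$-weight. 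The condition $\Psi_i(0)=1$ is precisely what is needed for the $\mathcal{U}_q^\mu(\hat{\Glie})$-module structure obtained from Theorem \ref{param} to pull back along the defining quotient map of $\tilde{\mathcal{U}}_q(\hat{\Glie})\to\mathcal{U}_q^\mu(\hat{\Glie})$, where $\tilde{\phi}_{i,0}^+$ is sent to $1$.

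The only real point of care will be checking that the equivalence of the previous proposition genuinely transports $\ell$-weight spaces, so that the parametrization obtained from Theorem \ref{param} corresponds term by term to the pseudo-eigenvalues of the $\tilde{\phi}_i^+(z)$. Once this compatibility is in place, assembling the decomposition $\tilde{\mathcal{O}}=\bigoplus_{\mu\in-\Lambda^+}\tilde{\mathcal{O}}_\mu$ over all antidominant $\mu$ produces the union $\bigsqcup_{\mu\in-\Lambda^+}\{\Psib\in\mathfrak{r}_\mu\mid \Psi_i(0)=1\}$, which is exactly the set of rational $\ell$-weights of non-positive degree and constant term $1$, finishing the proof.
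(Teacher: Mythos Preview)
Your proposal is correct and follows precisely the route the paper intends: the paper states the theorem as an immediate consequence of the preceding equivalence $\tilde{\mathcal{O}}\simeq\bigoplus_{\mu\in-\Lambda^+}\mathcal{O}_\mu$ together with the classification Theorem~\ref{param}, and that is exactly what you do. One small inaccuracy worth fixing: the equivalence does not literally carry $\tilde{\phi}_i^+(z)$ to $\phi_i^+(z)$; rather $\phi_i^+(z)=k_i\,\tilde{\phi}_i^+(z)$ in $\mathcal{U}_q(\hat{\Glie})$, with $k_i$ acting through the $\tb^*$-grading, so the $\ell$-weight on the $\mathcal{O}_\mu$ side is the $\tilde{\mathcal{O}}$ highest $\ell$-weight rescaled by the highest grading weight. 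This does not affect your argument, since the normalization $\tilde{\Psi}_i(0)=1$ is still forced by $\tilde{\phi}_{i,0}^+=1$, and the converse direction goes through unchanged.
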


\subsection{Induction functors}

To generalize the results of the previous section to $\mu\in\Lambda$, we have to proceed differently. 

Recall the algebra $\mathcal{U}_q^\mu(\hat{\bo})$ from Proposition \ref{mubo}. It is isomorphic to a subalgebra of $\mathcal{U}_q^\mu(\hat{\Glie})$ and of $\mathcal{U}_q(\hat{\bo})$. 

For $V$ a $\mathcal{U}_q^\mu(\hat{\Glie})$-module in the category $\mathcal{O}_\mu$, one can consider its restriction to $\mathcal{U}_q^\mu(\hat{\bo})$
and then its induction to $\mathcal{U}_q(\hat{\bo})$ : 
$$\mathcal{I}_{\mu} (V) = \mathcal{U}_q(\hat{\bo})\otimes_{\mathcal{U}_q^\mu(\hat{\bo})} V.$$
As $\mathcal{U}_q^+(\hat{\bo})$ and $\mathcal{U}_q^0(\hat{\bo})$ are both contained in $\mathcal{U}_q^\mu(\hat{\bo})$, 
the weight spaces of $\mathcal{I}_{\mu} (V)$ are finite-dimensional and we get a representation in the category $\mathcal{O}$. 
So this defines a functor
$$\mathcal{I}_{\mu} : \mathcal{O}_\mu \rightarrow \mathcal{O}.$$

\begin{rem} For $V = L(\Psib)$ in $\mathcal{O}_\mu$, the $\mathcal{U}_q(\hat{\bo})$-module 
$\mathcal{I}_{\mu}(V)$ is of highest $\ell$-weight generated by a highest weight vector of $V$. It admits $L^\bo(\Psib)$ as a quotient.
\end{rem} 

\begin{example} For $i\in I$, $a\in\mathbb{C}^*$, let $\mu = \omega_i^\vee$ and $V = L_{i,a}^+$ in $\mathcal{O}_{\omega_i^\vee}$. It has dimension $1$. Then 
 $\mathcal{I}_{\omega_i^\vee}(V)$ admits the simple infinite-dimensional $\mathcal{U}_q(\hat{\bo})$-module $L^{\bo,+}_{i,a}$ as a quotient.
In the $sl_2$-case, we have $\mathcal{U}_q(\hat{\bo}) = \mathcal{U}_q^{\omega_1^\vee}(\hat{\bo})\otimes \mathbb{C}[x_{1,1}^-]$ and so 
$\mathcal{I}_{\omega_i^\vee}(V) = \sum_{m\geq 0}(x_{1,1}^-)^m.V = L^{\bo,+}_{1,a}$.
\end{example}

\subsection{Restriction functors} Let $\mu\in \Lambda$. For $i\in I$, we set $\mu_i = \text{Max}(1,\alpha_i(\mu))$.

For $V$ a representation in $\mathcal{O}$, we consider its subspace $V_\mu$ (resp. $V_{<\mu}$) of vectors $v\in V$ 
so that for any $i\in I$, $\phi_i^+(z).v\in V(z)$ has degree lower or equal to $\alpha_i(\mu)$ (respectively strictly lower than $\alpha_i(\mu)$).

\begin{rem} $V_\mu$ is not a submodule of $V$ in general. Let $\mathfrak{g} = sl_2$ and $V = L((1 - z)^3)$ with highest weight vector $v$. Let 
$$x^{+,+}(z) = \sum_{m\geq 0}x_m^+z^m.$$ 
For $w = x_{1}^-.v$ one has $x^{+,+}(z).w=  \frac{z^2 - 3 z + 3}{q- q^{-1}}v$ and
$$- q^2 (1 - z)^3x_{0}^+ (\phi_1^+(z))^{-1}.w =  -  x_{0}^+.w + (1 - q^4)  x_i^{+,+}(z q^2).w = \frac{(1 - q^4)(q^4z^2 - 3q^2 z + 3) - 3}{q- q^{-1}}.v$$
In particular,  $(\phi_1^+(z))^{-1}.w$ has degree larger or equal to $-1$. But on the weight space of $V$ of weight $-\alpha_1$, $\phi_1^+(z) = q^{-2}(1 - z)^3 \text{Id} + N(z)$
with $N(z)^3 = 0$. Its inverse is 
$$q^2(1 - z)^{-3}\text{Id} - q^4(1 - z)^{-6}N(z) + q^6 (1 - z)^{-9}N^2(z).$$ 
Hence $N(z)$ has degree larger or equal to $4$, and so is the degree of $\phi_1^+(z)$.
\end{rem}

\begin{prop} For $v\in V_\mu$ (resp. $V_{<\mu}$), there is $M\geq 0$ so that  $\phi_{i,s}^+.v$, $x_{i,m}^+.v$, $x_{i,m}^-.v$ are in $V_\mu$ (resp. $V_{<\mu})$ for any 
$i\in I$, $s\geq 0$, $m\geq M$.
\end{prop}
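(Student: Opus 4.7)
The case of $\phi_{j,s}^+ v$ is immediate and holds without any condition on $s$: by the commutativity relation (\ref{un}) of Cartan-Drinfeld elements, $\phi_i^+(z)(\phi_{j,s}^+ v) = \phi_{j,s}^+(\phi_i^+(z) v)$ is a rational function in $z$ of the same degree as $\phi_i^+(z)v$, which by hypothesis is at most (resp. strictly less than) $\alpha_i(\mu)$. The interesting content is therefore the statement for $x_{j,m}^\pm v$, and by linearity one may assume $v$ lies in a single weight space $V_{\omega}$.

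The plan rests on a degree recursion in $m$ obtained from the quasi-commutation relation (\ref{phix}), combined with a uniform a priori degree bound coming from finite-dimensionality of weight spaces. Multiplying (\ref{phix}) by the polynomial $(w - q^{\pm B_{i,j}}z)$, applying the resulting identity to $v$, and extracting the coefficient of $w^{m+1}$, with the shorthand $F_i(z):=\phi_i^+(z)v$ and $G_{m,i}(z):=\phi_i^+(z)(x_{j,m}^\pm v)\in V(z)$, yields the identity of rational functions
\[
G_{m,i}(z) - q^{\pm B_{i,j}} z\, G_{m+1,i}(z) = \bigl(q^{\pm B_{i,j}} x_{j,m}^\pm - z\, x_{j,m+1}^\pm\bigr)F_i(z),
\]
equivalently
\[
G_{m+1,i}(z) = q^{\mp B_{i,j}} z^{-1}\bigl[G_{m,i}(z) - (q^{\pm B_{i,j}} x_{j,m}^\pm - z\, x_{j,m+1}^\pm)F_i(z)\bigr],
\]
from which one reads off $\deg_z G_{m+1,i} \le \max\bigl(\deg_z G_{m,i} - 1,\ \deg_z F_i\bigr)$.

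For the a priori bound, all vectors $x_{j,m}^\pm v$, $m \in \mathbb{Z}$, lie in the single weight space $W = V_{\omega\overline{\alpha_j}^{\pm 1}}$, which is finite-dimensional since $V\in\mathcal{O}$. The operator $\phi_i^+(z)$ acts rationally on $W$ and admits a common denominator, so there is a uniform constant $N_0 = N_0(i,j,\pm)$ with $\deg_z \phi_i^+(z)w \le N_0$ for every $w\in W$; in particular $\deg_z G_{m,i}\le N_0$ for all $m$. Combining this with the recursion and with $\deg_z F_i \le \alpha_i(\mu)$ (resp. $\le \alpha_i(\mu)-1$), the integer sequence $\deg_z G_{m,i}$ strictly decreases until it first drops to $\alpha_i(\mu)$ (resp. $\alpha_i(\mu)-1$) and is then trapped there. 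Hence for each triple $(i,j,\pm)$ there exists $M_{i,j,\pm}$ beyond which the required degree bound holds, and $M:=\max_{(i,j,\pm)\in I\times I\times\{+,-\}}M_{i,j,\pm}$, finite since $I$ is finite, gives the desired uniform threshold.

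The delicate point is to justify that the apparent $z^{-1}$ in the recursion does not produce a pole at $z=0$, i.e. that the bracketed expression vanishes at $z=0$; this follows from the $z=0$ specialization of (\ref{phix}), namely $\phi_{i,0}^+ x_{j,m}^\pm = q^{\pm B_{i,j}} x_{j,m}^\pm \phi_{i,0}^+$, which gives $G_{m,i}(0) = q^{\pm B_{i,j}} x_{j,m}^\pm F_i(0)$. Beyond this piece of bookkeeping, the proof is an elementary exercise in rational-function arithmetic using only the quasi-commutation relation and the finite-dimensionality of weight spaces in category $\mathcal{O}$.
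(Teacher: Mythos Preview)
Your proof is correct and follows essentially the same approach as the paper: both use the quasi-commutation relation (\ref{phix}) in the mode form $\phi_i^+(z)(x_{j,m-1}^\pm - q^{\pm B_{i,j}}z\, x_{j,m}^\pm) = (q^{\pm B_{i,j}}x_{j,m-1}^\pm - z\, x_{j,m}^\pm)\phi_i^+(z)$ to deduce the degree recursion $\deg G_{m,i}\le\max(\deg G_{m-1,i}-1,\deg F_i)$, and both invoke finite-dimensionality of weight spaces to terminate. Your presentation is more explicit (stating the recursion and the uniform bound $N_0$ directly), whereas the paper phrases the same step contrapositively---if $\deg G_{m,i}\ge\deg G_{m-1,i}$ then $\deg G_{m,i}\le\alpha_i(\mu)$, so failure of the conclusion forces strict decrease forever, contradicting the finiteness of degrees on a finite-dimensional weight space; the bookkeeping about the pole at $z=0$ is unnecessary for the degree bound (the relation $z\,G_{m+1,i}=\text{[bracket]}$ already yields the inequality without dividing), but does no harm.
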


\begin{proof} It suffices to show the statement for $V_{\mu}$ as 
$$V_{<\mu} = \sum_{i\in I} V_{\mu - \omega_i^\vee}.$$ 
Let $v\in V_\mu$  and $i,j\in I$. 
Let $m > 0$ so that 
$$\text{deg}(\phi_i^+(z)x_{j,m}^+.v) \geq \text{deg}(\phi_i^+(z)x_{j,m-1}^+.v).$$ 
Then the relation
$$\phi_i^+(z) (x_{j,m-1}^+ - q^{B_{i,j}}z x_{j,m}^+).v =   (q^{B_{i,j}}x_{j,m-1}^+ - z x_{j,m}^+)\phi_i^+(z).v$$
implies that $\text{deg}(\phi_i^+(z)x_{j,m}^+.v)\leq \alpha_i(\mu)$. 
So we are reduced to the case when for any $m\geq 0$, the maximum of the $\text{deg}(\phi_i^+(z)x_{j,m'}^+.v)$, $m' \geq m$
is realized at $m' = m$ only. This means that $\text{deg}(\phi_i^+(z)x_{j,m}^+.v)$ is strictly decreasing. Contradiction as 
weight spaces are finite-dimensional. 

This is analogous for the $x_{j,m}^-v$. And it is clear for the $\phi_{j,s}^+$ which commute with $\phi_i^+(z)$.
\end{proof}

We identify the elements of $\mathcal{U}_q(\hat{\bo})$ with the corresponding operators on $V$. For $v\in V_\mu$, we consider $M$ as in the previous Proposition.
Let $i\in I$. The rational function $\sum_{m\geq M}x_{i,m}^+.v z^m\in V_\mu(z)$ has a degree $d$. Let $M' > \text{Max}(M,d)$. Then 
$\sum_{m > M'}x_{i,m}^+z^m.v$ has degree $M'$. We expand it in $z^{-1}$ and we get a series 
$-\sum_{m \leq M'}v_{m}^{(M')}z^m$. We also set $v_{m}^{(M')} = x_{i,m}^+.v$ for $m > M'$.
We get a new family $(v_m^{(M')})_{m\in\mathbb{Z}}$. It does not depend on the choice of $M'$. Indeed, for $M'' > M'$, we have 
$$ -\sum_{m \leq M''}v_m^{(M'')} z^m = \sum_{m> M''} v_m^{(M'')} z^m 
= \sum_{m > M'}v_m^{(M')} z^m - \sum_{M''\geq m > M'}v_m^{(M')}z^m$$
$$= - \sum_{m \leq M'}v_m^{(M')}z^m - \sum_{M''\geq m > M'}v_m^{(M')}z^m = - \sum_{m \leq M''} v_m^{(M')} z^m.$$
Identifying the developments in $z^{-1}$, we get $v_m^{(M'')} = v_m^{(M')}$ for $m \leq M''$, and the developments in $z$
$v_m^{(M'')} = v_m^{(M')}$ for $m > M''$. So we can set $\tilde{x}_{i,m}^+.v = v_m^{(M')}$ which is well defined 
for $m\in\mathbb{Z}$. In the same way, one defines operators $\tilde{x}_{i,m}^-$ on $V_\mu$ for $i\in I$, $m\in\mathbb{Z}$.

These operators are well defined on $\tilde{V}_\mu = V_\mu/V_{<\mu}$. Also $\phi_i^+(z)$ is rational of degree $\alpha_i(\mu)$ on this quotient. Expanding in $z^{-1}$ we get an operator series 
$$\phi_i^-(z)\in z^{\alpha_i(\mu)}\text{End}(\tilde{V}_\mu)[[z^{-1}]].$$

\begin{prop}\label{resfunc} The operators $\tilde{x}_{i,m}^\pm$, $\phi_i^\pm(z)$ constructed above on $\tilde{V}_\mu$ define a structure 
of $\mathcal{U}_q^\mu(\hat{\Glie})$-module on $\tilde{V}_\mu$ which is in the category $\mathcal{O}_\mu$.
\end{prop}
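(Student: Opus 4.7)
The plan is in three steps: (a) show that $V_\mu$ and $V_{<\mu}$ are stable under all newly defined operators so everything descends to $\tilde{V}_\mu$, (b) verify the Drinfeld relations of $\mathcal{U}_q^\mu(\hat{\Glie})$, and (c) check the category $\mathcal{O}_\mu$ axioms.

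For step (a), the preceding proposition already shows that sufficiently high modes $x_{i,m}^\pm$ and $\phi_{i,s}^+$ preserve $V_\mu$ (and separately $V_{<\mu}$). The remaining extended modes, namely $\tilde{x}_{i,m}^\pm$ for small $m$ and the coefficients $\phi_{i,m}^-$, are by construction Laurent coefficients at $\infty$ of the same rational $V$-valued function whose expansion at $0$ lands in $V_\mu$ (resp. $V_{<\mu}$). Since extracting Laurent coefficients commutes with restricting to a subspace, these operators preserve the subspaces and descend to $\tilde{V}_\mu$, and in particular do not depend on the auxiliary choice of $M'$, as already noted.

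For step (b), I split the defining relations into two kinds. The relations (\ref{un}), (\ref{deux}), (\ref{hd}), (\ref{hdd}), (\ref{seq}) and (\ref{trois}) already hold on $V$ for the ranges of modes living in $\mathcal{U}_q(\hat{\bo})$ (non-negative modes of $\phi^+$ and $x^+$, positive modes of $x^-$); rewritten as identities of rational functions in $z,w$ acting on a weight space, they automatically extend to all modes on $\tilde{V}_\mu$ by rational continuation, using that the generating series act as rational operators (an analog of Proposition \ref{zero}, which holds here by construction). The critical relation is (\ref{pmz}). The key observation is that on $\tilde{V}_\mu$, by construction, $\phi_i^+(z)$ and $\phi_i^-(z)$ are the expansions at $0$ and $\infty$ of one and the same rational operator-valued function $\phi_i(z)$ of degree $\alpha_i(\mu)$. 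Hence the right-hand side $\delta_{i,j}(q_i-q_i^{-1})^{-1}[\delta(w/z)\phi_i^+(z)-\delta(z/w)\phi_i^-(z)]$ is precisely the standard delta-distribution whose pairing with a Laurent polynomial in $z,w$ produces the difference of the two expansions of $\phi_i(z)\delta_{i,j}/(q_i-q_i^{-1})$ along $z=w$. On the other side, the commutator $[\tilde{x}_i^+(z),\tilde{x}_j^-(w)]$, whose matrix coefficients in large-mode indices are determined by the $\mathcal{U}_q(\hat{\bo})$-relations on $V$, is extended to all modes via the rational presentation; the extension matches precisely the delta-distribution above, because both ``positive'' and ``negative'' halves of each series were constructed from the same rational function.

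I expect the verification of (\ref{pmz}) to be the main obstacle, since it alone mixes the two orientations of the Laurent expansion and couples the separately constructed positive- and negative-mode pieces. Once (b) is in hand, step (c) is routine: weight spaces of $\tilde{V}_\mu$ are subquotients of weight spaces of $V$, hence finite-dimensional; the boundedness of the set of weights is inherited from $V\in\mathcal{O}$; $\phi_{i,0}^+$ is invertible because it already is on $V$; and $\phi_{i,\alpha_i(\mu)}^-$, being by definition the leading coefficient at $\infty$ of the rational function $\phi_i(z)$ of degree exactly $\alpha_i(\mu)$ on $\tilde{V}_\mu$, is invertible on each weight space. This places $\tilde{V}_\mu$ in $\mathcal{O}_\mu$ and completes the construction of the restriction functor.
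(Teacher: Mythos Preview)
Your overall architecture is the same as the paper's, and steps (a) and (c) are essentially fine (though note that the invertibility of $\phi_{i,\alpha_i(\mu)}^-$ on the quotient needs a word, since on $V_\mu/V_{<\mu}$ the degree of $\phi_i^+(z)$ is exactly $\alpha_i(\mu)$ only because $V_{<\mu}$ absorbs everything of strictly lower degree). The list of ``automatic'' relations is also right in spirit: (\ref{un}), (\ref{deux}), (\ref{hd}), (\ref{hdd}), (\ref{seq}) are genuine polynomial-in-$z,w$ identities between rational operator-valued functions, so they do propagate from large modes to all modes by the rational extension. (Minor point: you include (\ref{trois}) in that list and then separately call (\ref{pmz}) critical; these are the same relation.)

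The gap is in your treatment of (\ref{pmz}). Saying that ``the extension matches precisely the delta-distribution because both halves were constructed from the same rational function'' is not an argument. The delta function is not rational, and the commutator $[\tilde{x}_i^+(z),\tilde{x}_i^-(w)]$ is a genuinely bilateral formal series in two variables; knowing that each factor is the full Laurent expansion of a rational function does \emph{not} by itself tell you what the four cross-commutators (positive-positive, positive-negative, etc.) assemble to. This is exactly where the work lies. The paper handles it by a concrete computation: one first uses the closed formula (essentially \cite[Lemma B.2 (c)]{FT})
\[
[x_i^{+,M}(z), x_i^{-,M}(w)] = \frac{ z^M w^{1 - M} \phi_i^{+,2M}(w) -  w^M z^{1 - M}\phi_i^{+,2M}(z)}{(w - z)(q_i - q_i^{-1})}
\]
for the commutator of truncated positive-mode series, which holds in $\mathcal{U}_q(\hat{\bo})$ and hence on $V$. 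One then expands this in $z^{-1}$ (resp.\ $w^{-1}$) to recover $[\tilde{x}_i^+(z),x_i^{-,M}(w)]$ and $[\tilde{x}_i^+(z),\tilde{x}_i^{-,M}(w)]$ separately, obtaining
\[
[\tilde{x}_i^+(z),x_i^{-,M}(w)] = \frac{z^{M-1}w^{1-M}(\phi_i^+(z)-\phi_i^-(z))}{q_i-q_i^{-1}}\sum_{r\ge 0}(w/z)^r,
\]
and the analogous identity with $(z/w)^r$. Summing gives exactly $\delta(w/z)\phi_i^+(z)-\delta(z/w)\phi_i^-(z)$ divided by $q_i-q_i^{-1}$. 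Without this explicit splitting-and-reassembling step, your argument does not establish (\ref{pmz}); you have identified the right target but not hit it.
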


We obtain a restriction functor 
$$\mathcal{R}_\mu : \mathcal{O} \rightarrow \mathcal{O}_\mu.$$

\begin{proof} We have to check the defining relations of $\mathcal{U}_q^\mu(\hat{\Glie})$ are satisfied. For example let us study the relation
$$[x_i^+(z),x_i^-(w)] = \frac{\delta(w z^{-1}) \phi_i^+(w) - \delta(zw^{-1})\phi_i^-(z)}{q_i - q_i^{-1}},$$
which is the most complicated to handle as it involves all operators. We work on a vector $v$ for which we can consider 
$M$ large enough so that $\tilde{x}_{i,m}^\pm.v =  x_{i,m}^\pm.v$ for $i\in I$ and $m\geq M$.
First, we prove exactly as in \cite[Lemma B.2 (c)]{FT} that for 
$$x_i^{\pm,M}(z) = \sum_{m\geq M}x_{i,m}^\pm z^m\text{ , }\phi_i^{\pm,M}(z) = \sum_{m\geq M} \phi_{i,\pm m}z^{\pm m},$$
we have in $\mathcal{U}_q(\hat{\bo})[[z,w]]$ : 
$$[x_i^{+,M}(z), x_i^{-,M}(w)] = \frac{ z^M w^{1 - M} \phi_i^{+,2M}(w) -  w^M z^{1 - M}\phi_i^{+,2M}(z)}{(w - z)(q_i - q_i^{-1})}.$$
Now we expand this in $z^{-1}$ and we get
$$[-\tilde{x}_i^{+,M}(z), x_i^{-,M}(w)] = \frac{z^Mw^{1 - M} \phi_i^{+,2M}(w) - w^M z^{1 - M}  \phi_i^-(z) +w^M z^{1 - M}  \sum_{0\leq m < 2M}\phi_{i,m}^+z^m }{(w - z)(q_i - q_i^{-1})},$$
where we denote
$$\tilde{x}_i^{\pm,M}(z) = \sum_{m < M}\tilde{x}_{i,m}^\pm z^m.$$
This implies
$$[\tilde{x}_i^+(z),x_i^{-,M}(w)] = z^{M-1}w^{1 - M} \frac{\phi_i^+(z) - \phi_i^-(z)}{(q_i - q_i^{-1})}\sum_{r\geq 0} (wz^{-1})^r.$$
In the same way, one gets
$$[\tilde{x}_i^+(z),\tilde{x}_i^{-,M}(w)] = z^M w^{-M}\frac{\phi_i^+(z) - \phi_i^-(z)}{(q_i - q_i^{-1})}\sum_{r \geq 0}(zw^{-1})^r.$$
The sum of the two relations give the correct relation.
\end{proof}

\begin{example} 
(i) Let $V$ be a simple finite-dimensional representation in $\mathcal{O}$. Then, up to a twist, it is a
representation of the quantum affine algebra and the action of the $\phi_i^+(z)$ are of degree $0$. 
This gives the restricted action and $\mathcal{R}_{\mu}(V)$ is simple of dimension $\text{dim}(V)$.

(ii) Consider the prefundamental representation $V = L^{\bo,-}_{1,1}$ in the $sl_2$-case. Then $V = V_{-\omega_1^\vee}$ 
and $\mathcal{R}_{-\omega_1^\vee}(V)$ is simple in $\mathcal{O}_{-\omega_1^\vee}$.

(iii) Consider the prefundamental representation $V = L^{\bo,+}_{1,1}$ in the $sl_2$-case. Then $V = V_{\omega_1^\vee}$.
By construction, $\tilde{x}_1^+(z) = \tilde{x}_1^-(z) = 0$ on $\mathcal{R}_{\omega_1^\vee}(V)$ 
which is semi-simple in $\mathcal{O}_{\omega_1^\vee}$ equal to an infinite direct sum of simple modules
of dimension $1$.
\end{example}

\begin{rem} We get functors 
$$\mathcal{I} : \mathcal{O}^{sh}\rightarrow \mathcal{O} \text{ and }\mathcal{R} : \mathcal{O} \rightarrow \mathcal{O}^{sh}.$$ 
We may wonder if these functors are biadjoint.
\end{rem}

\section{Characters and cluster algebra structures}\label{charclus}

In this section we establish a $q$-character formula for simple finite-dimensional 
representations of shifted quantum affine algebras in terms of the $q$-characters 
of certain simple representations of the quantum affine Borel algebra $\mathcal{U}_q(\hat{\bo})$ 
in the category $\mathcal{O}$ (Theorem \ref{charqf}). Then, we prove 
the results in \cite{HL} imply a description 
of simple finite-dimensional representations of $\mathcal{U}_q^\mu(\hat{sl}_2)$ (Theorem \ref{fact}), 
isomorphisms of Grothendieck rings between categories of representations
 of $\mathcal{U}_q^\mu(\hat{\Glie})$ associated to dominant and anti-dominant $\mu$ 
(Theorem \ref{exdu}), 
and a cluster algebra structure on the Grothendieck ring of finite-dimensional 
representations of shifted quantum affine algebras (Theorem \ref{clsh}). No analogs of these results are known for shifted Yangians.

Note that the structure of $K$-theoretic Coulomb branches of a 3d $N=4$ quiver gauge theory has been studied in the context of cluster theory in \cite{ss}. Here we consider cluster structures emerging from their representation theory.

\subsection{$q$-characters of finite-dimensional representations}

For $i\in I$, let $\chi_i$ be the character of $L^\bo(\Psib_{i,1})$. It is proved in \cite{HJ, FH} that for $a\in \mathbb{C}^*$ :
\begin{equation}\label{cform}\chi_q(L^{\bo,+}_{i,a}) =[\Psib_{i,a}]\chi_i. \end{equation}

\begin{thm}\label{charqf} Let $L(\Psib)$ be a simple finite-dimensional representation of $\mathcal{U}_q^\mu(\hat{\Glie})$. 
The $q$-character of the simple $\mathcal{U}_q(\hat{\bo})$-module  $L^\bo(\Psib)$ is :
$$\chi_q(L^\bo(\Psib)) = \chi_q(L(\Psib))\prod_{i\in I}\chi_i^{\alpha_i(\mu)}.$$
\end{thm}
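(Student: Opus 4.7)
The plan is to reduce the formula to a multiplicativity statement obtained by factoring the highest $\ell$-weight. Since $L(\Psib)$ is finite-dimensional, Theorem \ref{fdclas} ensures $\Psib$ is dominant, so we may write $\Psib = \Psib^0 \cdot \Psib^+$, where $\Psib^0$ has degree zero in each Drinfeld component (hence $L(\Psib^0)$ is a finite-dimensional $\mathcal{U}_q^{0}(\hat{\Glie})$-module, essentially a module over $\mathcal{U}_q(\hat{\Glie})$ by Theorem \ref{case0}), and
$$\Psib^+ = \Psib_{i_1,a_1}\Psib_{i_2,a_2}\cdots\Psib_{i_N,a_N}$$
with $\sum_{j=1}^{N}\omega_{i_j}^\vee = \mu$. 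In particular $|\{j : i_j = i\}| = \alpha_i(\mu)$ for each $i\in I$.

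On the shifted side, each $L(\Psib_{i_j,a_j})$ is one-dimensional, so fusing with it is an application of the shift functor $*_{i_j,a_j}$ of Remark \ref{caspe}, which preserves dimensions and multiplies $q$-characters by $[\Psib_{i_j,a_j}]$. Iterating Theorem \ref{fp} (or equivalently using the construction in the proof of Proposition \ref{fsense}) yields
$$\chi_q(L(\Psib)) = \chi_q(L(\Psib^0))\prod_{j=1}^{N}[\Psib_{i_j,a_j}].$$
Moreover the restriction of $L(\Psib^0)$ to $\mathcal{U}_q(\hat{\bo})$ is the simple Borel module $L^\bo(\Psib^0)$ (finite-dimensional simple modules stay simple by Bowman/Chari--Greenstein), so $\chi_q(L(\Psib^0)) = \chi_q(L^\bo(\Psib^0))$.

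On the Borel side, the factorization $L^\bo(\Psib) \simeq L^\bo(\Psib^0) \otimes L^\bo(\Psib^+)$ is exactly \cite[Lemma 5.9]{FJMM}, recalled in the remark following Theorem \ref{fdclas}. A further simplicity/cyclicity statement (in the spirit of the tensor product results of \cite{FH} for prefundamental modules) identifies $L^\bo(\Psib^+) \simeq L^{\bo,+}_{i_1,a_1}\otimes\cdots\otimes L^{\bo,+}_{i_N,a_N}$. Combined with the known character (\ref{cform}), this gives
$$\chi_q(L^\bo(\Psib^+)) = \prod_{j=1}^{N}[\Psib_{i_j,a_j}]\chi_{i_j} = \Bigl(\prod_{j=1}^{N}[\Psib_{i_j,a_j}]\Bigr)\prod_{i\in I}\chi_i^{\alpha_i(\mu)}.$$
Multiplying and substituting the shifted-side identity yields the desired formula $\chi_q(L^\bo(\Psib)) = \chi_q(L(\Psib))\prod_{i\in I}\chi_i^{\alpha_i(\mu)}$.

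The main obstacle is the Borel-side factorization: \cite[Lemma 5.9]{FJMM} handles the top-level split $L^\bo(\Psib^0)\otimes L^\bo(\Psib^+)$, but the further decomposition of $L^\bo(\Psib^+)$ into a tensor product of individual prefundamentals needs a genuine cyclicity argument. One can either cite the analogous simplicity statement for positive prefundamental tensor products as in \cite{FH}, or proceed by induction on $N$, tensoring one prefundamental at a time and verifying at each step that $L^\bo(\Psib/\Psib_{i_N,a_N}) \otimes L^{\bo,+}_{i_N,a_N}$ is simple. Either approach is entirely on the Borel side, independent of the shifted structure.
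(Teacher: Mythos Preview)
Your argument has a genuine gap on the shifted side. You claim that
\[
\chi_q(L(\Psib)) = \chi_q(L(\Psib^0))\prod_{j=1}^{N}[\Psib_{i_j,a_j}],
\]
justifying this by Theorem \ref{fp} and Proposition \ref{fsense}. But Theorem \ref{fp} only computes the $q$-character of the \emph{fusion product} $L(\Psib^0)*L(\Psib^+)$; it does not assert that this fusion is simple. Likewise, Remark \ref{caspe} tells you that $*_{i,a}$ preserves dimension and multiplies the $q$-character by $[\Psib_{i,a}]$, but not that it preserves simplicity: the resulting module is only known to be highest $\ell$-weight with $L(\Psib)$ as simple quotient (Proposition \ref{repsp}). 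Proposition \ref{fsense} gives $\dim L(\Psib)\leq \dim L(\Psib^0)$, not equality. The example after Proposition \ref{repsp} shows shift functors can genuinely break simplicity, so this is not a formality. Given the Borel-side inputs you assemble, the identity above is in fact \emph{equivalent} to the theorem, so you are assuming what is to be proved.

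Ironically, the obstacle you flag at the end (the Borel-side factorization of $L^\bo(\Psib^+)$) is the part that can be outsourced; the real work is the shifted-side simplicity you skipped. The paper avoids this trap: rather than asserting $L(\Psib^0)*L(\Psib^+)$ is simple, it works inside $L^\bo(\Psib_0)\otimes L^\bo(\Psib^+)$ via the Drinfeld coproduct, uses \cite[Lemma~5.7]{FJMM} to identify $L^\bo(\Psib)$ with $W\otimes L^\bo(\Psib^+)$ for some subspace $W\subset L^\bo(\Psib_0)$, and then applies the restriction functor $\mathcal{R}_\mu$ to show directly that $W\otimes v_+$ is a $\mathcal{U}_q^\mu(\hat{\Glie})$-submodule which is simple, hence isomorphic to $L(\Psib)$. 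The simplicity argument (generation by the highest weight vector via a rewriting of $x_{i,m}^-$ as $\sum_r \Psi_{i,r}^+ x_{i,m-r}^-$, and absence of other primitive vectors since all $x_{i,m}^+$ act as on $L(\Psib_0)$) is exactly the content you are missing.
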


\begin{rem} This generalizes the $q$-characters formulas (\ref{cform}) for $L^\bo(\Psib_{i,a})$ established in \cite{FH}) and the formula in \cite{HL} for $L^\bo(\Psib_{i,a}^*)$ (see Example \ref{exhl1}) :
$$\chi_q(L^\bo(\Psib_{i,a}^*)) = [\Psib_{i,a}^*]   (1 + A_{i,a}^{-1})   \prod_{j,C_{j,i} < 0} \chi_j =\chi_q(L(\Psib_{i,a}^*))   \prod_{j,C_{j,i} < 0} \chi_j .$$
\end{rem}

\begin{example} For $i\in I$, $r\geq 0$ and $\tilde{\Psib}_{i,1}$ is defined as in (iv) of Example \ref{exqchar}, we have
$$\chi_q(L^\bo(\tilde{\Psib}_{i,1}\Psib_{i,q_i^{-2r}})) = [\tilde{\Psib}_{i,1}\Psib_{i,q_i^{-2r}}] \sum_{0\leq m\leq r}(A_{i,1}A_{i,q_i^{-2}}\cdots A_{i,q_i^{-2m}})^{-1} \prod_{j\neq i}\chi_j^{-C_{i,j}}.$$
Indeed, it can be checked along the lines of (iv) of Example \ref{exqchar} that $L(\tilde{\Psib}_{i,1}\Psib_{i,q_i^{-2r}})$ is $(r+1)$-dimensional with the $q$-character corresponding to the formula above.
\end{example}

\begin{proof} From Theorem \ref{fdclas}, $L(\Psib)$ is a quotient of $L(\Psib_0)*L(\Psib^+)$ where $L(\Psib_0)$ is a finite-dimensional representation of $\mathcal{U}_q(\hat{\Glie})$
and $L(\Psib^+)$ is one-dimensional.

Consider the $\mathcal{U}_q(\hat{\bo})$-module $L^\bo(\Psib_0)\otimes L^\bo(\Psib^+)$ in the category $\mathcal{O}$. Let $v_0$ and $v_+$ be corresponding highest weight vectors. 
By \cite[Lemma 5.6]{FJMM}, the Drinfeld coproduct gives a well-defined action of $\mathcal{U}_q(\hat{\bo})$ on this tensor product. Let us denote by $V$ this representation. 
It is established in \cite[Lemma 5.7]{FJMM} that any non-zero submodule of $V$ is of the form $W\otimes L^\bo(\Psib^+)$ where $W\subset L^\bo(\Psib_0)$ 
is a subspace containing $v_0$.
In particular, the submodule $V'$ of $V$ generated by $v_0\otimes v_+$ is simple isomorphic to $L^\bo(\Psib)$. There is a corresponding 
$W\subset L^\bo(\Psib_0)$ : 
$$L^\bo(\Psib)\simeq W\otimes L^\bo(\Psib^+)= V' \subset V.$$
By construction, $W$ is stable by the action of the $x_{i,m}^+$, $\phi_{i,m}^\pm$ for $i\in I$, $m\geq 0$. 
Then by Formula (\ref{cform}), We have 
$$\chi_q(L^\bo(\Psib)) = \chi_q(W) \chi_q(L^\bo(\Psib^+)) = \chi_q(W)[\Psib^+]\prod_{i\in I}\chi_i^{\alpha_i(\mu)}.$$

Consider the restricted representation $\mathcal{R}_\mu(V)$ in the category $\mathcal{O}_\mu$. It admits $L(\Psib)$ as a subquotient. 

By construction, $W\otimes v_+$ is stable for the action of the $\tilde{x}_{i,m}^+, \phi_{i,m}^+$. Besides, the $\phi_i^+(z)$ have degree $0$ on $W$ (as a subspace of a $\mathcal{U}_q(\hat{\mathfrak{g}})$-module). This implies $(W\otimes v_+)_\mu = W\otimes v_+$ and $(W\otimes v_+)_{< \mu} = \{0\}$. This implies we have a subspace $W\otimes v_+\subset \mathcal{R}_\mu(V')$.

Now, for $m > \alpha_i(\mu)$, we have $x_{i,m}^-.v_+ = 0$ (this follows from $x_{j,r}^+x_{i,m}^- .v_+= \delta_{i,j}(q_i - q_i^{-1})^{-1} \phi_{i,m+r}^+.v_+$ for any 
$r\geq 0$, $j\in I$). Hence 
\begin{equation}\label{dec}x_{i,m}^-.(w\otimes v_+) = (\sum_{0\leq r\leq \alpha_i(\mu)} \Psi_{i,r}^+x_{i,m-r}^- )w\otimes v_+,\end{equation}
where $\Psi_{i,r}^+$ is the eigenvalue of $\phi_{i,r}^+$ on $v_+$.
This implies that $W\otimes v_+$ is stable for the action of $\mathcal{U}_q^\mu(\hat{\Glie})$. 
So, we have a submodule $W\otimes v_+\subset \mathcal{R}_\mu(V')$.

To conclude, it suffices to prove this module is simple isomorphic to $L(\Psib)$.

Let us prove that the $\mathcal{U}_q^\mu(\hat{\Glie})$-module $W\otimes v_+$ is generated by $v_0\otimes v_+$. 

Let $w\otimes v_+\in W$. We know there is $x$ in the negative subalgebra $\mathcal{U}_q^-(\hat{\bo})\subset \mathcal{U}_q(\hat{\bo})$ 
so that $x.(v_0\otimes v_+) = w\otimes v_+$. Although all Drinfeld generators are not in $\mathcal{U}_q(\hat{\bo})$, $x$ can be written in $\mathcal{U}_q(\hat{\mathfrak{g}})$ as an algebraic combination of various $x_{i,m}^-$, and from the Drinfeld coproduct formula. We have 
$$x. (v_0\otimes v_+) \in (x'.v_0\otimes v_+) + \text{ additional terms}$$
where $x'$ is obtained from $x$ by replacing each $x_{i,m}^-$ by $x_{i,m}^{-,'} = \sum_{0\leq r\leq \alpha_i(\mu)} \Psi_{i,r}^+x_{i,m-r}^-$ (see formula (\ref{dec})) and the additional terms have a right factor of weight strictly lower than the weight of $v_+$. Hence 
$$w\otimes v_+ = x. (v_0\otimes v_+) = (x'.v_0)\otimes v_+.$$
This implies that 
$$W\otimes v_+ \subset (<x_{i,m}^{-,'}>_{i\in I, m\in\mathbb{Z}}.v_0)\otimes v_+ = (<x_{i,m}^{-,'}>_{i\in I, m\geq \alpha_i(\mu)}.v_0)\otimes v_+ \subset \mathcal{U}_q^\mu(\hat{\Glie}).(v_0\otimes v_+).$$

We have proved that $W \otimes v_+ = \mathcal{U}_q^\mu(\hat{\Glie}) .(v_0\otimes v_+)$ is an highest $\ell$-weight module. To conclude it is simple, we just have to prove it has no primitive vector except the highest weight vectors. 
Recall that the there is a non-zero polynomial $P(z)$ so that $P(z).x_i^\pm(z) = 0$ on
the finite-dimensional representation $L(\Psib_0)$ (see Proposition \ref{zero} for instance).
If there is $w\otimes v_+\in W\otimes v_+$ so that $\tilde{x}_{i,m}^+ (w\otimes v_+) = 0$ for any $m\geq \alpha_i(\mu)$, $i\in I$, 
then $x_{i,m}^+.w = 0$ for any $m\in \mathbb{Z}$,$i\in I$, and $w$ is an highest weight vector. 
\end{proof}

\subsection{Description of simple finite-dimensional representations of $\mathcal{U}_q^\mu(\hat{sl}_2)$}

We get  a complete description of all simple finite dimensional representations of shifted quantum affine algebras $\mathcal{U}_q^\mu(\hat{sl}_2)$, 
that is simple objects in the category $\mathcal{C}^{sh}\subset \mathcal{O}^{sh}$ of finite-dimensional representations.

Suppose that $\Glie = sl_2$. A description of simple modules of the category $\mathcal{O}^+$ was given in \cite[Section 7.3]{HL}. 

For $k\geq 0$ and $a\in\mathbb{C}^*$, we have the Kirillov-Reshetikhin (KR) module 
$$W_{k,a} = L(\overline{k\omega}\Psib_{aq^{-1}}\Psib_{aq^{2k - 1}}^{-1}) = L(Y_aY_{aq^2}\cdots Y_{aq^{2(k-1)}}).$$ 
It is a representation of $\mathcal{U}_q(\hat{sl}_2)$ of dimension $k+1$ obtained by evaluation from a $\mathcal{U}_q(sl_2)$-module.

A $q$-set is a subset of $\CC^*$ of the form $\{aq^{2r}\mid R_1\leq r\leq R_2\}$
for some $a\in\CC^*$ and $R_1\leq R_2\in\ZZ\cup \{-\infty,+\infty\}$.
The modules $W_{k,a}$, $W_{k',b}$ are said to be in \emph{special position} if the union of $\{a,aq^2,\cdots, aq^{2(k-1)}\}$
and $\{b,bq^2, \cdots , bq^{2(k'-1)}\}$ is a $q$-set which contains both properly.
The module $W_{k,a}$ and the prefundamental representation $L_b^+$ are said to be in special
position if the union of $\{a,aq^2,aq^4,\cdots , aq^{2(k-1)}\}$ and $\{bq, bq^3, bq^5,\cdots \}$ is a $q$-set which contains both properly.
Two positive prefundamental representations are never in special position.
Two such representations are in \emph{general position} if they are not in special position.

The invertible elements in the category $\mathcal{C}^{sh}$ are the $1$-dimensional constant simple representations $[\omega]$.

From Theorem \ref{charqf}, we have now the following direct consequence of \cite[Theorem 7.9]{HL}.

\begin{thm}\label{fact} Suppose that $\Glie = sl_2$. The prime simple objects in the category $\mathcal{C}^{sh}$ are the 
positive prefundamental representations and the KR-modules. Any simple object in $\mathcal{C}^{sh}$ can be factorized in a
unique way as a fusion product of prefundamental representations and KR-modules (up to permutation of the factors and to invertibles).
Moreover, such a fusion product is simple if and only all its factors are pairwise in general position.
\end{thm}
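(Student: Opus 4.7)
The plan is to transfer the analogous classification from the Borel category $\mathcal{O}^+$ of \cite[Theorem 7.9]{HL} to $\mathcal{C}^{sh}$ via the $q$-character formula of Theorem \ref{charqf}. By Theorem \ref{fdclas}, the simple objects of $\mathcal{C}^{sh}$ in type $sl_2$ are the $L(\Psib)$ for $\Psib$ dominant, and this is exactly the parametrization of simple objects of $\mathcal{O}^+$ (see Example \ref{exom} and Remark \ref{refy}), so the assignment $L(\Psib)\mapsto L^{\bo}(\Psib)$ is a bijection between these two sets of simple objects. Under this bijection, a KR-module $W_{k,a}$, viewed as a representation of $\mathcal{U}_q^0(\hat{sl}_2)$, coincides with $L^{\bo}(Y_aY_{aq^2}\cdots Y_{aq^{2(k-1)}})$ and has the same $q$-character in both categories; the positive prefundamental $L_{1,a}^{+}\in\mathcal{C}^{sh}$ is one-dimensional with $q$-character $[\Psib_{1,a}]$, whereas by (\ref{cform}) the corresponding $L_{1,a}^{\bo,+}\in\mathcal{O}^+$ satisfies $\chi_q(L_{1,a}^{\bo,+}) = [\Psib_{1,a}]\chi$, with $\chi=\chi_1$.

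The next step is to transport the prime factorization. For a simple $L(\Psib)\in\mathcal{C}^{sh}$, let $\mu\in\Lambda^+$ be the coweight with $\alpha_1(\mu)=\deg\Psi_1$. Applying \cite[Theorem 7.9]{HL} to $L^{\bo}(\Psib)\in\mathcal{O}^+$ yields a decomposition
\[
L^{\bo}(\Psib)\simeq [\omega]\otimes W_{k_1,a_1}\otimes\cdots\otimes W_{k_r,a_r}\otimes L_{1,b_1}^{\bo,+}\otimes\cdots\otimes L_{1,b_s}^{\bo,+},
\]
unique up to permutation and invertibles, whose factors are pairwise in general position. Taking $q$-characters and applying Theorem \ref{charqf} on both sides gives
\[
\chi_q(L(\Psib))\cdot \chi^{\alpha_1(\mu)}=[\omega]\prod_{j} \chi_q(W_{k_j,a_j})\cdot \prod_{l}[\Psib_{1,b_l}]\cdot \chi^{s}.
\]
Since the number $s$ of positive prefundamental factors equals $\deg\Psi_1=\alpha_1(\mu)$, the $\chi$-powers cancel, and the injectivity of $\chi_q$ on $K_0(\mathcal{C}^{sh})$ (Corollary \ref{injq}) together with multiplicativity of $\chi_q$ under fusion (Theorem \ref{fp}) yields
\[
L(\Psib)\simeq [\omega]* W_{k_1,a_1}*\cdots* W_{k_r,a_r}* L_{1,b_1}^{+}*\cdots * L_{1,b_s}^{+}.
\]
The same cancellation argument shows that any non-trivial fusion factorization of a KR-module or positive prefundamental representation in $\mathcal{C}^{sh}$ would produce a non-trivial tensor factorization in $\mathcal{O}^+$, contradicting primality there; hence primality and uniqueness up to invertibles descend to $\mathcal{C}^{sh}$.

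Finally the simplicity criterion follows by the same bookkeeping: $L(\Psib_1)*L(\Psib_2)$ is simple iff $\chi_q(L(\Psib_1\Psib_2))=\chi_q(L(\Psib_1))\chi_q(L(\Psib_2))$, and by Theorem \ref{charqf} together with additivity $\alpha_1(\mu_1+\mu_2)=\alpha_1(\mu_1)+\alpha_1(\mu_2)$ this is equivalent to $\chi_q(L^{\bo}(\Psib_1\Psib_2))=\chi_q(L^{\bo}(\Psib_1))\chi_q(L^{\bo}(\Psib_2))$, i.e.\ to simplicity of $L^{\bo}(\Psib_1)\otimes L^{\bo}(\Psib_2)$ in $\mathcal{O}^+$, which by \cite[Theorem 7.9]{HL} is precisely the pairwise general-position condition. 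The main delicate point throughout is ensuring that the $\chi$-powers appearing in Theorem \ref{charqf} cancel correctly at each step; once one verifies that the exponent of $\chi$ on the Borel side always matches the total number of positive prefundamental factors in the decomposition, the whole statement reduces to the corresponding statement on the $\mathcal{O}^+$ side.
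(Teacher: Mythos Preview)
Your proof is correct and takes essentially the same approach as the paper: the paper's proof is the single sentence ``From Theorem \ref{charqf}, we have now the following direct consequence of \cite[Theorem 7.9]{HL},'' and you have simply unpacked what that deduction consists of, using the $\chi$-power cancellation coming from Theorem \ref{charqf} to transfer factorizations, primality, and the simplicity criterion between $\mathcal{C}^{sh}$ and $\mathcal{O}^+$. One small remark: when you conclude $L(\Psib)\simeq [\omega]* W_{k_1,a_1}*\cdots * L_{1,b_s}^{+}$ from equality of $q$-characters, injectivity of $\chi_q$ alone only gives equality in $K_0$; the isomorphism follows because the fusion product is highest $\ell$-weight (Theorem \ref{fp}) and hence surjects onto $L(\Psib)$, so equal $q$-characters force the kernel to vanish.
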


\begin{rem}
{\rm
(i) This is a generalization of the factorization of simple representations in the category $\mathcal{C}$ of finite-dimensional representations 
of $\mathcal{U}_q(\hat{sl}_2)$ by Chari-Pressley \cite{CP}.

(ii) All simple finite-dimensional representations can be factorized in a unique way into a fusion product of a simple finite-dimensional representation of $\mathcal{U}_q^0(\hat{sl}_2)$ 
and a one-dimensional representation.

(iii) This result for $\Glie = sl_2$ implies that all simple finite-dimensional representations are real and that their factorization into prime representations is unique.
}
\end{rem}

\subsection{Grothendieck ring isomorphisms}

Let us consider completed tensor products $\hat{\otimes}_{\mathbb{Z}}$ as in \cite[Section 4.1]{HL}.
We have the following consequence of Theorem \ref{charqf}.

\begin{cor}\label{firstiso} There is a ring isomorphism
$$K_0(\mathcal{C}^{sh})\hat{\otimes}_{\mathbb{Z}} \mathcal{E} \simeq K_0(\mathcal{O}^+)$$
which preserves the classes of simple objects.
\end{cor}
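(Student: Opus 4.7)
The plan is to upgrade the $q$-character identity of Theorem \ref{charqf} into a ring isomorphism, exploiting that both categories have bases of simples indexed essentially by the same set of dominant $\ell$-weights.

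First I would record that the simples of $\mathcal{C}^{sh}$ are the $L(\Psib)$ with $\Psib\in\mathfrak{r}_\mu$ dominant and $\mu\in\Lambda^+$ (Theorem \ref{fdclas}), while the simples of $\mathcal{O}^+$ are the $L^\bo(\Psib)$ for the same family of dominant $\ell$-weights (Example \ref{exom} together with \cite{HL, FJMM}). Next I would define
$$\Phi \colon K_0(\mathcal{C}^{sh}) \hat{\otimes}_{\mathbb{Z}} \mathcal{E} \longrightarrow K_0(\mathcal{O}^+),\qquad [L(\Psib)]\otimes \prod_{i\in I}\chi_i^{\alpha_i(\mu)} \longmapsto [L^\bo(\Psib)],$$
extended by $\mathcal{E}$-linearity (the $\chi_i$ are invertible in the completion). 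Theorem \ref{charqf} is precisely the statement that, under the joint embeddings $\chi_q \colon K_0(\mathcal{C}^{sh}) \hookrightarrow \mathcal{E}_\ell$ and $\chi_q \colon K_0(\mathcal{O}^+) \hookrightarrow \mathcal{E}_\ell$ afforded by Corollary \ref{injq}, the two images coincide, so $\Phi$ is well-defined and injective.

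Second, I would verify that $\Phi$ is a ring morphism. Multiplication on the left uses the fusion product (Theorem \ref{fp}), which is multiplicative on $q$-characters; multiplication on the right uses the tensor product in $\mathcal{O}^+$, again multiplicative on $q$-characters. The twist $\prod_i\chi_i^{\alpha_i(\mu)}$ is multiplicative in the coweight $\mu$, and since the highest $\ell$-weight of a fusion product of two simples in $\mathcal{O}_\mu$ and $\mathcal{O}_{\mu'}$ has coweight $\mu+\mu'$, the twisting factors combine correctly. Surjectivity on the classes of simples is built into the definition, which also gives the preservation of classes of simple objects claimed in the statement.

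The main subtlety will be tracking the completion $\hat{\otimes}_{\mathbb{Z}}$: one must check that the weight-graded topologies on both sides agree, so that every element of $K_0(\mathcal{O}^+)$ (a possibly infinite combination of simples, allowed by the definition of category $\mathcal{O}$) corresponds to a convergent element of the completed tensor product. This is the same bookkeeping as in \cite[Section 4.1]{HL} for the ordinary quantum affine algebra; the matching of $q$-characters on simples provided by Theorem \ref{charqf} guarantees that the two topological structures are compatible, so that $\Phi$ extends to a continuous isomorphism of completed rings.
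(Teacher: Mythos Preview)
Your proposal is correct and follows essentially the same route as the paper: both define the map via the $q$-character identity of Theorem \ref{charqf}, sending $[L(\Psib)]$ to $[L^\bo(\Psib)]$, check it is a ring morphism through the multiplicativity of $q$-characters, and obtain surjectivity from the common parametrization of simples by dominant $\ell$-weights. The paper's phrasing is slightly more direct---it assigns to $[L]$ the class in $K_0(\mathcal{O}^+)$ of $q$-character $\chi_q(L)\prod_i\chi_i^{\alpha_i(\mu)}$ and then extends $\mathcal{E}$-linearly, avoiding the need to invoke invertibility of the $\chi_i$---but the content is the same.
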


\begin{proof} For $L$ a finite-dimensional representation of $\mathcal{U}_q^\mu(\hat{\Glie})$, we assign to the class of $L$ 
in $K_0(\mathcal{C}^{sh})$ the class in $K_0(\mathcal{O}^+)$ of $q$-character 
$$\chi_q(L)\prod_{i\in I}\chi_i^{\alpha_i(\mu)}.$$
This defines an injective ring morphism from $K_0(\mathcal{C}^{sh})\hat{\otimes}_{\mathbb{Z}} \mathcal{E}$ to $K_0(\mathcal{O}^+)$ which sends a simple class to a simple class. As $K_0(\mathcal{O}^+)$ is topologically 
generated by the $[L^\bo(\Psib)]$ where $\Psib$ is a dominant $\ell$-weight, the morphism is surjective.
\end{proof}

It is proved in \cite{HL} that there is an isomorphism of Grothendieck rings
$$D : K_0(\mathcal{O}^+) \rightarrow K_0(\mathcal{O}^-)$$
which preserves dimensions, characters and so that $D([L^\bo(\Psib)]) = [L^\bo(\Psib^{-1})]$. 
Note however that it is not compatible with $q$-characters.

Let $\mathcal{O}^{sh,+}$ (resp. $\mathcal{O}^{sh,-}$) be the subcategory of representations in $\mathcal{O}^{sh}$ 
whose simple constituents have a highest $\ell$-weight $\Psib$ so that $\Psib$ is dominant (resp. $\Psib^{-1}$ is dominant).
This is motivated by analogous categories of $\mathcal{U}_q(\hat{\bo})$-modules (see Remarks \ref{exom}, \ref{refy}).

 Note that all simple modules in $\mathcal{O}^{sh,+}$ are finite-dimensional and that $\mathcal{C}^{sh}\subset \mathcal{O}^{sh,+}$.

\begin{thm}\label{exdu} The categories $\mathcal{O}^{sh,+}$, $\mathcal{O}^{sh,-}$ are stable by fusion product and we have a ring isomorphism which preserves simple classes
$$\bigoplus_{\mu\in \Lambda^+} K_0(\mathcal{O}_\mu) \supset K_0(\mathcal{O}^{sh,+})\simeq K_0(\mathcal{O}^{sh,-})\subset \bigoplus_{\mu\in -\Lambda^+} K_0(\mathcal{O}_\mu).$$
\end{thm}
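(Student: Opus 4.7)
The plan is to identify $K_0(\mathcal{O}^{sh,\pm})$ with the Borel Grothendieck rings $K_0(\mathcal{O}^\pm)$ via the $q$-character formula of Theorem \ref{charqf}, and then transport along the Hernandez--Leclerc duality $D:K_0(\mathcal{O}^+)\xrightarrow{\sim} K_0(\mathcal{O}^-)$, $[L^\bo(\Psib)]\mapsto [L^\bo(\Psib^{-1})]$ from \cite{HL}.

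I start with stability of $\mathcal{O}^{sh,+}$ under fusion product (the case of $\mathcal{O}^{sh,-}$ is symmetric). Let $L(\Psib_j)\in\mathcal{O}_{\mu_j}$, $j=1,2$, with $\Psib_j$ dominant and $\mu_j\in\Lambda^+$. By Theorem \ref{fp}, $L(\Psib_1)*L(\Psib_2)$ lies in $\mathcal{O}_{\mu_1+\mu_2}$ with highest $\ell$-weight $\Psib_1\Psib_2$ (dominant), and every simple constituent $L(\Psib')$ satisfies $\Psib'\in\mathfrak{r}_{\mu_1+\mu_2}$. Multiplying the decomposition
$$\chi_q(L(\Psib_1))\chi_q(L(\Psib_2)) = \sum_{\Psib'} [L(\Psib_1)*L(\Psib_2):L(\Psib')]\,\chi_q(L(\Psib'))$$
by $\prod_{i\in I}\chi_i^{\alpha_i(\mu_1+\mu_2)}$ and invoking Theorem \ref{charqf} turns the left-hand side into $\chi_q(L^\bo(\Psib_1)\otimes L^\bo(\Psib_2))$. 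Stability of $\mathcal{O}^+$ under tensor product \cite{HL} guarantees that every Borel simple constituent $L^\bo(\Psib'')$ has $\Psib''$ dominant; furthermore, the components of the Nakajima root monomials $A_{j,a}$ in (\ref{ayf}) are degree-zero rational functions in $z$, so each $\Psib''$ has componentwise degree $\mu_1+\mu_2$ as well. A second application of Theorem \ref{charqf} gives $\chi_q(L^\bo(\Psib''))=\chi_q(L(\Psib''))\prod_i\chi_i^{\alpha_i(\mu_1+\mu_2)}$. Canceling the invertible common factor and using injectivity of $\chi_q$ on $\mathcal{O}_{\mu_1+\mu_2}$ (Corollary \ref{injq}) matches multiplicities term by term, so every $\Psib'$ is dominant and $L(\Psib_1)*L(\Psib_2)\in\mathcal{O}^{sh,+}$.

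The same multiplicity matching promotes $[L(\Psib)]\mapsto [L^\bo(\Psib)]$ to a ring homomorphism $\Phi^+:K_0(\mathcal{O}^{sh,+})\to K_0(\mathcal{O}^+)$; it is a bijection on simple classes because both sides are parametrized by dominant $\ell$-weights (Theorem \ref{fdclas} and the definition of $\mathcal{O}^+$), hence a ring isomorphism. Symmetrically one obtains $\Phi^-:K_0(\mathcal{O}^{sh,-})\xrightarrow{\sim} K_0(\mathcal{O}^-)$. Composing,
$$(\Phi^-)^{-1}\circ D\circ \Phi^+:K_0(\mathcal{O}^{sh,+})\xrightarrow{\sim} K_0(\mathcal{O}^{sh,-}),\qquad [L(\Psib)]\mapsto [L(\Psib^{-1})],$$
is the required ring isomorphism, and it preserves simple classes by construction.

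The main obstacle is the bookkeeping in the stability step. One must work inside the completed $q$-character ring containing the infinite Borel characters $\chi_i$, verify that multiplication by $\prod_i\chi_i^{\alpha_i(\mu)}$ is injective there, and check carefully that componentwise degrees are preserved so that the constituents produced on the Borel side match only dominant $\ell$-weights on the shifted side. Once this is in hand, the remainder of the argument is formal and simply transports the duality of \cite{HL} through the $q$-character identification.
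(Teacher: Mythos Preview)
Your overall strategy---identify $K_0(\mathcal{O}^{sh,\pm})$ with $K_0(\mathcal{O}^\pm)$ and transport the duality $D$ of \cite{HL}---is exactly the paper's. There are two points worth flagging.

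First, your treatment of $\mathcal{O}^{sh,-}$ is not actually symmetric to that of $\mathcal{O}^{sh,+}$, and claiming it is leaves a gap. Your argument on the positive side rests on Theorem~\ref{charqf}, which applies only to \emph{finite-dimensional} simple modules; the simples in $\mathcal{O}^{sh,-}$ are infinite-dimensional, so you cannot invoke it there, and in particular the factor $\prod_i\chi_i^{\alpha_i(\mu)}$ with $\alpha_i(\mu)\le 0$ makes no sense. The correct (and in fact simpler) input on the negative side is Corollary~\ref{still}: for $\mu\in -\Lambda^+$ a simple $L(\Psib)$ in $\mathcal{O}_\mu$ remains simple as a $\mathcal{U}_q(\hat{\bo})$-module, so $\chi_q(L(\Psib))=\chi_q(L^\bo(\Psib))$ on the nose. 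This gives directly the ring isomorphism $K_0(\mathcal{O}^{sh,-})\simeq K_0(\mathcal{O}^-)$ and reduces stability under fusion to stability of $\mathcal{O}^-$ under tensor product. The paper handles the two sides by these genuinely different mechanisms.

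Second, for $\mathcal{O}^{sh,+}$ the paper avoids the completed-ring bookkeeping you flag as the ``main obstacle'': since dominant $\ell$-weights are precisely the finite-dimensional simples (Theorem~\ref{fdclas}), and a fusion product of finite-dimensional modules is finite-dimensional, every simple constituent of $L(\Psib_1)*L(\Psib_2)$ automatically has dominant highest $\ell$-weight. No cancellation of infinite characters is needed. Your argument via Theorem~\ref{charqf} is correct once one checks that multiplication by $\prod_i\chi_i^{\alpha_i(\mu)}$ is injective in the completed ring (each $\chi_i$ has leading term $1$), but it is doing more work than necessary.
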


\begin{proof} The stability of $\mathcal{O}^{sh,+}$ by fusion product follows from the stability of the 
category $\mathcal{C}^{sh}$ of finite-dimensional representations as both categories have the same simple
objects. We have
$$K_0(\mathcal{O}^{sh,+}) = K_0(\mathcal{C}^{sh})\hat{\otimes}_{\mathbb{Z}} \mathcal{E}.$$
The stability of $\mathcal{O}^{sh,-}$ by fusion product is clear as the simple objects in $\mathcal{O}^-$ and 
$\mathcal{O}^{sh,-}$ have the same $q$-character (Corollary \ref{still}). Hence we have an isomorphism
$$K_0(\mathcal{O}^{sh,-})\simeq K_0(\mathcal{O}^-).$$
Then we can use the isomorphism in Corollary \ref{firstiso}, combined with the isomorphism $D$ and the previous line :
$$K_0(\mathcal{O}^{sh,+}) = K_0(\mathcal{C}^{sh})\hat{\otimes}_{\mathbb{Z}} \mathcal{E} \simeq K_0(\mathcal{O}^+).$$
\end{proof}

\subsection{Cluster algebra structure}

A certain monoidal subcategory 
$$\mathcal{O}_{2\mathbb{Z}}^+\subset \mathcal{O}^+\subset\mathcal{O}$$ 
of representations of $\mathcal{U}_q(\hat{\bo})$ 
is introduced in \cite{HL}. It is defined as the subcategory of representations in $\mathcal{O}^+$ whose simple constituents have a highest $\ell$-weight 
$\Psib$ such that the roots and the poles of $\Psi_i(z)$ are of the form $q^r$ where $(i,r)$ belong to certain remarkable $V\subset I\times \mathbb{Z}$.

Its Grothendieck ring $K_0(\mathcal{O}_{2\mathbb{Z}}^+)$ captures the combinatorics of $K_0(\mathcal{O}^+)$. Moreover the main Theorem of \cite{HL} 
is a ring isomorphism
$$K_0(\mathcal{O}_{2\mathbb{Z}}^+)\simeq \mathcal{A}\hat{\otimes}_{\mathbb{Z}}  \mathcal{E},$$
where $\mathcal{A}$ is a cluster algebra and the classes of prefundamental representations $[L_{i,q^r}^{\bo, +}]_{(i,r)\in V}$ in $\mathcal{O}_{2\mathbb{Z}}^+$ form an initial seed.

Now consider the subcategory 
$$\mathcal{C}^{sh}_{2\mathbb{Z}} \subset \mathcal{C}^{sh}\subset \mathcal{O}^{sh}$$
of finite-dimensional representations whose simple constituents have a highest $\ell$-weight 
$\Psib$ such that the roots and the poles of $\Psi_i(z)$ are of the form $q^r$ where $(i,r)\in V$.

Similarly, we have also corresponding categories $\mathcal{O}^{sh,\pm}_{2\mathbb{Z}}\subset \mathcal{O}^{sh,\pm}$.

\begin{thm}\label{clusth} We have ring isomorphisms
$$K_0(\mathcal{O}^{sh,+}_{2\mathbb{Z}}) \simeq \mathcal{A}\hat{\otimes}_{\mathbb{Z}}  \mathcal{E} \simeq K_0(\mathcal{O}^{sh,-}_{2\mathbb{Z}}),$$
with classes of prefundamental representations corresponding to an initial seed.
\end{thm}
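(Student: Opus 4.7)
The plan is to derive Theorem \ref{clusth} by combining the identification from Corollary \ref{firstiso} (restricted to the $2\mathbb{Z}$-subcategory), Theorem \ref{exdu} (to pass between the dominant and anti-dominant sides), and the main theorem of \cite{HL} which realizes $K_0(\mathcal{O}^+_{2\mathbb{Z}})$ as a completed cluster algebra $\mathcal{A}\hat{\otimes}_{\mathbb{Z}}\mathcal{E}$ with initial seed given by the classes $[L^{\bo,+}_{i,q^r}]$ for $(i,r)\in V$.

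First, I would restrict the ring isomorphism in Corollary \ref{firstiso} to the full subcategory $\mathcal{O}^{sh,+}_{2\mathbb{Z}}$. At the level of simple classes, it sends $[L(\Psib)]$ to the class in $K_0(\mathcal{O}^+)$ whose $q$-character is $\chi_q(L(\Psib))\prod_i \chi_i^{\alpha_i(\mu)}$, which by Theorem \ref{charqf} equals $\chi_q(L^{\bo}(\Psib))$. Since the assignment $\Psib \mapsto \Psib$ is compatible with the $V$-constraint defining $\mathcal{O}_{2\mathbb{Z}}^+$ (the characters $\chi_i$ depend only on the underlying weights and introduce no new spectral parameters), the simple objects in $\mathcal{O}^{sh,+}_{2\mathbb{Z}}$ correspond bijectively to those of $\mathcal{O}^+_{2\mathbb{Z}}$, and one obtains a ring isomorphism $K_0(\mathcal{O}^{sh,+}_{2\mathbb{Z}}) \simeq K_0(\mathcal{O}^+_{2\mathbb{Z}})$. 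Moreover, for $(i,r)\in V$, the positive prefundamental $L_{i,q^r}^+$ is one-dimensional with $q$-character $[\Psib_{i,q^r}]$, so by formula (\ref{cform}) its class maps to $[\Psib_{i,q^r}]\chi_i = \chi_q(L^{\bo,+}_{i,q^r})$. Thus the initial seed of \cite{HL} transfers directly to the shifted side, which yields the first isomorphism of the statement.

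For the anti-dominant side, I would apply Theorem \ref{exdu}: the ring isomorphism $K_0(\mathcal{O}^{sh,+}) \simeq K_0(\mathcal{O}^{sh,-})$ constructed there (as the composition of Corollary \ref{firstiso} with the isomorphism $D$ from \cite{HL}) preserves simple classes and sends $[L(\Psib)]$ to $[L(\Psib^{-1})]$ on simple objects. Since the set $V$ defining the $2\mathbb{Z}$-subcategory is stable under the involution $\Psib \mapsto \Psib^{-1}$, the isomorphism restricts to $K_0(\mathcal{O}^{sh,+}_{2\mathbb{Z}}) \simeq K_0(\mathcal{O}^{sh,-}_{2\mathbb{Z}})$, and the classes of positive (resp.\ negative) prefundamental representations correspond to each other. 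Combining with the first step produces the second isomorphism with the cluster algebra.

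The main technical obstacle lies in matching the completed tensor product $\hat{\otimes}_{\mathbb{Z}}\mathcal{E}$ with the various categorical restrictions. One must verify that $\mathcal{O}^{sh,\pm}_{2\mathbb{Z}}$ is stable under fusion product (which follows as in the proof of Theorem \ref{exdu}, since fusion of prefundamental and KR-type factors stays within $V$), and that the twisting factor $\prod_i \chi_i^{\alpha_i(\mu)}$ of Corollary \ref{firstiso} genuinely lies in the completion $\mathcal{E}$ indexed by $V$-type weights, so that the isomorphisms above are ring isomorphisms of completed rings respecting the cluster mutations of \cite{HL}.
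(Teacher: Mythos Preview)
Your proposal is correct and follows essentially the same approach as the paper, which states Theorem \ref{clusth} without an explicit proof block precisely because it is meant to follow directly from the ingredients assembled just before it: Corollary \ref{firstiso}, Theorem \ref{exdu}, and the main theorem of \cite{HL} giving $K_0(\mathcal{O}_{2\mathbb{Z}}^+)\simeq \mathcal{A}\hat{\otimes}_{\mathbb{Z}}\mathcal{E}$ with initial seed $\{[L_{i,q^r}^{\bo,+}]\}_{(i,r)\in V}$. Your tracking of the prefundamental representations through the isomorphisms (via formula (\ref{cform}) and the duality $D$) is exactly the intended argument.
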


Let us recall that a simple object is said to be real if its fusion square is simple.

\begin{conj}\label{clconj} The classes of real simple objects in $K_0(\mathcal{O}^{sh,+}_{2\mathbb{Z}})$ (resp. $K_0(\mathcal{O}^{sh,-}_{2\mathbb{Z}})$) get identified with cluster monomials.
\end{conj}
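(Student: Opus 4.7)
The plan is to reduce Conjecture \ref{clconj} to the analogous (conjectural) monoidal categorification statement for the category $\mathcal{O}^+_{2\mathbb{Z}}$ of $\mathcal{U}_q(\hat{\bo})$-modules studied in \cite{HL}. Combining Corollary \ref{firstiso} with the isomorphism $K_0(\mathcal{O}^+_{2\mathbb{Z}}) \simeq \mathcal{A}\hat{\otimes}_{\mathbb{Z}}\mathcal{E}$ of loc. cit. yields a ring isomorphism
$$\Phi : K_0(\mathcal{O}^{sh,+}_{2\mathbb{Z}}) \xrightarrow{\sim} K_0(\mathcal{O}^+_{2\mathbb{Z}})$$
which, via the $q$-character formula of Theorem \ref{charqf}, sends $[L(\Psib)]$ in $\mathcal{O}_\mu$ to $[L^\bo(\Psib)]\prod_{i\in I}\chi_i^{-\alpha_i(\mu)}$. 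In particular $\Phi$ restricts to a bijection on simple classes, and matches the prefundamental initial seed on both sides (this is precisely the content of Theorem \ref{clusth}).

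First I would verify that $\Phi$ preserves the property of being real. Since $\Phi$ is a ring morphism inducing a bijection on simple classes, the relation $[L]\cdot [L] = [L']$ with $L'$ simple is equivalent to $\Phi([L])^2 = \Phi([L'])$ with $\Phi([L'])$ the class of a simple module of $\mathcal{U}_q(\hat{\bo})$. Hence $L$ is a real simple in $\mathcal{O}^{sh,+}_{2\mathbb{Z}}$ if and only if $\Phi([L])$ is the class of a real simple in $\mathcal{O}^+_{2\mathbb{Z}}$. Second, since the cluster algebra structure on $\mathcal{A}\hat{\otimes}_{\mathbb{Z}}\mathcal{E}$ is intrinsic to the ring once the initial seed is specified, $\Phi$ identifies cluster monomials on the two sides. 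Thus the conjecture for $\mathcal{O}^{sh,+}_{2\mathbb{Z}}$ becomes equivalent to the analogous statement for $\mathcal{O}^+_{2\mathbb{Z}}$. The case of $\mathcal{O}^{sh,-}_{2\mathbb{Z}}$ then follows by applying the duality isomorphism of Theorem \ref{exdu}, which identifies simple classes and, being a ring isomorphism compatible with the shared cluster structure, preserves both cluster monomials and realness.

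The main obstacle is the monoidal categorification of $\mathcal{O}^+_{2\mathbb{Z}}$ itself, which remains a genuinely open problem in full generality. A direct approach would be to construct, for every mutation in the cluster algebra $\mathcal{A}$, an exchange relation realized by a short exact sequence of fusion products of prefundamental and finite-dimensional representations, which would inductively show that every cluster monomial is the class of a real simple module. The converse direction --- that every real simple class is a cluster monomial --- is harder; one natural route would be to combine the triangularity of $q$-characters with respect to Nakajima's partial order (Theorem \ref{partialo}) with a Kimura--Qin style characterization of real simples via their pointed leading terms, in order to match them with the pointed generic basis of $\mathcal{A}\hat{\otimes}_{\mathbb{Z}}\mathcal{E}$. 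Partial results for small types, for Kirillov--Reshetikhin families, and for prefundamental modules already follow from \cite{HL} and can be transported verbatim to the shifted setting via $\Phi$, providing a first batch of evidence for Conjecture \ref{clconj}.
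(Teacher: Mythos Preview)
Your reduction is exactly what the paper does: it does not prove Conjecture \ref{clconj} but observes, via Corollary \ref{firstiso} (and Theorem \ref{exdu} for the minus side), that it is equivalent to \cite[Conjecture 7.12]{HL}, which in turn is equivalent to \cite[Conjecture 5.2]{HLJEMS}. Your verification that the isomorphism $\Phi$ preserves realness and matches cluster monomials is the content of that equivalence, and your identification of the remaining obstacle as the open monoidal categorification of $\mathcal{O}^+_{2\mathbb{Z}}$ is accurate.
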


By the results in the present paper, in particular Corollary \ref{firstiso}, this Conjecture \ref{clconj} is equivalent to \cite[Conjecture 7.12]{HL}. 
Moreover, by \cite[Theorem 7.12]{HL}, \cite[Conjecture 7.12]{HL} is equivalent to \cite[Conjecture 5.2]{HLJEMS}. Then a part of this 
Conjecture \ref{clconj} was established in \cite{Q} for $ADE$ types, for general types recently in \cite{kkop, kkop2}. Combining these results, one gets the following.

\begin{thm}\label{clsh} (i) We have an algebra isomorphism
$$\mathcal{A}\simeq K_0(\mathcal{C}^{sh}_{2\mathbb{Z}}).$$
(ii) The cluster monomials in $\mathcal{A}$ are real simple objects in $K_0(\mathcal{C}^{sh}_{2\mathbb{Z}})$.
\end{thm}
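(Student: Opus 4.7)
The plan is to deduce Theorem \ref{clsh} by assembling three ingredients already in place: the $2\mathbb{Z}$-graded version of Corollary \ref{firstiso}, the cluster algebra isomorphism of \cite{HL} recalled just before Theorem \ref{clusth}, and the now-established cases of Conjecture \ref{clconj}. First I would restrict the ring isomorphism $K_0(\mathcal{C}^{sh})\hat{\otimes}_{\mathbb{Z}}\mathcal{E} \simeq K_0(\mathcal{O}^+)$ of Corollary \ref{firstiso} to the $2\mathbb{Z}$-subcategories on both sides; since the $2\mathbb{Z}$ constraint (poles and zeros of $\Psi_i(z)$ supported on $q^r$ with $(i,r)\in V$) is invariant under multiplication by constant characters $[\omega]\in\mathcal{E}$, this yields
\[
K_0(\mathcal{C}^{sh}_{2\mathbb{Z}})\hat{\otimes}_{\mathbb{Z}}\mathcal{E} \;\simeq\; K_0(\mathcal{O}^+_{2\mathbb{Z}}).
\]
Combining with the main theorem of \cite{HL}, $K_0(\mathcal{O}^+_{2\mathbb{Z}}) \simeq \mathcal{A}\hat{\otimes}_{\mathbb{Z}}\mathcal{E}$, the two $\mathcal{E}$-factors can be identified (both parametrize the one-dimensional constant classes $[\omega]$ on the respective sides) and cancelled, yielding $K_0(\mathcal{C}^{sh}_{2\mathbb{Z}}) \simeq \mathcal{A}$, which is part (i). Under this isomorphism the classes of the positive prefundamental representations $[L^+_{i,q^r}]$ for $(i,r)\in V$ correspond to the initial cluster variables, exactly as their Borel counterparts $[L^{\bo,+}_{i,q^r}]$ do on the right.

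For part (ii), I would appeal to the chain of equivalences recalled in the paragraph preceding the theorem. Under the isomorphism of part (i), the statement that cluster monomials in $\mathcal{A}$ are classes of real simple objects in $K_0(\mathcal{C}^{sh}_{2\mathbb{Z}})$ is, by construction, equivalent to Conjecture \ref{clconj}, and further, via \cite[Theorem 7.12]{HL} and Corollary \ref{firstiso}, to \cite[Conjecture 5.2]{HLJEMS}. The results of Qin \cite{Q} for $ADE$ types and of Kashiwara-Kim-Oh-Park \cite{kkop, kkop2} for general types establish this statement at the level of $K_0(\mathcal{O}^+_{2\mathbb{Z}})$. Transporting through the isomorphism of part (i) gives part (ii). Note that realness of a simple class is preserved under the isomorphism because Theorem \ref{charqf} implies the fusion product structure on $K_0(\mathcal{C}^{sh}_{2\mathbb{Z}})$ is compatible, up to multiplication by invertible constants, with the tensor product structure on $K_0(\mathcal{O}^+_{2\mathbb{Z}})$.

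The main subtlety I expect to verify is the cancellation of the $\mathcal{E}$-factor in the first step: one needs that the isomorphism of Corollary \ref{firstiso} and the \cite{HL} isomorphism are both $\mathcal{E}$-linear in a compatible manner, so that quotienting yields an algebra isomorphism rather than merely a group isomorphism. This reduces to checking that the map $[L]\mapsto [L]\cdot\prod_{i}\chi_i^{\alpha_i(\mu)}$ used to define the isomorphism of Corollary \ref{firstiso} commutes with multiplication by $[\omega]$ in the expected way, which follows from $\chi_q([\omega]\ast L)=[\omega]\chi_q(L)$ and the fact that $[\omega]$ does not contribute any factor of $\chi_i$. Once this compatibility is in place, the remainder of the argument is pure transport of structure.
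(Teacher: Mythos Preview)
Your argument for (ii) is essentially the paper's: both invoke the chain of equivalences through Corollary \ref{firstiso}, \cite[Theorem 7.12]{HL}, and the results of \cite{Q, kkop, kkop2}. The difference lies in (i), and there your cancellation step has a genuine gap.

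An $\mathcal{E}$-linear ring isomorphism $K_0(\mathcal{C}^{sh}_{2\mathbb{Z}})\hat{\otimes}_{\mathbb{Z}}\mathcal{E}\simeq \mathcal{A}\hat{\otimes}_{\mathbb{Z}}\mathcal{E}$ does not by itself yield $K_0(\mathcal{C}^{sh}_{2\mathbb{Z}})\simeq\mathcal{A}$; you must argue that the two distinguished subrings actually match under the isomorphism. You observe that the prefundamental classes $[L^+_{i,q^r}]$ map to the initial cluster variables, but this only tells you that the \emph{polynomial} ring in the initial seed lies in the image of $K_0(\mathcal{C}^{sh}_{2\mathbb{Z}})$. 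A general cluster variable, obtained by mutation, is a Laurent polynomial in the initial seed, not a polynomial, so there is no immediate reason it should lie in that image. Conversely, nothing in your argument shows that an arbitrary simple class in $K_0(\mathcal{C}^{sh}_{2\mathbb{Z}})$ lands in $\mathcal{A}$ rather than in $\mathcal{A}\hat{\otimes}\mathcal{E}$ with a nontrivial $\mathcal{E}$-component. Checking $\mathcal{E}$-linearity, as you propose, is necessary but not sufficient for cancellation.

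The paper avoids this by reversing the logical order: it proves (ii) first, exactly as you do, and then \emph{deduces} (i) from (ii). One inclusion, $K_0(\mathcal{C}^{sh}_{2\mathbb{Z}})\subset\mathcal{A}$, comes from the arguments of \cite[Proposition 6.1]{HL}. The reverse inclusion $\mathcal{A}\subset K_0(\mathcal{C}^{sh}_{2\mathbb{Z}})$ follows from (ii): cluster monomials are classes of simple objects, hence lie in $K_0(\mathcal{C}^{sh}_{2\mathbb{Z}})$, and cluster monomials generate $\mathcal{A}$ as a ring. Thus (i) is obtained with no cancellation argument at all. Your proof can be repaired by adopting this order.
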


\begin{proof} By the discussion above, (ii) is known. The arguments in \cite[Proposition 6.1]{HL} imply that 
$$K_0(\mathcal{C}^{sh}_{2\mathbb{Z}})\subset\mathcal{A}.$$
As cluster monomials generate a cluster algebra, now (i) follows from (ii).
\end{proof}

\begin{rem} In the $sl_2$-case, Conjecture \ref{clconj} is proved in \cite[Theorem 7.11]{HL}. \end{rem}

We will study again these and other cluster algebra structures related to the representation theory of shifted quantum affine
algebras in another work.

\section{Cartan-Drinfeld series and Baxter polynomiality}\label{cds}

Adjoint versions of shifted quantum affine algebras are defined as the usual adjoint versions of quantum affine algebras
by adding Cartan generators corresponding to fundamental weights. We discuss series of Cartan-Drinfeld elements $Y_i^\pm(z)$ and 
$T_i^\pm(z)$ ($i\in I$) introduced respectively in \cite{Fre} in the study of transfer-matrices of finite-dimensional representations of 
quantum affine algebras and in \cite{HJ} as limits of transfer-matrices of prefundamental representations of quantum affine Borel algebras.

As the main result of this section (Theorem \ref{newpol}), we establish the rationality of $Y_i^\pm(z)$ (resp. the polynomiality 
of $(T_i^\pm(z))^{\mp 1}$) on a simple representation in the category $\mathcal{O}_\mu$ (up to the highest eigenvalue). 
The proof is partly based on the Cartan-Drinfeld polynomiality established in \cite{FH} as a limit of Baxter polynomiality of quantum integrable models.
We also obtain the equality up to a scalar multiple of the rational operators associated respectively to $Y_i^+(z)$ and $Y_i^-(z)$. 

These methods and results are also new in the case of ordinary quantum affine algebras
or shifted Yangians. 

\subsection{Adjoint versions} 
Adjoint versions for quantum affine algebras are used in the literature 
(see \cite{Fre} for instance where additional elements are denoted by $\tilde{k}_i$).

Fix $\mu\in\Lambda$. The adjoint version $\mathcal{U}_q^{\mu, ad}(\hat{\Glie})$ of 
the shifted quantum affine algebra in \cite{FT} is a slight extension of $\mathcal{U}_q^{\mu}(\hat{\Glie})$. New generators $(\overline{\phi}_i^+)^{\pm 1}$, $(\overline{\phi}_i^-)^{\pm 1}$ are added satisfying 
$$\prod_{j\in I}(\overline{\phi}_j^+)^{C_{j,i}} = \phi_{i,0}^+\text{ and }\prod_{j\in I}(\overline{\phi}_j^-)^{C_{j,i}} = \phi_{i,\alpha_i(\mu)}^-,$$
and satisfying the analogs of the quasi-commutations relations (\ref{un}), (\ref{deux}), that is, for $i,j\in I$, $r\in\mathbb{Z}$ :
$$[\overline{\phi}_i^\pm,\overline{\phi}_j^\pm] 
= [\overline{\phi}_i^\pm,\overline{\phi}_j^\mp] 
= [\phi_{i,r}^\pm,\overline{\phi}_{j}^\pm] 
= [\phi_{i,r}^\pm,\overline{\phi}_{j}^\mp] = 0,$$
$$\overline{\phi}_i^+ x_{j,r}^{\pm} = q_i^{\pm \delta_{i,j} }x_{j,r}^{\pm} \overline{\phi}_i^+
\text{ and }
\overline{\phi}_i^- x_{j,r}^{\pm} = q_i^{\mp \delta_{i,j}}x_{j,r}^{\pm} \overline{\phi}_i^-.$$

\begin{rem}\label{newc} (i) For $i\in I$, $\overline{\phi}_{i}^+\overline{\phi}_i^-$ is central in $\mathcal{U}_q^{\mu, ad}(\hat{\Glie})$.

(ii) There is a group of automorphisms of $\mathcal{U}_q^{\mu,ad}(\hat{\Glie})$ isomorphic to $(\mathbb{Z}/2\mathbb{Z})^n$ : for $(\epsilon_1,\cdots, \epsilon_n)\in\{\pm 1\}^n$, there is a unique automorphism so that for $i\in I$ and $m\in\mathbb{Z}$ : 
$$\overline{\phi}_i^\pm\mapsto \epsilon_i \overline{\phi}_i^\pm\text{ , }\phi_{i,m}^\pm \mapsto \eta_i \phi_{i,m}^\pm\text{ , }x_{i,m}^- \mapsto \eta_i x_{i,m}^-\text{ , }x_{i,m}^+\mapsto x_{i,m}^+\text{ where }\eta_i = \prod_{j\in I}  \epsilon_j^{C_{j,i}}\in\{\pm 1\}.$$ 
This induces a group of automorphisms of $\mathcal{U}_q^\mu(\hat{\Glie})$ that we call sign-twist (the order is $2^n$, except $2^{n-1}$ in type $B_n$ and $1$ in type $A_1$).
\end{rem}

The representation theory of $\mathcal{U}_q^{\mu, ad}(\hat{\Glie})$ is a slight modification of the representation theory of $\mathcal{U}_q^{\mu}(\hat{\Glie})$.
Indeed, a representation in $\mathcal{O}_\mu$ has a structure of $\mathcal{U}_q^{\mu, ad}(\hat{\Glie})$-module, but it is not unique. 
A representation of $\mathcal{U}_q^{\mu, ad}(\hat{\Glie})$ is said to be in the category $\mathcal{O}_{\mu, ad}$ if it is in the
category $\mathcal{O}_\mu$ as a $\mathcal{U}_q^{\mu}(\hat{\Glie})$-module. A module in $\mathcal{O}_{\mu, ad}$ is simple if and only if it is simple
as a $\mathcal{U}_q^{\mu}(\hat{\Glie})$-module. Hence the simple module in $\mathcal{O}_{\mu, ad}$ are parametrized by triples 
$$(\Psib, \overline{\omega}^+, \overline{\omega}^-) \in \mathfrak{r}_\mu \times \tb^*\times \tb^*$$ 
satisfying : 
$$\prod_{j\in I}(\overline{\omega}^+(j))^{C_{j,i}} = \Psi_i(0)\text{ and }\prod_{j\in I}(\overline{\omega}^-(j))^{C_{j,i}} = (z^{-\alpha_i(\mu)}\Psi_i(z)) (\infty)\text{ for $i\in I$.}$$
Such $\overline{\omega}^\pm$ are said to be compatible with $\Psib$. The corresponding simple representation is denoted by $L(\Psib,\overline{\omega}^+, \overline{\omega}^-)$. 
The dimensions of its weight spaces are the same as those of $L(\Psib)$ and its structure can be completely described from the structure of $L(\Psib)$, and $\overline{\omega}^\pm$.

\begin{rem}\label{groupk} $\overline{\omega}^+$ (resp. $\overline{\omega}^-$) is uniquely determined by $\Psib$ up to an element $(c_1,\cdots,c_n)$ in the group $K\subset (\mathbb{C}^*)^n$ of solutions of the equations $\prod_{j\in I}c_j^{C_{j,i}} = 1$.
\end{rem}

\begin{example}\label{follow} The algebra $\mathcal{U}_q^{-\omega_1^\vee, ad}(\hat{sl}_2)$ has additional generators $\overline{\phi}^\pm$ with $(\overline{\phi}^+)^2 = \phi_0^+$ and 
$(\overline{\phi}^-)^2 = \phi_{-1}^-$. For $a, b \in\mathbb{C}^*$, the representation $L(b(1 - az)^{-1})$ is in the category $\mathcal{O}_{-\omega_1^\vee}$. 
For $\alpha$ (resp. $\beta$) a square root of $a$ (resp. $b$), we have compatible 
$\overline{\omega}^+ = \beta$ and $\overline{\omega}^- = i\beta\alpha^{-1}$ which give 
a structure of $\mathcal{U}_q^{-\omega_1^\vee, ad}(\hat{sl}_2)$-module on $L(b(1 - az)^{-1})$.
\end{example}

\subsection{Fundamental Cartan-Drinfeld series}
We consider a collection of Cartan-Drinfeld series which appear naturally from 
$R$-matrices and transfer-matrices in \cite{Fre}. For $i\in I$, set
$$Y_i^\pm (z) = \overline{\phi}_i^\pm \text{exp}\left( \pm (q- q^{-1}) \sum_{m > 0} \tilde{h}_{i, \pm m} z^{ \pm m}  \right),$$
$$\tilde{h}_{i,m} = \sum_{j\in I} [r_j]_q \tilde{C}_{j,i}(q^m) h_{j, m}\text{ for $m \neq 0$,}$$
where $\tilde{C}(z)$ is the inverse of the quantum Cartan matrix $C(q)$ (invertible for a generic $q$).

By \cite[Formula (4.9)]{Fre} (see formula (\ref{ayf}) above), we have
\begin{equation}\label{ya}z^{-\alpha_i(\mu)\delta{\pm, -}}\phi_i^\pm (z) = H_i(Y_1^\pm(z),\cdots, Y_n^\pm(z))\end{equation}
where $H_i(Y_1^\pm(z),\cdots, Y_n^\pm(z))$ is set to be equal to 
$$\frac{Y_i^\pm(zq_i^{-1})Y_i^\pm(zq_i)}
{\prod_{j\in I, C_{j,i} = - 1} Y_j^\pm (z) \prod_{j\in I, C_{j,i} = -2} Y_j^\pm (zq^{-1})Y_j^\pm (zq) 
\prod_{j\in I, C_{j,i} = -3} Y_j^\pm(q^{-2}z) Y_j^\pm( z) Y_j^\pm(q^2 z)}.$$

\begin{rem}\label{unisol} Note that for Laurent formal power series $d_i(z)\in A((z))$ with coefficients in a commutative algebra $A$, if the system of $n$-functional equations 
$$d_i(z) = H_i(s_1(z),\cdots, s_n(z))\text{ , $i\in I$,}$$ 
has a solution as a formal Laurent power series, it is unique up to constant factors.
\end{rem}

The following Cartan-Drinfeld series are introduced in \cite{FH} as limits of 
transfer-matrices associated to prefundamental representations : 
$$T_i^\pm (z)  =  \text{exp}\left(\mp \sum_{m > 0}z^{\mp m} \frac{\tilde{h}_{i,\pm m}}{[r_i]_q[ m]_{q_i}}  \right).$$
We have
\begin{equation}\label{yt}Y_i^\pm(z) = \overline{\phi}_i^\pm \frac{T_i^\pm (z^{-1}q_i^{\pm 1})}{T_i^\pm (z^{-1}q_i^{\mp 1})}.\end{equation}
For $i\in I$ and $a\in\mathbb{C}^*$, recall the $\ell$-weight $\tilde{\Psib}_{i,a}$ in Example \ref{exqchar}. Motivated by the next result, we set
$$\Lambda_{i,a} = \tilde{\Psib}_{i,aq_i^{-1}}^{-1}\Psib_{i,aq_i}$$
$$= \Psib_{i,aq_i^{-1}}\Psib_{i,aq_i}\left(\prod_{j,C_{i,j} = -1}\Psib_{j,a}^{-1} \right)\left(\prod_{j,C_{i,j} = -2}\Psib_{j,aq^{-1}}^{-1}\Psib_{j,aq}^{-1} \right)\left(\prod_{j,C_{i,j} = -3}\Psib_{j,aq^{-2}}^{-1}\Psib_{j,a}^{-1}\Psib_{j,aq^2}^{-1} \right).$$
In particular we have  
\begin{equation}\label{alambda}A_{i,a} = \overline{\alpha_i}\Lambda_{i,aq_i^{-1}}/\Lambda_{i,aq_i}.\end{equation}

\begin{rem}\label{reml} The degrees of the coordinates of $\Lambda_{i,a}$ form the 
simple roots $\alpha_i^\vee$ of the Langlands dual Lie algebra ${}^L\mathfrak{g}$, in opposition to the powers of the monomial
$A_{i,a}$ in terms of the $Y_{j,b}$ which give the simple roots of $\mathfrak{g}$. 
This is an indication of the important role played by the Langlands dual Lie algebra ${}^L\mathfrak{g}$ in the
following (see Section \ref{maincsec}).
\end{rem}

\begin{lem}\label{egw} Consider a rational $\ell$-weight $\Psib = \Psib(0)\prod_{i\in I, a\in\CC^*} \Psib_{i,a}^{\nu_{i,a}}$. For $i\in I$, the corresponding eigenvalue of $(\overline{\phi}_i^\pm)^{-1}Y_i^\pm(z)$ is equal to   
$$\overline{Y}_{i,\Psib}^\pm(z) 
 = \text{exp}\left( \sum_{j\in I, m > 0,a\in\mathbb{C}^*}\tilde{C}_{j,i}(q^m) \nu_{j,a} \frac{a^{\pm m}}{-m}  z^{\pm m}\right).$$
The following are equivalent : 

(i) for any $i\in I$, $\overline{Y}_{i,\Psib}^+(z)$ is rational.

(ii) for any $i\in I$, $\overline{Y}_{i,\Psib}^-(z)$ is rational.

(iii) $\Psib (\Psib(0))^{-1}$ is a Laurent monomial in the $\Lambda_{i,a}$, $i\in I, a\in\mathbb{C}^*$.

\noindent Then $\overline{Y}_{i,\Psib}^+(z)$ and $z^{\omega_i(\mu)}\overline{Y}_{i,\Psib}^-(z)$ coincide as rational fractions up to a constant.
\end{lem}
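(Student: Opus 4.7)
First I would derive the explicit eigenvalue formula for $\overline{Y}_{i,\Psib}^\pm(z)$ by direct substitution. The action of $\phi_j^+(z)$ on the highest $\ell$-weight vector of $L(\Psib)$ is multiplication by the rational function $\Psi_j(z) = \Psi_j(0)\prod_a(1 - az)^{\nu_{j,a}}$, while $z^{-\alpha_j(\mu)}\phi_j^-(z)$ acts by its Laurent expansion at $z=\infty$. Taking logarithms and matching coefficients in the exponential presentation of $\phi_j^\pm(z)$ gives the eigenvalue $H_{j,\pm m} = (q_j - q_j^{-1})^{-1}\sum_a \nu_{j,a}\,(-a^{\pm m})/m$ of $h_{j,\pm m}$. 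Inserting this into $\tilde h_{i,\pm m} = \sum_j [r_j]_q\tilde C_{j,i}(q^m) h_{j,\pm m}$ and using the identity $(q - q^{-1})[r_j]_q = q_j - q_j^{-1}$ produces the formula in the statement.

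For $(iii)\Rightarrow (i),(ii)$, I would argue by multiplicativity: the assignment $\Psib \mapsto (\overline{Y}_{i,\Psib}^\pm)_{i\in I}$ is a group homomorphism (additive in the $\nu_{j,a}$), so it suffices to verify rationality on a single generator $\Psib = \Lambda_{j_0,a_0}$. Unpacking the definition of $\Lambda_{j_0,a_0}$, a direct computation shows that for every $k\in I$ one has $\sum_a \nu_{k,a}\,a^m = a_0^m\,C_{j_0,k}(q^m)$; this identity amounts to matching $q_{j_0}^m + q_{j_0}^{-m}$, $-1$, $-(q^m + q^{-m})$, $-(q^{2m} + 1 + q^{-2m})$ (coming from the cases $C_{j_0,k} = 2,-1,-2,-3$) with the quantum Cartan entries $[2]_{q_{j_0}^m}$, $-1$, $-[2]_{q^m}$, $-[3]_{q^m}$. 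Summing over $k$ with $\sum_k C_{j_0,k}(q^m)\tilde C_{k,i}(q^m) = \delta_{i,j_0}$, the exponent collapses to $\delta_{i,j_0}\cdot a_0^m/(-m)$, giving
$$\overline{Y}_{i,\Lambda_{j_0,a_0}}^+(z) = (1 - a_0 z)^{\delta_{i,j_0}}, \qquad \overline{Y}_{i,\Lambda_{j_0,a_0}}^-(z) = (1 - (a_0 z)^{-1})^{\delta_{i,j_0}},$$
both manifestly rational. Multiplicativity then extends the rationality to any Laurent monomial in the $\Lambda_{j,a}$.

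For the converses $(i)$ or $(ii)\Rightarrow (iii)$, the group homomorphism $\Psib(0)^{-1}\Psib \mapsto (\overline{Y}_{i,\Psib}^+)_i$ is injective by (\ref{ya}) together with the uniqueness of Remark \ref{unisol}. Writing a rational $\overline{Y}_{i,\Psib}^+(z) = \prod_a(1 - az)^{n_{i,a}}$ (the constant term is forced to be $1$) and comparing with $\Psib' = \Psib(0)\prod_{i,a}\Lambda_{i,a}^{n_{i,a}}$, the previous step shows both $\Psib$ and $\Psib'$ produce the same tuple, so injectivity forces $\Psib = \Psib'$; the argument for $(ii)\Rightarrow (iii)$ is identical. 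For the last assertion, a degree count in (\ref{ya}) gives $\deg \overline{Y}_{i,\Psib}^+ = \sum_j (C^{-1})_{ji}\alpha_j(\mu) = \omega_i(\mu)$, so the $+$ and $-$ series are expansions at $z=0$ and $z=\infty$ of rational functions that can differ only by the asymptotic shift $z^{\omega_i(\mu)}$ and a multiplicative constant, as one verifies directly on the generators $\Lambda_{j_0,a_0}$. The main obstacle is the combinatorial identity $\sum_a \nu_{k,a}\,a^m = a_0^m\,C_{j_0,k}(q^m)$ for $\Psib = \Lambda_{j_0,a_0}$, which is the \emph{raison d'être} of the specific Langlands-dual pattern in the definition of $\Lambda_{j_0,a_0}$ highlighted in Remark \ref{reml}.
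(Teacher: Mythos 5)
Your proposal is correct and is essentially the paper's proof in a light repackaging: the same $h_{j,\pm m}$-eigenvalue computation and inversion of the quantum Cartan matrix (your generator computation of $\overline{Y}^{\pm}_{i,\Lambda_{j_0,a_0}}$ is exactly the identity the paper exploits when it multiplies $\sum_{j,a}\tilde{C}_{j,i}(q^m)\nu_{j,a}a^m/m=\sum_b v_{i,b}b^m/m$ by $C_{i,k}(q^m)$ and sums, your injectivity step replacing that direct inversion), together with the same degree count from (\ref{ya}) for the final assertion. One cosmetic caveat: in your intermediate eigenvalue formula for $h_{j,-m}$ the denominator should be the signed index $-m$ (as in the paper's convention), not $m$; your subsequent formulas, e.g. $\overline{Y}^{-}_{i,\Lambda_{j_0,a_0}}(z)=(1-(a_0z)^{-1})^{\delta_{i,j_0}}$, already use the correct sign.
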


\begin{proof} The formula for $\overline{Y}_{i,\Psib}^\pm(z)$ is clear as the eigenvalue of $h_{i,m}$ associated to $\Psib$ is 
$$- \sum_{a\in \mathbb{C}^*} \frac{\nu_{i,a} a^m }{ m(q_i - q_i^{-1})}\text{ for $m\in\mathbb{Z}\setminus\{0\}$.}$$
Now suppose that (i) is satisfied. There are $v_{i,b}\in\mathbb{Z}$ so that for any $i\in I$, $m\in\mathbb{Z}\setminus\{0\}$ : 
$$ \sum_{j\in I,a\in\mathbb{C}^*}\tilde{C}_{j,i}(q^m) \nu_{j,a} \frac{a^{m}}{m}   = \sum_{b\in\mathbb{C}^*} \frac{v_{i,b} b^m}{m} .$$
Note that finitely many $v_{i,b}$ are non-zero. We obtain for any $k\in I$ : 
$$\sum_{a\in\mathbb{C}^*} \nu_{k,a} a^m = \sum_{i\in I,b\in\mathbb{C}^*} C_{i,k}(q^m) v_{i,b} b^m,$$ 
$$\Psib  (\Psib(0))^{-1} = \prod_{k\in I, b\in\mathbb{C}^*} \Lambda_{k,b}^{v_{k,b}}.$$
Hence we get (iii). The same computation gives that (ii) implies (iii), and that (iii) implies (i) or (ii). 

\noindent To conclude, let us suppose that the conditions are satisfied. From (\ref{ya}), we have 
$$\text{deg}(\overline{Y}_{i,\Psib}^+(z)) = \omega_i(\mu)\text{ and }\text{deg}(\overline{Y}_{i,\Psib}^-(z)) = 0.$$
The $v_{i,b}$ are well-defined from (iii) as the powers of the $\Lambda_{i,b}$ 
in the factorization of $\Psib (\Psib(0))^{-1}$. Then from the computations above : 
$$\overline{Y}_{i,\Psib}^+(z) = \prod_{b\in\mathbb{C}^*} (1 - zb)^{v_{i,b}} 
= (-z)^{\omega_i(\mu)}\prod_{b\in\mathbb{C}^*}b^{v_{i,b}} (1 - z^{-1}b^{-1})^{v_{i,b}} 
=((-z)^{\omega_i(\mu)}\prod_{b\in\mathbb{C}^*}b^{v_{i,b}}) \overline{Y}_{i,\Psib}^-(z).$$
\end{proof}

\begin{rem} This statement can be seen as a generalization of \cite[Lemma 5]{Fre} where the case when $\Psib$ is a Laurent monomial in the $Y_{i,a}$ is considered.
\end{rem}

With the same notations as in Lemma \ref{egw}, the eigenvalue\footnote{This is consistent with the eigenvalue computed in \cite[(5.20)]{FH} when $\Psib$ is a Laurent monomial in the $Y_{j,b}$, except that there is a misprint in that paper : $\tilde{C}_{i,j}(q^m)$ there should be $\tilde{C}_{j,i}(q^m)$).} of $T_i^\pm(z)$ associated to $\Psib$ is 
$$T_{i,\Psib}^\pm(z) = \text{exp}\left(  \sum_{j\in I, m > 0, a\in\mathbb{C}^*} \frac{z^{\mp m} \tilde{C}_{j,i}(q^m)}{(q_i^m - q_i^{-m})m} a^{\pm m} \nu_{j,a} \right).$$

\subsection{Rationality and polynomiality}

Consider $W = L(\Psib)$ simple in the category $\mathcal{O}_\mu$. 
Let $\omega = \Psib(0)$ be its 
highest weight and $w$ be a highest weight vector of $W$. 

Let $\Psib'$ be an $\ell$-weight space of $W$. We have proved in Theorem \ref{partialo} that 
there are $i_1,\cdots, i_R\in I$, $a_1,\cdots, a_R\in\mathbb{C}^*$ so that
$$\Psib' = \Psib A_{i_1,a_1}^{-1}\cdots A_{i_R,a_R}^{-1}.$$
The same computation as for \cite[Proposition 5.8]{FH} gives the following.

\begin{prop}\label{egw2} The eigenvalue of $T_i^\pm(z)$ on $W_{\Psib'}$ is 
$$T_{i,\Psib'}^\pm(z) = T_{i,\Psib}^\pm(z) \times \prod_{1\leq k\leq R,i_k = i} (1 - (z a_k^{-1})^{\mp 1})^{\mp 1}.$$
\end{prop}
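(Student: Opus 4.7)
The strategy is to follow the same approach as \cite[Proposition 5.8]{FH}, adapted to the shifted setting. Since $T_i^\pm(z)$ is defined as the exponential of a linear combination of the Cartan-Drinfeld generators $\tilde h_{i,\pm m}$, and these act diagonally on $\ell$-weight spaces with eigenvalues that are additive with respect to products of $\ell$-weights, the eigenvalue of $T_i^\pm(z)$ on $W_{\Psib'}$ factorizes multiplicatively over the factors $A_{i_k,a_k}^{-1}$ appearing in $\Psib' \Psib^{-1}$. Thus it suffices to compute the multiplicative contribution to $T_i^\pm(z)$ of a single factor $A_{j,a}^{-1}$.

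To this end, the first step is to compute the eigenvalue of $h_{k,\pm m}$ on an $\ell$-weight differing from $\Psib$ by a single factor $A_{j,a}^{-1}$. From the explicit formula
$$A_{j,a} = \overline{\alpha_j}\prod_{l\in I}\Psib_{l,aq^{B_{j,l}}}^{-1}\Psib_{l,aq^{-B_{j,l}}},$$
the eigenvalue of $\phi_k^\pm(z)$ on the pure $A_{j,a}^{-1}$ contribution is the rational fraction $(1-zaq^{B_{j,k}})/(1-zaq^{-B_{j,k}})$, expanded at $z=0$ or $z=\infty$ respectively. Taking the logarithm and identifying coefficients with the defining relation $\phi_k^\pm(z) = \phi_{k,\cdot}^\pm\exp(\pm(q_k-q_k^{-1})\sum_{m>0}h_{k,\pm m}z^{\pm m})$ yields
$$h_{k,\pm m}\big|_{A_{j,a}^{-1}} = -\frac{a^{\pm m}\,[m B_{j,k}]_q}{m\,[r_k]_q}.$$

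The second step is to assemble these contributions into $\tilde h_{i,\pm m}$ via $\tilde h_{i,m} = \sum_k[r_k]_q\tilde C_{k,i}(q^m)h_{k,m}$. The crucial identity is
$$\sum_{k\in I} \tilde C_{k,i}(q^m)\,[m B_{j,k}]_q \;=\; \delta_{i,j}\,[m r_i]_q,$$
which follows from $\tilde C(q) = C(q)^{-1}$ combined with the symmetric quantum Cartan relation $[mB_{j,k}]_q = [r_j]_q[mC_{j,k}]_{q_j} = [r_k]_q[mC_{k,j}]_{q_k}$; this is exactly the identity underlying the definition of $T_i^\pm(z)$ in \cite{Fre, FH} (and can be checked directly in rank two). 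Applying it gives $\tilde h_{i,\pm m}|_{A_{j,a}^{-1}} = -\delta_{i,j}a^{\pm m}[m r_i]_q/m$. Substituting this into the exponential defining $T_i^\pm(z)$, and using $[mr_i]_q = [r_i]_q[m]_{q_i}$ together with $\sum_{m>0}x^m/m = -\log(1-x)$, the contribution of each factor $A_{j,a}^{-1}$ becomes $\delta_{i,j}(1-(za^{-1})^{\mp 1})^{\mp 1}$ (and trivial when $i\ne j$). Multiplying over the $R$ factors $A_{i_k,a_k}^{-1}$ in $\Psib'\Psib^{-1}$ yields the stated product formula.

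The main technical point is the linear-algebra identity relating the inverse quantum Cartan matrix $\tilde C(q^m)$ to the symmetric quantum Cartan data $[mB_{j,k}]_q$; everything else is a routine calculation. Since this identity and the ensuing exponentiation are carried out in detail in \cite[Proposition 5.8]{FH}, no new difficulty arises here beyond transcribing that argument to the shifted setting, where only the highest-weight eigenvalues of $\overline\phi_i^\pm$ and $\phi_{i,\cdot}^\pm$ are modified (and these are absorbed into the base factor $T_{i,\Psib}^\pm(z)$).
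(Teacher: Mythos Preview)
Your proposal is correct and follows exactly the approach the paper indicates: the paper's own proof is simply ``The same computation as for \cite[Proposition 5.8]{FH} gives the following,'' and what you have written is precisely that computation transcribed to the shifted setting. One minor wording issue: the $h_{k,\pm m}$ act on $\ell$-weight spaces with a well-defined \emph{generalized} eigenvalue (the $\ell$-weight spaces are defined as generalized eigenspaces, not eigenspaces), but this does not affect your argument since only the eigenvalues are being computed.
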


\begin{rem}\label{invsy} The formula in \cite[Proposition 1.6]{C} to define an involution of $\mathcal{U}_q(\hat{\Glie})$ also defines an involution
$$\sigma : \mathcal{U}_q^{\mu,ad}(\hat{\Glie}) \rightarrow \mathcal{U}_q^{-\mu,0,ad}(\hat{\Glie})\simeq \mathcal{U}_q^{\mu,ad}(\hat{\Glie})$$
so that for $i\in I$, $m\in\mathbb{Z}$, $r\in\mathbb{Z}\setminus\{0\}$ : 
$$\sigma(x_{i,r}^\pm) = x_{i,-r - \delta_{\pm, - }\alpha_i(\mu)}^\mp\text{ , }\sigma(h_{i,m}) = - h_{i,-m}\text{ , }\sigma(\overline{\phi}_i^\pm) =\overline{\phi}_i^\mp.$$
For $W$ a representation of $\mathcal{U}_q^{\mu,ad}(\hat{\Glie})$, we denote by $W^\sigma$ its twist by $\sigma$. Note that we have :
$$\sigma(\phi_i^+(z)) = z^{\alpha_i(\mu)}\phi_i^-(z^{-1})\text{ , }\sigma(Y_i^+(z)) = Y_i^-(z^{-1})\text{ , }\sigma(T_i^+(z)) = T_i^-(z^{-1}).$$ 
\end{rem}

\begin{example}\label{longex} The representation  $W = L(Y_1^2)$ of $\mathcal{U}_q(\hat{sl}_2)$ was studied in the Example of \cite[Section 5.8]{FH}. 
It is a simple representation of $\mathcal{U}_q^{0,ad}(\hat{sl}_2)$  with parameter $\left(q^2\frac{(1 - zq^{-1})^2}{(1 - zq)^2},q, q^{-1}\right)$. 
It has a weight space of weight $0$ of dimension $2$. In a slight modification of the basis in \cite{FH}, the matrix of $T^-(z)/T_{\Psib}^- (z)$ and of $Y^-(z)/Y_{\Psib}^- (z)$  are respectively 
$$\begin{pmatrix}1 & 0 & 0 & 0\\ 0 &  1 - zq^{-1} & z & 0 \\  0 & 0 & 1 - zq^{-1} & 0 \\ 0 & 0 & 0 & (1 - zq^{-1})^2 \end{pmatrix}
\text{ , }\begin{pmatrix}1 & 0 & 0 & 0\\ 0 &  \frac{q - z^{-1}q^{-1}}{1 - z^{-1}} & \frac{z^{-1}(1 - q^2)}{(1 - z^{-1})^2}\ & 0 
\\  0 & 0 &  \frac{q - z^{-1}q^{-1}}{1 - z^{-1}} & 0 \\ 0 & 0 & 0 & \frac{(q - z^{-1}q^{-1})^2}{(1 - z^{-1})^2} \end{pmatrix},$$ 
$$\text{ for }\frac{T_{\Psib}^-(z^{-1}q^{-2})}{T_{\Psib}^-(z^{-1}q^2)} = \left(\frac{1 - z^{-1} q}{1 - z^{-1}q^{-1}}\right)^2\text{ , }Y_{\Psib}^-(z) = \frac{q^{-1} T_{\Psib}^-(z^{-1}q^{-1})}{T_{\Psib}^-(z^{-1}q)}.$$
As $W^\sigma\simeq L(Y_{q^{-2}}^2)$, the matrix of $T^+(z)/T_{\Psib}^+(z)$ and $Y^+(z)/Y_{\Psib}^+(z)$ are respectively 
$$\begin{pmatrix}1 & 0 & 0 & 0\\ 0 &  \frac{1}{1 - z^{-1}q} & \frac{z^{-1}q^2}{(1 - z^{-1}q)^2} & 0 \\  0 & 0 & \frac{1}{1 - z^{-1}q} & 0 \\ 0 & 0 & 0 & \frac{1}{(1 - z^{-1}q)^{2}} \end{pmatrix}
\text{ , }\begin{pmatrix}1 & 0 & 0 & 0\\ 0 & \frac{q^{-1} - zq}{1 - z} & \frac{z  (1 - q^2)}{(1 - z)^2}\ & 0 
\\  0 & 0 &  \frac{q^{-1} - zq}{1 - z} & 0 \\ 0 & 0 & 0 & \frac{(q^{-1} - zq)^2}{(1 - z)^2} \end{pmatrix},$$ 
$$\text{ for }T_{\Psib}^+(z) = T_{\Psib}^-(z^{-1}q^2)(1 - z^{-1}q)^2\text{ , }Y_{\Psib}^+(z) = q \frac{T_{\Psib}^+(z^{-1}q)}{T_{\Psib}^+(z^{-1}q^{-1})} = q\frac{T_{\Psib}^-(zq)(1 - z)^2}{T_{\Psib}^-(zq^3)(1 - zq^2)^2}.$$
These operators are rational. The action of $Y^+(z)/Y_{\Psib}^+(z)$ and $Y^-(z)/Y_{\Psib}^-(z)$ coincide. On a weight space 
of weight $\overline{\omega}^2\overline{\alpha}^{-h}$, the action of the following does not depend on $z$ :  
$$z^{-h} \frac{T^+(z)T^-(z)}{T_{\Psib}^+(z)T_{\Psib}^-(z)} = \begin{pmatrix}1 & 0 & 0 & 0 
\\0 & - q^{-1} & 1& 0
\\0 & 0 & - q^{-1} &0
\\0 & 0 & 0 & q^{-2}
\end{pmatrix}.$$
We note that 
$(T^\pm(z)/T_{\Psib}^\mp(z^{-1}q^2))^{\pm 1}$ and $(T_{\Psib}^\pm(z)/T^\pm(z))^{\pm 1}$ are polynomials in $z^{\mp 1}$.
\end{example}

\begin{example} Consider the prefundamental representation $W = L(\Psib_{1}^{-1})$ of $\mathcal{U}_q^{-\omega_1^\vee}(\hat{sl}_2)$ as in Example \ref{exval}.
Then we have for $j\geq 0$ : 
$$\frac{Y^\pm(z)}{Y^\pm_{\Psib}(z)} .v_j = \frac{q^{-j}(1 - zq)}{1 - q^{1 - 2j}z}.v_j \text{ , }\frac{T^\pm(z)}{T^\pm_{\Psib}(z)}.v_j = (1 - z^{\mp 1})^{\mp 1}(1 - z^{\mp 1}q^{\mp 2})^{\mp 1}\cdots (1 - z^{\mp 1}q^{\mp 2(j-1)})^{\mp 1}.v_j,$$
$$\text{for }Y^\pm_{\Psib} = (i)^{\delta_{\pm,-}}\text{exp}\left(- \sum_{m > 0} \frac{z^{\pm m}  }{m(q^m + q^{-m})}\right)\text{ , }T^\pm_{\Psib}(z) = \text{exp}\left(  \sum_{m> 0} \frac{z^{\mp m}}{m(q^{2m} - q^{-2m})}  \right).$$
We have $(T^+(z)T^-(z)/(T^+_{\Psib}(z)T^-_{\Psib}(z))).v_j = (-z)^jq^{j(j-1)}v_j$.
\end{example}

The following was partly established in \cite[Theorem 5.17]{FH} for simple finite-dimensional representations of $\mathcal{U}_q(\hat{\Glie})$. Let $\omega'$ be a weight of $W$. For $i\in I$, we denote by $ht_i(\omega (\omega')^{-1})$  
the multiplicity of $\overline{\alpha_i}$ 
in the factorization of $\omega (\omega')^{-1}$ as a product of simple roots.

\begin{thm}\label{newpol}  (i) The operators 
$$ \frac{Y_i^-(z)}{Y_{i,\Psib}^-(z)} \text{ and }\frac{Y_i^+(z)}{Y_{i,\Psib}^+(z)}  \in (\text{End}(W))(z) $$
are rational of degree $0$ on $W$ and coincide.

(ii) On $W_{\omega'}$ the operators 
$$\frac{T_i^-(z)}{T_{i,\Psib}^-(z)} \text{ and }z^{ht_i(\omega(\omega')^{-1})}\frac{T_{i,\Psib}^+(z)}{T_i^+(z)}  \in (\text{End}(W_{\omega'}))[z]$$
are polynomial in $z$ of degree $ht_i(\omega(\omega')^{-1})$ and coincide up to a constant operator factor.
\end{thm}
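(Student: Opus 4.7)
The strategy is to reduce to the two basic cases --- finite-dimensional simples of $\mathcal{U}_q(\hat{\Glie})$ and prefundamental representations $L_{i,a}^\pm$ --- and to assemble the general case via the fusion product description of an arbitrary simple in $\mathcal{O}^{sh}$. By Corollary \ref{role}, every $L(\Psib)$ is a quotient of a fusion product $L(\Psib(0))*L(\Psib^+)*L(\Psib^-)$ where $L(\Psib^\pm)$ is itself a fusion of prefundamentals. Under the deformed Drinfeld coproduct the series $T_i^\pm(z)$ and $(\overline{\phi}_i^\pm)^{-1}Y_i^\pm(z)$ are grouplike, e.g.
$$\Delta_u(T_i^\pm(z)) = T_i^\pm(z) \otimes T_i^\pm(zu^{-1}),$$
so rationality/polynomiality of the relevant normalized ratios is preserved under the $\mathcal{A}$-form specialization $u=1$ defining the fusion product, and then passes to simple quotients.

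For part (i), Proposition \ref{zero} gives that $\phi_i^\pm(z)$ is a rational operator of degree $\alpha_i(\mu)$ on each weight space. We decompose the weight space into generalized $\ell$-weight subspaces $W_{\Psib'}$; on each such space every $Y_j^\pm(z)$ preserves a filtration whose associated graded acts by the scalar $Y_{j,\Psib'}^\pm(z)$. The relation (\ref{ya}) expresses $z^{-\alpha_i(\mu)\delta_{\pm,-}}\phi_i^\pm(z)$ as the fixed rational monomial $H_i(Y_1^\pm(z),\ldots,Y_n^\pm(z))$; inverting this system using the invertibility of the quantum Cartan matrix (as in Remark \ref{unisol}) and bootstrapping along the Jordan filtration yields rationality of degree zero for $Y_i^\pm(z)/Y_{i,\Psib}^\pm(z)$. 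The coincidence of the $+$ and $-$ expansions as a single rational operator is then the statement of Lemma \ref{egw} upgraded to operators: both are expansions, at $0$ and at $\infty$ respectively, of the same rational matrix-valued function.

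For part (ii), we first treat the prefundamental case. $L_{i,a}^+$ is one-dimensional, while for $L_{i,a}^-$ the explicit action furnished by the asymptotic algebra \cite{HJ}, combined with Proposition \ref{egw2}, gives polynomiality of $T_i^-(z)/T_{i,\Psib}^-(z)$ and of $z^{ht_i}T_{i,\Psib}^+(z)/T_i^+(z)$ by direct computation. For finite-dimensional simples of $\mathcal{U}_q(\hat{\Glie})$, the polynomiality is exactly \cite[Theorem 5.17]{FH}, derived from Baxter polynomiality of transfer-matrix eigenvalues with prefundamental auxiliary spaces. Multiplying via the Drinfeld coproduct formula above then gives polynomiality on any fusion of these building blocks, hence on $L(\Psib)$ by Corollary \ref{role}. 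The coincidence up to a constant operator factor of $T_i^-(z)/T_{i,\Psib}^-(z)$ and $z^{ht_i}T_{i,\Psib}^+(z)/T_i^+(z)$ is checked first on scalars via Proposition \ref{egw2}: on $W_{\Psib'}$ with $\Psib' = \Psib\prod_k A_{i_k,a_k}^{-1}$, both equal $\prod_{i_k=i}(z-a_k)$ up to the constant $(-1)^{ht_i}\prod_{i_k=i}a_k^{-1}$, and the nilpotent corrections are then forced to agree by the same Jordan-filtration argument as in part (i).

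The main obstacle is the upgrade from scalar (eigenvalue) statements, which follow routinely from Propositions \ref{zero} and \ref{egw2}, to genuine operator-theoretic rationality and polynomiality. On a generalized $\ell$-weight space the $T_i^\pm(z)$ and $Y_i^\pm(z)$ may carry nilpotent corrections whose $z$-dependence is a priori only formal, and one must show these corrections are themselves rational, respectively polynomial. The key leverage is that all $h_{i,m}$ with $m>0$ (respectively $m<0$) mutually commute, so the families $\{T_i^+(z)\}_{i\in I}$ and $\{T_i^-(z)\}_{i\in I}$ each lie in a single commutative subalgebra; combined with the rationality of $\phi_i^\pm(z)$ from Proposition \ref{zero} fed through relation (\ref{ya}), this constrains the nilpotent parts via a finite-dimensional linear algebra argument in Jordan normal form.
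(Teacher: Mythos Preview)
Your reduction via Corollary~\ref{role} and the grouplike behaviour of $T_i^\pm$ under $\Delta_u$ is the same skeleton the paper uses, but the way you fill in the two essential steps has real gaps.

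\textbf{Rationality and coincidence of $Y_i^\pm$.} Your plan for (i) is to invert relation~(\ref{ya}) using the inverse quantum Cartan matrix and a Jordan-filtration bookkeeping. This does not work at the operator level: the entries $\tilde{C}_{j,i}(q^m)$ are genuinely infinite power series in $q^m$, so even if each $\phi_j^\pm(z)$ is rational on a weight space, the operator $\tilde{h}_{i,m}$ is an infinite linear combination of $h_{j,m}$'s and there is no mechanism forcing $\exp\bigl(\pm(q-q^{-1})\sum_m \tilde{h}_{i,\pm m}z^{\pm m}\bigr)$ to be rational beyond its semisimple part. The paper avoids this entirely: it first proves rationality of $T_i^\pm(z)/T_{i,\Psib}^\pm(z)$ (see below) and then uses~(\ref{yt}) to deduce rationality of $Y_i^\pm(z)/Y_{i,\Psib}^\pm(z)$. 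For the coincidence $Y_i^+/Y_{i,\Psib}^+ = Y_i^-/Y_{i,\Psib}^-$, the paper computes the commutation relations of $Y_i^\epsilon(z)$ with all the $x_j^\pm(w)$ (they are independent of $\epsilon$), so the rational operator $\frac{Y_i^+(z)Y_{i,\Psib}^-(z)}{Y_i^-(z)Y_{i,\Psib}^+(z)}$ commutes with the whole image of $\mathcal{U}_q^\mu(\hat\Glie)$ and is a scalar by Schur's lemma; since it is $1$ on the highest weight vector, it is $1$. Your ``both are expansions of the same rational matrix'' and ``nilpotent corrections forced to agree'' are exactly the statements needing proof, and the Schur argument is what supplies it. The coincidence in (ii) then follows formally: setting $U(z) = z^{-ht_i}\frac{T_i^-T_i^+}{T_{i,\Psib}^-T_{i,\Psib}^+}$, relation~(\ref{yt}) together with (i) gives $U(z) = U(zq_i^2)$, hence $U$ is constant.

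\textbf{Polynomiality of $T_i^\pm$ on the building blocks.} Two points. First, \cite[Theorem~5.17]{FH} gives polynomiality of $T_i^-(z)/T_{i,\Psib}^-(z)$ on finite-dimensional simples; it says nothing directly about $T_i^+$. The paper transfers the result to $T_i^+$ using the involution $\sigma$ of Remark~\ref{invsy}, under which $\sigma(T_i^+(z)) = T_i^-(z^{-1})$ and $W^\sigma$ is again a finite-dimensional simple. You need this step. Second, for negative prefundamentals your ``direct computation via the asymptotic algebra'' is not available beyond $sl_2$: the modules $L_{i,a}^-$ are not explicitly known in general type. The paper's argument is that $L^\bo(\Psib^-)$ is constructed in \cite{HJ,HL} as an inductive limit of simple tensor products of Kirillov--Reshetikhin modules in which highest weight vectors are preserved and the action of the Cartan--Drinfeld series is stationary up to a scalar function; polynomiality therefore passes from the finite-dimensional case to $L(\Psib^-)$. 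This is how one avoids any direct computation on negative prefundamentals.
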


\begin{rem} This constant operator is not necessarily diagonalizable, see Example \ref{longex}.\end{rem}

\begin{proof} For $m\in\mathbb{Z}\setminus\{0\}$, $r\in\mathbb{Z}$, $\epsilon = 1$ or $\epsilon = -1$,  we have 
$$[\tilde{h}_{i,m}, x_{j,r}^\pm] = \delta_{i,j} \frac{[mr_i]_q}{m} x_{j,m + r}^\pm\text{ and }\overline{\phi}_i^\pm x_{j,r}^\epsilon = q_i^{\pm\delta_{i,j}\epsilon }x_{j,r}^\epsilon \overline{\phi}_i^\pm.$$ 
In particular $[x^\epsilon_j(w),Y_i^\pm(z)] = 0$ for $j\neq i$ and $\epsilon = 1$ or $- 1$.
For $i = j$, the relation (\ref{hd}) is 
$$[(q_i - q_i^{-1})h_{i,m},x_{i,r}^{\pm}] = \pm \frac{q_i^{2 m} - q_i^{-2m}}{m}  x_{i,m+r}^{\pm}.$$ 
It is the same relation as  
$$[(q - q^{-1})\tilde{h}_{i,m}, x_{i,r}^\pm] = \frac{q_i^m - q_i^{-m}}{m} x_{i,m + r}^\pm,$$ 
except that in the 
right side we have $q_i$ replaced by $q_i^2$, the term $(q_i - q_i^{-1})h_{i,m}$ being replaced by $(q - q^{-1})\tilde{h}_{i,m}$ as 
in the definition of $Y_i^\pm(z)$ in comparison to the definition of $\phi_i^\pm(z)$ (we have the same substitution for the analog 
of the relations (\ref{deux})). Hence we get as for the relation (\ref{phix}) : 
$$Y_i^{\epsilon}(z) x_j^\pm(w) =   \left(\frac{q_i^{\pm 1}w - z}{w - q_i^{\pm 1}z}\right)^{\delta_{i,j}}  x_j^\pm(w) Y_i^\epsilon(z) \text{ for $\epsilon = +$ or $-$},$$
First assume that $T_i^\pm(z)/T_{i,\Psib}^\pm(z)$ has a rational action on $W$. Then by (\ref{yt}) $Y_i^\pm(z) /Y_{i,\Psib}^\pm(z)$ 
has a rational action on $W$ of degree $0$. By the remarks above, the rational operator
$$\frac{Y_i^+(z)Y_{i,\Psib}^-(z)}{Y_i^-(z)Y_{i,\Psib}^+(z)}$$
commutes with all operators in the image of $\mathcal{U}_q(\hat{\mathfrak{g}})$ in the endomorphism ring of $W$. Hence, by Schur Lemma, 
$Y_i^+(z)/Y_{i,\Psib}^+(z)$ and $Y_i^-(z)/Y_{i,\Psib}^-(z)$, as these operators coincide on an highest weight vector, they 
coincide as rational operators on $W$. Hence (i) is proved. 
Besides, consider the rational operator on $W_{\omega'}$ : 
$$U(z) = z^{-ht_i(\omega(\omega')^{-1})}\frac{T_i^-(z)T_i^+(z)}{T_{i,\Psib}^-(z)T_{i,\Psib}^+(z)}$$
Then by (\ref{yt}) and (i), we get that $U(z) = U(zq_i^2)$  and so $U(z)$ does not depend on $z$, that is $\frac{T_i^-(z)}{T_{i,\Psib}^-(z)}$, $z^{ht_i(\omega(\omega')^{-1})}\frac{T_{i,\Psib}^+(z)}{T_i^+(z)}$ 
coincide up to a constant operator factor.

Now we establish (ii) in the Theorem (it does not follow directly from Proposition \ref{egw2} as the operator do not
have necessarily a diagonal action). 
The polynomiality for $T_i^-(z)/T_{i,\Psib}^-(z)$ is known for finite-dimensional simple $\mathcal{U}_q(\hat{\Glie})$-modules by \cite[Theorem 5.17]{FH}. But, using the involution $\sigma$ of $\mathcal{U}_q^{ad}(\hat{\Glie})$ as in Remark \ref{invsy}, 
we get that $T_i^+(z)$ is rational on a finite-dimensional $W$ up to a scalar map, and so that $T_i^+(z)/T_{i,\Psib}^+(z)$ is rational. 
By the discussion in the first part of this proof, this implies the polynomiality result for $T_i^+(z)$ in this case.

Now let $W$ be a tensor product of various negative prefundamental representations. By Corollary \ref{still}, it is simple as a $\mathcal{U}_q(\hat{\bo})$-module 
isomorphic to $L^\bo(\Psib)$. This representation can be constructed in \cite{HJ, HL} as an inductive limit of a linear inductive system 
of simple tensor products of Kirillov-Reshetikhin modules which are simple finite-dimensional representations of $\mathcal{U}_q(\hat{\Glie})$. 
In this inductive system, the highest weight vectors are preserved and the action of $\phi_i^+(z)$ is stationary up to 
a scalar function factor. Hence the polynomiality result follows for $T_i^+(z)$ for $W$ from the result for the finite-dimensional 
$\mathcal{U}_q(\hat{\bo})$-modules. But not only the inductive construction gives the action of $\mathcal{U}_q(\hat{\bo})$, 
but also of the whole asymptotic algebra $\tilde{\mathcal{U}}_q(\hat{\Glie})$ from which the action of $T_i^-(z)$ on $W$ is 
obtained. As above, it is stationary up to a scalar function factor. The polynomiality of $T_i^-(z)/T_{i,\Psib}^-(z)$ on $W$, and the result, follow.

The result is also clear for a tensor product of various positive prefundamental representations as they are one-dimensional. Now, as 
$$\Delta_u(T_i^\pm(z)) = T_i^\pm(z) \otimes T_i^\pm(zu^{-1}),$$
it follows from Corollary \ref{role} and (i) in Remark \ref{caspe} that the result holds true for a tensor product of negative prefundamental representations
by a tensor product of positive prefundamental representations. The result follows.
\end{proof}

\section{Truncated shifted quantum affine algebras}\label{tsqaa}

Truncations of shifted quantum affine algebras are defined in \cite[Section 8.(iii)]{FT} in the study of 
quantized $K$-theoretic Coulomb branches of $3d$ $N = 4$ SUSY quiver gauge theories (see the Introduction).

We recall the definition of truncated shifted quantum affine algebras in terms of series $A_i^{\mathcal{Z},\pm}(z)$ 
of Cartan-Drinfeld generators. We explain how these series appear naturally in terms of the Cartan-Drinfeld series 
derived from transfer-matrices in the previous section.

We establish (Proposition \ref{rata}) a necessary and sufficient condition for the defining series $A_i^{\mathcal{Z},\pm}(z)$ 
to have a rational action on a simple representation.

\subsection{Truncation series}

We consider a variation of the series $Y_i^\pm(z)$. We fix 
$$\lambda = \sum_{i\in I} N_i \omega_i^\vee\in \Lambda^+\text{ and }\lambda\succeq \mu = \lambda - \sum_{i\in I} a_i \alpha_i^\vee \in \Lambda,$$ 
with the $N_i, a_i\in\mathbb{Z}$ non-negative. We consider a family of polynomials $Z_i$ of degree $N_i$ :
$$Z_i(z) = (1 - q_i z z_{i,1})(1 - q_i z z_{i,2}) \cdots (1 - q_i z z_{i,N_i}),$$
and we get an $\ell$-weight ${\bf Z}=(Z_i(z))_{i\in I}$. We also fix additional parameters $z_i'\in\mathbb{C}^*$ so that 
$$\prod_{j\in I}(z_j')^{C_{j,i}} = (- q_i)^{N_i}z_{i,1}\cdots z_{i,N_i}.$$
These are unique up to the group $K$ of Remark \ref{groupk}. The collection of these data is denoted
$$\mathcal{Z} = ({\bf Z}, z_1',\cdots, z_n').$$
Then we define 
\begin{equation}\label{atil}A_i^{\mathcal{Z},\pm}(z) =  \sum_{r\geq 0}  A^{\mathcal{Z},\pm}_{i, \pm r} z^{\pm r} 
=  (z_i')^{\delta_{\pm,-}} \frac{\overline{Y}_{i,{\bf Z}}^\pm(zq_i^{-1})}
{Y_i^\pm(zq_i^{-1})}\in \mathcal{U}_{q}^{\mu,ad}(\hat{\Glie})[[z^{\pm 1}]].\end{equation}
In particular, $\overline{Y}_{i,{\bf Z}\Psib^{-1}}(z q_i^{-1})$ is the eigenvalue of $A_i^{\mathcal{Z},+}(z)$ on a 
highest weight vector of a simple representation $L(\Psib)$ in $\mathcal{O}_\mu$.

Note that by definition we have
\begin{equation}\label{conta}A_{i,0}^{\mathcal{Z},+} = (\overline{\phi}_i^+)^{-1}\text{ , }A_{i,0}^{\mathcal{Z},-} = z_i' (\overline{\phi}_i^-)^{-1}.\end{equation}
From (\ref{ya}), we recover the defining formula in \cite{FT}, that for $i\in I$ : 
$$z^{\alpha_i(\lambda - \mu)\delta{\pm, -}}\phi_i^\pm (z)(Z_i(z))^{-1} = (H_i(A_1^{\mathcal{Z},\pm}(zq_1), \cdots, A_n^{\mathcal{Z},\pm}(zq_n)))^{-1}.$$

\begin{rem}
(i) The series $A_i^{\mathcal{Z},\pm}(z)$ are uniquely characterized by this property and by (\ref{conta}), see Remark \ref{unisol}. 

(ii)  The notations could be misleading as the series 
$A_i^{-,\mathcal{Z}}(z)$ in \cite{FT} are variations of the $Y_i(z)$ in \cite{Fre}, not of the $A_i(z)$ therein.

(iii) The subalgebra generated by the Yangian counterpart of the $A_{i,\pm r}^{\mathcal{Z},\pm}$ is called the Gelfand-Tsetlin subalgebra in \cite{brkl}. 
It equals the Cartan-Drinfeld subalgebra generated by the $\phi_{i,\pm m}^\pm$ and the $A_{i,0}^{\mathcal{Z},\pm 1}$.
\end{rem}

\begin{example}\label{b2} Assume that $\Glie$ is of type $B_2$ with $r_1 = 2$ and $r_2 = 1$. The formula give
$$\phi_1^+ (z)(Z_1(z))^{-1} = \frac{A_2^{\mathcal{Z},+}(z)A_2^{\mathcal{Z},+}(zq^2)}{A_1^{\mathcal{Z},+}(z)A_1^{\mathcal{Z},+}(zq^4)}\text{ and } \phi_2^+ (z)(Z_2(z))^{-1} = \frac{A_1^{\mathcal{Z},+}(zq^2)}{A_2^{\mathcal{Z},+}(z)A_2^{\mathcal{Z},+}(zq^2)}.$$
\end{example}

\subsection{Definition}

\begin{defi} The truncated shifted quantum affine algebra $\mathcal{U}_{q,\lambda}^{\mu,\mathcal{Z}}(\hat{\Glie})$ is the quotient of $\mathcal{U}_{q}^{\mu,ad}(\hat{\Glie})$ by the 
relations that for $i\in I$, $A_i^{\mathcal{Z},\pm}(z)$ is a polynomial of degree $a_i$ in $z^{\pm 1}$ and : 
$$A_{i,0}^{\mathcal{Z},+}A_{i,0}^{\mathcal{Z},-} = (-q_i)^{a_i} 
\text{ , } A_i^{\mathcal{Z},+}(z) = (zq_i^{-1})^{a_i} A_i^{\mathcal{Z},-}(z)\text{ for $i\in I$.}$$
\end{defi}

\begin{rem}\label{todeal} (i) The relations imply $\phi_{i,0}^+ \phi_{i,\alpha_i(\mu)}^- = \phi_{i,\mathcal{Z}}$ for 
$$\phi_{i,\mathcal{Z}} = (-1)^{N_i + \sum_j C_{j,i}a_j} q_i^{\alpha_i(\mu)}  z_{i,1}z_{i,2}\cdots z_{i,N_i}.$$

(ii) We do not write the relations \cite[(8.11)]{FT} which are redundant in our notations.

(iii) The relations are not preserved by a twist of the spectral parameter $z\mapsto az$.
\end{rem}

\begin{example} $\mathcal{U}_{q,\omega_1^\vee}^{- \omega_1^\vee,\mathcal{Z}}(\hat{sl}_2)$ is the quotient of $\mathcal{U}_{q}^{- \omega_1^\vee,ad}(\hat{sl}_2)$ by the relations :
$$A_{\pm s}^{\mathcal{Z},\pm}  = 0 \text{ for $s > 1$, }A_0^{\mathcal{Z},+}A_1^{\mathcal{Z},+} = -1\text{ , }A_0^{\mathcal{Z},-} = A_1^{\mathcal{Z},+} q\text{ , }A_{-1}^{\mathcal{Z},-} = A_0^{\mathcal{Z},+} q.$$
\end{example}

In the following, when $\mathcal{Z}$ is fixed without ambiguity, we will simply denote $A^{\pm}(z)$ and $\mathcal{U}_{q,\lambda}^\mu(\hat{\Glie})$.
 The defining relations of $\mathcal{U}_{q,\lambda}^{\mu}(\hat{\Glie})$ can be interpreted in the following way.

\begin{prop}\label{debprop} For each $i\in I$, the images of $\phi_i^\pm (z)$ in $\mathcal{U}_{q,\lambda}^{\mu}(\hat{\Glie})[[z^{\pm 1}]]$ 
are rational of degree $\alpha_i(\mu)$ and coincide in $\mathcal{U}_{q,\lambda}^{\mu}(\hat{\Glie})(z)$. They satisfy $\phi_i^+(0) (\phi_i^+(z)z^{-\alpha_i(\mu)})(\infty) =  \phi_{i,\mathcal{Z}}$.
\end{prop}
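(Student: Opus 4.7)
The plan is to work directly from the defining identity
$$z^{\alpha_i(\lambda-\mu)\delta_{\pm,-}}\phi_i^\pm(z)(Z_i(z))^{-1} = (H_i(A_1^{\mathcal{Z},\pm}(zq_1),\ldots,A_n^{\mathcal{Z},\pm}(zq_n)))^{-1}$$
recalled just before the proposition. In the truncated quotient, $A_j^{\mathcal{Z},+}(z)$ and $A_j^{\mathcal{Z},-}(z)$ are polynomials of degrees $a_j$ in $z$ and $z^{-1}$ respectively, so $H_i(A^\pm)$ is rational, and hence so is $\phi_i^\pm(z)$. Counting degrees in $z$ factor by factor in $H_i$ ($A_i^+$ appears twice in the numerator, and $A_j^+$ appears $-C_{j,i}$ times in the denominator for each $j\neq i$), one obtains
$$\deg H_i(A^+) = 2a_i - \sum_{j\neq i}(-C_{j,i})a_j = \sum_j C_{j,i}a_j = \alpha_i(\lambda-\mu),$$
so $\deg\phi_i^+(z) = \alpha_i(\lambda) - \alpha_i(\lambda-\mu) = \alpha_i(\mu)$. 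On the $-$ side each $A_j^{\mathcal{Z},-}(zq_j)$ has degree $0$ as a rational function (its top $z$-power is $z^0$), so $\deg H_i(A^-) = 0$ and, combining with the prefactor $z^{-\alpha_i(\lambda-\mu)}$, one again gets $\deg\phi_i^-(z) = \alpha_i(\mu)$.

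Next, to show $\phi_i^+(z) = \phi_i^-(z)$ as rational functions, I use the truncation relation $A_j^{\mathcal{Z},+}(w) = (wq_j^{-1})^{a_j}A_j^{\mathcal{Z},-}(w)$. Since $H_i$ is a Laurent monomial in its arguments, the ratio $H_i(A^+)/H_i(A^-)$ is a product of factors $A_j^+(w)/A_j^-(w) = q_j^{-a_j}w^{a_j}$ evaluated at the various $q$-shifts of $z$ appearing in $H_i$. In each block of shifts (the pair $\{zq_i^{-1}, zq_i\}$ in the numerator, and for $C_{j,i}=-2$ or $-3$ the symmetric sets $\{zq^{-1}, zq\}$ or $\{zq^{-2}, z, zq^2\}$ in the denominator), the accumulated $q_j$-prefactors cancel, while the $w^{a_j}$-factors multiply to $z^{2a_i - \sum_{j\neq i}(-C_{j,i})a_j} = z^{\alpha_i(\lambda-\mu)}$. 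Substituting $H_i(A^+) = z^{\alpha_i(\lambda-\mu)}H_i(A^-)$ into the two expressions for $\phi_i^\pm(z)$ yields $\phi_i^+(z) = \phi_i^-(z)$ in $\mathcal{U}_{q,\lambda}^{\mu,\mathcal{Z}}(\hat{\Glie})(z)$.

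Finally, since $\phi_i^+$ and $\phi_i^-$ coincide as rational functions of degree $\alpha_i(\mu)$, the coefficient $(\phi_i^+(z)z^{-\alpha_i(\mu)})(\infty)$ equals $\lim_{z\to\infty}\phi_i^-(z)z^{-\alpha_i(\mu)}$, which from the defining power-series form of $\phi_i^-(z)$ is $\phi_{i,\alpha_i(\mu)}^-$. Combined with $\phi_i^+(0) = \phi_{i,0}^+$, the stated identity reduces to $\phi_{i,0}^+\phi_{i,\alpha_i(\mu)}^- = \phi_{i,\mathcal{Z}}$, which is part (i) of Remark \ref{todeal} and follows from $A_{i,0}^{\mathcal{Z},+}A_{i,0}^{\mathcal{Z},-} = (-q_i)^{a_i}$, the identities (\ref{conta}), and the adjoint-version relations $\prod_j(\overline{\phi}_j^\pm)^{C_{j,i}} = \phi_{i,\mp\alpha_i(\mu_\pm)}^\pm$. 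The main point requiring care is the bookkeeping of $q$-power cancellations in the identity $H_i(A^+) = z^{\alpha_i(\lambda-\mu)}H_i(A^-)$, but this is a purely combinatorial check in the symmetrized Cartan data.
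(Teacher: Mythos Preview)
Your proof is correct and follows essentially the same approach as the paper: both use the defining identity for $\phi_i^\pm(z)$ in terms of $H_i(A^\pm)$, invoke polynomiality of the $A_j^{\mathcal{Z},\pm}(z)$ for rationality, and substitute the truncation relation $A_j^{\mathcal{Z},+}(w)=(wq_j^{-1})^{a_j}A_j^{\mathcal{Z},-}(w)$ into $H_i$ to obtain $H_i(A^+)=z^{\alpha_i(\lambda-\mu)}H_i(A^-)$ via the cancellation of the symmetric $q$-shifts, yielding $\phi_i^+(z)=\phi_i^-(z)$ and the degree count. The only organizational difference is that you compute the degrees of $\phi_i^+$ and $\phi_i^-$ separately before proving they coincide, and you explicitly reduce the final identity to Remark~\ref{todeal}(i), whereas the paper shows coincidence first and then computes the degree once (leaving the last assertion to the remark).
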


\begin{proof} The rationality is clear as the $A_j^{\pm}(z)$ are polynomials. Then we get in $\mathcal{U}_{q,\lambda}^{\mu}(\hat{\Glie})(z)$ :
$$\frac{\phi_i^+(z)}{Z_i(z)} =  \frac{\prod_{j,C_{j,i} = -1}z^{a_j}\prod_{j,C_{j,i} = -2}(zq^{-1}zq)^{a_j}\prod_{j,C_{j,i} = -1}(zq^{-2}zzq^2)^{a_j} }{(zq_i^{-1}zq_i)^{a_i}} \frac{z^{\alpha_i(\lambda - \mu)} \phi_i^-(z)}{Z_i(z)}$$
$$= \prod_{j\in I}z^{-a_jC_{j,i}} \frac{z^{\alpha_i(\lambda - \mu)} \phi_i^-(z)}{Z_i(z)} = z^{\sum_{j\in I}C_{j,i}\omega_j(\mu - \lambda)} 
\frac{z^{\alpha_i(\lambda - \mu)} \phi_i^-(z)}{Z_i(z)} = \frac{\phi_i^-(z)}{Z_i(z)}.$$
This implies the equality as rational fractions. The degree is
$$N_i - \sum_{j\in I} C_{j,i} a_j = \alpha_i(\lambda) + \sum_{j\in I}C_{j,i}\omega_j (\mu - \lambda)  = \alpha_i(\mu).$$
\end{proof}

\subsection{Rationality of truncation series}

Let $W = L(\Psib)$ be a simple module in $\mathcal{O}_\mu$.

\begin{prop}\label{rata} (1) The following are equivalent : 

(i) for any $i\in I$, $A_i^{\mathcal{Z},+}(z)$ is rational on $W$.

(ii) for any $i\in I$, $A_i^{\mathcal{Z},-}(z)$ is rational on $W$.

(iii) $\Psib(0){\bf Z}\Psib^{-1}$
 is a Laurent monomial in the $\Lambda_{i,a}$, $i\in I$, $a\in\mathbb{C}^*$.

(2) When these conditions are satisfied, $A_i^{\mathcal{Z},+}(z)$ and $(zq_i^{-1})^{a_i} A_i^{\mathcal{Z},-}(z)$ coincide on
$W$ as rational fractions (up to an element of the group $K$ of Remark \ref{groupk}) and have degree $a_i$. 
\end{prop}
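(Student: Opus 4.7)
The plan is to factor each $A_i^{\mathcal{Z},\pm}(z)$ as a scalar-valued function in $z$ multiplied by a known rational operator. Writing $Y_{i,\Psib}^\pm(z) = \omega^\pm(i)\,\overline{Y}_{i,\Psib}^\pm(z)$ for the eigenvalue of $Y_i^\pm(z)$ on a highest-weight vector of $W$ and using the multiplicativity of $\overline{Y}_{i,\cdot}^\pm$ in the $\ell$-weight, one obtains the decomposition
$$A_i^{\mathcal{Z},\pm}(z)=(z_i')^{\delta_{\pm,-}}(\omega^\pm(i))^{-1}\,\overline{Y}_{i,\Psib(0){\bf Z}\Psib^{-1}}^{\pm}(zq_i^{-1})\cdot\frac{Y_{i,\Psib}^{\pm}(zq_i^{-1})}{Y_i^{\pm}(zq_i^{-1})}.$$
By Theorem \ref{newpol}(i), the rightmost operator factor is rational of degree $0$ on $W$, it acts as the identity on the highest-weight line, and moreover its $+$ and $-$ versions coincide as rational operators on $W$.

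For part (1), the decomposition above shows that the rationality of $A_i^{\mathcal{Z},\pm}(z)$ on $W$ is equivalent to the rationality of the scalar function $\overline{Y}_{i,\Psib(0){\bf Z}\Psib^{-1}}^{\pm}(z)$ for all $i$. Applying Lemma \ref{egw} to the $\ell$-weight $\Psib(0){\bf Z}\Psib^{-1}$ gives the three equivalences at once: conditions (i), (ii), and (iii) all amount to the statement that this $\ell$-weight is a Laurent monomial in the $\Lambda_{k,a}$.

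For part (2), assume (iii) and factor $\Psib(0){\bf Z}\Psib^{-1}=\prod_{k,b}\Lambda_{k,b}^{u_{k,b}}$. Since the $j$-th coordinate of $\Lambda_{k,a}$ has degree $C_{k,j}$, while the $j$-th coordinate of $\Psib(0){\bf Z}\Psib^{-1}$ has degree $N_j-\alpha_j(\mu)=\alpha_j(\lambda-\mu)=\sum_k a_k C_{k,j}$, invertibility of $C$ forces $\sum_b u_{i,b}=a_i$ for every $i$. The explicit formula from the proof of Lemma \ref{egw} then yields
$$\overline{Y}_{i,\Psib(0){\bf Z}\Psib^{-1}}^{+}(z)=(-z)^{a_i}\Bigl(\prod_{b}b^{u_{i,b}}\Bigr)\overline{Y}_{i,\Psib(0){\bf Z}\Psib^{-1}}^{-}(z),$$
so $A_i^{\mathcal{Z},+}(z)$ has degree $a_i$ and differs from $(zq_i^{-1})^{a_i}A_i^{\mathcal{Z},-}(z)$ by a scalar $\kappa_i$. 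To identify $(\kappa_i)\in K$, one verifies $\prod_j \kappa_j^{C_{j,i}}=1$ by using the compatibility constraints on $\omega^\pm(j)$ coming from the definition of $\mathfrak{r}_\mu$-compatible pairs $(\overline{\omega}^+,\overline{\omega}^-)$, the defining constraint on the $z_j'$, and the identity obtained by matching leading coefficients of the $j$-th coordinates of $\prod_{k,b}\Lambda_{k,b}^{u_{k,b}}$ and $Z_j(z)\Psi_j(z)^{-1}\Psib(0)_j$.

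The hardest step will be the bookkeeping in the last paragraph: tracking the three sources of scalar ambiguity -- $\omega^\pm(i)$, $z_i'$, and the factor $\prod_b b^{u_{i,b}}$ -- and checking that they balance modulo $K$ through the Cartan relations. The structural content of the proof, however, is already captured by the decomposition formula together with Theorem \ref{newpol}(i) and Lemma \ref{egw}, which reduce everything to a calculation on the highest-weight line.
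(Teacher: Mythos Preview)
Your proposal is correct and follows essentially the same route as the paper: both arguments reduce the rationality of $A_i^{\mathcal{Z},\pm}(z)$ to that of its highest-weight eigenvalue via Theorem~\ref{newpol}(i), then invoke Lemma~\ref{egw} for the three-way equivalence, and for part~(2) use that the $+$ and $-$ operator factors coincide so only the scalar prefactors need to be compared. The one place where the paper is more economical is the verification that the residual constants lie in $K$: rather than tracking $\omega^\pm(i)$, $z_i'$, and $\prod_b b^{u_{i,b}}$ separately as you propose, the paper applies the functional $H_i$ of formula~(\ref{ya}) to both sides of the identity $\overline{Y}_{i,{\bf Z}}^+(z)/Y_{i,\Psib}^+(z)=c_i\,z_i'\,z^{a_i}\,\overline{Y}_{i,{\bf Z}}^-(z)/Y_{i,\Psib}^-(z)$ and reads off $\prod_j c_j^{C_{j,i}}=1$ directly from $Z_i(z)\Psi_i(z)^{-1}=H_i(c_1,\dots,c_n)\,Z_i(z)\Psi_i(z)^{-1}$, bypassing the bookkeeping you flagged as the hardest step.
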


\begin{proof} (1) As above, it does not follow directly from Proposition \ref{egw2} as the operator do not
have necessarily a diagonal action. However, from Theorem \ref{newpol}, $A_i^{\mathcal{Z},\pm}(z)$ is rational on $W$ if and only 
the eigenvalue on a highest weight vector is rational. From Lemma \ref{egw}, this is equivalent to (i) 
or (ii) or (iii).

(2) When the conditions are satisfied, it follows from Theorem \ref{newpol} that $Y_i^\pm(z)/Y_{i,\Psib}^\pm(z)$ 
coincide as rational fractions, so it suffices to prove that $(z_i' z^{a_i})^{\delta_{\pm,-}} \overline{Y}_{i,{\bf Z}}^\pm(z) /Y_{i,\Psib}^\pm(z)$ coincide as rational fractions. From Lemma \ref{egw}, they coincide up to a constant $c_i$ : 
$$\overline{Y}_{i,{\bf Z}}^+(z) /Y_{i,\Psib}^+(z) 
= c_i z_i' z^{a_i} \overline{Y}_{i,{\bf Z}}^-(z) /Y_{i,\Psib}^-(z).$$
As 
$$H_i(\overline{Y}_{1,{\bf Z}}^-(z),\cdots, \overline{Y}_{n,{\bf Z}}^-(z)) = 
(-q_iz)^{-N_i}  (z_{i,1}\cdots z_{i,N_i})^{-1}Z_i(z),$$
we get
$$Z_i(z) \Psi_i^{-1}(z) 
= H_i(c_1,\cdots, c_n) z^{\alpha_i(- \mu)} Z_i(z) 
/ (z^{-\alpha_i(\mu)}\Psi_i(z)),$$
and so $(c_1,\cdots, c_n)\in K$.
\end{proof}

\begin{example} We continue Example \ref{longex}. Let us consider the polynomial operators : 
$$A^-(zq) = \frac{T^-(z^{-1}q)}{T_{\Psib}^-(z^{-1}q)}\frac{T_*^-(z^{-1}q^{-1})}{T^-(z^{-1}q^{-1})} = \frac{q^3(1 - q^{-2}z^{-1})^2Y_{\Psib}^-(z)}{Y^-(z)}$$
$$=\begin{pmatrix}q^3(1 - q^{-2}z^{-1})^2 & 0 & 0 & 0\\ 0 &  (q^2 - z^{-1})(1 - z^{-1}) & 
 z^{-1} (q^3 - q) & 0 
\\  0 & 0 & (q^2 - z^{-1})(1 - z^{-1})  & 0 \\ 0 & 0 & 0 & q(1 - z^{-1})^2 
\end{pmatrix}.$$ 
$$A^+(zq) = \frac{T^+(z^{-1}q^{-1})}{T_*^+(z^{-1}q^{-1})}\frac{T_{\Psib}^+(z^{-1}q)}{T^+(z^{-1}q)} = \frac{q^{-1}(1 - z q^2)^2Y_{\Psib}^+(z)}{Y^+(z)} $$
$$= \begin{pmatrix}q^{-1}(1 - z q^2)^2 & 0 & 0 & 0\\ 0 &  (1 - zq^2)(1 - z) & z  (q^3 - q) & 0 
\\  0 & 0 & (1 - zq^2)(1 - z) & 0 \\ 0 & 0 & 0 & q (1 - z)^2 \end{pmatrix}$$ 
We note that $A^\pm(z) = A^{\mathcal{Z},\pm}(z)$ with $\mathcal{Z} = ((1 - zq^{-1})^2(1 - zq^3)^2, q^2)$ as we have 
$$A^+(z) = (zq^{-1})^2 A^-(z)\text{ , }A_+(0)A_-(\infty) = q^2\text{Id},$$
$$(A^+(z)A^+(zq^2))^{-1} = \frac{\phi^+(z)}{(1 - zq^{-1})^2(1 - zq^3)^2}\text{ and }(A^-(z)A^-(zq^2))^{-1} = \frac{z^2 \phi^-(z)}{(1 - zq^{-1})^2(1 - zq^3)^2}.$$
\end{example}

\begin{rem}\label{rcront} We see from the proof of Lemma \ref{egw} that the contribution of the factor $\Lambda_{i,a}$ to the eigenvalue of $A_i^+(z)$ is $(1 - zaq_i^{-1})^{-1}$.
\end{rem}

\section{Descent to the truncation}\label{dttt}

We study which simple representations descend to truncated shifted quantum affine algebras using methods which are also new in the case of ordinary quantum affine algebras
or shifted Yangians. See the introduction for a
discussion on earlier results \cite{brkl, ktwwy, ktwwy2, N4, nw}.

We establish a necessary condition on a simple representation to be a representation of 
a truncated shifted quantum affine algebra (Proposition \ref{maint}). As a consequence we establish that a 
truncated shifted quantum affine algebra has only a finite number of isomorphism classes of simple representations (Theorem \ref{finsimp}).
Then we introduce a partial ordering $\preceq_{\mathcal{Z}}$ on $\ell$-weights (up to sign) and we prove
that the simple representations $L(\Psib)$ of a truncated shifted quantum affine algebra of parameter ${\bf Z}$ must satisfy $\Psib\preceq_{\mathcal{Z}}{\bf Z}$ (Theorem \ref{thpartial}).
We note this partial ordering is also related to the Langlands dual Lie algebra ${}^L\Glie$, a point which will be crucial in 
the next Section. In the $sl_2$-case we establish a complete characterization 
of simple representations of a truncated shifted quantum affine algebra (Theorem \ref{carsl22}).

\subsection{Descent} 

\begin{defi} A representation in $\mathcal{O}_\mu$ descends to the truncation 
$\mathcal{U}_{q,\lambda}^{\mu,\mathcal{Z}}$ if it has a structure of $\mathcal{U}_q^{\mu,ad}(\hat{\Glie})$-module 
compatible with the defining relations of the quotient $\mathcal{U}_{q,\lambda}^{\mu,\mathcal{Z}}(\hat{\Glie})$. 
\end{defi}

This defines an abelian subcategory $\mathcal{O}_{\mu,\mathcal{Z}}^{\lambda}$ of $\mathcal{O}_\mu$.

\begin{rem} The category $\mathcal{O}_{\mu,\mathcal{Z}}^\lambda$ is stable by sign-twist.
\end{rem}

We investigate which simple modules $L(\Psib)$ are in $\mathcal{O}_{\mu,\mathcal{Z}}^\lambda$. 
It means that there is a structure $L(\Psib,\overline{\omega}^+,\overline{\omega}^-)$ of $\mathcal{U}_q^{\mu,ad}(\hat{\Glie})$-module 
on $L(\Psib)$ which is a $\mathcal{U}_{q,\lambda}^{\mu,\mathcal{Z}}(\hat{\Glie})$-module. 
If $\overline{\omega}^\pm$ exist, then 
\begin{equation}\label{normc}(\overline{\omega}^+\overline{\omega}^-)(i) = z_i' (-q_i)^{-a_i}\text{ for any $i\in I$.}\end{equation}
By Proposition \ref{debprop}, such $\overline{\omega}^\pm$ compatible with $\Psib\in\mathfrak{r}_\mu$ exist if and only if for any $i\in I$, $\Psi_i(0) (\Psi_i(z)z^{-\alpha_i(\mu)})(\infty) =  \phi_{i,\mathcal{Z}}$. We will denote by $\mathfrak{r}_{\mu,\mathcal{Z}}$ the set of such $\Psib$. We will work with such $\ell$-weights (if necessary, although not written explicitly, we 
will renormalize $\ell$-weights by constants to work in this set). So let us consider $\Psib\in\mathfrak{r}_{\mu,\mathcal{Z}}$ 
and $\overline{\omega}^\pm\in\tb^*$ compatible with $\Psib$ satisfying the relations (\ref{normc}). 
Then the central element $\overline{\phi}_{i}^+\overline{\phi}_i^-$ (resp. $A_{i,0}^{\mathcal{Z},+}A_{i,0}^{\mathcal{Z},-}$) acts as the scalar $z_i' (-q_i)^{-a_i}$ 
(resp. $(-q_i)^{a_i}$) on $L(\Psib)$.

\begin{example}\label{fex} Suppose $\lambda = \mu$. Then for $V$ in $\mathcal{O}_{\mu,\mathcal{Z}}^\mu$, 
the operator $A_{i,0}^{\mathcal{Z},+} = A_i^{\mathcal{Z},+}(z) = A_i^{\mathcal{Z},-}(z) = A_{i,0}^{\mathcal{Z},-}$ is constant 
and satisfies $\text{Id} = A_{i,0}^{\mathcal{Z},+}A_{i,0}^{\mathcal{Z},-} = (A_{i,0}^{\mathcal{Z},+})^2$ and $\phi_i^+(z)(Z_i(z))^{-1} = \prod_{j\in I} (A_{j,0}^{\mathcal{Z},+})^{-C_{j,i}}$. 
Hence $\mathcal{O}_{\mu,\mathcal{Z}}^\mu$ is semi-simple, its simple objects are $1$-dimensional of 
highest $\ell$-weight $\Psi(z) = (\eta_i Z_i(z))_{i\in I}$ with $\eta_i = \prod_{j}\epsilon_j^{C_{j,i}}$
for a choice of $\epsilon_j = \pm 1$ (see Example \ref{pospre}). Up to a sign-twist, 
there is a unique simple representation in $\mathcal{O}_{\mu,\mathcal{Z}}^\mu$.
\end{example}

\begin{example}\label{sex}
Let $\Glie = sl_2$, $\lambda = 2 \omega_1^\vee$, $\mu = 0$. Set $\mathcal{Z} = ((1 - aq^3z)(1 - aq^{-1}z), aq)$. 
For $\Psib(z) = \frac{aq(1 - aq^{-1}z)}{(1 - aqz)}$, $L(\Psib)$ is a $2$-dimensional 
representation in $\mathcal{O}_{0,\mathcal{Z}}^{2\omega_1^\vee}$. Indeed its other $\ell$-weight is
$\Psib'(z) = \frac{aq^{-1}(1 - aq^3 z)}{(1 - aqz)}$. Choose $\alpha$ so that $\alpha^2aq = 1$.
For $A(z) = (\alpha - \alpha^{-1} z)$ and $A'(z) = (q\alpha - \alpha^{-1}q^{-1}z)$, one has
$$\Psi(z) = \frac{Z(z)}{A(z)A(zq^2)}\text{ , }\Psi'(z) = \frac{Z(z)}{A'(z)A'(zq^2)}.$$
\end{example}

\begin{rem}\label{flem}  Let $i\in I$ and $V$ be a representation in $\mathcal{O}_\mu$ so that the central element 
$\overline{\phi}_{i}^+\overline{\phi}_i^-$ acts by $z_i' (-q_i)^{-a_i}$. 
As a direct consequence of Proposition \ref{rata}, if $A_i^{\mathcal{Z},+}(z)$ has
a rational action on $V$, then $A_i^+(z) = (z q_i^{-1})^{a_i} A_i^-(z)$ if and only if we have
$$A_i^+(z)\sim_{\infty} (-z)^{a_i}(A_i^+(0))^{-1}$$ 
on $V$. Besides it suffices that this condition is satisfied on a highest weight vector.\end{rem}

\subsection{Partial ordering}\label{paror}

We have the following refinement of Proposition \ref{rata}.

\begin{lem}\label{porder} If for any $i\in I$, $A_i^{\mathcal{Z},+}(z)$ is polynomial on $L(\Psib)$, then $\Psib(0){\bf Z}\Psib^{-1}$ is a monomial in the $\Lambda_{i,a}$, $i\in I$, $a\in\mathbb{C}^*$.
\end{lem}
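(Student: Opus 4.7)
The plan is to sharpen Proposition~\ref{rata}: polynomiality implies rationality, so that Proposition already provides a factorization
$$\mathbf{M} := \Psib(0)\,\mathbf{Z}\,\Psib^{-1} \,=\, \prod_{j\in I,\; a\in\mathbb{C}^*}\Lambda_{j,a}^{v_{j,a}},\qquad v_{j,a}\in\mathbb{Z},$$
with only finitely many nonzero exponents. All that remains is to upgrade ``Laurent monomial'' to ``monomial'', i.e.\ to prove $v_{j,a}\ge 0$ for every $j\in I$ and $a\in\mathbb{C}^*$.

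First I would compute, for a fixed $i\in I$, the scalar by which $A_i^{\mathcal{Z},+}(z)$ acts on a highest $\ell$-weight vector $v$ of $L(\Psib)$. From the definition \eqref{atil}, the eigenvalue of $Y_i^+(zq_i^{-1})$ on $v$, and the multiplicativity $\overline{Y}_{i,\Psib_1\Psib_2}^+(z)=\overline{Y}_{i,\Psib_1}^+(z)\overline{Y}_{i,\Psib_2}^+(z)$ which is manifest from the formula in Lemma~\ref{egw}, this scalar is $\overline{\omega}^+(i)^{-1}\,\overline{Y}_{i,\mathbf{M}}^+(zq_i^{-1})$. The elementary identity $\overline{Y}_{i,\Lambda_{j,a}}^+(z)=(1-za)^{\delta_{i,j}}$, readable off the proof of Lemma~\ref{egw} and recorded in Remark~\ref{rcront}, then yields
$$A_i^{\mathcal{Z},+}(z).v \,=\, \overline{\omega}^+(i)^{-1}\prod_{a\in\mathbb{C}^*}\bigl(1-zaq_i^{-1}\bigr)^{v_{i,a}}\,v.$$

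Now I would invoke the hypothesis: since $A_i^{\mathcal{Z},+}(z)$ acts polynomially on $L(\Psib)$, its action on the eigenvector $v$ must be polynomial in $z$. A rational function of the form $\prod_{a}(1-zaq_i^{-1})^{v_{i,a}}$ with $v_{i,a}\in\mathbb{Z}$ is polynomial if and only if every exponent $v_{i,a}$ is non-negative, because the linear factors $(1-zaq_i^{-1})$ attached to distinct $a$ are pairwise coprime. Running this argument for each $i\in I$ forces $v_{j,a}\ge 0$ for all $j\in I$ and $a\in\mathbb{C}^*$, which is the desired statement.

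I do not foresee a real obstacle: the analytic input (Laurent monomiality of $\mathbf{M}$) is already supplied by Proposition~\ref{rata}, and the rest is a short highest-weight computation. The only point demanding care is the bookkeeping of the shift $z\mapsto zq_i^{-1}$ together with the inversion of $Y_i^+(zq_i^{-1})$ in the definition \eqref{atil}, which together convert the $(1-zaq_i^{-1})^{-1}$ of Remark~\ref{rcront} (contributed by a $\Lambda_{i,a}$ factor of $\Psib$) into the positive-exponent contribution $(1-zaq_i^{-1})^{v_{i,a}}$ of a $\Lambda_{i,a}^{v_{i,a}}$ factor of $\mathbf{M}$.
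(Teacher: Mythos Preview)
Your proposal is correct and is exactly the paper's argument: invoke Proposition~\ref{rata} to obtain the Laurent monomial factorization of $\Psib(0)\mathbf{Z}\Psib^{-1}$ in the $\Lambda_{j,a}$, then read off the eigenvalue of $A_i^{\mathcal{Z},+}(z)$ on a highest weight vector as $\overline{\omega}^+(i)^{-1}\prod_a(1-zaq_i^{-1})^{v_{i,a}}$ via Lemma~\ref{egw}, so that polynomiality forces $v_{i,a}\ge 0$. The paper states this in two sentences; your write-up simply unpacks the same computation in more detail.
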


In fact, our proof implies it suffices it is polynomial on a highest weight vector of $L(\Psib)$.

\begin{proof} By Proposition \ref{rata}, $\Psib(0){\bf Z}\Psib^{-1}$ is a Laurent monomial in the $\Lambda_{i,a}$, $i\in I$, $a\in\mathbb{C}^*$. Following the proof of Lemma \ref{egw}, we see that the powers of the $\Lambda_{i,a}$ 
have to be non-negative so that the eigenvalue of $A_i^{\mathcal{Z},+}(z)$ on a highest weight vector of $L(\Psib)$ is a polynomial.
\end{proof}

This suggests the following definition for the set of $\ell$-weights :
$$\mathfrak{r}_{\mathcal{Z}} = \bigsqcup_{\mu\in\Lambda}\mathfrak{r}_{\mu,\mathcal{Z}}.$$
For $\Psib,\Psib'\in\mathfrak{r}_{\mathcal{Z}}$, we set $\Psib'\preceq_{\mathcal{Z}} \Psib$ 
if $(\Psib(0))^{-1}(\Psib'(0)) \Psib (\Psib')^{-1}$ is a monomial in the $\Lambda_{i,a}$, $i\in I$, $a\in\mathbb{C}^*$.

\begin{prop} If $\Psib'\preceq_{\mathcal{Z}} \Psib$, then $\Psib$ is determined by $\Psib'$ and $(\Psib(0))^{-1}(\Psib'(0)) \Psib (\Psib')^{-1}$ up to a sign $i\in I$.

In particular, $\preceq_{\mathcal{Z}}$ defines a partial ordering on $\mathfrak{r}_{\mathcal{Z}}$ (up to signs).
\end{prop}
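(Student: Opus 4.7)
The plan is to read the relation $\Psib' \preceq_{\mathcal{Z}} \Psib$ as an equality
$$\Psib = c \cdot M \cdot \Psib',$$
where $M = \prod_{i,a} \Lambda_{i,a}^{m_{i,a}}$ with $m_{i,a}\geq 0$ is the prescribed monomial and $c = \Psib(0)(\Psib'(0))^{-1}\in\tb^*$ is the only remaining free parameter. For the first assertion, it suffices to show that the normalization $\Psib\in\mathfrak{r}_{\mu,\mathcal{Z}}$ determines $c$ up to signs. Substituting $\Psi_i(z) = c_i M_i(z)\Psi_i'(z)$ into the constraint $\Psi_i(0)\cdot(z^{-\alpha_i(\mu)}\Psi_i(z))(\infty) = \phi_{i,\mathcal{Z}}$ and using the same identity for $\Psib'$, the unknown $\phi_{i,\mathcal{Z}}$ cancels and one extracts an equation of the form
$$c_i^2 = \bigl[M_i(0)\cdot(z^{-\alpha_i(\mu-\mu')}M_i(z))(\infty)\bigr]^{-1},$$
with right-hand side computable from $\Psib'$ and the exponents $m_{j,a}$ alone (the coordinate degree $\alpha_i(\mu-\mu')$ is itself fixed by $M$, since by Remark \ref{reml} the coordinate degrees of $\Lambda_{k,a}$ form the simple roots of ${}^L\Glie$). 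Thus each $c_i$ is pinned down up to a sign.

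For the second assertion, reflexivity (take $M=1$, $c=1$) and transitivity (multiply the two monomials and the two constants, using that non-negative exponents are closed under addition) are immediate. To obtain antisymmetry modulo signs, suppose $\Psib\preceq_{\mathcal{Z}}\Psib'$ and $\Psib'\preceq_{\mathcal{Z}}\Psib$ with monomials $M = \prod\Lambda_{i,a}^{m_{i,a}}$ and $N = \prod\Lambda_{i,a}^{n_{i,a}}$. Their product $MN$ is the trivial $\ell$-weight, and the key step is to invoke the multiplicative independence of the family $(\Lambda_{i,a})_{i\in I, a\in\CC^*}$. This is visible in the proof of Lemma \ref{egw}: the formula
$$\overline{Y}_{i,\Psib}^+(z) = \prod_b (1 - zb)^{v_{i,b}}$$
shows that the exponents $v_{i,b}$ in any factorization $\Psib(\Psib(0))^{-1} = \prod\Lambda_{k,b}^{v_{k,b}}$ are directly read off from the tuple $(\overline{Y}_{i,\Psib}^+(z))_{i\in I}$, using that the polynomials $(1 - zb)$ for distinct $b\in\CC^*$ are multiplicatively independent in $\CC(z)$. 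Hence $MN$ trivial forces $m_{i,a} + n_{i,a} = 0$ and, by non-negativity, $m_{i,a} = n_{i,a} = 0$, so $M = N = 1$.

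It follows that $\Psib$ and $\Psib'$ differ only by the constant $c\in\tb^*$, and the first part of the proposition (applied with $M=1$) then forces $c_i\in\{\pm 1\}$ for every $i\in I$. Consequently $\preceq_{\mathcal{Z}}$ descends to a genuine partial ordering on $\mathfrak{r}_{\mathcal{Z}}$ modulo the subgroup $\{\pm 1\}^I\subset\tb^*$. I expect the main conceptual obstacle to be isolating the multiplicative independence of the $\Lambda_{i,a}$ cleanly, since it is not stated separately in the paper; however, it is a direct consequence of the formula recalled above. Everything else is routine manipulation of constant terms and leading terms at $0$ and $\infty$.
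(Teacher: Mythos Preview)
Your proof is correct and follows essentially the same route as the paper: both arguments pin down $\Psi_i(0)^2$ by combining the normalization $\Psi_i(0)(z^{-\alpha_i(\mu)}\Psi_i(z))(\infty)=\phi_{i,\mathcal{Z}}$ with the fact that $\Psi_i(z)\Psi_i(0)^{-1}$ is already determined by $\Psib'$ and the monomial $M$, and both deduce antisymmetry from the observation that a monomial in the $\Lambda_{i,a}$ whose inverse is also such a monomial must be trivial. The paper's version is more terse, leaving the multiplicative independence of the $\Lambda_{i,a}$ and the reflexivity/transitivity checks implicit, whereas you spell them out; your justification of independence via the formula $\overline{Y}_{i,\Psib}^+(z)=\prod_b(1-zb)^{v_{i,b}}$ from Lemma~\ref{egw} is a clean way to make this step precise.
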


\begin{proof} Let $\Psib\in\mathfrak{r}_{\mu,\mathcal{Z}}$, $\Psib'\in\mathfrak{r}_{\mu',\mathcal{Z}}$. 
It suffices to prove that each $\Psib_i(0)$ is determined up to a sign. 
The conditions imply that for any $i\in I$, $(\Psib_i(0))^{-1}(\Psib_i(z)z^{-\alpha_i(\mu)})(\infty)$ is 
determined. But $\Psib_i(0)(\Psib_i(z)z^{-\alpha_i(\mu)})(\infty)$ is fixed, so $(\Psib_i(0))^2$ is determined.

For the second point, it suffices to consider $\Psib$, $\Psib'$ so that $\Psib\preceq_{\mathcal{Z}} \Psib'$ and $\Psib'\preceq_{\mathcal{Z}} \Psib$. 
Then, $(\Psib(0))^{-1}(\Psib'(0)) \Psib (\Psib')^{-1} = 1$ and from the first point we get that $\Psib$ and $\Psib'$
are equal (up to a sign for each $i\in I$).
\end{proof}

\begin{rem} (i) If we add the data of a $\overline{\omega}^\pm\in\tb^*$ compatible with each $\Psib$ as above, then 
as for Lemma \ref{egw} we can replace "up to a sign" by "up to a sign twist".

(ii) This partial ordering is different from the extension of Nakajima partial ordering $\preceq$ in Section \ref{npo}. 
\end{rem}

Now Lemma \ref{porder} can be reformulated in terms of this partial ordering.

\begin{thm}\label{thpartial} For $L(\Psib)$ a simple representation in $\mathcal{O}^\lambda_{\mu,\mathcal{Z}}$ we have
$$\Psib  \preceq_{\mathcal{Z}} {\bf Z}.$$
\end{thm}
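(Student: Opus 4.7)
The plan is to recognize that this theorem is essentially an immediate consequence of Lemma \ref{porder} once the definitions are unpacked. Here is how I would carry it out.

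First, I would note that since $L(\Psib)$ lies in $\mathcal{O}^\lambda_{\mu,\mathcal{Z}}$, by definition of the truncation the series $A_i^{\mathcal{Z},+}(z)$ acts on $L(\Psib)$ as a polynomial in $z$ of degree $a_i$ (where $\mu = \lambda - \sum_{i\in I} a_i \alpha_i^\vee$). In particular, the action of $A_i^{\mathcal{Z},+}(z)$ on the highest weight vector of $L(\Psib)$ is polynomial in $z$, so its eigenvalue there, namely $\overline{Y}_{i,{\bf Z}\Psib^{-1}}(zq_i^{-1})$ (possibly up to a constant factor from $z_i'$ on the $-$ side), is a polynomial.

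Next, I would invoke Lemma \ref{porder} directly: it asserts that if all the $A_i^{\mathcal{Z},+}(z)$ act polynomially on $L(\Psib)$, then $\Psib(0) {\bf Z} \Psib^{-1}$ is a (non-Laurent, i.e., non-negative exponents) monomial in the $\Lambda_{j,b}$ for $j \in I$, $b \in \mathbb{C}^*$. Here the key strengthening of Proposition \ref{rata} --- from Laurent monomial to genuine monomial --- comes from the explicit contribution computed in Remark \ref{rcront}, which shows that a factor $\Lambda_{i,a}$ in $\Psib(0){\bf Z}\Psib^{-1}$ corresponds to a factor $(1 - zaq_i^{-1})^{-1}$ in the eigenvalue of $A_i^{\mathcal{Z},+}(z)$ on a highest weight vector; polynomiality forces all such exponents to be non-negative.

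Finally, I would conclude by unwinding the definition of the partial ordering $\preceq_{\mathcal{Z}}$. Since $Z_i(0) = 1$ for every $i \in I$, one has ${\bf Z}(0) = (1,\ldots,1) \in \mathfrak{t}^*$, so
$$({\bf Z}(0))^{-1}\, \Psib(0)\, {\bf Z}\, \Psib^{-1} \;=\; \Psib(0)\, {\bf Z}\, \Psib^{-1},$$
and the monomiality established in the previous step is exactly the defining condition $\Psib \preceq_{\mathcal{Z}} {\bf Z}$. One should check along the way that both ${\bf Z}$ and $\Psib$ lie in $\mathfrak{r}_{\mathcal{Z}}$ so that the partial ordering is meaningful: for ${\bf Z}$ viewed in $\mathfrak{r}_\lambda$ (zero shift), the condition $Z_i(0)\,(Z_i(z)z^{-N_i})(\infty) = \phi_{i,\mathcal{Z}}$ reduces to $(-q_i)^{N_i} z_{i,1}\cdots z_{i,N_i} = \phi_{i,\mathcal{Z}}$, which is immediate from the formula in Remark \ref{todeal}(i) specialized to $\mu = \lambda$; for $\Psib$ the condition is built into the defining relations of the truncation. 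Since the entire argument reduces to an immediate consequence of Lemma \ref{porder}, there is no real obstacle --- the substance of the theorem has already been carried by Lemma \ref{porder}, which in turn rested on the polynomiality coming from Baxter polynomiality via Theorem \ref{newpol}.
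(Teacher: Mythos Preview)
Your proposal is correct and follows exactly the paper's approach: the paper itself presents this theorem simply as a reformulation of Lemma~\ref{porder} in the language of the partial ordering $\preceq_{\mathcal{Z}}$, and you have spelled out that reformulation (including the observation that ${\bf Z}(0)=1$) in appropriate detail. One small slip in your explanatory aside: a factor $\Lambda_{i,a}$ in $\Psib(0){\bf Z}\Psib^{-1}$ contributes $(1-zaq_i^{-1})$, not $(1-zaq_i^{-1})^{-1}$, to the eigenvalue of $A_i^{\mathcal{Z},+}(z)$ on the highest weight vector (so polynomiality directly forces non-negative exponents); this does not affect your argument since you are invoking Lemma~\ref{porder} as a black box.
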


\begin{rem}\label{mustaslo} For $L(\Psib)$ a representation in $\mathcal{O}^\lambda_{\mu,\mathcal{Z}}$, all its $\ell$-weights $\Psib'$ 
must also satisfy $\Psib'\preceq_{\mathcal{Z}} {\bf Z}$ (by the proof of Lemma \ref{egw}). 
As an example, more concretely, consider an $\ell$-weight of the form $\Psib' = \Psib A_{i,a}^{-1}$. Then Formula (\ref{alambda}) gives that a factor $\Lambda_{i,aq_i}$ in 
the factorization of ${\bf Z}\Psib^{-1}$ is replaced by $\Lambda_{i,aq_i^{-1}}$ 
in the factorization of ${\bf Z}(\Psib')^{-1}$. 
\end{rem}

\subsection{A necessary condition on highest $\ell$-weight and finiteness}

\begin{prop}\label{maint} Suppose that $L(\Psib)$ is in $\mathcal{O}_{\mu,\mathcal{Z}}^\lambda$. 
Then for $i\in I$, $\overline{Y}^+_{i,{\bf Z}\Psib^{-1}}(z q_i^{-1})$ and $\overline{Y}^+_{i,{\bf Z}\Psib^{-1}}(z q_i^{-1}) \Psi_i(z)$ are polynomials with 
$$\overline{Y}^+_{i,{\bf Z}\Psib^{-1}}(z q_i^{-1})\sim_{\infty} (-z)^{a_i}(\overline{Y}^+_{i,{\bf Z}\Psib^{-1}}(0))^{-1}.$$
\end{prop}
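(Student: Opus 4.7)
My plan is to evaluate the truncation relations on the highest weight vector $v\in L(\Psib)$ and match the result with formula~\eqref{atil} via Lemma~\ref{egw}. Throughout, let $\overline{\omega}^\pm(i)\in\mathbb{C}^\times$ denote the eigenvalues of the invertible generators $\overline{\phi}_i^{\pm}$ on $v$.

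First, I would substitute the highest-weight action $Y_i^+(z).v=\overline{\omega}^+(i)\overline{Y}^+_{i,\Psib}(z)v$ provided by Lemma~\ref{egw} into \eqref{atil}, and use the multiplicativity $\overline{Y}^+_{i,{\bf Z}\Psib^{-1}}=\overline{Y}^+_{i,{\bf Z}}/\overline{Y}^+_{i,\Psib}$ to conclude that $A_i^{\mathcal{Z},+}(z)$ acts on $v$ by $\overline{\omega}^+(i)^{-1}\overline{Y}^+_{i,{\bf Z}\Psib^{-1}}(zq_i^{-1})$. Since $L(\Psib)\in\mathcal{O}_{\mu,\mathcal{Z}}^\lambda$, this operator is polynomial in $z$ of degree $a_i$ on the whole representation, so this eigenvalue is too, yielding the polynomiality of $\overline{Y}^+_{i,{\bf Z}\Psib^{-1}}(zq_i^{-1})$. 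For the asymptotic, the defining relation $A_i^{\mathcal{Z},+}(z)=(zq_i^{-1})^{a_i}A_i^{\mathcal{Z},-}(z)$ gives $A_i^{\mathcal{Z},+}(z)\sim_\infty q_i^{-a_i}z^{a_i}A_{i,0}^{\mathcal{Z},-}$; combining the $v$-eigenvalue of $A_{i,0}^{\mathcal{Z},-}$ extracted from the central relation $A_{i,0}^{\mathcal{Z},+}A_{i,0}^{\mathcal{Z},-}=(-q_i)^{a_i}$ with the normalization $\overline{Y}^+_{i,{\bf Z}\Psib^{-1}}(0)=1$ (immediate from the $\exp$-expression in Lemma~\ref{egw}) yields the claimed asymptotic and forces the polynomial degree to be exactly $a_i$.

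For the polynomiality of $\overline{Y}^+_{i,{\bf Z}\Psib^{-1}}(zq_i^{-1})\Psi_i(z)$, I would invert formula~\eqref{ya} into the operator identity
\begin{equation*}
\phi_i^+(z)\,A_i^{\mathcal{Z},+}(z)\,A_i^{\mathcal{Z},+}(zq_i^2) \;=\; c\,Z_i(z)\,N_i(z),
\end{equation*}
where $c\in\mathbb{C}^\times$ and $N_i(z)$ is a product of suitable shifts of $A_j^{\mathcal{Z},+}(\cdot)$ with $j\neq i$, hence polynomial on $L(\Psib)$ by hypothesis. Evaluating on $v$ yields
\[
\Psi_i(z)\,\overline{Y}^+_{i,{\bf Z}\Psib^{-1}}(zq_i^{-1})\,\overline{Y}^+_{i,{\bf Z}\Psib^{-1}}(zq_i)\in\mathbb{C}[z].
\]
To pass from this two-factor identity to the one-factor claim, I would invoke Theorem~\ref{thpartial} to write ${\bf Z}\Psib^{-1}\Psib(0)=\prod_{k,b}\Lambda_{k,b}^{v_{k,b}}$ with all $v_{k,b}\geq 0$, then use Lemma~\ref{egw} to get the explicit form $\overline{Y}^+_{i,{\bf Z}\Psib^{-1}}(zq_i)=\prod_{b}(1-zbq_i)^{v_{i,b}}$, and finally argue via the explicit shape of $\Lambda_{i,b}$ and of $N_i$ that each root $z=(bq_i)^{-1}$ of this polynomial occurs in $Z_i(z)N_i(z)$ with multiplicity at least $v_{i,b}$, coming specifically from the $\Psib_{i,bq_i}$ contribution inside $\Lambda_{i,b}$ after inverting $H_i$. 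This divisibility delivers the desired polynomiality.

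The first two computations are direct translations of the truncation relations into eigenvalue identities. The main obstacle is the final divisibility step, which requires a careful combinatorial matching of the zeros of $\overline{Y}^+_{i,{\bf Z}\Psib^{-1}}(zq_i)$ with factors of $Z_i(z)N_i(z)$, tracked through the $\Lambda$-decomposition of ${\bf Z}\Psib^{-1}\Psib(0)$.
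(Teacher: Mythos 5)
Your first two steps are correct and are essentially the paper's argument: the polynomiality and degree of $\overline{Y}^+_{i,{\bf Z}\Psib^{-1}}(zq_i^{-1})$ are read off from the polynomiality of $A_i^{\mathcal{Z},+}(z)$ on a highest weight vector, and the asymptotic comes from the truncation relations $A_{i,0}^{\mathcal{Z},+}A_{i,0}^{\mathcal{Z},-}=(-q_i)^{a_i}$ and $A_i^{\mathcal{Z},+}(z)=(zq_i^{-1})^{a_i}A_i^{\mathcal{Z},-}(z)$. (One bookkeeping caveat: in this statement $\overline{Y}^+_{i,{\bf Z}\Psib^{-1}}(zq_i^{-1})$ denotes the eigenvalue of $A_i^{\mathcal{Z},+}(z)$ on the highest weight vector, whose constant term is the eigenvalue of $(\overline{\phi}_i^+)^{-1}$; your normalization $\overline{Y}^+_{i,{\bf Z}\Psib^{-1}}(0)=1$ would make the factor $(\overline{Y}^+_{i,{\bf Z}\Psib^{-1}}(0))^{-1}$ in the claim vacuous, which is not the intended reading.)

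The third step has a genuine gap. The two-factor identity $\Psi_i(z)\,\overline{Y}^+_{i,{\bf Z}\Psib^{-1}}(zq_i^{-1})\,\overline{Y}^+_{i,{\bf Z}\Psib^{-1}}(zq_i)=Z_i(z)N_i(z)$ evaluated on $v$ is tautological and carries no usable information: writing ${\bf Z}\Psib^{-1}\Psib(0)=\prod_{k,b}\Lambda_{k,b}^{v_{k,b}}$ with $v_{k,b}\geq 0$, the potential poles of $\Psi_i(z)\overline{Y}^+_{i,{\bf Z}\Psib^{-1}}(zq_i^{-1})$ sit exactly at the zeros $(bq_i)^{-1}$ of $\overline{Y}^+_{i,{\bf Z}\Psib^{-1}}(zq_i)$ with multiplicities at most $v_{i,b}$, so the product is automatically polynomial, and asking that $(1-zbq_i)^{v_{i,b}}$ divide $Z_i(z)N_i(z)$ is literally equivalent to the conclusion you are trying to prove; your ``contribution of $\Psib_{i,bq_i}$ inside $\Lambda_{i,b}$'' is exactly the factor supplied by $\overline{Y}^+_{i,{\bf Z}\Psib^{-1}}(zq_i)$ itself, so dividing it out just returns the unknown quantity. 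More structurally, no argument using only the Cartan--Drinfeld eigenvalues on the highest weight line can work: already for $\Glie=sl_2$ (where $N_i$ is empty), take ${\bf Z}\Psib^{-1}$ proportional to a single $\Lambda_{1,b}$ with $(bq)^{-1}$ not a root of $Z_1(z)$; then the $A^+$-eigenvalue on $v$ is a degree-one polynomial with the correct constant and asymptotic, yet $\Psi(z)\overline{Y}^+_{{\bf Z}\Psib^{-1}}(zq^{-1})$ is not polynomial (such an $L(\Psib)$ simply does not descend, cf.\ Theorem \ref{carsl22}, but your computation would not detect this). The paper closes this gap by leaving the highest weight line: using relation (\ref{ac}) and the invertibility of $A_{i,0}^{\mathcal{Z},+}$ it shows that $C_i(z)=(q_i-q_i^{-1})x_i^{-,+}(z)A_i^{\mathcal{Z},+}(z)$ acts polynomially on $L(\Psib)$, and then applies the raising operator: $x_{i,0}^+C_i(z).v=-\overline{Y}^+_{i,{\bf Z}\Psib^{-1}}(zq_i^{-1})\bigl(\Psi_i(z)-\Psi_{i,0}^+-\sum_{0<r\leq\alpha_i(\mu)}\Psi_{i,r}^-z^r\bigr).v$, which forces $\overline{Y}^+_{i,{\bf Z}\Psib^{-1}}(zq_i^{-1})\Psi_i(z)$ to be polynomial. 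Some such use of the currents $x_i^{\pm}$, i.e.\ of the module structure beyond the eigenvalues of the truncation series on $v$, is what your proposal is missing.
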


\begin{rem} (i) The property does not depend on the choice of $\overline{\omega}^\pm$ but only on $\Psib$.

(ii) The degree of the two polynomials are $a_i$ and $a_i + \alpha_i(\mu)$ respectively.
\end{rem}

We will use the currents defined for $i\in I$ by
$$x_i^{+,+}(z) =  \sum_{r\geq 0} x_{i, r}^+ z^{ r}\text{ , }x_i^{-,+}(z) = - \sum_{r > 0} x_{i, r}^- z^{ r}\text{ , }
C_i (z) = (q_i - q_i^{-1}) x_i^{-,+}(z) A_i^+(z) = \sum_{r > 0} C_{i,r} z^r.$$
The following relations were proved in \cite{FT} when $\mu\in- \Lambda^+$. 
But since the commutators $[x_{i,r}^\pm, \phi_{j,s}^+]$ are the same for a general $\mu\in \Lambda$, it holds in general : 
 \begin{equation}\label{ac}  (w - z)[A_i(z), C_i(w)]_{q_i} = (q_i - q_i^{-1})  (z C_i(w) A_i(z)  - w C_i(z) A_i(w) ). \end{equation}
Here we use that standard notation $[a,b]_{q_i^{\pm 1}} = ab - q_i^{\pm 1}ba$. 

\begin{proof} Let $i\in I$ and $v$ be a highest weight vector of $L(\Psib)$. The polynomiality of $\overline{Y}^+_{i,{\bf Z}\Psib^{-1}}(z)$, 
which is the eigenvalue of $A_i^{\mathcal{Z},+}(z)$ on $v$, is clear. 
It is already observed in \cite{FT} that the polynomiality of $C_i(z)$ can be deduced in the following way : 
the coefficient of $w$ in relation (\ref{ac}) gives 
$$-z [A_i^+(z), C_{i,1}]_{q_i^{-1}} = -(q_i - q_i^{-1})  C_i(z) A_{i,0}^+ .$$ 
As $A_{i,0}^+$ is invertible, $C_i(z)$ is a polynomial on $L(\Psib)$. Now we have also : 
$$x_{i,0}^+.C_i(z).v = (q_i - q_i^{-1}) [x_{i,0}^+,x_i^{-,+}(z)]A_i^+(z).v = -
\overline{Y}^+_{i,{\bf Z}\Psib^{-1}}(z q_i^{-1})(\sum_{r > 0}(\phi_{i,r}^+ - \phi_{i,r}^-)z^r) .v$$
$$=  - \overline{Y}^+_{i,{\bf Z}\Psib^{-1}}(z q_i^{-1})
(\Psi_i(z) - \Psi_{i,0}^+ - \sum_{0 < r \leq \alpha_i(\mu)}\Psi_{i,r}^-z^r) .v,$$
As $\overline{Y}^+_{i,{\bf Z}\Psib^{-1}}(z q_i^{-1})$ and $x_{i,0}^+.C_i(z).v$ are polynomial in $z$, this implies that 
$\overline{Y}^+_{i,{\bf Z}\Psib^{-1}}(z q_i^{-1}) \Psi_i(z)$ is a polynomial. For the second point, the eigenvalue of $A_{i,a_i}^+$ on $v$ is $(-1)^{a_i}(
\overline{Y}^+_{i,{\bf Z}\Psib^{-1}}(0))^{-1}$.
\end{proof}

This condition is not sufficient in general. 

\begin{example} Let $\Glie = sl_3$, $\lambda = \omega_1^\vee$, $\mu = \lambda - 2 \alpha_1^\vee - \alpha_2^\vee = -2\omega_1^\vee$,
$Z_1(z) = 1 - z$, $Z_2(z) = 1$,
$$\Psib = \left(\frac{q^{-4}}{(1 - zq^{-4})(1 - zq^{-2})} , 1\right).$$
We fix $\mathcal{Z}$ with compatible $z_1', z_2'$. We have 
$$ \overline{Y}^+_{1,{\bf Z}\Psib^{-1}}(z q^{-1})
= \alpha^2(1 - z q^{-4})(1 - z q^{-2})\text{ and }
\overline{Y}^+_{2,{\bf Z}\Psib^{-1}}(z q^{-1}) = \alpha(1 - z q^{-3})$$ 
with $\alpha^3 = q^4$. 
The condition of Proposition \ref{maint} are satisfied. However, $L(\Psib)$ is not in $\mathcal{O}^\lambda_{\mu,\mathcal{Z}}$. Indeed, by Theorem \ref{fpm}, this representation is the fusion product of two negative prefundamental representations (with a $1$-dimensional constant representation). The $q$-character of each of these factors is known as it is the same as for the corresponding negative prefundamental representation of $\mathcal{U}_q(\hat{\bo})$ (in the $sl_3$-case these representations are explicitly described in \cite{HJ}). The following is an $\ell$-weight of $L(\Psib)$ : 
$$\Psib' = \Psib A_{1,q^{-2}}^{-1}A_{2,q^{-1}}^{-1}= \left(\frac{q^{-5}}{(1 - zq^{-4})^2},\frac{q^{-1}(1 - zq)}{1 - zq^{-1}}\right).$$
and is not of the correct form
$$ \left(\frac{(1-z)A_2(zq)}{A_1(z)A_1(zq^2)},\frac{A_1(zq)}{A_2(z)A_2(zq^2)}\right)$$
for $A_1(z)$, $A_2(z)$ polynomials of respective degrees $2$, $1$. Indeed, identifying the second coordinates, we would have 
$A_1(z)$ of the form $(1 - z)(1-za)$ up to a constant and for a certain $a\in\mathbb{C}^*$. 
This contradicts the relation for the first coordinates, as $q^4$ could not be a pole of order $1$.
\end{example}

However in some cases the conditions are enough to determine the simple representations in $\mathcal{O}^\lambda_{\mu,\mathcal{Z}}$. This is the case for the $\Glie = sl_2$ in 
the next section. We have also the following.

\begin{example}\label{almost} Let $\lambda = \omega_i^\vee$ and $\mu = \omega_i^\vee - \alpha_i^\vee$. Set $Z_j(z) = 1 - zaq_i^2\delta_{i,j}$ and associate $z_j'$ accordingly. 
Up to sign twist $\mathcal{O}^\lambda_{\mu,\mathcal{Z}}$ contains a unique simple representation whose highest $\ell$-weight is $\tilde{\Psib}_{i,a}$ (up to a constant, see Example \ref{exqchar}). The uniqueness follows from Proposition \ref{maint}. From Example \ref{exqchar}, the $\ell$-weight spaces are of dimension $1$ parametrized by $m\geq 0$ with the corresponding eigenvalue of 
$A_j^{\mathcal{Z},+}(z)$ equal to $1$ if $j\neq i$ and if $j = i$ equal to $v_i^{-1}q_i^m - v_iq_i^{- m}z$ where $v_i^2 = a_i$ are fixed.
\end{example}

We have the following consequence of Proposition \ref{maint}.

\begin{thm}\label{finsimp} There is a finite number of simple representations $L(\Psib)$ in $\mathcal{O}^\lambda_{\mu,\mathcal{Z}}$.
\end{thm}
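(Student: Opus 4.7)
The plan is to combine the necessary condition from Proposition~\ref{maint} with an explicit multiplicity analysis of the polynomial $\overline{Y}^+_{i,{\bf Z}\Psib^{-1}}(zq_i^{-1})\Psi_i(z)$ in order to severely constrain the highest $\ell$-weight $\Psib$ of any simple module in $\mathcal{O}^\lambda_{\mu,\mathcal{Z}}$. First, I will apply Proposition~\ref{maint} to such a simple $L(\Psib)$: it gives that $\overline{Y}^+_{i,{\bf Z}\Psib^{-1}}(zq_i^{-1})$ is a polynomial of degree $a_i$ and that the product $\overline{Y}^+_{i,{\bf Z}\Psib^{-1}}(zq_i^{-1})\,\Psi_i(z)$ is also a polynomial. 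The first polynomiality, combined with Lemma~\ref{porder}, furnishes a factorization
$$
\Psib(0)\,{\bf Z}\,\Psib^{-1}=\prod_{k\in I,\,b\in\CC^*}\Lambda_{k,b}^{v_{k,b}},\qquad v_{k,b}\in\ZZ_{\geq 0},\quad \sum_{b\in\CC^*} v_{i,b}=a_i\;\text{ for each }i\in I,
$$
together with the key identity $\overline{Y}^+_{i,\Psib(0){\bf Z}\Psib^{-1}}(z)=\prod_b(1-zb)^{v_{i,b}}$ drawn from the proof of Lemma~\ref{egw}.

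The heart of the proof will be to show that the supports of the $v_{i,\cdot}$'s are confined to finite subsets of $\CC^*$. The crucial observation is that in the $i$-th coordinate of $\prod_{k,b}\Lambda_{k,b}^{v_{k,b}}$, the zeros come \emph{only} from the factors $\Lambda_{i,b}$ (each contributing zeros at $(bq_i^{\pm 1})^{-1}$), since the factors $\Lambda_{k,b}$ with $k\neq i$ and $C_{k,i}<0$ contribute only poles in that coordinate. Writing $\Psi_i(z)=\Psi_i(0)Z_i(z)/\bigl(\prod_{k,b}\Lambda_{k,b}^{v_{k,b}}\bigr)_i(z)$ and comparing vanishing orders at an arbitrary $z=c\in\CC^*$, with $b_1=q_i/c$ and $b_2=1/(cq_i)=b_1q_i^{-2}$, I obtain denominator vanishing order $v_{i,b_1}+v_{i,b_2}$ against numerator order $v_{i,b_1}+\#\{\alpha:z_{i,\alpha}=b_2\}$ (using $\overline{Y}^+_{i,{\bf Z}\Psib^{-1}}(zq_i^{-1})=\prod_b(1-zbq_i^{-1})^{v_{i,b}}$ and $Z_i(z)=\prod_\alpha(1-q_i zz_{i,\alpha})$). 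Polynomiality of $\overline{Y}^+_{i,{\bf Z}\Psib^{-1}}(zq_i^{-1})\,\Psi_i(z)$ therefore forces
$$
v_{i,b}\;\leq\;\#\{\alpha\in\{1,\ldots,N_i\}:z_{i,\alpha}=b\}\quad\text{for every }b\in\CC^*.
$$

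In particular, $v_{i,b}>0$ only when $b\in\{z_{i,1},\ldots,z_{i,N_i}\}$, a finite set of size at most $N_i$. Since the integers $v_{i,b}$ are uniformly bounded and sum to $a_i$, there are only finitely many admissible tuples $(v_{k,b})_{k,b}$. Each such tuple determines $\Psib=\Psib(0)\,{\bf Z}\prod_{k,b}\Lambda_{k,b}^{-v_{k,b}}$ uniquely up to the finite group $K$ of Remark~\ref{groupk}, which pins down the constant $\Psib(0)$ via the normalization $\Psib\in\mathfrak{r}_{\mu,\mathcal{Z}}$. Together with the classification of simple objects in Theorem~\ref{param}, this yields finitely many isomorphism classes of simple representations in $\mathcal{O}^\lambda_{\mu,\mathcal{Z}}$. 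The main obstacle is the careful bookkeeping of local vanishing orders at $z=c$, in particular when several of the values $q_i/b'$, $(b'q_i)^{-1}$, and $(q_iz_{i,\alpha})^{-1}$ collapse to a common point; once that is handled, the remainder is a purely combinatorial counting argument.
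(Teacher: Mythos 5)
Your overall skeleton (Theorem~\ref{thpartial}/Lemma~\ref{porder} give $\Psib(0){\bf Z}\Psib^{-1}=\prod_{k,b}\Lambda_{k,b}^{v_{k,b}}$ with $v_{k,b}\ge 0$ and $\sum_b v_{i,b}=a_i$, so it suffices to bound the support of the $v_{i,\cdot}$) is exactly the paper's strategy, but the step that is supposed to bound the support is wrong. When you compare vanishing orders of $\overline{Y}^+_{i,{\bf Z}\Psib^{-1}}(zq_i^{-1})\Psi_i(z)$ at $z=c$, you count the numerator as coming only from $Z_i(z)$ and from $\overline{Y}^+_{i,{\bf Z}\Psib^{-1}}$ itself, and you discard the factors $\Lambda_{k,b}$ with $k\ne i$, $C_{k,i}<0$, on the grounds that they "contribute only poles" to the $i$-th coordinate of $\prod\Lambda^{v}$. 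But those poles of $(\prod\Lambda^v)_i$ become \emph{zeros} of $\Psi_i(z)=\Psi_i(0)Z_i(z)/(\prod\Lambda^v)_i(z)$, i.e.\ they enter the numerator of the product and can cancel the denominator zeros $\prod_b(1-zbq_i)^{v_{i,b}}$. Hence your inequality $v_{i,b}\le\#\{\alpha: z_{i,\alpha}=b\}$ is false. A counterexample is already in the paper (Section~\ref{exsl3}): for $\Glie=sl_3$, $\lambda=\omega_1^\vee$, $Z_1(z)=1-zq^3$, $Z_2(z)=1$, the simple module with $\mu=-\omega_2^\vee$ has ${\bf Z}\Psib^{-1}=\Lambda_{1,q^2}\Lambda_{2,q}$ up to constants, so $v_{2,q}=1$ while $Z_2$ has no roots at all; here the pole of $\Psi_2$ at $z=q^{-2}$ is cancelled by the factor $(1-zq^2)$ coming from the second coordinate of $\Lambda_{1,q^2}$, not by a root of $Z_2$. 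So your conclusion that the support of $v_{i,\cdot}$ lies in $\{z_{i,1},\dots,z_{i,N_i}\}$ does not hold.

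The correct constraint, which is what the paper extracts from Proposition~\ref{maint}, is only a divisibility of $\tilde Z_i$ (the polynomial built from the $v_{i,b}$) shifted up by $q_i^2$ into $Z_i(z)$ times products of the neighboring $\tilde Z_j$ at various $q$-shifts. This is weaker than your inequality: $v_{i,a}\ne 0$ forces either a root of $Z_i$ at a shifted point, \emph{or} some $v_{j,b}\ne 0$ for a neighbor $j$ with $b=aq^r$ and $r>0$ bounded by $2r_{i}$-type increments. One must then iterate this implication along a chain $(i_0,a),(i_1,aq^{r_1}),\dots$ of nodes with strictly increasing, bounded shifts; since the number of nonzero $v$'s is controlled by $\sum_l a_l$, the chain terminates at a root of some $Z_{i_R}$ after a bounded total shift (at most $6\sum_l a_l$), which confines the possible $a$'s to a finite set and completes the finiteness argument. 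Without this propagation step (or some substitute handling the neighbor contributions), your proof does not go through; the remaining bookkeeping you describe (finitely many tuples $(v_{k,b})$, determination of $\Psib$ up to sign-twist, Theorem~\ref{param}) is fine once the support bound is established.
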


\begin{proof} Let $L(\Psib)$ be in $\mathcal{O}^\lambda_{\mu,\mathcal{Z}}$. As $\Psib\preceq_{\mathcal{Z}}{\bf Z}$ by Theorem \ref{thpartial}, 
there are $v_{i,a}\geq 0$ so that 
$$\Psib (\Psib(0))^{-1} = {\bf Z}\prod_{i\in I, a\in\mathbb{C}^*}\Lambda_{i,a}^{-v_{i,a}}.$$
Moreover, $\Psib$ is determined by the $v_{i,a}$ up to sign twist. So it suffices to show that there is a finite
number of possibilities for the $v_{i,a}$. Besides, $\sum_{a\in\mathbb{C}^*}v_{i,a} = a_i$. So it suffices
to prove that there is a finite number of possible $a$ so that $v_{i,a}\neq 0$.

For $i\in I$, $a\in\mathbb{C}^*$, let $z_{i,a}$ be the multiplicity of $a^{-1}$ as a root of $Z_i(z)$.

For each $i\in I$, let 
$$\tilde{Z}_i(z) = \prod_{a\in\mathbb{C}^*}(1 - za)^{v_{i,a}}$$ 
which is equal to $\overline{Y}^+_{i,{\bf Z}\Psib^{-1}}(zq_i^{-1})$ up to a constant. Then from Proposition \ref{maint}, $\tilde{Z}_i(zq_i^2)$ divides 
$$Z_i(z) \left(\prod_{C_{j,i} = -1}\tilde{Z}_j(zq_i)\right)
\left(\prod_{C_{j,i} = -2}\tilde{Z}_j(z)\tilde{Z}_j(zq^2)\right)
\left(\prod_{C_{j,i} = -3}\tilde{Z}_j(z)\tilde{Z}_j(zq^2)\tilde{Z}_j(zq^4)\right).$$

Suppose that $v_{i,a}\neq 0$. Then $z_{i,aq_i^{2}}\neq 0$ or 
there is $j\neq i$ so that $v_{j,b}\neq 0$ with $b = a q^r$ for $r = r_i$ if $C_{j,i} =  -1$, 
$r = 4$ or $2$ if $C_{j,i} = -2$, $r = 6$ or $4$ or $2$ if $C_{j,i} = -3$.

Consequently : $v_{i,a}\neq 0$ implies that there is $R\geq 0$ and a finite sequence 
$$(i_0,a_0) = (i,a), (i_1,a_1) = (i_1,aq^{r_1}), \cdots , (i_R,a_R) = (i_R,aq^{r_R})$$ 
so that for any $k$, $r_k < r_{k+1} \leq r_k + 2r_{i_k} $ and $v_{i_k,a_k} > 0$, and 
$$ z_{i_R,aq^{r_R}q_{i_R}} > 0.$$ 
This implies that $0\leq r \leq 6 \sum_{l\in I} a_l$. So there is a finite number of possible $a$ and the result follows.
\end{proof}

\begin{rem}\label{finol} (i) It follows from the proof that $\Psib$ is the product of ${\bf Z}$ by various 
$\Lambda_{i,a}^{-1}$ so that there are $j\in I$, $r > 0$ with $z_{j,aq^r}\neq 0$.

(ii) The same proof as above, taking into account not only the fact that the
$v_{j,b}\neq 0$, but also there actual value, implies that the category 
$$\mathcal{O}^\lambda_{\mathcal{Z}} = \bigoplus_{\mu\in\Lambda} \mathcal{O}^\lambda_{\mu,\mathcal{Z}}$$
has a finite number of simple objects. Indeed, there is a finite subset $\mathcal{A}\subset \mathbb{C}^*$ of $a\in\mathbb{C}^*$ so that
one $v_{i,a}$ might be non zero. One obtains by induction on $|\{r\geq 0,aq^r\in\mathcal{A}\}|\geq 0$ that the 
possible values of the $v_{i,a}$ are bounded.
\end{rem}

\subsection{Descent for $\Glie = sl_2$}

We suppose in this section $\Glie = sl_2$. 

The condition of Proposition \ref{maint} is 
\begin{equation}\label{sl2ec}({\bf Z}(zq^{-2}))^{-1}  \preceq_{\mathcal{Z}} \Psib (z) \preceq_{\mathcal{Z}} {\bf Z}(z).\end{equation}
Indeed $\overline{Y}^+_{{\bf Z}\Psib^{-1}}(zq^{-1})$ polynomial means $\Psib (z) \preceq_{\mathcal{Z}} {\bf Z}(z)$.
Let $\tilde{Z}(z)$ be this polynomial. Then $\Psi(z) \tilde{Z}(z)= Z(z)/\tilde{Z}(zq^{2}) = Q(z)$ polynomial means
$\tilde{Z}(zq^{2})$ divides $Z(z)$. But 
$$\Psi(z) Z(zq^{-2}) = \frac{Z(z)Z(zq^{-2})}{\tilde{Z}(zq^2)\tilde{Z}(z)} = Q(z)Q(zq^{-2})$$
and so $\Psib(z) {\bf Z}(zq^{-2})\succeq_{\mathcal{Z}} 1$.

We shall prove now that the condition (\ref{sl2ec}) is sufficient. Let us denote 
$$\Phi(z) = \sum_{r\geq 0}\Phi_r z^r = \sum_{r\geq 0}(\phi_r^+ - \phi_r^-)z^r  =  \phi^+(z) - \sum_{0\leq r\leq \alpha_1(\mu)}   \phi_r^- z^r.$$
Note that if $\mu \in -\Lambda^+\setminus\{0\}$ is strictly antidominant, $\Phi(z) = \phi^+(z)$. For $m,m' \geq 0$, we have 
$(q - q^{-1})[x^+_m,x^-_{m'}] = \Phi_{m+m'}$, and so
\begin{equation}\label{plusmoins}(z - w)[x^{+,+}(z), x^{-,+}(w)] = w \frac{\Phi(w) - \Phi(z) }{ q - q^{-1}}.\end{equation}
This generalizes formulas established in \cite{FT} for $\mu\in -\Lambda^+$.
The following relation is established in \cite{FT} (for $\mu\in - \Lambda^+$ but the same
proof gives the result in general) : 
\begin{equation}\label{cci}[C(z),C(w)] = 0.\end{equation}

We prove the converse of the statement in Proposition \ref{maint} is true in the $sl_2$-case.

\begin{thm}\label{carsl22} $L(\Psib)$ in $\mathcal{O}_\mu$ descends to 
the truncation $\mathcal{U}_{q,\lambda}^{\mu,\mathcal{Z}}(\hat{sl_2})$ if and only if $\overline{Y}^+_{{\bf Z}\Psib^{-1}}(z q^{-1})$ and 
$\overline{Y}^+_{{\bf Z}\Psib^{-1}}(z q^{-1}) \Psi(z)$ are polynomials with 
$\overline{Y}^+_{{\bf Z}\Psib^{-1}}(z q^{-1})\sim_{\infty} (-z)^{a}(\overline{Y}^+_{{\bf Z}\Psib^{-1}}(0))^{-1}$.
\end{thm}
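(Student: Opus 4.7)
The ``only if'' implication is exactly Proposition \ref{maint}. For the converse, write $\tilde Z(z) = \overline Y^+_{\mathbf Z\Psib^{-1}}(zq^{-1})$, which is a polynomial of degree $a$ in $z$ by hypothesis, and note that $\tilde Z(z)\Psi(z)$ is also a polynomial. Choose $\overline\omega^\pm \in \tb^*$ compatible with $\Psib$ and satisfying the normalization $(\overline\omega^+\overline\omega^-)(1) = z_1'(-q)^{-a}$; such a choice exists precisely because of the leading behavior $\tilde Z(z)\sim_\infty (-z)^a \tilde Z(0)^{-1}$ in the hypothesis. The goal is to equip $L(\Psib,\overline\omega^+,\overline\omega^-)$ with the defining relations of $\mathcal{U}_{q,\lambda}^{\mu,\mathcal Z}(\hat{sl_2})$, that is, to show that $A^\pm(z)$ act as polynomial operators of degree $a$ in $z^{\pm 1}$, with matching $A^+(z) = (zq^{-1})^a A^-(z)$ and central normalization $A^+_0 A^-_0 = (-q)^a$.

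The polynomiality of $\tilde Z(z)$ translates, via Lemma \ref{egw} and Lemma \ref{porder}, into the fact that $\Psib(0)\mathbf Z\Psib^{-1}$ is a monomial in the $\Lambda_a$ with non-negative exponents. Proposition \ref{rata}(1) then guarantees that $A^\pm(z)$ act as rational operators on the full module. Applying Remark \ref{flem} with the asymptotic hypothesis at the highest weight vector promotes this to the identity $A^+(z) = (zq^{-1})^a A^-(z)$ of rational operators on the whole module, and the central normalization $A^+_0 A^-_0 = (-q)^a$ is automatic from the chosen $\overline\omega^\pm$ since $A^+_0 A^-_0$ is central in $\mathcal{U}_q^{\mu,ad}(\hat{sl_2})$.

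The crux of the argument, and its principal obstacle, is to upgrade this rational action to an honest polynomial action on every weight space of $L(\Psib)$. The natural approach is to propagate polynomiality from the highest weight vector by induction on the depth of a weight vector, using the commutation relation (\ref{ac}), which in the $sl_2$-case reads
\[
(w-z)[A^+(z), C(w)]_q = (q-q^{-1})\bigl(z\,C(w)A^+(z) - w\,C(z)A^+(w)\bigr),
\]
together with (\ref{plusmoins}) and the polynomiality of $C(z)$ itself (which follows from the $w$-coefficient extraction in (\ref{ac}) combined with the invertibility of $A^+_0$, as already used in the proof of Proposition \ref{maint}). Since $L(\Psib)$ is generated over its highest weight vector by modes of $x^{-,+}(w) = C(w)\bigl[(q-q^{-1})A^+(w)\bigr]^{-1}$, the induction expresses $A^+(z)$ on any vector in terms of its action on the highest weight vector and the polynomial $C(w)$; the apparent pole at $z=w$ introduced by the factor $1/(w-z)$ at each step is cancelled thanks to the zeros of $\tilde Z$ prescribed by the hypothesis and the controlled evolution of the $\Lambda$-factorization described in Remark \ref{mustaslo}.

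An alternative, more explicit route specific to $\Glie = sl_2$ is to use Theorem \ref{fact}: $L(\Psib)$ factors as a fusion of positive prefundamentals, negative prefundamentals and Kirillov--Reshetikhin modules in general position. On each prime factor polynomiality of $A^+(z)$ can be verified directly (trivially for positive prefundamentals, by Example \ref{exval} for negative prefundamentals, and by the classical evaluation description for KR-modules), and, as the Drinfeld deformed coproduct is compatible with the Cartan--Drinfeld series, polynomiality is preserved under the fusion construction of Section \ref{fusionr}, hence holds on $L(\Psib)$. Either route completes the verification of the defining relations of $\mathcal{U}_{q,\lambda}^{\mu,\mathcal Z}(\hat{sl_2})$ and shows that $L(\Psib,\overline\omega^+,\overline\omega^-)$ descends. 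The polynomial propagation is the delicate point; the matching identity and the central normalization reduce to properties checked at the highest weight vector.
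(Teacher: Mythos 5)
Your reduction of the problem to the polynomiality of $A^+(z)$ (and of $C(z)$) on all of $L(\Psib)$, with the matching relation handled by Remark \ref{flem} at the highest weight vector, is the right frame, and the ``only if'' direction is indeed Proposition \ref{maint}. But your first route has a genuine gap, and it is circular at the decisive point: you justify the polynomiality of $C(z)$ by the $w$-coefficient extraction in (\ref{ac}) together with the invertibility of $A^+_0$, ``as in the proof of Proposition \ref{maint}''. That argument reads $-z[A^+(z),C_1]_{q^{-1}} = -(q-q^{-1})C(z)A^+_0$ and deduces $C(z)$ polynomial \emph{from} the polynomiality of $A^+(z)$ on the module, which in Proposition \ref{maint} is available because the module is assumed to descend to the truncation. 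In the converse direction this is exactly what has to be proved. The paper's proof fills this hole in two steps you do not supply: first, $C(w).v$ is shown to be polynomial on the highest weight vector $v$ by a summation argument that uses simplicity of $L(\Psib)$ (it suffices to test against $x^{+,+}(z)$, by Proposition \ref{zero}), relation (\ref{plusmoins}), and \emph{crucially} the second hypothesis that $\overline{Y}^+_{{\bf Z}\Psib^{-1}}(zq^{-1})\Psi(z)$ is polynomial (so that $\Phi(z)A^+(z).v$ is polynomial); second, polynomiality of $C(z)$ is propagated to every weight space by induction using the commutativity $[C(z),C(w)]=0$ of (\ref{cci}), which you never invoke, and only then is (\ref{ac}) used to transfer polynomiality to $A^+(z)$. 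Your substitute for this mechanism, ``the pole at $z=w$ is cancelled thanks to the zeros of $\tilde Z$ and Remark \ref{mustaslo}'', is not an argument and does not reflect how the hypothesis enters.

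Your alternative route through factorization also does not close the gap as written. Theorem \ref{fact} classifies prime factorizations only for \emph{finite-dimensional} simple modules, i.e.\ dominant $\Psib$, whereas Theorem \ref{carsl22} concerns an arbitrary $L(\Psib)$ in $\mathcal{O}_\mu$ (negative prefundamental factors included); no such prime factorization of arbitrary simple objects of $\mathcal{O}^{sh}$ is established in the paper. Moreover, even granting a factorization, Proposition \ref{fustunc} only gives descent of a fusion product to the truncation with parameter $\mathcal{Z}_1\mathcal{Z}_2$; you would still have to produce, from the stated hypotheses on $\Psib$ relative to the \emph{given} $\mathcal{Z}$, a compatible distribution of the zeros of $Z(z)$ among the factors so that each prime factor satisfies the descent conditions for its own piece (and recall, by Remark \ref{todeal}, that truncation relations are not invariant under spectral shifts). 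That combinatorial matching is omitted, so neither route, as proposed, yields the converse implication.
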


\begin{proof} One implication is proved in Proposition \ref{maint}. Let us suppose that the conditions are satisfied.
We prove that the action of $A^+(z)$, and also of $C(z)$, are polynomial 
on $V = L(\Psib)$. Let $\omega_0$ be the highest weight of $V$. For $N\geq 0$, let $V_N$ be the sum of weight spaces $V_\omega$ with $\omega - \omega_0 = N \alpha$. 

First we prove that $C(z)$ is polynomial on $V_0$. Let $v\in V_0$. We prove that for $m\geq a + |\alpha_1(\mu)|$, the coefficient of $w^{m + 1}$ in $C(w).v$ is zero. 
As $V$ is simple, it suffices to prove this for $x^{+}(z)C(w).v$.
Due to Proposition \ref{zero}, it suffices to prove this is true for 
$$x^{+,+}(z)C(w).v = (q - q^{-1})[x^{+,+}(z),x^{-,+}(w)] A^+(w).v.$$
Relation (\ref{plusmoins}) gives
$$(z - w)x^{+,+}(z)C(w).v = w (\Phi(w) - \Phi(z)) A^+(w).v.$$
For $l\geq 0$, considering the coefficients of $w^{l+1}$, we get 
$$-x^{+,+}(z)C_{l}.v + z x^{+,+}(z)C_{l+1}.v = (\Phi A)_l.v - \Phi(z)A_l.v,$$
where $(\Phi A)_l$ is the coefficient of $z^l$ in $\Phi(z)A^+(z)$. Let us multiply this relation by $z^l$ and take the sum of the relations for $l = 0, \cdots , m$. As $C_{0} = 0$, $A^+(z).v$ is a polynomial of degree 
$a\leq m$ and $\Phi(z)A^+(z).v$ a polynomial of degree lower than $m$ :
$$ x^{+,+}(z)z^{m+1}C_{m+1}.v = (\sum_{0\leq l\leq m}(\Phi A)_lz^l - \Phi(z) A_{l}z^l).v = (\Phi(z)A^+(z) - \Phi(z)A^+(z)).v = 0.$$
This proves the claim, that is $C_{m+1}.v = 0$.

Now by the relation (\ref{cci}), we get by induction on $N$ that $C(z)$ is a polynomial on $V_N$ of degree lower than $a + |\alpha_1(\mu)|$.
Then, using relation (\ref{ac}), we obtain that the action of $A^+(z)$ on $V$ is polynomial. We conclude by Remark \ref{flem}.
\end{proof}

\begin{rem} As a by product one gets that $B (z) = (q - q^{-1}) A^+(z) x^{+,+}(z)$ and 
$D (z) = A^+ (z) \phi^+(z) + (q - q^{-1})^2 x^{-,+} (z) A(z) x^{+,+}(z)$ are polynomial on this representation.
\end{rem}

\begin{example} Let $\lambda = \omega_1^\vee$, $\mu = - \omega_1^\vee = \lambda - \alpha_1^\vee$. 
We fix $z_1, z_1'\in\mathbb{C}^*$. From Example \ref{almost}, up to sign twist there is a unique simple representation $L(\Psib)$ 
in $\mathcal{O}^{\omega_1^\vee}_{-\omega_1^\vee,\mathcal{Z}}$ with
$$\Psi(z) =   \frac{q^{-1} z_1 }{1 - z q^{-1}z_1}.$$
This is Example \ref{follow} with $a = b = z_1 q^{-1}$. Indeed for $L(b(1 - az)^{-1})$ we have (here $\alpha^2 = a$) : 
$$A^+(z)^.v_j = (\alpha^{-1} q^{j} - \alpha q^{- j} z)v_j \text{ , }\phi_+(z).v_j = \frac{a(1 - q^2 az)q^{-2j}}{(1 - q^{2 - 2j}az)(1 - q^{-2j} az)}v_j,$$
$$B(z).v_j = \alpha^{-1} (q - q^{-1})  q^{ j -1} v_{j-1}\text{ , }C(z).v_j = [j+1]_q  q^{-2 j} \alpha^3   z  v_{j+1}\text{ , }D(z).v_j 
=
\alpha q^{-j}v_j.$$
The representation $L(b(1 - az)^{-1})$ descends to a truncation if and only if $a = b$. 
\end{example}

\begin{example} Let $\lambda = 2 \omega_1^\vee$, $\mu = - 2 \omega_1^\vee = \lambda  - 2\alpha_1^\vee$. 
We fix $z_1, z_2 \in\mathbb{C}^*$. Let $\Psib$ be of degree $\alpha(\mu) = -2$ and $\Psi(0)(\Psi(z)z^2)(\infty) = \phi_{\Psib} = q^{-2}z_1z_2$. We have $\overline{Y}^+_{{\bf Z}\Psib^{-1}}(z)$ of degree $2$ and
\begin{equation}\label{form2}\Psi(z) = \frac{(1  - qzz_1)(1 - qz z_2)}{\overline{Y}^+_{{\bf Z}\Psib^{-1}}(z q^{-1})\overline{Y}^+_{{\bf Z}\Psib^{-1}}(z q)}.\end{equation}
The conditions of Proposition \ref{maint} 
give that the roots of $\overline{Y}^+_{{\bf Z}\Psib^{-1}}(z )$ are $z_1^{-1}$, $z_2^{-1}$ and
$$\Psi(z) =   \frac{\Psi(0)}{(1 - z q^{-1}z_1)(1 - zq^{-1}z_2)} =  \frac{q^{-2} z_1z_2 }{(1 - z q^{-1}z_1)(1 - z q^{-1}z_2)}.$$
There is one simple representation in $\mathcal{O}_{-2\omega_1^\vee,\mathcal{Z}}^{2\omega_1^\vee}$. It is a fusion product of two negative prefundamental representations.
\end{example}

\begin{example} Let $\lambda = 2 \omega_1^\vee$, $\mu = 0 = \lambda  - \alpha_1^\vee$. 
We fix $z_1, z_2 \in\mathbb{C}^*$. Let $\Psib$ be an $\ell$-weight of degree $\alpha(\mu) = 0$ and 
$\Psi(0)(\Psi(z)z^2)(\infty) = z_1z_2$. We have (\ref{form2}) but with $\overline{Y}^+_{{\bf Z}\Psib^{-1}}(z)$ of degree $1$. The conditions of the Proposition \ref{maint} give that 
$\overline{Y}^+_{{\bf Z}\Psib^{-1}}(z q^{-1}) = \overline{Y}^+_{{\bf Z}\Psib^{-1}}(0) -  (\overline{Y}^+_{{\bf Z}\Psib^{-1}}(0))^{-1}z$. 
Hence the root of $\overline{Y}^+_{{\bf Z}\Psib^{-1}}(z)$ is $z_1^{-1}$ or $z_2^{-1}$, say it is $z_i^{-1}$, and let $z_j^{-1}$ be the other root. Then
$$\Psi(z) = \frac{\Psi(0)(1 - qz_jz)}{(1 - q^{-1}z_iz)} = \frac{q^{-1}z_i(1 - qz_jz)}{(1 - q^{-1}z_iz)}.$$
Hence if $z_1\neq z_2$, there are two simple representations 
in $\mathcal{O}_{0,\mathcal{Z}}^{2\omega_1^\vee}$. If $z_1 = z_2$, there is one simple representation in $\mathcal{O}_{0,\mathcal{Z}}^{2\omega_1^\vee}$, its $\ell$-weight is $q^{-1}z_1$ and it is of dimension $1$.
\end{example}

\begin{cor}\label{carsl2} If $\mathcal{O}^\lambda_{\mu,\mathcal{Z}}$ is non empty then $-\lambda\preceq \mu\preceq \lambda$. Its simple representations are in bijection with the divisors $\tilde{Z}(z)$ of $Z(zq^2)$ of degree $\omega_1(\lambda - \mu)$ satisfying $\tilde{Z}(0) = 1$. The corresponding $\ell$-weight satisfies
$$\Psi(z) = \Psi(0)\frac{Z(z)}{\tilde{Z}(zq^{-2})\tilde{Z}(z)}.
$$
\end{cor}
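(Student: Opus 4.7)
The plan is to specialize Theorem \ref{carsl22} to $\Glie = sl_2$ and combine it with the Cartan--Drinfeld identity
$$\phi^+(z) = \frac{Z(z)}{A^{\mathcal{Z},+}(z)\,A^{\mathcal{Z},+}(zq^2)}$$
from Section \ref{tsqaa} (the $sl_2$ instance of the relation after Remark \ref{unisol}). I write $\lambda = N\omega_1^\vee$ and $\mu = \lambda - a\alpha_1^\vee$ with $a\ge 0$, so that $\omega_1(\lambda-\mu) = a$; the inequality $\mu\preceq\lambda$ is then built into this parametrization.

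For the forward direction, let $L(\Psib) \in \mathcal{O}^\lambda_{\mu,\mathcal{Z}}$, let $v$ be a highest weight vector, and let $P(z)$ be the eigenvalue of $A^{\mathcal{Z},+}(z)$ on $v$. Evaluating the displayed identity on $v$ gives $\Psi(z) = Z(z)/(P(z)P(zq^2))$. Theorem \ref{carsl22} translates, in the $sl_2$ setting, into two polynomial conditions on $P$: first, $P(z)$ must itself be polynomial of degree $a$ with the prescribed asymptotic behavior; second, $P(z)\Psi(z) = Z(z)/P(zq^2)$ must be polynomial, i.e.\ $P(zq^2)$ divides $Z(z)$. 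Setting $\tilde{Z}(z) := P(zq^2)/P(0)$ yields a polynomial of degree $a$ with $\tilde{Z}(0) = 1$, whose roots are roots of the appropriate shift of $Z$. A substitution using $\tilde{Z}(zq^{-2})\tilde{Z}(z) = P(z)P(zq^2)/P(0)^2$ and $\Psi(0) = 1/P(0)^2$ gives the stated formula
$$\Psi(z) = \Psi(0)\,\frac{Z(z)}{\tilde{Z}(zq^{-2})\tilde{Z}(z)}.$$

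For the converse, starting from a divisor $\tilde{Z}$ as in the statement, I would set $P(z) := P(0)\tilde{Z}(zq^{-2})$ with $P(0)$ chosen as a scalar compatible with the central constraint \eqref{normc} on $\phi_{i,\mathcal{Z}}$ (this fixes $\Psi(0)$ up to the expected sign ambiguity, as visible already in Example \ref{sex}), define $\Psib$ by the displayed formula, and verify that the hypotheses of Theorem \ref{carsl22} are satisfied, which is a direct computation. The second inequality $\mu\succeq-\lambda$ then follows because a polynomial divisor of degree $a$ of $Z$ (of degree $N$) only exists for $a\le N$, i.e.\ $\mu \succeq -\lambda$. I do not expect a serious obstacle: the argument is essentially a bookkeeping exercise on top of Theorem \ref{carsl22}, and the only point requiring care is to keep the various shifts by $q^{\pm 2}$ consistently aligned.
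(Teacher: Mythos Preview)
Your proposal is correct and follows essentially the same route as the paper: both reduce everything to Theorem \ref{carsl22}, read the eigenvalue $P(z)=\overline{Y}^+_{{\bf Z}\Psib^{-1}}(zq^{-1})$ of $A^{\mathcal{Z},+}(z)$ on a highest weight vector via the $sl_2$ identity $\Psi(z)=Z(z)/(P(z)P(zq^2))$, translate the two polynomiality conditions into ``$P(zq^2)$ divides $Z(z)$'', and for the converse choose the constant $P(0)$ from the asymptotic/central constraint. Your normalization $\tilde Z(z)=P(zq^2)/P(0)$ differs from the paper's $\tilde Z(z)=P(z)/\alpha$ by a $q^2$-shift; this is harmless and is exactly the ``keeping the $q^{\pm 2}$ shifts aligned'' issue you flagged (and explains why the displayed formula and the phrase ``divisor of $Z(zq^2)$'' in the statement look mismatched under one convention or the other).
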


\begin{proof} The conditions of the Theorem \ref{carsl22} imply that $\overline{Y}^+_{{\bf Z}\Psib^{-1}}(z q)$ divides $Z(z)$. This determines $\Psi(z)$ up to the constant $\Psi(0)$.
Conversely, for each divisor $\tilde{Z}(z)$ of $Z(zq^2)$ normalized with $\tilde{Z}(0) = 1$, we can fix $\overline{Y}^+_{{\bf Z}\Psib^{-1}}(z q^{-1}) = \alpha \tilde{Z}(z)$ for a certain $\alpha\in\mathbb{C}^*$ determined up to a sign by the limit condition.
\end{proof}

\subsection{An example for $\Glie = sl_3$}\label{exsl3}

Let $\Glie = sl_3$, $\lambda = \omega_1^\vee$ with $Z_1(z) = 1 - zq^3$ and $Z_2(z) = 1$. 

$\mu = \lambda = \omega_1^\vee$. By Example \ref{fex}, we get $L(1 - zq^3,1)$ is the unique simple in $\mathcal{O}^\lambda_{\mu,\mathcal{Z}}$.

$\mu = \lambda - \alpha_1^\vee = \omega_2^\vee - \omega_1^\vee$. By Example \ref{almost}, we get $L(\frac{q}{1 - zq},v^{-1}(1 - zq^2))$ 
is the unique simple in $\mathcal{O}_{\mu,\Psib}^\lambda$ up to sign twist (here $v$ is a square root of $q$).

$\mu = -\omega_2^\vee = \lambda - \alpha_1^\vee - \alpha_2^\vee$. By Proposition \ref{maint}, there is at most one simple
module $L(q,\frac{v^{-1}}{1 - z} )$ in $\mathcal{O}_{\mu,\mathcal{Z}}^\lambda$ up to sign twist. 
The action of $\mathcal{U}_q(\hat{\bo})$ is described in \cite[Section 4.1]{HJ}. The $\ell$-weight spaces are of dimension $1$ with $\ell$-weights 
parametrized by $0\leq n'\leq n$ :
$$\left( q^{1 + n - 2n'} \frac{(1 - q^3z)(1 - q^{1-2n}z)}{(1 - q^{1 - 2n'}z) (1 - q^{3 - 2n'}z)} , v^{-1}q^{n' - 2n}\frac{1 - q^{2 - 2n'} z }{(1 - q^{-2n}z)(1 - q^{-2n + 2}z)}   \right),$$
with $v^{-1}q^{n'} - vq^{- n'}z$, $q^n - q^{- n}z$ respective eigenvalues of $A_1^+(z), A_2^+(z)$.

\section{A conjecture : truncation and Langlands dual standard modules}\label{maincsec}

We state a conjecture (Conjecture \ref{mainc}) on the parametrization of simple modules of non-simply laced truncated shifted quantum affine algebras. The statement of the conjecture involves the Langlands dual Lie algebra ${}^L\Glie$ : it is given
in terms of the structure of a standard module of the twisted quantum affine algebra $\mathcal{U}_q({}^L\hat{\Glie})$, more precisely 
in terms of its Langlands dual $q$-character that we introduce.

For simply-laced types, simple representations of truncated shifted Yangians have been parametrized in terms of Nakajima monomial crystals \cite{ktwwy2}. Combining with \cite{N4}, this implies an analogous statement for simply-laced shifted quantum affine algebras. 
This is a fundamental motivation for our conjecture in non simply-laced types. See the introduction and Remark \ref{remconj} for a discussion on earlier results. 

We have several strong evidences for our conjecture. We establish in type $B_2$ that our parametrization gives representations of the 
truncated shifted quantum affine algebra (Proposition \ref{tdeux}). In general, we establish that a simple finite-dimensional
representation of a shifted quantum affine algebra descends to a truncation as in Conjecture \ref{mainc} (Theorem \ref{truncfd}). 
The proof of this last result is based on Baxter polynomiality of quantum integrable models.

For non simply-laced types, the conjecture, these results and these methods are also new in the case of ordinary quantum affine algebras
or shifted Yangians (see the discussion in the Introduction and Remark \ref{remconj}). 

As in the previous section, we have fixed $\lambda \in\Lambda^+$ and corresponding ${\bf Z}$, $\mathcal{Z}$.

\subsection{Reminder - interpolating $(q,t)$-characters}

Interpolating $(q,t)$-characters were introduced in \cite{FH0} as an 
incarnation of Frenkel-Reshetikhin deformed $\mathcal{W}$-algebras \cite{Wd} to 
interpolate between $q$-characters of a non simply-laced quantum affine algebra and its Langlands dual. 
 These interpolating $(q,t)$-character are tools in \cite{FH0} to study Langlands duality between finite-dimensional representations 
of quantum affine algebras.

Let $r = \text{Max}_{i\in I}(r_i)$ be the lacing number of $\mathfrak{g}$. We set $\epsilon = e^{i\pi/r}$.

For $i\in I$, $a\in\mathbb{C}^*$, we set
\begin{align}\label{zy} Z_{i,a} = \begin{cases} Y_{i,a}&\text{ if $r_i = r$,}\\ Y_{i,aq^{-1}}Y_{i,aq}&\text{ if $r_i =r - 1$,} \\Y_{i,aq^{-2}}Y_{i,a}Y_{i,aq^2}&\text{ if $r_i = r - 2$.} \end{cases}\end{align}
Then $W = L(Z_{i,a})$ is a Kirillov-Reshetikhin module of the untwisted quantum affine algebra $\mathcal{U}_q(\hat{\Glie})$ 
(it is a fundamental representation when $r_i = r$). 

Let us recall that the interpolating $(q,t)$-character $\chi_{q,t}(W)$ of $W$ (more precisely its refined version constructed in \cite{FHR}) is an element of a (completion of a) quotient of the ring 
$$ \mathbb{Z}[Y_{j,b}^{\pm 1}, \alpha]_{j\in I, b\in a q^{\mathbb{Z}}t^{\mathbb{Z}}\epsilon^{\mathbb{Z}}}$$ 
where $t$ is an additional formal variable and $\alpha$ is an indeterminate (in type $G_2$, this indeterminate is denoted by $\beta$ in \cite{FH0}). 


The interpolating $(q,t)$-characters have various interesting limits, for instance the following limits discussed in \cite{FH0, FHR} : 

When $t \rightarrow 1$, $\alpha$ specializes to $1$ and $\chi_{q,t}(L(Z_{i,a}))$ specializes to the $q$-character of the $\mathcal{U}_q(\hat{\Glie})$-module $L(Z_{i,a})$. 

When $q \rightarrow \epsilon$, $\alpha$ specializes to $0$ and $\chi_{q,t}(L(Z_{i,a}))$ specializes to the $t$-character of the fundamental representation $V_i^L(a)$ of highest monomial $Z_{i,a}$ of the Langlands dual twisted quantum affine algebra $\mathcal{U}_t({}^L\hat{\Glie})$ (in the sense of \cite{H3}).

\subsection{Langlands dual $q$-characters}

For $i\in I$, $a\in\CC^*$, consider the interpolating $q,t$-character $\chi_{q,t}(L(Z_{i,a}))$. 

To state our conjecture, let us consider another specialization : we set $t = 1$ but we discard the monomials 
with coefficient $\alpha$, that is we set $\alpha = 0$. By \cite{FHR}, this limit is well-defined, 
and only variables $Z_{j,aq^m}^{\pm 1}$ with $j\in I$, $m\in\mathbb{Z}$, occur. We get a well-defined element $\chi_q^L(V_i^L(a))$ 
in (a completion of) the ring 
$$\mathbb{Z}[Z_{j,aq^m}^{\pm 1}]_{j\in I, m\in \mathbb{Z}},$$
that we call the Langlands dual $q$-character of $V_i^L(a)$. 

The Langlands dual $q$-character in defined from a combination of the limit $t = 1$, which gives monomials occurring 
in the $q$-character of the $\mathcal{U}_q(\hat{\Glie})$-module $L(Z_{i,a})$ (which is not a fundamental representation in general, but a Kirillov-Reshetikhin module), 
and of the limit $\alpha = 0$, which give the coefficients of the $t$-character of a fundamental $\mathcal{U}_t({}^L\hat{\Glie})$-module. 

Now let $V^L$ be the standard module of $\mathcal{U}_q({}^L\hat{\Glie})$ of highest monomial  
$$M_0 = \prod_{i\in I}Z_{i,q_i^{-1}z_{i,1}^{-1}}\cdots Z_{i,q_i^{-1}z_{i,N_i}^{-1}}.$$
It is defined as a tensor product (for the ordinary coproduct) of the fundamental representations $V_i^L(q_i^{-1}z_{i,j})$ 
(as for standard modules considered in simply-laced cases in \cite{N, VV}). Its isomorphism class depends on 
the ordering of the tensor product, but not its class in the Grothendieck ring. 

We introduce its Langlands dual $q$-character        
\begin{equation}\label{ldqc}\chi_q^L(V^L) = \prod_{i\in I, 1\leq s \leq N_i} \chi_q^L(V_i^L(q_i^{-1}z_{i,s}^{-1}))
.\end{equation}
It should not be confused with its $q$-character $\chi_q(V^L)$ as a representation of $\mathcal{U}_q({}^L\hat{\Glie})$.

We also the use a representation $V$ of $\mathcal{U}_q(\hat{\Glie})$ which is Langlands dual to $V$ in the sense of 
\cite{FH0}, that is an (ordered) tensor product of the simple $\mathcal{U}_q(\hat{\Glie})$ representations of highest
monomial $Z_{i,q_i^{-1}z_{i,j}^{-1}}$, expressed in terms of the $Y_{j,b}$ variables as in (\ref{zy}) (these are not fundamental representations in general).

\subsection{Statement}

For a Laurent monomial $M = \prod_{i\in I,a\in\mathbb{C}^*}Z_{i,a}^{u_{i,a}}$, we define the corresponding $\ell$-weight 
$\Psib_M = (\Psi_i(z))_{i\in I}$ by 
\begin{equation}\label{corrl}\Psi_i(z) = \Psi_i(0)\prod_{a\in\mathbb{C}^*}(1 - za^{-1})^{u_{i,a}},\end{equation}
with 
$$(\Psi_i(0))^2 = (\prod_{a\in\CC^*}(- a)^{u_{i,a}})\phi_{i,\mathcal{Z}}$$ 
(this $\ell$-weight is defined up to sign-twist). The corresponding weight is $\mu_M = \sum_{i\in I, a\in\CC^*}u_{i,a}\omega_i^\vee \in \Lambda$.
As an example, we have $\Psib_{M_0} = {\bf Z}$ up to a constant.

Recall the partial ordering $\preceq_{\mathcal{Z}}$ in Section \ref{paror}.

\begin{prop}\label{compord}  For a monomial $M$ occurring in the Langlands dual $q$-character of $V^L$, we have
$$\Psib_M \preceq_{\mathcal{Z}} {\bf Z}.$$
\end{prop}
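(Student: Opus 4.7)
The plan is to reduce to the case of fundamental representations and identify the ``Langlands dual $A$-monomials'' lowering $\chi_q^L$ with the $\Lambda_{j,a}$-monomials under the correspondence $(\ref{corrl})$.

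First, I would use that the map $M \mapsto \Psib_M$ is multiplicative. Combined with the factorization $(\ref{ldqc})$, it suffices to show that for each factor $\chi_q^L(V_i^L(a))$ and each monomial $M'$ occurring in it, the ratio $\Psib_{Z_{i,a}}/\Psib_{M'}$ is a product of $\Lambda_{j,b}$'s with non-negative exponents. Note that the highest monomial of $\chi_q^L(V^L)$ is precisely $M_0 = \prod_{i,s} Z_{i, q_i^{-1} z_{i,s}^{-1}}$, and $(\ref{corrl})$ yields $\Psib_{M_0} = {\bf Z}$ (up to constants and sign-twist), since the $i$-th coordinate becomes $\prod_s(1 - z q_i z_{i,s}) = Z_i(z)$.

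Second, I would invoke the structure of the interpolating $(q,t)$-character $\chi_{q,t}(L(Z_{i,a}))$ from \cite{FH0, FHR}: starting from the highest monomial $Z_{i,a}$, subsequent monomials are obtained by successive multiplications by inverses of ``$A$-type'' monomials coming from the Frenkel-Reshetikhin deformed $\mathcal{W}$-algebra screening operators. Under the specialization $t = 1$, $\alpha = 0$ defining $\chi_q^L$, the surviving lowering monomials should take the ``Langlands dual'' form
$$A^L_{j, b} = Z_{j, bq_j} Z_{j, bq_j^{-1}} \prod_{k : C_{j,k}=-1} Z_{k, b}^{-1} \prod_{k : C_{j,k}=-2} Z_{k, bq^{-1}}^{-1} Z_{k, bq}^{-1} \prod_{k : C_{j,k}=-3} Z_{k, bq^{-2}}^{-1} Z_{k, b}^{-1} Z_{k, bq^{2}}^{-1},$$
in which the condition is on the Langlands dual row $C_{j,k}$ rather than the column, reflecting the simple root $\alpha_j^\vee$ of ${}^L\Glie$ as anticipated in Remark \ref{reml}. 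The two limits determining the interpolation, namely $t=1$ with generic $\alpha$ (yielding $\chi_q$ of $\mathcal{U}_q(\hat{\Glie})$) and $q \to \epsilon$ with $\alpha \to 0$ (yielding $\chi_t$ of $\mathcal{U}_t({}^L\hat{\Glie})$ whose twisted $A$-monomials are described in \cite{H3}), constrain the limit shape to be precisely $A^L_{j,b}$.

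Third, I would check that under the identification $Z_{k,c} \leftrightarrow \Psib_{k,c^{-1}}$ induced by $(\ref{corrl})$, the monomial $A^L_{j,b}$ corresponds term-by-term to $\Lambda_{j, b^{-1}}$; this is immediate from the definition of $\Lambda_{j,a}$. Combining the three steps, any monomial $M'$ in $\chi_q^L(V_i^L(a))$ satisfies $Z_{i,a}/M' = \prod (A^L_{j, b_j})^{n_{j, b_j}}$ with $n_{j,b_j} \ge 0$, hence $\Psib_{Z_{i,a}}/\Psib_{M'}$ is a non-negative $\Lambda$-monomial; multiplying over all factors then gives ${\bf Z}/\Psib_M$ non-negative in the $\Lambda_{j,b}$'s, i.e., $\Psib_M \preceq_{\mathcal{Z}} {\bf Z}$.

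The main obstacle is the second step, i.e.\ pinning down the exact shape of the lowering monomials in the specialization $t=1$, $\alpha=0$ of $\chi_{q,t}$. Making this fully rigorous requires a careful analysis of the Frenkel-Mukhin-type expansion underlying the interpolating $(q,t)$-characters of \cite{FH0, FHR}; the direct verification in type $B_2$ (consistent with Proposition \ref{tdeux}) and the uniformity of the screening-operator formalism across types provide the evidence that the identification holds in general.
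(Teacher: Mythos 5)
Your strategy is the same as the paper's: regard a monomial $M$ of $\chi_q^L(V^L)$ as the $t=1$, $\alpha$-free limit of a monomial $\tilde{M}$ of the interpolating $(q,t)$-character, observe that $M_0\tilde{M}^{-1}$ is a product of lowering factors, and check that under the correspondence (\ref{corrl}) (i.e. $Z_{k,c}\mapsto \Psib_{k,c^{-1}}$ up to constants) these factors become the $\Lambda_{j,b}$. Your guessed uniform shape $A^L_{j,b}$ is indeed the right one: it agrees with the case-by-case $t=1$ limits computed in the paper and corresponds exactly to $\Lambda_{j,b^{-1}}$, and your preliminary reduction to the fundamental factors $\chi_q^L(V_i^L(a))$ is harmless though not needed, since (\ref{ldqc}) is a product and the ordering $\preceq_{\mathcal{Z}}$ ignores the constant terms.

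However, the step you yourself flag as the ``main obstacle'' is exactly where the content of the proof lies, and as written it is a genuine gap: you only assert, on the strength of a type-$B_2$ check and a heuristic about screening operators, that the lowering monomials surviving at $t=1$, $\alpha=0$ have the shape $A^L_{j,b}$. What is needed, and what the paper invokes, is the structural fact built into the construction of interpolating $(q,t)$-characters in \cite[Section 4]{FH0}: for an $\alpha$-free monomial $\tilde M$, the quotient $M_0\tilde M^{-1}$ is a product of single factors $\tilde{A}_{j,bt^s}$ when $r_j=r$, of pairs $\tilde{A}_{j,bq^{-1}t^s}\tilde{A}_{j,bqt^s}$ when $r_j=1$, $r=2$, and of triples $\tilde{A}_{j,bq^{-2}t^s}\tilde{A}_{j,bt^s}\tilde{A}_{j,bq^2t^s}$ when $r_j=1$, $r=3$; that is, for short nodes the $\tilde{A}$-factors occur grouped in $q$-strings. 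This grouping is precisely what guarantees that each group has $t=1$ limit a \emph{full} $\Lambda_{j,b^{-1}}$ (your $A^L_{j,b}$) rather than a fragment of one; if a single short-node $\tilde{A}$-factor could survive alone, the quotient would not be a monomial in the $\Lambda$'s and $\Psib_M\preceq_{\mathcal{Z}}{\bf Z}$ would fail. Once this citation is supplied, the remaining computation of the three limits and the identification with $\Lambda_{j,b^{-1}}$ is exactly the paper's proof, so the fix is to replace your ``evidence'' by the reference to the construction in \cite{FH0} rather than to find a new argument.
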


\begin{proof} By definition, $M$ is the limit at $t = 1$ of a monomial $\tilde{M}$ in the interpolating $(q,t)$-character
of $V$, without a factor $\alpha$. By construction \cite[Section 4]{FH0}, $M_0$ is the product of $\tilde{M}$ by a product of various 

$\tilde{A}_{j,bt^s}$ with $j\in I$ so that $r_j = r$, 

$\tilde{A}_{j,bq^{-1}t^s}\tilde{A}_{j,bqt^s}$ with  $j\in I$ so that $r_j = 1$ and $r = 2$,

$\tilde{A}_{j,bq^{-2}t^s}\tilde{A}_{j,bt^s}\tilde{A}_{j,bq^2t^s}$ with $j\in I$ so that $r_j = 1$ and $r = 3$,

\noindent for certain $b\in q^{\mathbb{Z}}\{q_i^{-1}z_{i,p}^{-1}\}_{i\in I, 1\leq p\leq N_i}$, $s\in\mathbb{Z}$. The limits at $t = 1$ are respectively

$A_{j,b} = Z_{j,bq_j^{-1}}Z_{j,bq_j}\prod_{k,C_{j,k} = -1}Z_{k,b}^{-1},$

$A_{j,bq^{-1}}A_{j,bq} = Z_{j,bq^{-1}}Z_{j,bq}
\prod_{k,C_{j,k} = -1}Z_{k,b}^{-1}\prod_{k,C_{j,k} = -2}Z_{j,bq^{-1}}^{-1}Z_{j,bq}^{-1},$ 

$A_{j,bq^{-2}}A_{j,b}A_{j,bq^2} = Z_{j,bq^{-1}}Z_{j,bq}
\prod_{k,C_{j,k} = -1}Z_{k,b}^{-1}\prod_{k,C_{j,k} = -3}Z_{j,bq^{-2}}^{-1}Z_{j,b}Z_{j,bq^2}^{-1}.$ 

\noindent The corresponding $\ell$-weight, as defined by (\ref{corrl}), is $\Lambda_{j,b^{-1}}$, hence the result.
\end{proof}

Recall that by Theorem \ref{thpartial}, $L(\Psib)$ in $\mathcal{O}^\lambda_{\mu,\mathcal{Z}}$ must satisfy 
$\Psib  \preceq_{\mathcal{Z}} {\bf Z}$, the same condition as in Proposition \ref{compord}. 

We state our main Conjecture for $\Glie$ of non simply-laced type.

\begin{conj}\label{mainc} 

(A) For a monomial $M$ occurring in the Langlands dual $q$-character of $V^L$, 
the simple module $L(\Psib_M)$ is in the category $\mathcal{O}^\lambda_{\mu_M,\mathcal{Z}}$.

(B) For a simple module $L(\Psib)$ in a category $\mathcal{O}^\lambda_{\mu,\mathcal{Z}}$, there is a 
monomial $M$ occurring in $\chi_q(V)$ so that $\Psib = \Psib_M$ and $\mu = \mu_M$.

(C) This defines a bijection between the classes of simple representations in $\mathcal{O}^\lambda_{\mu,\mathcal{Z}}$ up to sign-twist and the monomials of weight $\mu$ occurring in the Langlands dual $q$-character of $V^L$. 
\end{conj}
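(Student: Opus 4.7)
The plan is to combine induction on the partial order $\preceq_{\mathcal{Z}}$, explicit fusion-product constructions (Theorem \ref{fp}), and Baxter polynomiality (Theorem \ref{newpol}) to reduce Conjecture \ref{mainc} to a combinatorial matching between the monomials of $\chi_q^L(V^L)$ and the $\preceq_{\mathcal{Z}}$-interval below ${\bf Z}$ cut out by Proposition \ref{maint}. Parts (A) and (B) will be attacked separately, and part (C) then follows from finiteness (Theorem \ref{finsimp}) plus the manifest injectivity of $M \mapsto \Psib_M$ coming from (\ref{corrl}).

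For part (A), I would first dispose of the highest monomial $M_0$: here $\Psib_{M_0} = {\bf Z}$ up to a constant, and $L({\bf Z})$ is a fusion product of positive prefundamental representations by Theorem \ref{fpm}, on which $A_i^{\mathcal{Z},\pm}(z)$ acts by scalars normalized as required. For a general monomial $M$ in $\chi_q^L(V^L)$, Proposition \ref{compord} writes $M_0 M^{-1}$ as a product of the Langlands dual root monomials $A_{j,b}$, $A_{j,bq^{-1}}A_{j,bq}$, or $A_{j,bq^{-2}}A_{j,b}A_{j,bq^2}$ (depending on $r_j$); under (\ref{corrl}) each such factor translates into an $\ell$-weight factor $\Lambda_{j,b^{-1}}^{-1}$. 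The plan is to construct $L(\Psib_M)$ as a subquotient of a fusion product built out of $L({\bf Z})$ with appropriate negative prefundamental and Kirillov-Reshetikhin modules along a path in $\chi_q^L(V^L)$ from $M_0$ to $M$, and then to verify polynomiality and the asymptotic normalization of $A_i^{\mathcal{Z},+}(z)$ step by step using Remark \ref{mustaslo} and the $sl_2$-criterion of Theorem \ref{carsl22} applied rank-by-rank via Proposition \ref{maint}. The $B_2$ calculation of Proposition \ref{tdeux} should serve as the template for the inductive step.

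For part (B), I would start from the necessary condition of Theorem \ref{thpartial}: any simple $L(\Psib) \in \mathcal{O}^\lambda_{\mu,\mathcal{Z}}$ satisfies $\Psib(0)^{-1}\Psib = {\bf Z} \prod_{i,a}\Lambda_{i,a}^{-v_{i,a}}$ for some non-negative integers $v_{i,a}$, and the goal is to recognize the admissible $(v_{i,a})$ as the content-vectors of monomials in $\chi_q^L(V^L)$. The Baxter polynomiality/normalization condition of Proposition \ref{maint}, together with the rationality-and-coincidence Theorem \ref{newpol} applied to lower $\ell$-weight spaces (and iterated via the relations (\ref{ac})), will impose at each site the same inequalities that govern the support of the interpolating $(q,t)$-character through its screening operators from \cite{FH0, FHR}. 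Once this system of polynomiality constraints is shown to cut out exactly the Langlands dual monomials, Theorem \ref{truncfd} provides the base case of finite-dimensional $L(\Psib)$, and Theorem \ref{finsimp} ensures the induction terminates. Part (C) then amounts to checking that the forward map of (A) and the reverse assignment of (B) are mutually inverse up to sign twist.

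The main obstacle is the absence of an intrinsic description of $\chi_q^L(V^L)$ outside of the $(q,t)$-interpolation procedure, so one cannot directly compare monomial-by-monomial with the polynomiality conditions of Proposition \ref{maint}. I expect two complementary routes to this obstruction. The first is a systematic extension of the direct verification of Proposition \ref{tdeux}: treat types $C_n$, $F_4$, $G_2$ explicitly by analyzing the deformed $\mathcal{W}$-algebra screening operators at $\alpha = 0$, $t = 1$ and matching their output against the admissible $\ell$-weights produced by Theorem \ref{carsl22}-type polynomiality at each node. The second is a transport-of-structure argument using the Nakajima-Weekes bijection \cite{nw} between simple modules of simply-laced and non simply-laced quantized Coulomb branches, combined with the comparison between untwisted and twisted $q$-characters of \cite{H3}, to import the known monomial-crystal parametrization of \cite{ktwwy2} into the non simply-laced setting and then identify the resulting parametrization with Langlands dual $q$-characters. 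The technically delicate point on either route is controlling the coefficients (multiplicities) of the monomials of $\chi_q^L(V^L)$, since the conjecture asserts a bijection of isomorphism classes, which corresponds to showing these Langlands dual multiplicities are all equal to one after passage to sign-twist classes.
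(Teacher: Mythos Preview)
The statement you are attempting to prove is explicitly labelled a \emph{conjecture} in the paper; there is no proof of it in the paper to compare against. What the paper does establish is partial evidence: Proposition \ref{tdeux} verifies part (A) in types $A_2$ and $B_2$ by reducing to fundamental coweights via Proposition \ref{fustunc} and then checking the finitely many cases by hand, and Theorem \ref{truncfd} shows that every simple finite-dimensional representation descends to some truncation of the form predicted by the conjecture. The full statement, in particular parts (B) and (C) and part (A) beyond rank two, remains open in the paper.

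Your proposal is therefore not a proof but a research programme, and you are candid about this in your final paragraph. Several specific gaps deserve emphasis. For part (A), your ``inductive step'' along a path in $\chi_q^L(V^L)$ presupposes that at each step the resulting simple subquotient still satisfies the polynomiality conditions; but the $sl_2$-criterion of Theorem \ref{carsl22} controls only a single node, and the paper's $sl_3$ example in Section \ref{dttt} shows that the node-by-node conditions of Proposition \ref{maint} are \emph{not} sufficient in higher rank, so ``applied rank-by-rank'' cannot close the argument. For part (B), the assertion that the polynomiality constraints from Proposition \ref{maint} together with Theorem \ref{newpol} ``cut out exactly the Langlands dual monomials'' is precisely the content of the conjecture, not a lemma you can invoke; no mechanism is given for producing the required screening-operator comparison. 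Finally, your two proposed routes around the obstacle --- explicit type-by-type verification via deformed $\mathcal{W}$-algebras, or transport via the Nakajima--Weekes bijection of \cite{nw} combined with \cite{H3, ktwwy2} --- are exactly the directions the paper itself points to in Remark \ref{remconj}(iv), and neither is carried out there.
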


\begin{rem}\label{remconj} (i) As explained above, results already obtained for simply-laced types are 
fundamental motivations for this conjecture. 
Simple representations of simply-laced truncated shifted Yangians have been parametrized in terms of Nakajima monomial crystals \cite{ktwwy2}. Combining with \cite{N4}, this implies an analogous statement for simply-laced shifted quantum affine algebras. 
In simply-laced type, $V^L$ is a standard module of $\mathcal{U}_q({}^L\hat{\Glie}) = \mathcal{U}_q(\hat{\Glie})$. 
Note that the set of monomials occurring in $\chi_q(V^L)$ is the product of the 
set of monomials occurring in the monomial crystals $\mathcal{M}(Y_{i, q^{-1}z_{i,s}^{-1}})$, see \cite[Section 7]{ktwwy}.

(ii) Conjecture \ref{mainc} 
does not involve the monomial crystal for non simply-laced types in \cite{K2} (see also \cite{hn}) or 
the $q$-character of the standard module of $\mathcal{U}_q(\hat{\Glie})$ (see also Remark \ref{contexx}).

(iii) According to our conjecture, the $\ell$-weights of simple representations in $\mathcal{O}_{\mu, \mathcal{Z}}^{\lambda}$ can be read 
from the monomials in the Langlands dual $q$-character. By interpolation, these monomials correspond to monomials in the $q$-character of the standard module of the twisted 
quantum affine algebra, but the $\ell$-weights can not be read directly from the latter in general. 
Also, by folding in \cite{H3}, these monomials correspond to monomials 
in the $q$-character of a standard module of a corresponding untwisted simply-laced quantum affine algebra.
Hence we get a relation between non simply-laced shifted quantum affine algebras and simply-laced quantum affine algebras
 (see also (iii) in Remark \ref{contexx}).

(iv) As discussed in the introduction, Nakajima-Weekes \cite{nw}, combining with \cite{ktwwy2}, gave 
an explicit parametrization of simple representations in category $\mathcal{O}$ of truncated non 
simply-laced shifted Yangians and quantum affine algebras. Using the previous point (iii), one can compare 
and consider a possible relation between the two parametrizations. 
In small examples this different method seems to give the same parametrization as our result.

(v) Conjecture \ref{mainc} implies that an arbitrary simple representation in $\mathcal{O}_\mu$ is in one of the categories 
$\mathcal{O}^\lambda_{\mu,\mathcal{Z}}$. Indeed, for any $i\in I$, $a\in\mathbb{C}^*$, the monomial $Z_{i,a}^{-1}$ occurs as
the lowest weight monomial of the $q$-character of the $\mathcal{U}_q(\hat{\Glie})$-module $L(Z_{\overline{i},aq^{-rh^{\vee}}})$, 
where $Z_{\overline{i},aq^{-rh^{\vee}}}$ is expressed in terms of the $Y_{j,b}$ variables as in (\ref{zy}) 
(this follows from \cite{Fre2}). Here $h^{\vee}$ is the dual Coxeter number of $\Glie$ and 
$\overline{i}\in I$ is set so that $w_0(\alpha_i) = -\alpha_{\overline{i}}$ for $w_0$ the longest element of the Weyl 
group of $\mathfrak{g}$. Let us assume Conjecture \ref{mainc} is true. Then the Langlands dual $q$-character of the representation $V_{\overline{i}}^L(aq^{-rh^{\vee}})$ has the lowest monomial $Z_{i,a}^{-1}$, and an arbitrary Laurent monomial 
in the $Z_{i,a}^{\pm 1}$ occurs in the Langlands dual $q$-character of a standard module of $\mathcal{U}_t({}^L\hat{\Glie})$. 
The statement follows.

(vi) Recall the category $\mathcal{O}_{\mathcal{Z}}^\lambda$ has a finite number of simple objects by Remark \ref{finol}. We may expect it is a categorification of (a natural subspace of) the module $V$, associated to the monomials of $\chi_q^L(V^L)$, in the spirit of \cite{ktwwy2}.

(vii) Should we extend the construction of this paper to twisted shifted quantum affine algebras, we expect the 
parametrization of simple representations would involve interpolating $(q,t)$-characters
of finite-dimensional representations of twisted quantum affine algebras as in \cite[Section 6]{FH0}, as well as Langlands dual
$q$-character of standard modules of untwisted non-simply laced quantum affine algebras.
\end{rem}

\subsection{Examples in simply-laced types} 

See the Introduction and Remark \ref{remconj} for general earlier results in simply-laced types 
(see also Corollary \ref{carsl2} for $\Glie = sl_2$).

For $\Glie = sl_3$, in the examples in section \ref{exsl3}, the $3$ simple representations in $\mathcal{O}_{\mathcal{Z}}^\lambda$ correspond to the $3$ monomials occurring in  
$$\chi_q(L(Y_{1,q^{-3}})) = Y_{1,q^{-3}} + Y_{2,q^{-2}}Y_{1,q^{-1}}^{-1} +  Y_{2,1}^{-1}.$$

\subsection{Examples in type $B_2$} 

We work in type $B_2$ with $r_1 = 2$ and $r_2 = 1$.

First let us set $\lambda = \omega_2^\vee$, $Z_1(z) = 1$, $Z_2(z) = 1 - z$. 
The interpolating $(q,t)$-character of the $\mathcal{U}_q(B_2^{(1)})$-representation $L(Z_{2,1})$ 
has $11$ terms and was computed explicitly in \cite[Section 3.5]{FH0} :
$$Y_{2,q^{-1}}Y_{2,q} +\alpha Y_{2,q^{-1}}Y_{2,q^3t^2}^{-1}Y_{1,q^2t} + Y_{2,qt^2}^{-1}Y_{2,q^3t^2}^{-1}Y_{1,t}Y_{1,q^2 t}+ \alpha  Y_{2,q^{-1}}Y_{2,q^5t^2}Y_{1,q^6t^3}^{-1}
+ Y_{1,q^2t}Y_{1,q^4t^3}^{-1}$$ 
$$+ Y_{2,qt^2}^{-1}Y_{2,q^5t^2}Y_{1,q^6t^3}^{-1}Y_{1,t}+\alpha Y_{2,q^{-1}}Y_{2,q^7t^4}^{-1}+ Y_{1,q^4t^3}^{-1}Y_{1,q^6t^3}^{-1}Y_{2,q^3t^2}Y_{2,q^5t^2}
+ \alpha Y_{2,qt^2}^{-1}Y_{2,q^7t^4}^{-1}Y_{1,t}$$
$$+\alpha Y_{1,q^4t^3}^{-1}Y_{2,q^3t^2}Y_{2,q^7t^4}^{-1}+Y_{2,q^5t^4}^{-1}Y_{2,q^7t^4}^{-1}.$$
We put $\alpha = 0$, $t = 1$, $Z_{2,q^r} = Y_{2,q^{-r-1}}Y_{2,q^{r+1}}$, $Z_{1,q^r} = Y_{1,q^{r}}$ and we get 
$$\chi_q^L(V_2^L(1)) = Z_{2,1} + Z_{2,q^2}^{-1}Z_{1,1}Z_{1,q^2} + Z_{1,1}Z_{1,q^6}^{-1} Z_{2,q^2}^{-1}Z_{2,q^4} 
+ Z_{1,q^2}Z_{1,q^4}^{-1} +  Z_{1,q^6}^{-1}Z_{1,q^4}^{-1}Z_{2,q^4} + Z_{2,q^6}^{-1}.$$
We have
$$\phi_1^+ (z)= \frac{A_2^+(z)A_2^+(zq^2)}{A_1^+(z)A_1^+(zq^4)}\text{ and } \phi_2^+ (z) = \frac{(1 - z)A_1^+(zq^2)}{A_2^+(z)A_2^+(zq^2)}.$$
For the following values of $\mu$, one gets a unique simple object in $\mathcal{O}^\lambda_{\mu,\mathcal{Z}}$ up to sign-twist :

$\mu = \omega_2^\vee$ : $\Psib = (1 , 1 - z)$.

$\mu = \lambda - \alpha_2^\vee = 2\omega_1^\vee - \omega_2^\vee$ : $\Psib = \left( q^2(1 - zq^{-2})(1 - z)   ,\frac{q^{-2}}{1 - zq^{-2}}\right)$.

$\mu = \lambda - \alpha_2^\vee - 2\alpha_1^\vee = \omega_2^\vee - 2\omega_1^\vee$ : $\Psib = \left(  \frac{q^{-8}}{(1 - zq^{-6})(1 - zq^{-4})}, q^3(1 - zq^{-4} )\right)$.  

$\mu = \lambda - 2\alpha_2^\vee - 2\alpha_1^\vee = -\omega_2^\vee$ : $\Psib = \left(q^{-2} ,\frac{q^{-3}}{1 - zq^{-6}} \right)$.

\noindent The first two cases follow respectively from respective Example \ref{fex} and Example \ref{almost}. 

\noindent The representation associated to $\mu = \omega_2^\vee - 2\omega_1^\vee$ is a subquotient of 
$$L\left(\frac{q^{-8}}{1 - zq^{-4}}, q^3(1 - zq^{-4} )\right)\otimes L_{1,q^{-6}}^-\text{ and }L\left(\frac{q^{-8}}{1 - zq^{-6}}, q^3(1 - zq^{-4} )\right)\otimes L_{1,q^{-4}}^-.$$ 
This implies $[\Psib^{-1}]\chi_q(L(\Psib))$ equals
$$\sum_{0\leq\alpha, \beta,0\leq \gamma\leq\text{Min}(2\alpha,1 + 2\beta) }
(A_{1,q^{-4}}^{-1}A_{1,q^{-8}}^{-1}\cdots A_{1,q^{-4\alpha}}^{-1})
(A_{1,q^{-6}}^{-1}A_{1,q^{-10}}^{-1}\cdots A_{1,q^{-2-4\beta}}^{-1})
(A_{2,q^{-2}}^{-1}\cdots A_{2,q^{-2\gamma}}^{-1}).$$
It is a thin representation, that is its $\ell$-weight spaces have dimension $1$. 
So to check this representation descends to truncation reduces to a direct combinatorial check on 
the explicit $q$-character formula, which can be done directly.

\noindent By \cite{H3tb}, Kirillov-Reshetikhin modules are thin in type $B$ 
and we have an explicit formula for its $q$-character. So this is also true for negative prefundamental representations which
are limits of Kirillov-Reshetikhin modules by \cite{HJ}. Hence we can conclude as for the previous representation.

\noindent Let us focus for the details on the most subtle weight $\mu = \lambda - \alpha_2^\vee - \alpha_1^\vee = 0$. 

\noindent Let $L(\Psib)$ be a simple representation in $\mathcal{O}^\lambda_{\mu,\mathcal{Z}}$. We have 
$$\overline{Y}^+_{1,{\bf Z}\Psib^{-1}}(z q^{-2}) = (\delta^{-1} - z\delta)
\text{ , }\overline{Y}^+_{2,{\bf Z}\Psib^{-1}}(z q^{-1}) = (\gamma^{-1} - z\gamma)$$ 
for some $\beta, \gamma\in\mathbb{C}^*$. As $\Psi_2(z)\overline{Y}^+_{2,{\bf Z}\Psib^{-1}}(z q^{-1})$ and $\Psi_1(z)\overline{Y}^+_{1,{\bf Z}\Psib^{-1}}(z q^{-2})$ are polynomials, 
we have $\overline{Y}^+_{2,{\bf Z}\Psib^{-1}}(z q^{-1}) = \pm(q - zq^{-1})$ and $
\overline{Y}^+_{1,{\bf Z}\Psib^{-1}}(z q^{-2}) = \pm (q^3 - z q^{-3})$ or $\pm (q^2 - zq^{-2})$. 
Hence, up to sign-twist, we get two possibilities for $\Psib(z)$ : 
$$\Psib_1 = \left(q^{-2}\frac{1 - z}{1 - zq^{-6}} , q \frac{1 - zq^{-4}}{1 - zq^{-2}}\right)\text{ or }\Psib_2 = \left(q^{-2}\frac{1 - zq^{-2}}{1 - zq^{-4}}, 1\right).$$
We can check directly these two simple representations descends indeed to the truncation. First
$$\chi_q(L(\Psib_1)) = \Psib_1 \sum_{\alpha\beta\geq 0,\beta\leq 2\alpha + 1} (A_{1,q^{-6}}A_{1,q^{-10}}\cdots A_{1,q^{-2-4\alpha}})^{-1}
(A_{2,q^{-2}}A_{2,q^{-4}}\cdots A_{2,q^{-2\beta}})^{-1},$$
with the action on the $1$-dimensional $\ell$-weight space corresponding to each term given by
$$A_1^+(z) = q^{3 + 2\alpha} - z q^{-3 - 2\alpha}\text{ , }A_2^+(z) = q^{\beta + 1} - z q^{-\beta - 1}.$$ 
The formula is compatible with Remark \ref{mustaslo} as ${\bf Z}\Psib_1^{-1} = \Lambda_{1,q^{-4}}\Lambda_{2,q^{-1}}$. Then
$$\chi_q(L(\Psib_2)) = \Psib_2 \sum_{\alpha\beta\geq 0,\beta\leq 2\alpha} (A_{1,q^{-4}}A_{1,q^{-8}}\cdots A_{1,q^{-4\alpha}})^{-1}
(A_{2,q^{-2}}A_{2,q^{-4}}\cdots A_{2,q^{-2\beta}})^{-1}$$
with the action on the $1$-dimensional $\ell$-weight space corresponding to each term given by
$$A_1^+(z) = q^{2 + 2\alpha} - z q^{-2 - 2\alpha}\text{ , }A_2^+(z) = q^{\beta + 1} - z q^{-\beta - 1}.$$ 
We get two non twist-equivalent simple representations in the category $\mathcal{O}^\lambda_{\mu,\mathcal{Z}}$ as predicted by 
Conjecture \ref{mainc}. This proves Conjecture \ref{mainc} for these weights in type $B_2$.

\begin{rem}\label{contexx} (i) There are $6$ terms in $\chi_q^L(V_2^L(1))$ which is the dimension of the fundamental representation $V_2^L(1)$ of $\mathcal{U}_q({}^L(B_2^{(1)})) = \mathcal{U}_q(A_3^{(2)})$, but the number of vertices in the corresponding crystal of finite type $B_2$ in only $5$. This example also explains the importance of involving the Langlands dual Lie algebra ${}^L\Glie$ in Conjecture \ref{mainc}.
Indeed, the study of these simple representations in $\mathcal{O}^\lambda_{\mu,\mathcal{Z}}$ would not be compatible with usual $q$-character 
of type $C_2^{(1)}$ : 
$$Y_{2,1} + Y_{2,q^4}^{-1}Y_{1,q}Y_{1,q^3} + Y_{1,q^5}^{-1}Y_{1,q} + Y_{1,q^3}^{-1}Y_{1,q^5}^{-1}Y_{2,q^2} + Y_{2,q^4}^{-1},$$
or with the monomial crystal :
$$\mathcal{M}(Y_{2,1}) = \{Y_{2,1}, Y_{2,q^2}^{-1}Y_{1,q}^2, Y_{1,q}Y_{1,q^3}^{-1}, Y_{1,q^3}^{-2}Y_{2,q^2}, Y_{2,q^4}^{-1}\}.$$

(ii) The $q$-character of $V_2^L(1)$ would not be directly relevant either : 
$$Z_{2,1} + Z_{2,q^4}^{-1}Z_{1,-q}Z_{1,q} + Z_{1,q}Z_{-q^3}^{-1} + Z_{1,-q}Z_{1,q^3}^{-1} + Z_{2,q^4}Z_{1,q^3}^{-1}Z_{1,-q^3}^{-1}    + Z_{2,q^8}^{-1}.$$

(iii) For $\mu = 0$, we have seen the two $\ell$-weights can be read in terms of the Langlands dual $q$-character, 
involving the monomials of the $q$-character a Kirillov-Reshetikhin modules of the untwisted quantum affine algebra 
$\mathcal{U}_q(B_2^{(1)})$ :
$$Z_{1,q^2}Z_{1,q^4}^{-1} = Y_{1,q^2}Y_{1,q^4}^{-1} = (Y_{2,q^{-1}}Y_{2,q}) A_{2,1}^{-1}A_{2,q^2}^{-1}A_{1,q^2}^{-1},$$ 
$$Z_{1,1}Z_{1,q^6}^{-1}Z_{2,q^2}^{-1}Z_{2,q^4} = (Y_{2,q^{-1}}Y_{2,q}) A_{2,1}^{-1}A_{2,q^2}^{-1}A_{1,q^4}^{-1}.$$
By interpolation, they correspond to the following monomials occurring in the $q$-characters of the fundamental module $L(Z_{2,1})$ 
of the twisted quantum affine algebra $\mathcal{U}_q(A_3^{(2)})$ :
$$Z_{1,q}Z_{1,-q^3}^{-1} = Z_{2,1} A_{2,q^2}^{-1}A_{1,-q^2}^{-1}\text{ , }Z_{1,-q}Z_{1,q^3}^{-1} = Z_{2,1} A_{2,q^2}^{-1}A_{1,q^2}^{-1}.$$
By folding \cite{H3}, they correspond to the following monomials occurring in the $q$-character of the fundamental module $L(Y_{2,1})$ 
of the simply-laced quantum affine algebra $\mathcal{U}_q(A_3^{(1)})$ :
$$Y_{1,q}Y_{3,q^3}^{-1} = Y_{2,1} A_{2,q}^{-1}A_{3,q^2}^{-1}\text{ , }Y_{3,q}Y_{1,q^3}^{-1} = Y_{2,1} A_{2,q}^{-1}A_{1,q^2}^{-1}.$$
The corresponding eigenvalues of $A_1^+(z)$ and $A_2^+(z)$ are respectively 
$$(q^2(1-zq^{-4}), q(1-zq^{-2}))\text{ and }(q^3(1-zq^{-6}), q(1-zq^{-2})),$$
which correspond to the contribution of $\Lambda_{1,q^{-2}}^{-1}\Lambda_{2,q^{-1}}^{-1}$ and $\Lambda_{2,q^{-1}}^{-1}\Lambda_{1,q^{-4}}^{-1}$ (see Remark \ref{rcront}).
\end{rem}

To complete the picture of fundamental representations in type $B_2$, let us now set $\lambda = \omega_1^\vee$, $Z_1(z) = 1 - z$, $Z_2(z) = 1 $. An analogous computation gives 
\begin{equation}\label{fundunc}\chi_q^L(V_1(1)) = Z_{1,1} + Z_{1,q^4}^{-1}Z_{2,q^2} + Z_{2,q^4}^{-1}Z_{1,q^2} + Z_{1,q^6}^{-1}.\end{equation}
In the same way the following representations descend to the corresponding truncation : 

$\mu = \omega_1^\vee$ : $\Psib = (1 - z, iq^{-1})$,

$\mu = \lambda - \alpha_1^\vee = \omega_2^\vee - \omega_1^\vee$ : $\Psib = \left( \frac{q}{1 - zq^{-4}},iq^{-1}(1 - zq^{-2}))\right)$,

$\mu = \lambda - \alpha_1^\vee - \alpha_2^\vee = \omega_1^\vee - \omega_2^\vee$ : $\Psib = \left( q^{-1}(1 - zq^{-2}) ,\frac{i}{1 - zq^{-4}} \right)$,

$\mu = \lambda - 2\alpha_1^\vee - \alpha_2^\vee = -\omega_1^\vee$ : $\Psib = \left(\frac{q^2}{1 - zq^{-6}}, i q^{-1} \right)$.

\subsection{Reduction to fundamental representations}

Let us first study the compatibility between fusion products and truncated shifted quantum affine algebras.

Let $\mu_1,\mu_2\in\Lambda$ and $\lambda_1,\lambda_2\in\Lambda^+$ so that $\mu_1\preceq \lambda_1$ and $\mu_2\preceq \lambda_2$. 
We consider corresponding set of parameters $\mathcal{Z}_1$, $\mathcal{Z}_2$. The product $\mathcal{Z}_1\mathcal{Z}_2$ is defined component-wise.

\begin{prop}\label{fustunc} If $V_1$ is in $\mathcal{O}_{\mu_1,\mathcal{Z}_1}^{\lambda_1}$ and $V_2$ is in $\mathcal{O}_{\mu_2,\mathcal{Z}_2}^{\lambda_2}$ then $V_1 * V_2$ is in $\mathcal{O}_{\mu_1 + \mu_2,\mathcal{Z}_1\mathcal{Z}_2}^{\lambda_1 + \lambda_2}$.
\end{prop}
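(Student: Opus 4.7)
The plan is to verify on $V_1 * V_2$ the three conditions defining the truncation for $\mathcal{Z}_1\mathcal{Z}_2$: for each $i\in I$, that $A_i^{\mathcal{Z}_1\mathcal{Z}_2,\pm}(z)$ is a polynomial of degree $a_i := a_{i,1}+a_{i,2}$ in $z^{\pm 1}$, together with the two normalization relations. The key input is the behaviour of the deformed Drinfeld coproduct on Cartan--Drinfeld currents. Taking logarithms of $\Delta_u(\phi_i^\pm(z)) = \phi_i^\pm(z)\otimes\phi_i^\pm(zu)$ and using that each $\tilde h_{i,\pm r}$ is a linear combination of the $h_{j,\pm r}$, one obtains
$$\Delta_u(Y_i^\pm(z)) = Y_i^\pm(z)\otimes Y_i^\pm(zu),$$
and extending $\Delta_u$ to the adjoint version by $\Delta_u(\overline{\phi}_i^\pm) = \overline{\phi}_i^\pm\otimes\overline{\phi}_i^\pm$ is consistent with the defining relations $\prod_j(\overline{\phi}_j^\pm)^{C_{j,i}} = \phi_{i,\mp\alpha_i(\mu_\pm)}^\pm$.

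I would then take for $\mathcal{Z}_1\mathcal{Z}_2$ the natural choice ${\bf Z} = {\bf Z}_1{\bf Z}_2$ and auxiliary parameters $z_j'$ equal to the componentwise product of those of $\mathcal{Z}_1$ and $\mathcal{Z}_2$; the compatibility constraint $\prod_j(z_j')^{C_{j,i}} = (-q_i)^{N_i}z_{i,1}\cdots z_{i,N_i}$ for $\mathcal{Z}_1\mathcal{Z}_2$ then follows by multiplying the ones for $\mathcal{Z}_1$ and $\mathcal{Z}_2$. Combined with the scalar multiplicativity $\overline{Y}_{i,{\bf Z}_1{\bf Z}_2}^\pm = \overline{Y}_{i,{\bf Z}_1}^\pm\overline{Y}_{i,{\bf Z}_2}^\pm$ and the definition~(\ref{atil}), the coproduct formula above gives
$$\Delta_u(A_i^{\mathcal{Z}_1\mathcal{Z}_2,\pm}(z)) = R_i^\pm(z,u)\cdot\bigl(A_i^{\mathcal{Z}_1,\pm}(z)\otimes A_i^{\mathcal{Z}_2,\pm}(zu)\bigr),$$
with scalar correction $R_i^\pm(z,u) = \overline{Y}_{i,{\bf Z}_2}^\pm(zq_i^{-1})/\overline{Y}_{i,{\bf Z}_2}^\pm(zuq_i^{-1})$, a rational function in $u$ regular at $u=1$ and equal to $1$ there. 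Applying this identity to the cyclic vector $v_1\otimes v_2$ generating the $\mathcal{A}$-form $\tilde V \subset (V_1\otimes V_2)(u)$, and using that by hypothesis $A_i^{\mathcal{Z}_j,\pm}(z)$ acts on $V_j$ as a polynomial of degree $a_{i,j}$ in $z^{\pm 1}$, one sees that $A_i^{\mathcal{Z}_1\mathcal{Z}_2,\pm}(z)$ acts on $\tilde V$ by a rational function in $u$ regular at $u=1$ whose coefficients in $z^{\pm 1}$ vanish above degree $a_i$. Specializing at $u=1$ kills the correction and yields, on $V_1 * V_2$,
$$A_i^{\mathcal{Z}_1\mathcal{Z}_2,\pm}(z) = A_i^{\mathcal{Z}_1,\pm}(z)\cdot A_i^{\mathcal{Z}_2,\pm}(z);$$
the two normalization conditions then follow at once by multiplying the corresponding identities for $V_1$ and $V_2$ and using $a_i = a_{i,1}+a_{i,2}$.

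The main obstacle is the step propagating the polynomial bound on the action of $A_i^{\mathcal{Z}_1\mathcal{Z}_2,\pm}(z)$ from the cyclic vector $v_1\otimes v_2$ to arbitrary vectors of $\tilde V$ with uniform control of the $u$-dependence. Since the $A_i^{\mathcal{Z},\pm}(z)$ lie in the commutative Cartan--Drinfeld subalgebra (see (iii) of Remark~\ref{com}), this reduces to commuting them through the mode operators $x_{j,r}^\pm$ via the quasi-commutation relations (\ref{deux}) applied inside $\Delta_u$, and exploiting that the polynomiality of $A_i^{\mathcal{Z}_j,\pm}(z)$ on each factor $V_j$ is preserved by these commutations. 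This is analogous to, but strictly richer than, the special case of a one-dimensional $V_2$ treated in (i) of Remark~\ref{caspe}.
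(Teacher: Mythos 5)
Your argument is essentially the paper's: both rest on $\Delta_u(\phi_i^\pm(z))=\phi_i^\pm(z)\otimes\phi_i^\pm(zu)$, the induced multiplicativity on Cartan--Drinfeld series, and the $\mathcal{A}$-form defining the fusion product. The only real difference is organizational: the paper works with the truncation series $A_{i,u}^+(z)$ attached to the $u$-shifted datum $\mathcal{Z}_1(z)\mathcal{Z}_2(zu)$, for which $\Delta_u(A_{i,u}^+(z))=A_i^{\mathcal{Z}_1,+}(z)\otimes A_i^{\mathcal{Z}_2,+}(zu)$ holds exactly, so no correction factor appears and specializing $u=1$ immediately yields polynomiality of $A_i^{\mathcal{Z}_1\mathcal{Z}_2,+}(z)$ on $V_1*V_2$; you keep the undeformed datum and carry the scalar series $R_i^\pm(z,u)$, which works because its $z$-coefficients lie in $\mathcal{A}$ and are congruent to those of $1$ modulo $(u-1)$ (note it is a formal series in $z$ with polynomial-in-$u$ coefficients, not literally a rational function of $u$). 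Finally, the ``main obstacle'' you flag is not one: descending to a truncation is an operator identity on the whole module, so $A_i^{\mathcal{Z}_j,\pm}(z)$ acts polynomially of degree $a_{i,j}$ on \emph{every} vector of $V_j$, hence your coproduct identity bounds the $z$-degree on every vector of $(V_1\otimes V_2)(u)$, in particular on $\tilde V$ (whose stability under these Cartan--Drinfeld coefficients is immediate since they preserve weights and the $\overline{\phi}_i^\pm$ act by scalars on weight spaces); no commutation through the modes $x_{j,r}^\pm$ or propagation from the cyclic vector is required.
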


\begin{proof} This follows from 
$$\Delta_u(\phi_i^\pm(z)) = \phi_i^\pm(z)\otimes \phi_i^\pm(zu).$$ 
Let us explain it for $\phi_i^+(z)$. 
Consider elements $A_{i,u}^+(z)$ associated to $\mathcal{Z}_1(z)\mathcal{Z}_2(zu)$. 
The $\mathcal{A}$-form which defines $V_1 * V_2$ is stable by the coefficients of $A_{i,u}^+(z)$ and
$$\Delta_u(A_{i,u}^+(z)) = A_i^{\mathcal{Z}_1,+}(z)\otimes A_i^{\mathcal{Z}_2,+}(zu).$$ 
Hence $A_{i,u}^+(z)$ is a polynomial in $z$ on the $\mathcal{A}$-form and 
$A_i^+(z)$ is polynomial on $V_1 * V_2$.\end{proof}

Let $\mu\in\Lambda$ and recall the functor $*_{i,a} : \mathcal{O}_{\mu} \rightarrow \mathcal{O}_{\mu + \omega_i^\vee}$. Consider $V$ a representation in $\mathcal{O}_\mu$. 
Let $\lambda\in \Lambda^+$ so that $\mu\preceq \lambda$. 
 
\begin{prop} The representation $*_{i,a}(V)$ is in $\mathcal{O}_{\mu+\omega_i^\vee,\mathcal{Z}'}^{\lambda + \omega_i^\vee}$ if and only if $V$ is in $\mathcal{O}_{\mu,\mathcal{Z}}^\lambda$. Here $\mathcal{Z}'$ is obtained from $\mathcal{Z}$ by 
replacing $Z_i(z)$ by $Z_i(z)(1 - za)$.
\end{prop}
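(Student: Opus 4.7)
The key observation is that the functor $*_{i,a}$, by (ii) of Remark \ref{caspe}, coincides with the restriction functor $\mathcal{R}_{\mu + \omega_i^\vee, -\omega_i^\vee, a}$ attached to the shift homomorphism $\iota = \iota_{\mu + \omega_i^\vee, -\omega_i^\vee, a}$ of Section \ref{reps}. In particular, $*_{i,a}(V)$ has the same underlying vector space as $V$, and from the defining formulas (\ref{iot}), the action of $\phi_j^+(z)$ on $*_{i,a}(V)$ equals $(1 - az)^{\delta_{i,j}}$ times the action on $V$, and similarly for $\phi_j^-(z)$. I will exploit this to transfer polynomiality of the truncation series back and forth.

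For the \emph{if} direction, one first notes that the one-dimensional module $L_{i,a}^+$ has highest $\ell$-weight $\Psib_{i,a}$, so by Example \ref{fex} it lies (up to sign-twist) in $\mathcal{O}_{\omega_i^\vee, \mathcal{Z}_0}^{\omega_i^\vee}$, where $\mathcal{Z}_0$ has $i$-th polynomial $1-za$ and all other polynomials equal to $1$. Since $\mathcal{Z}\cdot \mathcal{Z}_0 = \mathcal{Z}'$ component-wise, Proposition \ref{fustunc} applied to $V$ and $L_{i,a}^+$ directly yields $*_{i,a}(V) = V * L_{i,a}^+ \in \mathcal{O}_{\mu + \omega_i^\vee, \mathcal{Z}'}^{\lambda + \omega_i^\vee}$.

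For the \emph{only if} direction, assume $*_{i,a}(V)\in \mathcal{O}_{\mu + \omega_i^\vee, \mathcal{Z}'}^{\lambda + \omega_i^\vee}$. Denote by $\phi_j^+(z)'$ and $A_j^{\mathcal{Z}',+}(z)$ the operators on $*_{i,a}(V)$. Since $Z_i'(z) = (1-za) Z_i(z)$ and $Z_j'(z) = Z_j(z)$ for $j\neq i$, the shift formula above gives, for every $j\in I$,
\[
\phi_j^+(z)'\, (Z_j'(z))^{-1} \;=\; \phi_j^+(z)\, (Z_j(z))^{-1}
\]
as rational operators on the common underlying space. By (i) of Remark (which underlies Remark \ref{unisol}), the defining system $H_j(A_1^{+}(zq_1),\ldots,A_n^{+}(zq_n))^{-1} = \phi_j^+(z)/Z_j(z)$ determines the $A_j^+(z)$ uniquely up to constant factors. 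Matching the initial conditions $A_{j,0}^{+} = (\overline{\phi}_j^+)^{-1}$ on both sides is possible because $\phi_j^+(0)$ is unaffected by the factor $(1-az)^{\delta_{i,j}}$ at $z=0$, so the central element $\phi_{j,\mathcal{Z}}$ agrees with $\phi_{j,\mathcal{Z}'}$ and one may choose $\overline{\phi}_j^+$ on $V$ matching $\overline{\phi}_j'^+$ on $*_{i,a}(V)$ up to an element of the group $K$ of Remark \ref{groupk}. Consequently, $A_j^{\mathcal{Z}',+}(z)$ on $*_{i,a}(V)$ and $A_j^{\mathcal{Z},+}(z)$ on $V$ coincide up to a sign-twist, and the hypothesized polynomiality of degree $a_j$ for the former transfers to the latter. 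An identical argument, applied to $\phi_j^-(z)$ and using the symmetric normalization at $z\to\infty$, handles $A_j^{\mathcal{Z},-}(z)$. The truncation relations $A_{j,0}^{\mathcal{Z},+}A_{j,0}^{\mathcal{Z},-} = (-q_j)^{a_j}$ and $A_j^{\mathcal{Z},+}(z) = (zq_j^{-1})^{a_j}A_j^{\mathcal{Z},-}(z)$ likewise descend to $V$.

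\textbf{Main obstacle.} The only delicate point is the normalization: one must verify that the choice of $\overline{\omega}^\pm$ (i.e.\ the adjoint structure) on $*_{i,a}(V)$ descends to a compatible adjoint structure on $V$. This is controlled by the sign-twist ambiguity (group $K$) already inherent in the definition of the truncation, and since the factor $(1-az)^{\delta_{i,j}}$ evaluates to $1$ at $z=0$ and equals $(-az)^{\delta_{i,j}}$ at $z=\infty$, the resulting change in $\phi_{j,\mathcal{Z}}$ is exactly absorbed by the change $Z_i\mapsto (1-za)Z_i$, so no obstruction arises.
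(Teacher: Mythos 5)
Your proof is correct, and while your ``if'' direction coincides with the paper's (fuse with the one-dimensional module $L_{i,a}^+$, which lies in $\mathcal{O}^{\omega_i^\vee}_{\omega_i^\vee,\mathcal{Z}'/\mathcal{Z}}$ by Example \ref{fex}, and invoke Proposition \ref{fustunc}), your ``only if'' direction takes a genuinely different route. The paper stays inside the fusion picture: by the computation in the proof of Proposition \ref{fustunc}, $\Delta_u(A_j^{\mathcal{Z}',+}(z))$ factorizes on the tensor product, and since $A_j^{\mathcal{Z}'/\mathcal{Z},+}(z)$ acts on the one-dimensional factor by a nonzero constant $a_{j,0}$, one reads off $A_j^{\mathcal{Z}',+}(z).(v_{i,a}\otimes v)=a_{j,0}\,v_{i,a}\otimes(A_j^{\mathcal{Z},+}(z).v)$ and polynomiality transfers immediately. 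You instead use the identification of $*_{i,a}$ with the shift/restriction functor (Remark \ref{caspe}(ii)), so that $\phi_j^\pm(z)$ on $*_{i,a}(V)$ is $(1-az)^{\delta_{i,j}}$ times its action on $V$, observe that $\phi_j^\pm(z)(Z_j(z))^{-1}$ is then literally the same operator on both sides (and $\lambda'-\mu'=\lambda-\mu$, so the degrees $a_j$ agree), and conclude by the uniqueness up to constants of solutions of the system defining the $A_j^\pm(z)$ (Remark \ref{unisol}, applicable since the Cartan--Drinfeld operators commute and $A_{j,0}^{\pm}$ is invertible). The paper's argument is shorter because the transfer is explicit on vectors; yours is slightly more conceptual and makes transparent that only the ratio $\phi_j^\pm/Z_j$ matters, at the cost of having to track the adjoint normalization through the group $K$. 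One small slip: the sentence claiming $\phi_{j,\mathcal{Z}}$ ``agrees with'' $\phi_{j,\mathcal{Z}'}$ is false for $j=i$ (one has $\phi_{i,\mathcal{Z}'}=-a\,\phi_{i,\mathcal{Z}}$, since $\alpha_i(\mu)$ and the root set of $Z_i$ both change), but your closing paragraph records the correct bookkeeping — the factor $(-az)$ at infinity is exactly absorbed by $Z_i\mapsto(1-za)Z_i$ — so this does not affect the argument.
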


\begin{proof} If $V$ is in $\mathcal{O}_{\mu,\mathcal{Z}}^\lambda$, we obtain from Proposition \ref{fustunc} 
that $L_{i,a}^+*V$ is in $\mathcal{O}_{\mu+\omega_i^\vee,\mathcal{Z}'}^{\lambda + \omega_i^\vee}$ as 
$L_{i,a}^+$ is in $\mathcal{O}_{\omega_i^\vee,\mathcal{Z}'/\mathcal{Z}}^{\omega_i^\vee}$ (up to a constant).
Now, suppose that $L_{i,a}^+*V$ is in $\mathcal{O}_{\mu+\omega_i^\vee,\mathcal{Z}'}^{\lambda + \omega_i^\vee}$.
Recall that for $j\in I$, the action of $A_j^{\mathcal{Z}'/\mathcal{Z},+}(z)$ on $L_{i,a}^+$ in 
the category $\mathcal{O}_{\omega_i^\vee,\mathcal{Z}'/\mathcal{Z}}^{\omega_i^\vee}$ is given by 
a constant scalar (see \ref{fex}). Let us denote $a_{j,0}\in\mathbb{C}^*$ this scalar.
The computation in the proof of Proposition \ref{fustunc} shows that for $j\in I$, we have 
the following action on the fusion product
$$A_j^{\mathcal{Z}',+}(z).(v_{i,a}\otimes v) = a_{j,0} v_{i,a}\otimes (A_j^{\mathcal{Z},+}(z).v).$$
This implies that $V$ is in the category $\mathcal{O}_{\mu,\mathcal{Z}}^\lambda$.
\end{proof}

As a consequence, to prove Conjecture \ref{mainc} (A),  it suffices to treat the case when $V^L$ is a fundamental 
representation, that is when $\lambda$ is a fundamental coweight.

Indeed, in Equation (\ref{ldqc}), a monomial $M$ occurring in $\chi_q^L(V^L)$ is of the form
$$M = \prod_{i\in I, 1\leq s \leq N_i} M_{i,s}$$
where $M_{i,s}$ is a monomial occurring in $\chi_q^L(V_i^L(q_i^{-1}z_{i,s}^{-1}))$.
For each $i,s$, consider $\mathcal{Z}_{i,s}$ associated to $\lambda = \omega_i^\vee$ and ${\bf Z}_{i,s} =\Psib_{M_{i,s}}$.
If we know that the simple representation $L(\Psib_{M_{i,s}})$ 
is in $\mathcal{O}^{\omega_{i}^\vee}_{\mu_{M_{i,s}},\mathcal{Z}_{i,s}}$ for any $i,s$, we obtain that 
the fusion product of the $L(\Psib_{M_{i,s}})$ is in $\mathcal{O}_\lambda^{\mu_M,\mathcal{Z}}$ from Proposition \ref{fustunc}.
And so $L(\Psib_{M})$ is in this category $\mathcal{O}_\lambda^{\mu_M,\mathcal{Z}}$ as $\Psib_M = \prod_{i,s} \Psib_{M_{i,s}}$.

Consequently, from the examples above, we obtain the following.

\begin{prop}\label{tdeux} In types $A_2$, $B_2$, Conjecture \ref{mainc} (A) is true.
\end{prop}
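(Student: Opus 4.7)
The plan is to reduce to fundamental representations using the multiplicativity of truncated descent under fusion products (Proposition \ref{fustunc}), and then to verify the statement by explicit case-by-case analysis. More precisely, the factorization $\chi_q^L(V^L) = \prod_{i,s} \chi_q^L(V_i^L(q_i^{-1}z_{i,s}^{-1}))$ lets me write an arbitrary monomial $M$ occurring in $\chi_q^L(V^L)$ as a product $M = \prod_{i,s} M_{i,s}$ of monomials from fundamental Langlands dual $q$-characters, and correspondingly $\Psib_M = \prod_{i,s}\Psib_{M_{i,s}}$. If each $L(\Psib_{M_{i,s}})$ descends to the truncation determined by ${\bf Z}_{i,s} = \Psib_{M_{i,s}}$ (a parameter datum of size $\lambda = \omega_i^\vee$), then Proposition \ref{fustunc} implies that the fusion product, which surjects onto $L(\Psib_M)$, descends to the truncation with ${\bf Z} = \prod_{i,s}{\bf Z}_{i,s}$; hence so does its simple quotient $L(\Psib_M)$.

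Hence the real work is to verify the conjecture for fundamental Langlands dual $q$-characters. In type $A_2$, each fundamental representation has a $3$-term $q$-character (simply-laced, so the Langlands dual $q$-character coincides with the usual $q$-character), and the three corresponding simple modules have been written down explicitly in Section \ref{exsl3}; I would just rewrite those three $\ell$-weights in the $Z_{i,a}$ variables and check term-by-term that each $\Psib_M$ matches a displayed $\Psib$, so that those sections directly furnish the descent statement.

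In type $B_2$, there are two fundamental coweights $\omega_2^\vee$ and $\omega_1^\vee$. For $\lambda = \omega_2^\vee$ the six-term Langlands dual $q$-character has already been extracted from the interpolating $(q,t)$-character, and the explicit list of the six simple modules in $\mathcal{O}^\lambda_{\mu,\mathcal{Z}}$ (including the subtle weight $\mu = 0$ where two non twist-equivalent simple modules $L(\Psib_1)$, $L(\Psib_2)$ appear) has been matched term-by-term with the six monomials. For $\lambda = \omega_1^\vee$ the four-term Langlands dual $q$-character $\chi_q^L(V_1(1))$ in equation (\ref{fundunc}) is similarly matched by the four displayed simple modules. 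So the plan is simply to assemble these checks into a single statement.

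The main (only) obstacle is the weight $\mu = 0$ case of $\lambda = \omega_2^\vee$ in type $B_2$, which already contains the only multiplicity phenomenon occurring here: two distinct simple modules exist, and their $q$-characters must be computed explicitly (as done) to confirm that the polynomiality and degree conditions of Theorem \ref{carsl22}-style checks (via $A_j^{\mathcal{Z},+}(z)$ acting on each thin $\ell$-weight space) hold. Since the modules are thin in all these fundamental cases, the descent reduces to a direct verification on $q$-character formulas, already available from \cite{H3tb} and the explicit constructions in the paper, so no further computation is required beyond reading off the two lists and noting the bijection with $\chi_q^L(V^L)$.
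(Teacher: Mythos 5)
Your proposal is correct and follows essentially the same route as the paper: reduce Conjecture \ref{mainc} (A) to the case of fundamental coweights via the compatibility of fusion products with truncations (Proposition \ref{fustunc}) together with $\Psib_M=\prod_{i,s}\Psib_{M_{i,s}}$, and then invoke the explicit verifications for the fundamental cases in Section \ref{exsl3} (type $A_2$) and in the $B_2$ examples, including the two non twist-equivalent simple modules at $\mu=0$.
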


\subsection{Finite-dimensional representations} We use Baxter polynomiality of quantum integrable systems to establish the following.

\begin{thm}\label{truncfd} A simple finite-dimensional representation of a 
shifted quantum affine algebra $\mathcal{U}_q^\mu(\hat{\Glie})$ is in a category 
$\mathcal{O}_{\mu,\mathcal{Z}}^\lambda$, as predicted by Conjecture \ref{mainc}.
\end{thm}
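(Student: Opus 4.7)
The plan is to reduce to two basic cases via fusion and verify the descent using Baxter polynomiality. By Theorem~\ref{fdclas} and Proposition~\ref{fsense}, a simple finite-dimensional $L(\Psib)$ has $\Psib$ dominant and $\mu\in\Lambda^+$, and admits the fusion decomposition $L(\Psib)\simeq L(M_1)*L(M_2)$ from the proof of Proposition~\ref{fsense}, where $L(M_1)$ is a finite-dimensional simple $\mathcal{U}_q(\hat{\Glie})$-module (so $\mu_1=0$) and $L(M_2)=L(\prod_s\Psib_{i_s,a_s})$ is one-dimensional. By Proposition~\ref{fustunc} and the multiplicativity of $\chi_q^L$ over tensor products of Langlands dual fundamentals, it suffices to handle the two factors separately and then combine them.

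The one-dimensional case is settled by Example~\ref{fex}: take $\lambda_2=\mu_2=\sum_s\omega_{i_s}^\vee$ with $Z_i(z)=\prod_{s:i_s=i}(1-za_s)$, giving $L(M_2)\in\mathcal{O}_{\mu_2,\mathcal{Z}_2}^{\mu_2}$, matched by the top monomial of $V_2^L=\bigotimes_s V_{i_s}^L(a_s^{-1})$. For the $\mathcal{U}_q(\hat{\Glie})$-factor $L(M_1)$, I would write the rational function $\Psib=M_1$ uniquely as $\Psi_j(z)=\Psi_j(0)\prod_a(1-za^{-1})^{u_{j,a}}$ with integer exponents $u_{j,a}$, thereby defining a Laurent monomial $M$ in the $Z_{j,a}^{\pm 1}$ with $\Psib_M=\Psib$. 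I then choose a standard Langlands dual module $V_1^L=\bigotimes_t V_{i_t}^L(c_t)$ large enough so that $M$ appears in $\chi_q^L(V_1^L)$: the positive exponents of $M$ are covered by top monomials of appropriate fundamental factors, while the negative exponents come from lower monomials of additional fundamental factors with spectral parameters chosen to produce the required $Z_{j,a}^{-1}$-factors. The top monomial of $V_1^L$ then determines ${\bf Z}_1$ and $\lambda_1\in\Lambda^+$, matching Conjecture~\ref{mainc}.

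The main obstacle, and main technical step, is verifying that $A_i^{\mathcal{Z}_1,+}(z)$ acts polynomially of degree $a_i=\alpha_i(\lambda_1)$ on $L(M_1)$. By (\ref{atil}), this is equivalent to $\overline{Y}_{i,{\bf Z}_1}^+(zq_i^{-1})/Y_i^+(zq_i^{-1})$ being a polynomial operator on each weight space. Theorem~\ref{newpol}(i) gives that $Y_i^+(z)/Y_{i,\Psib}^+(z)$ is rational of degree zero on $L(M_1)$ and coincides with $Y_i^-(z)/Y_{i,\Psib}^-(z)$. Theorem~\ref{newpol}(ii), which derives from Baxter polynomiality of transfer-matrices in quantum integrable models, locates the poles of this rational operator on each weight space $W_{\omega'}$ precisely at the spectral parameters arising from the $A_{j,c}^{-1}$-factorization of the monomial corresponding to $\omega'$ (via Proposition~\ref{egw2} and (\ref{alambda})). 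By the construction of $V_1^L$ above, all such pole locations are covered by zeros of $\overline{Y}_{i,{\bf Z}_1}^+(zq_i^{-1})$, rendering $A_i^{\mathcal{Z}_1,+}(z)$ polynomial. The remaining truncation relation $A_i^{\mathcal{Z}_1,+}(z)=(zq_i^{-1})^{a_i}A_i^{\mathcal{Z}_1,-}(z)$ follows from the coincidence of $Y_i^\pm$-operators in Theorem~\ref{newpol}(i) combined with Remark~\ref{flem}. Fusing $L(M_1)$ and $L(M_2)$ via Proposition~\ref{fustunc} then completes the proof.
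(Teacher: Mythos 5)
Your overall strategy is the same as the paper's (reduce to an unshifted simple finite-dimensional $\mathcal{U}_q(\hat{\Glie})$-module via Theorem \ref{fdclas} and Proposition \ref{fustunc}, then use Cartan--Drinfeld/Baxter polynomiality to make $A_i^{\mathcal{Z},+}(z)$ polynomial), but your main technical step has a genuine gap. You try to control the poles of $Y_i^+(z)/Y_{i,\Psib}^+(z)$ weight space by weight space using Theorem \ref{newpol}(ii) together with the $\ell$-weight eigenvalues of Proposition \ref{egw2}. But \ref{newpol}(ii) makes $z^{ht}T_{i,\Psib}^+(z)/T_i^+(z)$ polynomial, so what you need is the inverse of a polynomial operator, and these operators are not diagonalizable in general: on an $\ell$-weight space the operator is (scalar eigenvalue)$\cdot\mathrm{Id}$ plus a nilpotent-valued polynomial, and inverting it can produce poles of order strictly larger than the order of vanishing of the eigenvalue (the paper's own example in Section \ref{irf}, where $(\phi_1^+(z))^{-1}$ acquires higher degree from the nilpotent part, illustrates exactly this). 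So "locates the poles precisely at the spectral parameters arising from the $A_{j,c}^{-1}$-factorization" is unjustified at the level of multiplicities, which is what the covering argument needs. The paper avoids any inversion: it proves, in addition, a \emph{lowest-weight} polynomiality statement (obtained by twisting with the involution $\sigma$ of Remark \ref{invsy}, so that $z^{ht_i(\omega'(\omega_*)^{-1})}T_i^+(z)/T_{i,*}^+(z)$ is a polynomial operator), and then writes $A_i^{\mathcal{Z},+}(z)$ as an explicit scalar function $\overline{Y}^+_{i,{\bf Z}\Psib^{-1}}(zq_i^{-1})\,T_{i,*}^+(z^{-1})/T_{i,\Psib}^+(z^{-1})$ times a manifestly polynomial operator $P_i(z)$. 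You never use the lowest weight at all, so your argument as stated does not close.

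The second gap is that the crucial covering claim is asserted, not proved. Saying "choose $V_1^L$ large enough so that $M$ appears, with negative exponents produced by lower monomials of extra fundamental factors" does not pin down ${\bf Z}_1$, and the statement that its zeros cover the poles with multiplicity is exactly the non-trivial combinatorial content in non-simply-laced types: the zeros of $\overline{Y}^+_{i,{\bf Z}_1\Psib^{-1}}$ are governed by the factorization of ${\bf Z}_1\Psib^{-1}$ into the $\Lambda_{j,b}$ (Langlands dual simple roots), while the poles to be cancelled are governed by the factorization of $\Psib(\Psib^*)^{-1}$ into the $A_{j,b}$ (simple roots of $\Glie$), and these two factorizations do not match when $\Glie$ is not simply laced. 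The paper makes an explicit choice of ${\bf Z}$ (with extra Kirillov--Reshetikhin-type factors $\Psib_{i,aq_iq^{1-r}}\cdots$ at short nodes, built from the lowest monomial $Z_{\overline{i},aq^{rh^\vee}}^{-1}$ of $\chi_q^L(V_i^L(a))$) and then proves the multiplicity inequality (\ref{sufc}), $\nu_{i,aq_i}\geq v_{i,a}$, by comparing the $\Lambda$- and $A$-expansions via the computation of Proposition \ref{compord}; it also checks at the end that $M_\Psib$ indeed occurs in $\chi_q^L(V^L)$, so that the truncation found is one predicted by Conjecture \ref{mainc}. Your proposal would need both the explicit construction of ${\bf Z}_1$ and a proof of this inequality (as well as the final compatibility check) to be complete; your treatment of the one-dimensional factor via Example \ref{fex} and of the relation $A_i^{\mathcal{Z},+}(z)=(zq_i^{-1})^{a_i}A_i^{\mathcal{Z},-}(z)$ via Remark \ref{flem} is fine.
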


From Proposition \ref{fustunc} and by the classification in Theorem \ref{fdclas}, it suffices to consider the
case of $W$ simple finite-dimensional representation of $\mathcal{U}_q(\hat{\Glie})$. 

Let $w_*$ be a lowest weight vector. Let $\omega_*$ be its weight. For $i\in I$, let $T_{i,*}^\pm(z)\in \mathbb{C}[[z^{\mp 1}]]$
be the eigenvalue of $T_i^\pm(z)$ on $w_*$. By Proposition \ref{egw2}, 
$T_{i,*}^-(z)/T_{i,\Psib}^-(z)$ and $z^{ht_i(\omega(\omega_*)^{-1})}T_{i,\Psib}^+(z)/T_{i,*}^+(z)$ are polynomial
in $z$ of degree $ht_i(\omega(\omega_*)^{-1})$. 

\begin{example} This can be observed in Example \ref{longex} with the respective eigenvalues of $T^-(z)$ and $T^+(z)$ on a lowest vector : 
$$(1 - zq^{-1})^2 T_{\Psib}^-(z) = T_{\Psib}^+(z^{-1}q^2)\text{ , }T_{\Psib}^+(z) (1 - z^{-1}q)^{-2} = T_{\Psib}^-(z^{-1}q^2).$$
\end{example}

More generally we have the following.

\begin{prop}  
For $\omega'$ a weight of $W$, on $W_{\omega'}$ the operators 
$$z^{ht_i(\omega'(\omega_*)^{-1})}\frac{T_i^+(z)}{T_{i,*}^+(z)} \text{ and }\frac{T_{i,*}^-(z)}{T_{i}^-(z)}  \in (\text{End}(W_{\omega'}))[z]$$
are polynomial in $z$ of degree $ht_i(\omega'(\omega_*)^{-1})$ and coincide up to a constant operator factor. 
\end{prop}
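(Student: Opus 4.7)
\medskip

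\noindent\textbf{Proof plan.} The strategy is to deduce the statement from Theorem \ref{newpol}(ii) by applying the involution $\sigma$ of Remark \ref{invsy}, which exchanges the roles of highest and lowest weight vectors. Since $W$ is a simple finite-dimensional $\mathcal{U}_q(\hat{\Glie})$-module, the twisted module $W^\sigma$ (with $\mu=0$) is again a simple finite-dimensional $\mathcal{U}_q(\hat{\Glie})$-module. The defining formula $\sigma(x_{i,r}^\pm) = x_{i,-r}^\mp$ shows that the original lowest weight vector $w_*$ of $W$ becomes a highest weight vector $w_*^\sigma$ of $W^\sigma$, killed by all $x_{i,r}^+$ and acted on diagonally by the Cartan-Drinfeld generators. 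Let $\Psib^\sigma$ denote the resulting highest $\ell$-weight and $\omega^\sigma$ its highest weight.

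The key identities from Remark \ref{invsy} are $\sigma(T_i^+(z)) = T_i^-(z^{-1})$ and, symmetrically, $\sigma(T_i^-(z)) = T_i^+(z^{-1})$. Applying Theorem \ref{newpol}(ii) to $W^\sigma$ at a weight space $(W^\sigma)_{\widetilde{\omega}}$ yields that $T_i^-(z)/T_{i,\Psib^\sigma}^-(z)$ and $z^{h}\, T_{i,\Psib^\sigma}^+(z)/T_i^+(z)$ are polynomials in $z$ of degree $h = ht_i(\omega^\sigma \widetilde{\omega}^{-1})$ that agree up to a constant operator factor. Transporting back to $W$ via $\sigma$, the operator $T_i^-(z)$ on $W^\sigma$ corresponds to $T_i^+(z^{-1})$ acting on the corresponding weight space of $W$, which under the weight convention of $\sigma$ is exactly $W_{\omega'}$ for an $\omega'$ with $ht_i(\omega^\sigma \widetilde{\omega}^{-1}) = ht_i(\omega'(\omega_*)^{-1})$. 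Similarly $T_{i,\Psib^\sigma}^-(z)$ corresponds to $T_{i,*}^+(z^{-1})$.

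Substituting $z \mapsto z^{-1}$ in the first polynomial identity yields that $T_i^+(z)/T_{i,*}^+(z)$ is a polynomial in $z^{-1}$ of degree $h$ on $W_{\omega'}$, and multiplying by $z^h$ gives that $z^{ht_i(\omega'(\omega_*)^{-1})}\, T_i^+(z)/T_{i,*}^+(z)$ is polynomial in $z$ of degree $ht_i(\omega'(\omega_*)^{-1})$. The symmetric argument, applied to the second polynomial identity (or obtained by running the same $\sigma$-twist with the roles of $+$ and $-$ exchanged, using $\sigma^2 = \mathrm{id}$ on Cartan-Drinfeld generators up to sign), gives the polynomiality of $T_{i,*}^-(z)/T_i^-(z)$ of the same degree. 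The equality up to a constant operator factor in $W^\sigma$ transports under $\sigma$ to the corresponding equality on $W$.

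The only real subtlety is verifying the weight identification, i.e.\ that the weight space $(W^\sigma)_{\widetilde{\omega}}$ singled out by $\sigma$ corresponds to $W_{\omega'}$ with the height equality $ht_i(\omega^\sigma \widetilde{\omega}^{-1}) = ht_i(\omega'(\omega_*)^{-1})$. This is a direct bookkeeping exercise from the action of $\sigma$ on $\phi_{i,0}^\pm$, since these are the Cartan-Drinfeld generators defining the weight gradings on both sides; once established, it is clear that the minimum value of $ht_i$ on $W$ is realized at $\omega_*$, matching the highest weight role played by $\omega^\sigma$ in $W^\sigma$. No new technical ingredient beyond Theorem \ref{newpol}(ii) is needed.
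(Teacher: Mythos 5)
Your proposal is correct and follows essentially the same route as the paper: the paper's proof is precisely to twist $W$ by the involution $\sigma$ of Remark \ref{invsy} (from \cite[Proposition 1.6]{C}), as in the proof of Theorem \ref{newpol}, so that the lowest weight vector becomes a highest weight vector and the statement follows from Theorem \ref{newpol}(ii) via $\sigma(T_i^\pm(z)) = T_i^\mp(z^{-1})$. Your additional bookkeeping on the weight identification and the substitution $z\mapsto z^{-1}$ is exactly the detail the paper leaves implicit.
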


\begin{proof} It suffices to twist the representation by the morphism $\sigma$ of \cite[Proposition 1.6]{C} as in the proof of Theorem \ref{newpol}. Then the statement follows from this Theorem.
\end{proof}

We consider $A_i^{\mathcal{Z},+}(z)$ (this is analogous for $A_i^{\mathcal{Z},-}(z)$). We have 
$$A_i^{\mathcal{Z},+}(z) = (\overline{\phi}_i^+)^{-1} \overline{Y}_{i,{\bf Z}}^+(zq_i^{-1})\frac{T_i^+(z^{-1})}{T_i^+(z^{-1} q_i^2)}
=  \overline{Y}_{i,{\bf Z}}^+(zq_i^{-1})\frac{T_{i,*}^+(z^{-1})}{T_{i,\Psib}^+(z^{-1}q_i^2)} P_i(z),$$
where 
$P_i(z) = (\overline{\phi}_i^+)^{-1} \frac{T_i^+(z^{-1})}{T_{i,*}^+(z^{-1})} \frac{T_{i,\Psib}^+(z^{-1}q_i^2)}{T_i^+(z^{-1} q_i^2)}$ is a polynomial operator of degree $ht_i(\omega(\omega')^{-1}) + ht_i(\omega' (\omega_*)^{-1}) = ht_i(\omega (\omega_*)^{-1})$. Besides
\begin{equation}\label{scalfunc}\overline{Y}_{i,{\bf Z}}^+(zq_i^{-1})\frac{T_{i,*}^+(z^{-1})}{T_{i,\Psib}^+(z^{-1}q_i^2)} 
= \overline{Y}_{i,{\bf Z}\Psib^{-1}}^+(zq_i^{-1}) \times \frac{T_{i,*}^+(z^{-1})}{T_{i,\Psib}^+(z^{-1})}.\end{equation}
Note that by Proposition \ref{egw2}, 
$$T_{i,\Psib}^+(z^{-1})/T_{i,*}^+(z^{-1}) = \prod_{a\in\mathbb{C}^*}(1 - za)^{v_{i,a}}$$ 
is a polynomial of degree $ht_i(\omega(\omega_*)^{-1})$ where we have denoted $\Psib^* = \Psib\prod_{i\in I, a\in\mathbb{C}^*}A_{i,a}^{-v_{i,a}}$ 
the lowest $\ell$-weight of $W$. We will also denote by $u_{i,a}$ the multiplicity of $Y_{i,a}$ in $\Psib$ for $i\in I$, $a\in\mathbb{C}^*$.
Then consider
$${\bf Z} = \prod_{a\in\mathbb{C}^*} \prod_{i\in I}(\Psib_{i,aq_i^{-1}} \Psib_{\overline{i},aq^{r_i + r h^\vee}})^{u_{i,a}}\prod_{i\in I, r_i = 1 \neq  r} (\Psib_{i,aq_iq^{1-r}}\Psib_{\overline{i},aq^{r_i - r +1  + rh^\vee}}\Psib_{i,aq_iq^{r-1}}\Psib_{\overline{i},aq^{r_i + r - 1  + rh^\vee}})^{u_{i,a}}$$
$$\prod_{i\in I, r_i = r \neq 1}(\Psib_{i,aq_iq^{-2}}\Psib_{\overline{i},aq^{r_i - 2  + rh^\vee}})^{u_{i,a}}
\prod_{i\in I, r_i = 3 = r} 
(\Psib_{i,aq_iq^{-4}}\Psib_{\overline{i},aq^{r_i - 4  + rh^\vee}})^{u_{i,a}}$$
and a corresponding $\mathcal{Z}$, where we use the same notations as in (v) of Remark \ref{remconj}. We claim that $\Psib\preceq_{\mathcal{Z}} {\bf Z}$, that $L(\Psib)$ is in the category $\mathcal{O}_{\mu,\mathcal{Z}}^\lambda$ for the corresponding $\lambda$.

\noindent By (v) of Remark \ref{remconj}, with monomials translated in terms of the corresponding $\ell$-weights as above, and by Proposition \ref{compord}, we obtain that for each $i\in I$, $a\in\mathbb{C}^*$ :
$$1 \preceq_{\mathcal{Z}} \Psib_{i,a}\Psib_{\overline{i},aq^{r h^\vee}}.$$
This implies that, up to constants : 
$${\bf Z}\Psib^{-1} = 
\prod_{a\in\mathbb{C}^*} \prod_{i\in I}(\Psib_{i,aq_i} \Psib_{\overline{i},aq^{r_i + r h^\vee}})^{u_{i,a}}\prod_{i\in I, r_i = 1 \neq  r} (\Psib_{i,aq_iq^{1-r}}\Psib_{\overline{i},aq^{r_i - r +1  + rh^\vee}}\Psib_{i,aq_iq^{r-1}}\Psib_{\overline{i},aq^{r_i + r - 1  + rh^\vee}})^{u_{i,a}}$$
$$\prod_{i\in I, r_i = r \neq 1}(\Psib_{i,aq_iq^{-2}}\Psib_{\overline{i},aq^{r_i - 2  + rh^\vee}})^{u_{i,a}}
\prod_{i\in I, r_i = 3 = r} 
(\Psib_{i,aq_iq^{-4}}\Psib_{\overline{i},aq^{r_i - 4  + rh^\vee}})^{u_{i,a}} \succeq_{\mathcal{Z}} 1.$$
For the second point, let $\nu_{i,a}\geq 0$ so that 
$${\bf Z}\Psib^{-1} = \prod_{i\in I, a\in\mathbb{C}^*} \Lambda_{i,a}^{\nu_{i,a}}.$$ 
We obtain as in the proof of Lemma \ref{egw} that the eigenvalue of $\overline{Y}_{i,{\bf Z}\Psib^{-1}}^+(zq_i^{-1})$ 
corresponding to a factor $\Lambda_{i,a}$ is $(1 - z a q_i^{-1})$. Hence the condition 
\begin{equation}\label{sufc}\nu_{i,aq_i}\geq v_{i,a}\text{ for any $i\in I$, $a\in\mathbb{C}^*$.}\end{equation}
 implies that the scalar function (\ref{scalfunc}) is polynomial and so that $L(\Psib)$ in $\mathcal{O}_{\mu, \mathcal{Z}}^\lambda$.
Let us establish (\ref{sufc}).

For simply-laced types, the defining formula of the $\Lambda_{j,b}$ 
in terms of the $\Psib_{k,c}$ is the same as the defining formula of the $A_{j,b}$ in terms 
of the $Z_{k,c} = Y_{k,c}$. In particular, the factorization of each $\Psib_{i,aq}\Psib_{\overline{i},aq^{1 + r h^\vee}}$ 
a product of the $\Lambda_{i,b}$ is the same as the factorization of each $Y_{i,aq}Y_{\overline{i},aq^{1 + rh^{\vee}}}$ 
in terms of the $A_{j,b}$.
This implies $\nu_{i,aq} = v_{i,a}$ for any $i\in I$, $a\in\mathbb{C}^*$, the condition (\ref{sufc}) is clear.

For general types, the factorizations of $A_{j,b}$ and $\Lambda_{j,b}$ do not match, and so the 
powers $\nu_{i,aq}$ and $v_{i,a}$ are not equal in general. 
However, the computation in the proof of Proposition \ref{compord} shows that the 
factorization of $\Psib_{i,a} \Psib_{\overline{i},aq^{rh^\vee}}$ in terms of the $\Lambda_{j,b}$
matches the factorization of $Y_{i,a}Y_{\overline{i},aq^{rh^\vee}}$ in terms of the 
$A_{j,b}$ if $r_j = r$, in terms of the $A_{j,bq^{-1}}A_{j,bq}$ if $r_j = 1 = r-1$, in terms of the $A_{j,bq^{-2}}A_{j,b}A_{j,bq^2}$ 
if $r_j = 1 = r - 2$. Hence, the power $a_{i,a,j,b}$ of $A_{j,b}$ in $Y_{i,a}Y_{\overline{i},aq^{rh^\vee}}$ and the power 
$b_{i,a,j,b}$ of $\Lambda_{j,b}$ in $\Psib_{i,a} \Psib_{\overline{i},aq^{rh^\vee}}$ are related by :

$a_{i,a,j,b} = b_{i,a,j,b} $ if $r_j = r$,

$a_{i,a,j,b} = b_{i,a,j,bq^{-1}} + b_{i,a,j,bq}$ if $r_j = 1 = r - 1$,

$a_{i,a,j,b} = b_{i,a,j,bq^{-2}} + b_{i,a,j,b} + b_{i,a,j,bq^2} $ if $r_j = 1 = r - 2$.

\noindent Now for $j\in I$, $b\in\mathbb{C}^*$, we have : 
$$\nu_{j,bq_j} = \sum_{a\in\mathbb{C}^*}(\sum_{i\in I}u_{i,a} b_{i,a,j,bq_jq_i^{-1}} 
+ \sum_{i\in I, r_i = 1 \neq  r} u_{i,a}(b_{i,a,j,bq_jq_i^{-1}q^{r - 1}} + b_{i,a,j,bq_jq_i^{-1}q^{1 - r}})$$
$$+ \sum_{i\in I, r_i = r \neq 1} u_{i,a} b_{i,a,j,bq_jq_i^{-1}q^2}
+ \sum_{i\in I, r_i = 3 = r} u_{i,a} b_{i,a,j,bq_jq_i^{-1}q^4})$$
$$\geq \sum_{i\in I,a\in\mathbb{C}^*}u_{i,a} a_{i,a,j,b} = v_{j,b}.$$
\noindent and we obtain the inequalities (\ref{sufc}).

To complete the proof of Theorem \ref{truncfd}, we check that the truncation we found is 
 one of the possible truncations predicted by Conjecture \ref{mainc}. Let us assume this Conjecture is correct and let us check that 
our result is compatible with it. 
As we as seen in  (v) in Remark \ref{remconj}, for any $i\in I$ and $a\in\mathbb{C}^*$, the monomial $Z_{\overline{i},aq^{rh^\vee}}^{-1}$ occurs as a monomial of $\chi_q^L(V_{i,a}^L)$. Then $M_{\Psib}$ occurs in the Langlands dual 
$q$-character associated to $M_{{\bf Z}}$. Indeed, each factor $(\Psib_{i,aq_i^{-1}} \Psib_{\overline{i},aq^{r_i + r h^\vee}})^{u_{i,a}}$ contributes as $(\Psib_{i,aq_i^{-1}} \Psib_{i,aq_i}^{-1})^{u_{i,a}} = ([-\omega_i]Y_{i,a})^{u_{i,a}}$ in $\Psib$ and all
other factors contribute as $1$. Now, this implies that $L(\Psib)$ is in $\mathcal{O}_{\mu,\mathcal{Z}}^{\lambda}$, 
where $\mu$ is the weight of $M_{\Psib}$. This coincides with the result that we have established.

\begin{example} Let $\mathfrak{g}$ of type $B_2$ with $r_1 = 2$ and $r_2 = 1$. 
Let $\Psib = [-\omega_1] Y_{1,1} = \Psib_{1,q^{-2}}\Psib_{1,q^2}^{-1}$ corresponding
to a $5$-dimensional fundamental representation. We have its lowest-weight 
$$\Psib^* = [-\omega_1]Y_{1,q^6}^{-1} = [-2\omega_1]\Psib_{1,q^8}\Psib_{1,q^4}^{-1} = \Psib A_{1,q^2}^{-1}A_{2,q^4}^{-1}A_{2,q^2}^{-1}A_{1,q^4}^{-1}.$$
We have also for any $a\in\mathbb{C}^*$ :
$$\Psib_{1,a}\Psib_{1,aq^6} = \Lambda_{1,aq^2}\Lambda_{2,aq^3}\Lambda_{1,aq^4}.$$
That why we set as is the proof of Theorem \ref{truncfd} : 
$${\bf Z} = \Psib_{1,q^{-2}}\Psib_{1,q^8}\Psib_{1,1}\Psib_{1,q^6}$$
polynomial so that 
$$\Psib = {\bf Z} \Lambda_{1,q^4}^{-1}\Lambda_{2,q^5}^{-1}\Lambda_{1,q^6}^{-1}\Lambda_{1,q^2}^{-1}\Lambda_{1,q^3}^{-1}\Lambda_{1,q^4}^{-1}.$$
We see that the inequalities (\ref{sufc}) are satisfied. Hence $L(\Psib)$ is in the category $\mathcal{O}_{0,\mathcal{Z}}^{4\omega_1^\vee}$ for a certain 
$\mathcal{Z}$ compatible with ${\bf Z}$. The corresponding Langlands dual standard module is 
$$V = V_{1,q^2}^L\otimes V_{1,q^{-8}}^L \otimes V_{1,1}^L \otimes V_{1,q^{-6}}^L.$$
The Langlands dual $q$-character of the $V_{1,a}^L$ are give by formula (\ref{fundunc}). We obtain that 
$$Z_{1,q^2}Z_{1,q^{-2}}^{-1} =  Z_{1,q^2}   Z_{1,q^{-2}}^{-1}  Z_{1,1}    Z_{1,1}^{-1}$$
occurs as a monomial in the Langlands dual $q$-character $\chi_q^L(V^L)$ as in Conjecture \ref{mainc}.
\end{example}

\end{document}